\begin{document}

\title[FCs and Rep.\ Theory]{Feynman categories and Representation Theory}

\author[R.~M.~Kaufmann]{Ralph M.\ Kaufmann}
\email{rkaufman@math.purdue.edu}

\address{Purdue University Department of Mathematics, West Lafayette, IN 47907
}

\begin{abstract}
We give a presentation of Feynman categories from a rep\-resentation--theoretical viewpoint.
Feynman categories are a special type of monoidal categories and their representations  are monoidal functors. They can be viewed as a far reaching generalization of groups, algebras and modules.
Taking a new algebraic approach, we provide more examples and more details for several key constructions. This leads to new applications and results.

The text is intended to be a self--contained basis for a crossover of more elevated constructions and results in the fields of representation theory and Feynman categories, whose applications so far include number theory, geometry, topology and physics.
\end{abstract}

\maketitle

\pagenumbering{Roman}
\section*{Introduction}

This paper concentrates on the algebraic aspects of Feynman categories.
Feynman  categories where introduced to have an
enveloping theory for several types of generalizations of algebras \cite{feynman}. Going beyond this and the original
intended application \cite{KWZ} the theory has found  applications outside its initial intention in algebra, category theory,
geometry, number theory and physics, \cite{decorated,BergerKaufmann, HopfPart1, HopfPart2,  BenSix}. Further applications to representation theory are present, but an extension of results and approaches is
desirable and anticipated. The present treatment is designed to aid this development.

Although treated within the general theory, the algebraic aspects have not been presented in full detail. There are many constructions and results that are subtle when going beyond set based categories, which is necessary to study algebra representations, that is modules. The current paper bridges this gap providing at an algebraically motivated, example based introduction to the theory  while at the same time providing new level of detail for these constructions. This clarifies previous results and constructions, while providing new results and concrete examples along the way.

 The basic idea underlying the formalism of Feynman categories is to separate objects and their structures. This is in a similar spirit as Galois' insight to separate the group from its representations, or,  in  modern terminology, the  category of its representations. Taking this approach  leads to a hierarchy of abstraction, and allows one to operate on a higher level.

 Continuing with the group analogy,
many things about groups and their representations just follow from the axioms of a group, and are hence true in general for all groups and their representations ---for instance restriction and Frobenius reciprocity. Other results depend only on the group and hence work in all representation of that group. Finally there are results about particular representations.
In keeping with this theme, the Feynman categories are the analogues of the groups, and their representations are given by  functors; that is monoidal functors to be precise.

There is a natural  categorical transition from groups to groupoids or  quivers, which is discussed in the first section.
In this version, the groups are indeed an example of Feynman categories and restriction, induction and Frobenius reciprocity are generalized to a pair of adjoint functors, see \S\ref{reppar} and \S\ref{feynmanpar}. More generally, Feynman categories can be understood as having two constituents, a groupoid providing  basic objects and isomorphisms, and a set of morphism encoding operations and their relations. Up to isomorphism the morphisms further decompose into tensor products of a basic morphisms, those whose target are the basic objects. The morphisms can be thought of as ``proto--operations'' on ``proto--objects'' that get realized to operations on objects if a representation functor is applied.
A presentation of a Feynman category, will be a set of basic generating morphism and relations among them.

A good example for the presentation of proto--operations, or morphisms, and their relations is  given by considering commutative algebras. This example also illustrates the hierarchy of abstraction.
An algebra is a linear object $A$ together with a multiplication $\mu:A\otimes A\to A$ which is bi--linear and associative.
 The structure so far is an object in a linear category and a 2-variable morphism with a relation ---associativity.
There are two ways of writing down the associativity equation. The first is in terms of elements $(ab)c=a(bc)$.  Using just the structure morphism $\mu$, one can  alternatively rewrite the equation as $\mu \circ_1\mu=\mu\circ_2\mu_2$ considered as morphisms of three variables obtained by substitution, where $\circ_i$ means plug into the
$i$--th variable. In this form, one has the following data: (1) an object in a linear monoidal category, we used the monoidal structure to write down $A\otimes A$ and
need  $A^{\otimes 3}$ for associativity.  Moreover, if we are allowed to plug into variables, we will get $n$--linear maps that is we will use $A^{\otimes n}$.
(2) A morphism $\mu$ and its iterates and the relations among them given by associativity. Moving to having this data as the value of a functor,  the simple data
which will be encoded in the Feynman category whose monoidal functors are commutative algebras. Is simply as follows: one basic object $*$ and one basic generating morphism $\pi_2:*\otimes*\to *$, the proto--multiplication, with the relation of associativity given by a commutative diagram corresponding to the equation $\pi_2 \circ_1\pi_2=\pi_2\circ_2\pi_2$. Commutativity corresponds to the fact that $\pi_2\circ (12)=\pi_2$, where $(12)$ is the elementary transposition. The monoidal part of the Feynman category is the category finite sets with surjections with disjoint union as monoidal product,  see \S\ref{finsetpar} and Proposition \ref{monoidprop} for full details.
  The morphisms are surjection and the basic morphisms are the surjections $\pi_S:S\to \{*\}$ for a chosen atom $*$.
 The multiplication  is the value of the functor on the surjection $\pi_2:\{1,2\}\ta \{1\}$ and the associativity corresponds to the fact that there is only one surjection $\{1,2,3\}\ta \{1\}$.
 Note that we now do not have to specify that the functor takes values in a linear category. In general, a functor out of the Feynman category into {\em any} monoidal target category $\C$ is equivalent to the data of a commutative monoid.

This begs the questions, which we will answer in the text:

 \begin{enumerate}
 \item \label{Q1} What are the natural generalizations of groups, algebras and representations in terms of Feynman categories?
  \item \label{Q2} Are there are  similar Feynman categories for modules and their generalizations?
   \item \label{Q3} What type of operations on algebras translate to the Feynman categories?
   \end{enumerate}
The answer to \eqref{Q1} is that there are indeed many Feynman categories naturally generalizing groups and algebras. There are even constructions, like the plus construction, which  build more complex Feynman categories from simpler ones. In particular,
there are  two constructions, which allow one to give more structure to the objects and the morphisms. The first is called decoration and the second indexed enrichment.  Decorations, which are a form of Grothendieck (op)--fibration, lead to a factorization system for morphisms in the category of Feynman categories analogous to Galois covers, see \S\ref{decopar} and \cite{BergerKaufmann}.
Other constructions allow one to consider lax--monoidal functors or regular functors instead of strong monoidal functors as representations.

Indexed enrichments are tied to the so--called plus construction, which gives rise to several hierarchies.  
The most basic one  starts with the trivial Feynman category whose representations are objects (\S\ref{trivialpar}). progresses to the category whose representations are associative algebras and the next step is given by non--symmetric operads. Beyond that one finds hyper-operads, which are necessary for the bar and co--bar construction and so on.  This may provide a first point of contact and exhibit the naturalness of the notions.  To obtain symmetric versions, one can use a forgetful functor which induces a cover by a decoration. In this fashion, there is a boot-strap, which generates a large part of the theory simply from the trivial Feynman category.
Another hierarchy starts with a Feynman category $\GG$ based on graphs, see Appendix \ref{graphsec}. This leads to modular operads, hyper-(modular)-operads, etc., which are intimately related to moduli spaces of curves, and, among other things, lead to twisted modular operads see \cite[4.1]{feynman} and \cite{KWZ,decorated,Ddec} as well as \S\ref{outlookpar}.
In general, the plus construction is a formalization of the fact that the morphisms in a category regarded as having inputs and outputs give rise to flow charts, see also \cite{pluspaper}.
For the reader not so familiar with these notions, the self--contained presentation   using the approach outlined above gives to a natural construction which makes their existence transparent. For instance, operads naturally appear when considering modules over algebras.

The plus construction is also important in comparing Feynman categories to operadic categories of
 \cite{BMopcat}, see \cite{BaKaMa}.  One application of these categories is to higher category theory
as they produce Batanin's $n$--operads \cite{Bathigher,Batnoper} which lead to an approach to higher categories.
 In the context of Feynman categories, the construction goes back to indexing \cite{feynmanarxiv}  as a 
codification and generalization the notion of  hyper--operads and twists as introducted in  \cite{GKmodular}. The fact that this is related to  so--called plus constructions, was explained to us by M.\ Batanin and C.\ Berger, which lead to the  formulation given in \cite[\S3,\S4]{feynman} that is presented below.
The origin of plus constructions goes back to  \cite{BaezDolanPlus}, see also \cite{PolynomialMonadplus} for a plus construction for polynomial monads. 
Iterations of plus constructions can be found in \cite{BaezDolanPlus}, under the name of opetopes, and also in \cite{Bathigher,VogtiteratedMonoidal,Leinster}.

As to question \eqref{Q2}:
there  are Feynman categories which allow to encode modules (\S\ref{pluspar}).
 It turns out, that in the analogy with groups, algebras and modules are formalized by indexed enrichment using the aforementioned plus construction: the hierarchies are more like ladders on which there  are two ways to move: ``up'' creating a new Feynman category and ``down'' using the upper rung to define an indexed enrichment. The representations of these indexed enriched versions are then the sought after modules.
For representations and modules it is important that these categories can be enriched.
Enrichment several different flavors, namely, combinatorial, topological and algebraic. The native constructions are combinatorial in nature as categories are based on sets. The other two are more complicated and are enriched, either in a Cartesian category,  which behave very much like $\Set$, or in non--Cartesian category, e.g.\ a linear ones, such as $\kVect$. These  are of basic interest in representation theory.
In the analogy with groups enrichment over $\kVect$ is the transition from group representations to $k[G]$ algebras. We give the details in \S\ref{enrichedpar} stressing the intricacies that are presented in the  non--Cartesian/linear case.

As far as question \eqref{Q3} is concerned,  there are analogues of the bar and cobar constructions (\S\ref{barpar}), as well as of a dual transform, aka.\ Feynman transform, which is the cobar on the dual of $A$.
This yields a generalization of Maurer--Cartan equations in the form of Master Equations (\S\ref{masterpar}), which are important in deformation theory.
There is a topological version of this, the so--called W--construction given in \S\ref{wpar}. The construction has a cubical nature and the cubical setting gives a natural wall structure. We give the construction for monoids and show that as expected one obtains the cubical decomposition of associahedra which also appear in the stability conditions for $A_n$ type algebras \cite{Igusamodulated}.

\subsection*{Organization of the text}
The text is designed to be as self-contained as possible and is aimed at a diverse audience.

 We start in \S\ref{reppar} with collecting classic results for groups and quiver representations, but reformulated in categorical language. This presentation might be of independent interest as a primer.

 The next paragraph, \S\ref{feynmanpar}, contains the definition of Feynman category introduced as a special type of monoidal category. The representations are then given by strong monoidal functors. The development is parallel to that of \S\ref{reppar}. The presentation of indexed Feynman categories is new. The section ends with examples which are based on finite sets. Here we provide new details. The representations of these are various kinds of algebras. The group(oid) representations are also included as a basic example. Further examples are provided by graphical Feynman categories. The theory of graphs we use is detailed in Appendix \ref{graphsec}.

 We then turn to  various constructions for Feynman categories in \S\ref{constructionpar}. These yield Feynman categories whose representations are lax-monoidal or simply functors. At this level the finite set based Feynman categories have  FI--modules and (co)--(semi)--simplicial objects as objects. The next operation is that of decoration. It yields  the graphical Feynman categories that encode operad--like types, see Table \ref{zootable}.
 The next construction is the plus construction. Here we give a detailed exposition of the condensed presentation in \cite[\S3.6]{feynman}, providing several explicit calculations.
  The new precision yields gcp--version   of the plus--construction, which is a  generalization of hyper version contained in \cite[\S3.7]{feynman}. The relationship to indexing is also made more explicit here then previously.
 A detailed graphical based analysis is given in Appendix \ref{graphpluspar}, where we also give a careful discussion of levels.

 In \S\ref{enrichedpar} we tackle the enriched version. This is technically the most demanding and contains many new details.
 The bar/co-bar transformation and a dual transformation, aka.\ Feynman transform along with the master equations are discussed in \S\ref{barpar}. Traditionally the bar/co-bar adjunction can be used to define resolutions. For this one needs a model structure in general. The relevant details are
 reviewed in Appendix \ref{modelpar}.
 The W--construction is reviewed in \S\ref{wpar}. Here we also reconstruct the associahedra in their cubical decompositions.

 We end the paper with an outlook, \S\ref{outlookpar} that contains further applications as well as speculations about cluster transformations, relations to moduli spaces and 2--Segal objects.

Appendix \ref{graphsec} also has several details not present in other discussions, such as more details about the composition, the composition of ghost graphs, and grafting into vertices. The presentation of the category of graphs following \cite{BorMan} is of independent interest as it captures just the correct amount of combinatorics for subtle considerations.

There is an additional Appendix \ref{twocatapp}, which gives the definition of 2--categories double categories and their relationship to Feynman categories and indexing. These  can provide a rather technical, but natural, background.

\subsection*{New Results}
For the reader already familiar with (some of) the notions, there are several new results. The connection to Frobenius reciprocity is new. Algebras receive a  full treatment at all levels  and the relation to classical results are pointed out along the way.
For instance, the examples of \S\ref{examplepar} are partially new and partially given in fuller detail. The treatment of noncommutative sets is entirely novel and provides a new avenue of construction. Tables \ref{table1} and \ref{table2} are the most exhaustive and detailed up to date.

The role of monoidal units is treated
 more carefully. First, for the free and nc construction in \S\ref{ncpar} which leads to more precise theorems.
Again a more careful treatment of units has lead to the definition and construction of $\FF^\gcp$ in \S\ref{enrichedpar}.
The graphical description of $\FF^+,\FF^\gcp$ and $\FF^\hyp$ in Appendix \ref{graphpluspar} is new in its level of detail and clarifies the short description in \cite{feynman}. This is a key result  in the current paper. Key  results and definitions in this direction  are Proposition \ref{functorequivprop}, Definition \ref{splitdef}, Theorems \ref{weakthm} and \ref{forestthm}.
The explicit computations of $\FF^+,\FF^\gcp$ and $\FF^\hyp$ are new, cf.\ Propositions \ref{algebraprop} and \ref{operadprop}.

The classification of FI enrichments in \S\ref{FIpar} is new and shows how the perspective of Feynman category leads to a natural classification and extension of previous results.

Throughout the paper, The role of indexing is paid more attention to. Especially in the section \S\ref{indexedpar},\S\ref{pluspar} and Appendix \ref{twocatapp}. The latter appendix also has a full development of the details of the constructions only briefly introduced in \cite{feynman} in terms of double categories and 2--categories. The incorporation of holonomy and connections is new.
 In  similar vein, in the section on decoration and covers \S\ref{decopar}, the criterion for being a cover is now given in Proposition \ref{coverprop}.

\subsection*{Acknowledgements}

This work would have not been possible without the 2018 Maurice Auslander International Conference. We thank
the organizers G.\ Todorov, A.\ Martsinkovsky,   and K.\  Igusa for making this possible and creating a perfect atmosphere. 

Special thanks goes to A.\ Martsinkovsky for his encouragement and patience during the preparation of the manuscript.
We are grateful to K.\ Igusa for key discussions, and, as well as S.\ Schroll and  A.\ Hanany  for further enlightening discussions.

We also thank  J.\ Miller and P.\ Patzt, for discussions about FI--algebras and the organizers and participants of the conference on ``Combinatorial Categories in Algebra and topology" in Osnabr\"uck, especially M.\ Spitzweck and and H.\ Krause, where the results on FI--algebras
were first presented,

It is a pleasure to thank and acknowledge  my collaborators C.\ Berger, M.\ Markl and M.\ Batanin for related and ongoing discussions.

RK gratefully acknowledges support from  the Simons Foundation the
Institut des Hautes Etudes Scientifiques and the Max--Planck--Institut
for Mathematics in Bonn for their hospitality and
 their support.

\subsection*{Conventions} For a complex $C_\bullet$, we define the shift by
 $(\Sigma C)_n=(C[1])_n=C_{n+1}$. The effect is that the complex shifts down and suspension $s:=\Sigma^{-1}$ shifts the complex up.  If $f:C\to D[k]$ them we set $|f|=k$. This means
 that $f$ is given by a collection of maps $f_n:C_n\to D_{n+k}$ and $|f(c)|=|f||c|$.

 \tableofcontents

\cleardoublepage\pagenumbering{arabic}
\section{Representations from a categorical viewpoint}
\label{reppar}
\subsection{Representations}
A $k$-representation of a group $G$ is a pair $(V,\rho)$ of a vector space $V$ over $k$ and a morphism of groups $\rho:G\to \Aut(V)$, where $\Aut(V)$ is the group of $k$-linear automorphisms of $V$. This data is neatly organized and generalized as follows.

\begin{df}
A {\em groupoid} is a category whose morphisms are all invertible.
\end{df}
\begin{ex}
\label{groupoidex}
Let $\underline{G}$ be the category with one object $*$ and morphisms $Hom_{\underline G}(*,*)=G$ where the composition map is given by group multiplication: $f\circ g =fg$. The unit $id_*$ is the unit $e\in G$, the inverses of the morphisms are the inverse group elements $g^{-1}$, hence this is indeed a groupoid.
\end{ex}

\begin{df}
A {\em  representation of a groupoid $\Gr$} is a functor $F:\Gr\to \C$.
\end{df}

\begin{ex}
Let $k$-$\Vect$ be the category of $k$ vector spaces. A functor $F:\underline{G}\to k$-$\Vect$ is exactly a $k$ representation of $G$. Since $\underline{G}$ only has one object $*$, on the object level the functor is completely fixed by $V:=F(*)$. For the morphisms, we have an additional morphism
$\rho:=F:\Aut(*)\to \Aut(V)$. Thus the functor is determines and is uniquely determined by the pair $(V,\rho)$.
\end{ex}
As the example illustrates, one can quickly get generalizations. Groupoid representations  are given by collections of objects, automorphisms of them and isomorphisms between them. Another generalization is given by changing the target category $\C$ from $\kVect$ to some other category to obtain
groupoid representations in different categories.

\begin{ex}
For any category $\C$, we let $\Iso(\C)$ be the wide sub--category with $\Obj(\Iso(\C))=\Obj(\C)$ and $\Mor(\Iso(\C)))= \Iso(\C)\subset \Mor(\C)$ the subset of isomorphisms. This is a groupoid sometimes called the underlying groupoid.

In the example above the functor $\rho:\underline{G}\to \C$ actually factors through $\Iso(\C)$ and more generally so does any functor whose source is a groupoid. Note, a category $\V$ is a groupoid if and only if $\Iso(\V)=\V$.
\end{ex}

\begin{ex}
Another typical groupoid is a action groupoid. Let $X$ be a set and $G$ be a group action $\rho$ on $X$. Let $Mor=G\times X$ and $Obj=X$. Furthermore set
 $s=\pi_2: Mor\to Obj: s(g,x)=x$,  define $t:Mor\to Obj: t(g,x)=\rho(g)(x)$ and $id:Obj\to Mor: id(x)=(e,x)$ where $e\in G$ is the unit. Lastly, define composition
  $\circ:Mor \leftidx{_s}\times{_t}Mor\to Obj$ given by $(g,\rho(h )(x))\circ (h,x)=(gh,x)$.
  Then this  data forms  a category and moreover a groupoid.
\end{ex}

\begin{rmk}
If one wants to add more geometry or more conditions to a groupoid, one can consider a groupoid internal to a category $\C$.  Like a category internal to $\C$ this is a pair of objects $Mor$ and $Obj$  of  $\C$  which form the morphisms and objects of a category together with morphisms in $\C$ $s,t:Mor\to Obj$, $id:Obj\to Mor$ and $\circ:Mor \leftidx{_s}\times{_t}Mor\to Obj$ satisfying all the conditions of a category.
\end{rmk}
\subsubsection{Intertwiners as Natural Transformations}
Morphisms between representations $(V,\rho_V)$, $(W,\rho_W)$ aka.\ intertwiners are morphisms $N:V\to W$ such that
\begin{equation}
\label{intertweq}
N\circ \rho_V=\rho_W\circ N
\end{equation}

This equation is also the equation for natural transformations. Recall that functors from $\C$ to $\D$ form a category ${\it Fun}(\C,\D)$ whose morphisms are natural transformations. Where  $Nat(F,F')$ are the natural transformations from $F$ to $F'$ and a natural transformation $N$ is given by a collection of morphisms $N_X:F(X)\to F'(X)$ indexed  by the objects of $\C$ that satisfy
\begin{equation}
\forall \phi\in Hom_\C(X,Y): N_X\circ F'(\phi)=N_Y\circ F(\phi)
\end{equation}
\begin{ex}
In the example where $\C=\underline{G	}$, there is only one object $*$ and hence only one morphism $N_*=N$, and the equation becomes \eqref{intertweq}.
\end{ex}
\subsection{Graphs and Quivers}
A groupoid gives rise to a graph and vice--versa any graph gives rise to a (free) groupoid, as we will review, see also e.g.\ \cite{graphsym}.

We will use the Borisov--Manin definition of graphs and morphisms, \cite{BorMan,feynman}. Full details are in Appendix \ref{graphsec}.
In this formalism, a graph $\Gamma$ is a collection $(V,F,\del,\imath)$, where $V$ is a set of vertices, $F$ is a set of flags aka.\ half edges, $\del:F\to V$ assigns a base vertex to each flag and $\imath:F\to F$ is an involution $\imath^2=id$. Edges are orbits of order two of $\imath$, that is an edge $e=\{f,\imath(f)\}$ comprises two half edges and the obits of order one, are  ``unpaired'' half edges aka.\ tails. The set of edges will be denoted by $T$ and that of tails by $T$. A {\em directed edge $\vec{e}$}  is a pair $(f,\imath(f))$. Each edge gives rise to two directed edges and by picking the first flag the set of directed edges is in bijection with the flags, that {\em are not} tails.
A path is a sequence of directed edges $\vec{e}_i=(f_i,\imath(f_i))$, such that $\del(\imath(f_i))=\del(f_{i+1})$.
The set of all paths on $\G$ is denoted by $\P(\G)$.
A {\em directed graph}, aka. quiver, is a graph, with a choice of direction for all of its edges.

\subsection{Graphs and Groupoids}
Given a groupoid $\Gr$ the corresponding graph has $V=\Obj(\Gr)$ and flags $F=\Mor(\Gr)$, $\del(\phi)=s(\phi)$ and $\imath(\phi)=\phi^{-1}$. This graph has no tails, and hence the directed edges are in bijections with the flags. Notice that (i) each object has an identity map, thus there is at least one loop at each vertex, (ii) the graph structure does not encode the composition. We do however have a morphisms $\P(\G)=\Mor(\Gr)$, by sending the sequence $\vec{e}_i, i=1\kdk n$ to $f_n\circ\dots\circ f_1$.

Vice--versa, given a graph $\G$ without tails, setting  $\Obj(\Gr)=V$ and $F$ as the set of directed edges as the basic morphisms, where the source and target maps are given by $\del$. One cannot read off relations, but one can simply regard the free category on a given graph $\G$ to be the category generated by the flags of $\G$, that is morphisms $\Mor(\Gr)=\P(\G)$ together with the identity morphisms of all the objects and concatenation being composition of composable paths.

\begin{rmk}
One can use functorial language to say that there is a forgetful functor $\G:\mathcal{C}at_{sm}\to \Graphs^{nt}$ from the category of small category to that of graphs without tails and that this functor has a left adjoint free functor $F$, $F\dashv G$, see \S\ref{adjointpar} below.
\end{rmk}

\subsection{Monoids and quiver representations}
\label{monoidpar}
On can relax the invertibility and see that using the construction of Example \ref{groupoidex}
actually any unital monoid $A$ gives rise to a category $\underline{A}$.

Similarly, relaxing the invertibility, but adding a direction for each edge, a directed graph, gives rise to a category, where the generating morphisms are exactly the directed edges. This is usually known as a quiver.

\subsection{Restriction, Induction and Kan Extensions}
Given a morphism of groups $f:H\to G$, in particular a subgroup $H\subset G$, there are two canonical operations for representations. Restriction $res_H^G:Rep(G)\to Rep(H)$ and
induction $ind_H^G:Rep(H)\to Rep(G)$.

In the categorical formulation this amounts to the following triangles.
\begin{equation}
\xymatrix{
\underline{H}\ar[rr]^f\ar[dr]_{F\circ f=f^*F}&&\underline{G}\ar[dl]^F\\
&\C&
} \qquad
\xymatrix{
\underline{H}\ar[dr]_F\ar[rr]^f&&\underline{G}\ar[dl]^{Lan_fF=f_!F}\\
&\C&
}
\end{equation}
Here pull--back $f^*$ is simply restriction. Induction is more complicated and is given by the so--called left Kan extension.

In general, the situation is that one has a functor $f:\D\to \E$ this gives rise
to a morphism in the category of functors and natural transformations $f^*: {\it Fun}[\E,\C]\to {\it Fun}[\E,\C]$ by sending $F\in {\it Fun}[\E,\C]$ to $f^*F=F\circ f$.
 A left adjoint functor $f_!$, $f_! \dashv f^*$, if it exists if it exists gives a functor in the other direction
\begin{equation}
\label{adjointeq}
f_!: {\it Fun}[\E,\C]\leftrightarrows {\it Fun}[\E,\C]: f^*
\end{equation}
The left Kan extension, if it exists provides such a left adjoint: $Lan_f=f_!$.

\subsubsection{Adjoint functors}
\label{adjointpar}
A functor $F\in {\it Fun}(\C,\D)$ is called the left adjoint to a functor $G\in {\it Fun}(\D,\C)$
if there are natural bijections
\begin{equation}
Hom_\C(X,GY)\leftrightarrow Hom_\D(FX,Y)
\end{equation}

Typical pairs are $G=$ for\underline{g}et  and $F=$ \underline{f}ree. E.g.\ if $\C=\Set$ and $\D=\mathcal{G}roup$.
In the case at hand, the functors are $F=f^*$ and $f_!$ which run between the categories indicated in \eqref{adjointeq}.

 \subsubsection{Left Kan extension}
The Kan extension,  gives a left adjoint functor, the putative formula for the point-wise Kan left extension is
\begin{equation}
Lan_f F(Y)=\colim_{(f \downarrow Y)} F\circ s
\end{equation}
if the colimit exists. We will now discuss how to calculate such a beast in the situation above.

First, $(f\downarrow Y)$ is a so--called comma or relative slice category which has as objects pairs $(X,\phi \colon f(X)\to Y)$ where $X\in \D$ and $\phi\in Hom_\E(f(X),Y)$. The morphisms from $(X,\phi \colon f(X)\to Y)$ to $(X',\phi' \colon f(X')\to Y)$
are morphisms $\psi: X\to X'$ such that  $\phi=\phi'\circ f(\psi)$

\begin{equation}
\label{commamorpheq}
\xymatrix{
X\ar[d]_\psi& f(X)\ar[r]^{\phi}\ar[d]_{f(\psi)}&Y\\
X'&F(X')\ar[ur]^{\phi'}\\
}
\end{equation}
Second: the source map $s$ sends $(X,\phi)\mapsto X$ and hence the evaluation at the object $(X,\phi)$ is $F\circ s(X,\phi)=F(X)$.

Finally in a category $\C$ with direct sums $\bigoplus$ or more generally co--products $\coprod$, we can compute small co--limits as follows. Given an index category $\mathcal{I}$ and a functor $F:\mathcal{I}\to \C$,  the co-limit is:
\begin{equation}
\colim_{\mathcal{I}}F=\bigoplus_{X\in \Obj(\mathcal{I})}F(X)/\sim
\end{equation}
where $\sim$ is the equivalence relation induced by $F(X)\ni y\sim F(\phi)(y)\in F(X')$, where $\phi\in Hom_{\mathcal{I}}(X,X')$.
\begin{ex}
Consider a set $X$ with a group action $\rho:G\times X\to X$.
Set $\mathcal{I}=\underline{G}$ and consider $\C=\Set$. A functor $F:\mathcal{I}\to \Set$ then given by $X=F(*)$ and
a morphism $F:G\to \Aut(X)$ which is equivalent to the action $\rho:G\times X\to X$.
The colimit $\colim_\mathcal{I}F=X_G$ is given by the co--invariants of $X$, that is $X/\sim$ where $x\sim x'$ if there is a $g\in G$ such that $g(x)=x'$. These is exactly $F(*)/\sim$ above. Note that there is a natural map  quotient  $X\to X_G$
\end{ex}

The co-limit is actually more that an object, but it is an object together with  morphisms and as such is defined by its universal property. The co-limit $\colim_\mathcal{I}F$ is a coherent collection (aka.\ co-cone) $(C,(\pi_X:F(X)\to C)_{X\in \Obj(\mathcal{I})})$ where coherent means that for all $\phi:X\to X':\pi_{X'}\circ F(\phi)=\pi_X$.
The universal property is that if $(C',\pi'_X)$ is any other co--cone, there is a map $\psi:C\to C'$ which commutes with all the data.

In computing co-limits the following slogan is useful:
\begin{quote}{\em A co--limit of a given functor can be  computed by using an equivalent indexing category.}
\end{quote} Hence one can compute a co--limit using a skeleton, that is an equivalent category that only has one object in each  isomorphism class.
For a category $\F$, we will denote its skeleton by $sk(\F)$.

\begin{ex}
Taking the Kan extension as an example, we see that there is a component for each object $(X,\phi)$ and the equivalence relation is given by the morphisms $\psi$ as in \eqref{commamorpheq}, that is
\begin{equation}
Lan_f F(Y)=\bigoplus_{(X,\phi: f(X)\to Y)} F(X)/\sim
\end{equation}
where $F(x)\ni x\sim F(\psi)(x)\in F(X')$.
Note that this allows one to omit components $(X,\phi)$ whenever there is a morphisms $\psi$ with this as a source.
This means that only co--final objects, namely those which are not the source of a non--automorphism play a role when computing the limit.
\end{ex}

Finally notice that the Kan extension yields a functor. It also has values on morphisms
$Lan_fF(\psi:Y\to Y'):Lan_f F(Y)\to Lan_f F(Y')$. This is obtained by mapping the component $(X,\phi)$ to $(X,\psi\circ\phi)$ using $F(\psi)$.

\subsection{Restriction, Induction and Frobenius reciprocity}
With this setup, we can retrieve classical results. Consider an  inclusion of groups $H\hookrightarrow G$. This gives rise to the functor $f:\underline{H}\to \underline{G}$. Given a representation $\rho \in {\it Fun}(\underline{G},\kVect)$, we have
$res_H^G(\rho)=\rho\circ f$. Furthermore for $\rho \in {\it Fun}(\underline{H},\C)$
\begin{equation}
ind_H^G\rho(*)=Lan_f\rho(g)=\bigoplus_{(*,g):g\in G} V/\sim \; =G\times_H V
\end{equation}
where $V=\rho(*)$, $g\in Hom_{\underline{G}}(*,*)=G$, $H$ acts on the right by multiplication on $G$ and on the left via $\rho$ on $V$. In terms of the colimit, $g\sim g'$ if there is an $h$ such that $g=g'h$ and the functorial action of $h$, is given by sending the component of $g$ to that of $g'=gh^{-1}$ using $\rho(h)$ as the morphism on $V$, which is exactly what the relative  product encodes.

The using the equivalence ${\it Fun}[\underline{G},\kVect]=k[G]\text{-}mod$, the adjointness of $ind_H^G =f_!\dashv f^*=res_H^G$, yields one version of {\em Frobenius reciprocity}.

\begin{equation}
\label{frobrepeq}
Hom_{k[G]}(ind^G_H\rho,\lambda)\leftrightarrow Hom_{k[H]}(\rho,res^G_H\lambda)
\end{equation}

\subsection{Algebra and dual co--algebra structure}
\label{coalgpar}
 Concatenation operation for morphisms gives a partial composition for morphisms of a category $\C$.
 Let $C=k[\Mor(\C)]$ then $C$ is an associative algebra with the multiplication
 \begin{equation}
 \phi\cdot\psi=\begin{cases}\phi\circ\psi&\text{If they are composable}\\
0&\text{else}
  \end{cases}
 \end{equation}
 there is an approximate unit for the multiplication which is $\sum_X id_X$. This is a unit, if there are only finitely many objects.
$\C$ is called of {\em  decomposition finite} if for each $\phi$ there are only finitely many pairs $(\phi_0,\phi_1)$ such that $\phi=\phi_0\circ \phi_1$. In this case
\begin{equation}
\Delta(\phi)=\sum_{(\phi_0,\phi_1):\phi_0\circ\phi_1=\phi}\phi_0\otimes \phi_1
\end{equation}
 is a co--associative multiplication, see e.g.\ \cite{JR,HopfPart1,HopfPart2}  and has a co--unit
 \begin{equation}
 \eps(\phi)=\begin{cases}1&\text{if $\phi=id_X$ for some $X$}\\
 0&\text{else}
  \end{cases}
 \end{equation}
This is one of the instances, where cutting is simpler than gluing, in the sense that in order to glue, one usually has conditions and hence only partial structures, while when cutting, the cut pieces have no conditions as they could be re--glued.
\begin{ex} \mbox{}
\begin{enumerate}
\item For a groupoid $\underline{G}$, $C$  is the group algebra $k[G]$ and the co-multiplication is the dual
$\Delta(g)=\sum_{(g_1,g_2):g_1g_2=g} g_1\otimes g_2$. This is the usual co--multiplication one gets for the functions on the group.

\item  For a quiver this is the quiver algebra $k\vec{\G}$ or the path algebra. The algebra is free and hence decomposition finite and thus has a dual co--algebra. The co--product is de--concatenation of paths.

\item For a groupoid, $C$ is the groupoid algebra and the co--product is the de--composition co--product. If the groupoid is  given by a graph $\G$ there are many relations. The path--algebra is a groupoid algebra and represents the fundamental groupoid. The groupoid may or may not be of finite type. Characteristics which keep it from being of finite type are isomorphisms of infinite order, i.e.\ loops, infinitely many paths connecting two vertices, or an infinite path connecting two vertices.

\end{enumerate}
\end{ex}
\section{Feynman categories}
\label{feynmanpar}
Feynman categories are a special type of monoidal category which generalize the groupoids in two ways.
\subsection{Monoidal Categories}
A monoidal category $\C$ is a category with a functor $\otimes:\C\times \C\to \C$.
This means the for any two objects $X,Y$ there is an object $X\otimes Y$ and
for any two morphisms $\phi\in Hom_\C(X,Y), \phi'\in Hom_\C(X',Y')$ a morphism $\phi\otimes \phi'\in Hom_\C(X\otimes X',Y\otimes Y')$, such that
\begin{equation}
\label{interchangeeq}
(\phi\otimes \phi')\circ(\psi\otimes \psi')=(\phi\circ\psi)\otimes (\phi'\circ \psi')
\end{equation}

There are several other structures needed for a monoidal category
\begin{enumerate}
\item A unit object $\unit$ together with isomorphisms aka.\ unit constraints $X\otimes \unit \stackrel{\lambda}{\to}X \stackrel{\rho}{\leftarrow} \unit \otimes X$.
\item Associativity isomorphisms $A_{XYZ}:(X\otimes Y)\otimes Z\stackrel{\sim}{\to} X\otimes (Y\otimes Z)$
\item In the case of a {\em symmetric} monoidal category isomorphisms, aka.\ commutativity constraints
$C_{XY}:X\otimes Y\stackrel{\sim}{\to} Y\otimes X$ with $C_{YX}\circ C_{XY}=id$
\end{enumerate}
that satisfy various compatibility conditions, such as the pentagon axiom, see e.g.\ \cite{Kassel}.
A monoidal category is called strict if $\lambda,\rho$ and $A$ are identities. MacLane's coherence axiom states, that every monoidal category is equivalent (even monoidally) to a strict one \cite{MacLane}.

\begin{ex}
The most well--known example is $(\kVect,\otimes_k)$, here $\unit= k$. For $\Z$ graded $k$ vector spaces $\kVect^\Z$ the usual Koszul sign conventions are  commutativity constraints given by $C_{VW}(v\otimes w)=(-1)^{|v||w|}w\otimes v$, where $|v|$ is the $\Z$ degree. This formula also is used in the $\Z/2\Z$ case. The unit is $k$ in degree $0$.

 Another important example is $(\Set,\amalg)$ where $\amalg$ is disjoint union. One can also consider
 $(Top,\times)$.
\end{ex}

A strong monoidal functor between two monoidal categories $(\C,\otimes_C)\to (\D,\otimes_\D)$ is a functor $F:\C\to \D$
and natural isomorphisms $\Phi_{XY}:F(X)\otimes_\C F(Y)\stackrel{\sim}{\to} F(X)\otimes_\D F(Y)$, $F(\unit_\C)=\unit_\D$
and $F(\lambda_\C)=\lambda_\D$ as well as for $\rho$ and all is compatible with the other constraints.

If the $\Phi_{XY}$ are identities the functor is called {\em strict monoidal}. If they are just natural morphisms, the functor
is called {\em lax monoidal}. A co--lax or {\em op--lax monoidal} functor has morphisms $\hat\Phi: F(X)\otimes_\D F(Y)\to F(X)\otimes_\C F(Y)$

Strong monoidal functors  form a category ${\it Fun}_{\otimes}(\C,\D)$, the same is the case for the other versions.
The natural transformation have to respect the other structure maps.

\subsubsection{Free monoidal categories}

The strict free monoidal category on a category $\C$ is given by objects that are words in objects of $\C$ and morphisms induced by morphisms in $\C$. For the non--strict case, the free monoidal category are bracketed words with associativity morphisms between them and the symmetric version adds commutation isomorphisms.
The free monoidal category in both the symmetric and non--symmetric case will be denoted by $\V^\otimes$.
The strict and non--strict versions are equivalent as (symmetric) monoidal categories, see e.g.\ \cite[\S2.4]{matrix} for further discussion. For the examples, we mostly use the strict version, as it has less objects. The non--strict version is more natural when one is dealing with functors into non--strict monoidal categories, see below.

\begin{ex}
If $\V=\underline{1}$ then the {\em strict symmetric version} is: $\V^\otimes \simeq \SS$; viz.\  the category whose objects are natural numbers $\N_0$ corresponding to the powers $n=*^{\otimes n}$ and $Hom_{\V^\otimes}(n,n)=\SS_n$ the symmetric group, with all other Hom--sets being empty. Here $\unit=0=*^{\otimes 0}$, that is the empty word. For the {\em strict  non--symmetric }version: $\V^\otimes=\N_0$, that is the discrete category of
natural numbers.\footnote{A category is discrete if the only morphisms being are identity morphisms $id_X$. This defines a way to identify  sets with small  discrete categories.}
\end{ex}

The free monoidal category has a universal property. For this notice that there is an inclusion $j:\C\to \C^\otimes$ by one letter words. The property can now be phrased as follows, every functor $f:\C\to \D$ into a
monoidal category $(\D,\otimes)$ has a lift $f^\otimes:\C^\otimes \to \D$ as a monoidal functor such that
$f=f^\otimes \circ j$. This association is functorial and
\begin{equation}
\label{univeq}
{\it Fun}(\C,\D)\simeq {\it Fun}_\otimes (\C^\otimes, \D)
\end{equation}

\begin{ex}
For instance, we have the $k[G]\text{-}mod={\it Fun}(\underline{G},\kVect)={\it Fun}_\otimes(\underline{G}^\otimes,\kVect)$.
Similarly for $k\vec{\G}$.
\end{ex}
\subsection{Algebra structure for strict monoidal categories}
\label{algebrapar}
If $(\C,\otimes)$ is a strict monoidal category there is a unital algebra structure on $C=\Mor(\C)$ given by $\otimes$.
The unit is $id_\unit$.

\begin{rmk}
Thus on a monoidal  category, $C$ has two algebra structures, which are compatible by the intertwining relation \eqref{interchangeeq}, or if it is decomposition finite. (1) a unital algebra structure given by $\mu=\otimes$ with unit $id_\unit$ and (2) a co--unital co--algebra structure $(\Delta, \eps)$ given by deconcatenation, see \S\ref{coalgpar}.

It is {\em not true in general that these structure from a bi--algebra}. This is the case for non--symmetric Feynman categories, and for the induced structures on isomorphism classes
 for Feynman categories \cite{HopfPart2}.
\end{rmk}

\subsection{Feynman categories}
Consider a triple $\FF=(\V,\F,\imath)$ of a groupoid $\V$ a (symmetric) monoidal category $\F$ and a functor $\imath:\V\to \F$. By universality of the free (symmetric) monoidal category, there is a functor $\imath^\otimes:\V^\otimes \to \F$ which factors through $\Iso(\F)$ since $\V^\otimes$ is again a groupoid --- words in isomorphisms are isomorphisms.
Among the morphisms in $\F$ there are {\em basic morphisms} $X\to \imath(*)$ which are the objects of the comma category $(\F\downarrow \V)$ the morphisms in this category are commutative squares
\begin{equation}
\xymatrix{
X\ar[r]^{\phi}\ar[d]_{\psi}&\imath(*)\ar[d]^{\sigma}\\
X'\ar[r]^{\phi'}&\imath(*')\\
}
\end{equation}
where $\sigma$ is (necessarily) an isomorphism.

The tensor product induces a morphism $(\F\downarrow\V)^\otimes$ to the category of arrows $(\F\downarrow \F)$. It sends a word in $\phi_i:X_i\to \imath(*_i): \phi_1\cdots \phi_n \mapsto \phi_1\odo\phi_n:X_1\odo X_n\to \imath(*_1)\odo \imath (*_n)= \imath^\otimes(*_1\dots *_n)$.
The isomorphisms in $(\F\downarrow\F)$,  are commutative diagrams
\begin{equation}
\label{sdseq}
\xymatrix{
X\ar[r]^\phi\ar[d]_\sigma^\simeq&Y\ar[d]^{\sigma'}_\simeq\\
X'\ar[r]^{\phi'}&Y'}
\raisebox{-.6cm}{\quad denoted by $\sds(\phi):\phi\to\phi'$}
\end{equation}

We will abbreviate to $\sds$, if the source is clear. Alternatively $\sds$ can be interpreted be a map $\Hom(s(\sigma),s(\sigma')\to \Hom(t(\sigma),t(\sigma')$.
These morphisms can also be considered as 2--morphisms in a double category, see   Example \S\ref{sdsex} in  Appendix \ref{twocatapp}.

\begin{df}\cite{feynman}
\label{feynmandef}
A triple $\FF$ as above is a Feynman category if
\begin{enumerate}
\renewcommand{\theenumi}{\roman{enumi}}
\item \label{objectcond} $\imath^\otimes:\V^\otimes \to \Iso(\F)$ yields an equivalence of categories.
\item \label{morphcond} The monoidal product yields an equivalence $(\Iso(F\downarrow \V))^\otimes \simeq \Iso(\F\downarrow\F)$.
\item \label{smallcond} Every slice category $(\F\downarrow \imath(*))$ is essentially small.
\end{enumerate}
\end{df}
A Feynman category is called {\em strict} if the equivalences are identities. Using MacLane's coherence, one can show that every Feynman category is equivalent to a strict one.
We call a Feynman category {\em strictly strict}, if the equivalences become identities when using the strict free monoidal structures. $\FF$ is {\em skeletal} if $\F$ is.

The first condition says that each object $Y$ decomposes up to isomorphism into a word in $\V$:
 $Y\simeq \bigoplus_{v\in V}\imath(*_v)$, such a decomposition is unique up to unique isomorphism and all isomorphisms of $\F$ are induced from the (iso)morphisms in $\V$ acting on the letters of the word. This means that each object has a well defined length $|X|$ given by the length of an isomorphic word.
The second condition means that every morphisms $\phi:X\to Y$ in $\F$ is  decomposes isomorphically into a tensor product of basic morphisms according to a decomposition of $Y\simeq \bigoplus_{v\in V}\imath(*_v)$. This decomposition is unique up to unique isomorphism.
\begin{equation}
\label{feydecompeq}
\xymatrix{
X\ar[r]^{\phi}\ar[d]^{\simeq}_{\hat \sigma}&Y\ar[d]_{\simeq}^{\sigma}\\
\bigotimes_{v\in V}X_v\ar[r]^{\bigotimes_{v\in V}\phi_v}&\bigotimes_{v\in V}\imath(*_v)
}
\end{equation}
with $\phi_v:X_v\to \imath(*_v)$.

\subsubsection{Native length and element--type morphisms}
Notice by condition \eqref{objectcond} the length of an object $|X|=\text{tensor length}$ is well defined. If $X\simeq \bigotimes_{v\in V}*_v$ then $|X|=|V|$.
This defines the length of a morphism $\phi:X\to Y$ by $|\phi|=|X|-|Y|$. Isomorphisms necessarily have length $0$.
There are morphisms of negative length. These come from the fact that morphisms $\unit\to *_v$ are allowed, by the axioms. We will call these morphisms of element--type or simply elements. It follows from the axioms that any morphisms  factors as a tensor product into morphisms of positive length and element--type morphisms.

\subsubsection{Non-Sigma version}
Leaving out the ``symmetric'' in the monoidal categories, one arrives at the notion of a non--Sigma Feynman category. Let $\V$ be a groupoid, $\F$ be a monoidal category, $i:\V\to \F_<$ a functor, $\V^\otimes$ the free monoidal category and $\imath^\otimes:\V^{\otimes}\to \F_<$ be the induced functor.\footnote{We will use the notation $\F_<,\FF_<$ to indicate that these are non--symmetric, aka.\ ordered, versions.}
\begin{df}\cite{feynman}
\label{nsfeynmandef}
A triple $\FF_<=(\V,\F_<,\imath)$ as above is a non--Sigma Feynman category if
\begin{enumerate}
\renewcommand{\theenumi}{\roman{enumi}}
\item \label{nsobjectcond} $\imath^\otimes:\V^\otimes \to \Iso(\F)$ yields an equivalence of categories.
\item \label{nsmorphcond} The monoidal product yields an equivalence $(\Iso(F\downarrow \V))^\otimes \simeq \Iso(\F\downarrow\F)$.
\item \label{nssmallcond} Every slice category $(\F\downarrow \imath(*))$ is essentially small.
\end{enumerate}
\end{df}
Note that now the decompositions \eqref{feydecompeq} are unique up to isomorphisms in the letters ---permutations are not possible anymore.

\subsection{A bi-algebra and Hopf algebras structures for Feynman categories.}
The following result from \cite{HopfPart2} is a surprising feature of Feynman categories.

\begin{thm}{\rm \cite{HopfPart2}}
\label{Hopfthm}
The algebra structure of \S\ref{algebrapar} and the co--algebra structure of  \S\ref{coalgpar} for a decomposition finite monoidal category $\F$
\begin{enumerate}
\renewcommand{\theenumi}{\alph{enumi}}
\item satisfy the bi-algebra equation if $\F=\F_<$  belongs to a non--Sigma Feynman category $\FF_<$.
\item induce  a bi-algebra structure on the co-invariants  $\B$ of $C$ taken with respect to isomorphisms if  $\F$ is part of a Feynman category $\FF$.
\end{enumerate}
In the symmetric case let $\C$ be the ideal spanned by $[id_X]-[id_\unit]$ in the bialgebra $\B$ then
\begin{enumerate}
\renewcommand{\theenumi}{\alph{enumi}}
\setcounter{enumi}{2}
\item $\C$ is  a co-ideal.
\item If $\FF$ satisfies additional natural conditions listed in {\rm \cite[\S1.6]{HopfPart2}} then $H=\B/\C$ is a Hopf algebra.
\end{enumerate}
For a non--Sigma skeletal  strictly strict $\FF_<$ the corresponding ideal is given by the relations
$\frac{1}{|\Aut(X)|}id_X-id_\unit$. With a modified co--unit,
the quotient $\B/\C$ yields a Hopf algebra.
\end{thm}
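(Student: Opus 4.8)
The plan is to read the bi-algebra equation $\Delta(\phi\cdot\psi)=\Delta(\phi)\cdot\Delta(\psi)$ as a bijection between two sets of factorizations and to locate exactly where symmetry obstructs it. Since $\mu=\otimes$, the left side unpacks to $\sum_{\chi_0\circ\chi_1=\phi\otimes\psi}\chi_0\otimes\chi_1$, while the right side is $\sum(\phi_0\otimes\psi_0)\otimes(\phi_1\otimes\psi_1)$ running over factorizations $\phi_0\circ\phi_1=\phi$ and $\psi_0\circ\psi_1=\psi$. The content of part (a) is thus that every factorization $\chi_0\circ\chi_1=\phi\otimes\psi$ arises uniquely by tensoring a factorization of $\phi$ with one of $\psi$. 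I would deduce this from the decomposition axiom \eqref{nsmorphcond} of Definition \ref{nsfeynmandef}: the middle object $Z=t(\chi_1)=s(\chi_0)$ is a word in $\V$, and in the non--Sigma setting the decompositions of $\chi_0,\chi_1$ into basic morphisms are unique up to isomorphisms \emph{in the letters}, with no permutation available to intertwine the $\phi$-- and $\psi$--blocks; hence $Z$ splits as $Z_1\otimes Z_2$ compatibly and the bijection follows. Compatibility with unit $id_\unit$ and co--unit $\eps$ is immediate.

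For part (b) the same computation exhibits the obstruction in the symmetric case: a factorization of $\phi\otimes\psi$ may route through a permutation of the middle letters and so need not split as a tensor. The remedy is to pass to the co--invariants $\B$ of $C$ under the action of isomorphisms via $\sds$. I would first check that $\mu$ and $\Delta$ descend to $\B$ (using decomposition finiteness to keep $\Delta$ well defined) and that every isomorphism satisfies $[\sigma]=[id_X]$ there. The offending permutation is an isomorphism, so the ``twisted'' factorizations of $\phi\otimes\psi$ are identified in $\B$ with the straight tensor factorizations; once the automorphism weights in the descended coproduct are tracked as in \cite{HopfPart2}, the two sides match and the bi-algebra equation is restored on $\B$.

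For part (c) I would evaluate $\Delta$ and $\eps$ on the generators. A factorization $\chi_0\circ\chi_1=id_X$ consists of mutually inverse morphisms; in the connected Feynman categories at issue (the role of the extra conditions below) these are forced to be isomorphisms, so in $\B$ every term is $[id_X]\otimes[id_X]$ and, after the normalization making $[id_X]$ group--like, $\Delta[id_X]=[id_X]\otimes[id_X]$ while $\eps[id_X]=1=\eps[id_\unit]$. Therefore
\begin{equation}
\Delta([id_X]-[id_\unit])=([id_X]-[id_\unit])\otimes[id_X]+[id_\unit]\otimes([id_X]-[id_\unit])\in \C\otimes\B+\B\otimes\C,
\end{equation}
and $\eps([id_X]-[id_\unit])=0$, so $\C$ is a co--ideal; being an ideal by definition, it makes $\B/\C$ a bi-algebra.

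For part (d) I would grade $H=\B/\C$ by the length $|\phi|$ (iso--invariant, hence well defined on $\B$, and preserved by $\C$ since its generators have length $0$); multiplication adds lengths and $\Delta$ preserves total length, so $H$ is a graded bi-algebra. Quotienting by $\C$ collapses all identities to one group--like element, making the degree--zero part $k\cdot[id_\unit]$, and the conditions of \cite[\S1.6]{HopfPart2} are precisely what guarantee connectedness and local finiteness of this grading; a connected graded (or complete filtered) bi-algebra then carries a unique antipode by the standard recursion (Takeuchi's formula), so $H$ is Hopf. In the non--Sigma skeletal strictly strict case no co--invariants are needed: by (a), $C$ is already a bi-algebra, but $\Delta(id_X)$ now carries the factor $|\Aut(X)|$ from the automorphisms of the middle object, so rescaling the generator to $\tfrac{1}{|\Aut(X)|}id_X$ makes it group--like and, with the correspondingly modified co--unit, the group--like computation of (c) again yields a connected bi-algebra and hence a Hopf algebra. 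The main obstacle throughout is the symmetric bookkeeping in (b)--(c): getting the automorphism weights in the descended comultiplication exactly right so that twisted and straight factorizations cancel and the identities come out genuinely group--like, which is what forces the $\tfrac{1}{|\Aut(X)|}$ normalization and the modified co--unit.
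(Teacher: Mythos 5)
The paper itself offers no proof of this theorem; it is imported wholesale from \cite{HopfPart2}, so your proposal can only be measured against the strategy of that reference. Your architecture does match it: the bi-algebra equation is a bijection between factorizations of $\phi\otimes\psi$ and pairs of factorizations, the bijection comes from the hereditary axiom, the symmetric-group action on the middle letters is the obstruction, coinvariants repair it, and the antipode comes from a connectedness/filtration argument. But several load-bearing steps are asserted rather than proved.

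The most serious gap is in (c). You claim a factorization $\chi_0\circ\chi_1=id_X$ ``consists of mutually inverse morphisms.'' This does not follow from the Feynman category axioms together with decomposition finiteness, and it is exactly where the coideal claim lives: if $id_X$ admits a factorization with both $\chi_0,\chi_1$ non-invertible, the term $[\chi_0]\otimes[\chi_1]$ appears in $\Delta[id_X]$ and lies in neither $\C\otimes\B$ nor $\B\otimes\C$, so your computation of $\Delta([id_X]-[id_\unit])$ collapses. (In $\FFinSet$ one has $id_{\{*\}}=\pi_Z\circ j_z$ for every pointed set $(Z,z)$; that example escapes only because $\FinSet$ is not decomposition finite, which shows the issue is real and must be argued, not assumed.) You gesture at ``connectedness'' and the \S 1.6 conditions, but (c) is stated unconditionally, so you must either derive the iso-only factorization property from the hypotheses actually in force or track the non-isomorphism terms and show they are absorbed. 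Second, in (b) the entire technical content of the symmetric case --- that $\Delta$ descends to $\B$ with multiplicities that match the permutation-twisted factorizations of $\phi\otimes\psi$ bijectively, with weights, against pairs of factorizations --- is deferred to ``tracking the automorphism weights as in \cite{HopfPart2}''; that bookkeeping is the theorem in the symmetric case, not a footnote to it. Third, in (d) the native length grading does not yield connectedness: element-type morphisms have negative length and non-isomorphisms can have length $0$, so the degree-zero part of $\B/\C$ is not spanned by $[id_\unit]$, and Takeuchi's recursion needs the coradical-type filtration that the \S 1.6 hypotheses actually provide. Your heuristic for the $\frac{1}{|\Aut(X)|}$ normalization in the skeletal non--Sigma case is the right instinct, but the factorization $id_X=\sigma^{-1}\circ\sigma$ contributes $\sum_{\sigma\in\Aut(X)}\sigma^{-1}\otimes\sigma$, not $|\Aut(X)|\,id_X\otimes id_X$, so the group-likeness and the modified co-unit still require an argument.
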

Examples are the various Hopf algebras of Connes and Kreimer for trees and graphs \cite{CK1,CK2}, the Hopf algebra of Baues for double loop spaces \cite{BauesHopf}
and the Hopf algebra of Goncharov for multiple zeta values.

\begin{rmk}
Note  if  $\FF$ is a Feynman category with co-algebra $\C$, then $\FF^{op}$ will have the co-algebra structure $\C^{op}$. Thus $\FF^{op}$ although not a Feynman category will also yield a bi-algebra. One can speculate that  up to taking the opposite category, the bi-algebra structure is a defining feature.
\end{rmk}
\subsection{Representations of Feynman categories, aka.\ $\ops$}
Fix a (symmetric) monoidal target category $(\C,\otimes)$. We define:
\begin{equation}
\Fops_\C:={\it Fun}_\otimes (\F,\C) \text{ and } \Vmods_\C:={\it Fun}(\V,\C)
\end{equation}
where ${\it Fun}_\otimes$ means strong monoidal functors.
We will denote such functors by $\O$ and suppress $\C$ when it is fixed.
The functors $\Fops$ are ``representations''. They are usually operators or
operations, which is why we call them $\ops$. One such functor gives an $\F$-operation,
that is an $\F$-$\op$ or an $\op$ for short.

\subsubsection{Intertwiners and monoidal category structure}
 Using natural transformations as morphisms both $\Fops$ and $\Vmods$ are categories. These natural transformations correspond to  intertwiners.
This yields the natural definition of equivalence of $\ops$  and $\mods$ as isomorphic objects in these categories.

 $\ops$ and $\mods$  are symmetric monoidal categories for the level--wise tensor product.
 That is for $\O,\P\in \Fops$ or $\smodcat$
\begin{equation}
(\O\otimes \P)(X):=\O(X)\otimes \P(X)
\end{equation}
The monoidal unit is given by the trivial functor $\trivial$, which is defined by $\trivial(X)=\unit_\C$ and $\trivial(\phi)=id_{\unit_\C}$.
The unit, associativity and commutativity constraints are those  induced from  $\C$.

\subsubsection{A second monoidal structure} Due to the fact that in the setting of Theorem \ref{Hopfthm}, $\B$ is a bi--algebra there is an additional
monoidal structure on the co--completion of $\ops$, which has as of yet not been explored. For the co-completion and universal operations see \cite[\S6]{feynman}.

\subsubsection{Free $\ops$ and monadicity}

There is a forgetful functor $G:\Fops_\C \to \Vmods_\C$ were $G(\O)=\imath^*\O=\O\circ\imath$.
This functor is strong symmetric monoidal functor.

\begin{thm}{\rm \cite[Theorem 1.5.3.]{feynman}}
\label{freethm}
The functor $G$ has a left adjoint (free) functor $F\dashv G$, which is lax symmetric monoidal.
\end{thm}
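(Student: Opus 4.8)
The plan is to construct the free functor $F$ explicitly as a left Kan extension along $\imath:\V\to\F$ and verify the adjunction by hand, then upgrade the result to the lax monoidal statement. The key observation is that the forgetful functor $G(\O)=\O\circ\imath$ is precisely the restriction functor $\imath^*$ of the kind discussed in \S\ref{adjointpar}, except that the target $\Fops_\C$ consists of \emph{strong monoidal} functors rather than all functors. So I would first pretend the monoidal constraint is absent: the ordinary left Kan extension $\Lan_\imath$ furnishes a left adjoint to $\imath^*:{\it Fun}(\F,\C)\to{\it Fun}(\V,\C)$, provided the relevant colimits exist in $\C$. The pointwise formula, exactly as in \S\ref{adjointpar}, reads
\begin{equation}
\label{freeformulaeq}
F(P)(Y)=\colim_{(\imath\downarrow Y)} P\circ s=\bigoplus_{(X,\phi\colon \imath^\otimes(X)\to Y)} P(X)/\sim,
\end{equation}
where $P\in\Vmods_\C$ and the equivalence relation is induced by the morphisms of the comma category. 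The first task is therefore to check that these colimits exist; since $\C$ is monoidal and the slice categories $(\F\downarrow\imath(*))$ are essentially small by condition~\eqref{smallcond}, and $\F$ is assumed decomposition-finite, one reduces to coproducts indexed by essentially small categories, which one can assume $\C$ admits.

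\textbf{Second}, I would use the Feynman category axioms to show that the functor \eqref{freeformulaeq} actually lands in $\Fops_\C$, i.e.\ that it is automatically strong monoidal, not merely an object of ${\it Fun}(\F,\C)$. This is where conditions~\eqref{objectcond} and~\eqref{morphcond} of Definition~\ref{feynmandef} do the real work: because every object $Y\simeq\bigotimes_{v}\imath(*_v)$ decomposes into a word and every morphism decomposes as a tensor of basic morphisms \eqref{feydecompeq}, the comma category $(\imath\downarrow Y)$ factors as a product of the comma categories over the letters. Concretely, a cofinality/decomposition argument shows $(\imath^\otimes\downarrow (Y_1\otimes Y_2))\simeq (\imath^\otimes\downarrow Y_1)\times(\imath^\otimes\downarrow Y_2)$ up to the equivalences provided by the axioms, and since colimits commute with the monoidal product $\otimes$ of $\C$ in each variable (when $\otimes$ is cocontinuous, as it is for the usual targets), the natural comparison map $F(P)(Y_1)\otimes F(P)(Y_2)\to F(P)(Y_1\otimes Y_2)$ is an isomorphism. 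This establishes strong monoidality of $F(P)$ and simultaneously shows $F$ preserves the unit, using that $\imath^\otimes(\emptyset)=\unit_\F$ indexes a one-point comma category.

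\textbf{Third}, having landed in the right category, the adjunction $F\dashv G$ follows from the universal property of the Kan extension restricted to monoidal functors: a natural transformation $F(P)\to\O$ of strong monoidal functors corresponds, via the colimit's universal property, to a natural transformation $P\to\O\circ\imath=G(\O)$ in $\Vmods_\C$, and one checks this bijection is natural in both arguments. Finally, to see that $F$ is itself lax symmetric monoidal, I would exhibit structure morphisms $F(P)\otimes F(Q)\to F(P\otimes Q)$ built from the canonical maps $\colim_{(\imath\downarrow Y)}(P\circ s)\otimes\colim_{(\imath\downarrow Y)}(Q\circ s)\to\colim_{(\imath\downarrow Y)}((P\otimes Q)\circ s)$ and verify coherence; the symmetry is inherited from the symmetry of $\V$ acting on the letters of the word decomposition.

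\textbf{The hard part} will be step two---proving that the naively-defined Kan extension is \emph{automatically} strong monoidal. The decomposition of the comma category over a tensor product into a product of comma categories is exactly the content secured by the Feynman axioms, but making the cofinality argument precise, while correctly tracking the automorphisms in $\V$ (so that the $\sim$-relations match on both sides), is delicate---this is precisely the ``subtlety when going beyond set based categories'' flagged in the introduction. Everything else is the standard formal machinery of Kan extensions and adjunctions reviewed in \S\ref{adjointpar}.
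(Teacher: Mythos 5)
Your proposal is correct and takes essentially the same route as the cited proof (which this paper recasts as the push--forward $\mathfrak{i}_!$ along $\mathfrak{i}\colon\frakV\to\FF$ in \S\ref{trivialpar}): a pointwise left Kan extension --- necessarily along $\imath^{\otimes}\colon\V^{\otimes}\to\F$ after freely extending $P$, which your comma--category formula already does even though your prose says ``along $\imath$'' --- made strong monoidal by the hereditary decomposition of $(\imath^{\otimes}\downarrow Y_1\otimes Y_2)$ guaranteed by condition (ii), with the adjunction and the lax symmetric monoidal structure on $F$ then following formally. The only stray ingredient is the appeal to decomposition--finiteness, which plays no role here; what is actually needed is cocompleteness of $\C$ with $\otimes$ preserving colimits in each variable together with the smallness condition (iii).
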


There is another way to understand the operations as an algebra over a triple or a monad.
Given a pair of adjoint functors $F\colon\C\leftrightarrows\D\colon G$, there is the endofunctor $T=G\circ F\in {\it Fun}(\D,\D)$, which is a unital monoid as follows:
\begin{enumerate}
\item There is a natural transformation $\mu\colon T\circ T \to T$ given by the structure morphism of adjoint functors
$\eps: F\circ G\to id_\D$ (here $id_\D$ is the identity functor):
$T\circ T=(G\circ F)\circ (G\circ F)=G\circ(F\circ G)\circ F \stackrel{\eps}{\to} G \circ id_\C \circ F=T$.
\item The other structure map of the adjunction $\eta: id_\D\to F\circ G=T$ yields a unit for $T$:
$T=T\circ  id_\D\stackrel{\eps}{\to} T\circ T\stackrel{\mu}{\to}T$ is the identity transformation. Likewise for the left unit equation.
\end{enumerate}
An algebra over a triple $(T,\mu,\eps)$ is an object $\mathcal{M}$ in $\D$ together with a transformation
$\rho:T\mathcal{M}\to \mathcal {M}$ that is associative $\mu\circ\rho=\rho\circ\rho:T^2\mathcal{M}\to \mathcal{M}$.
The $T$--algebras in $\D$ form a category denoted by $\D^T$.
In the case at hand, $\mathcal{M}\in \Vmods$ is a $\V$ module and $\rho$ gives the operation of $T$ on $\mathcal{M}$.
\begin{thm}{\rm \cite[Corollary 1.5.5]{feynman}}
The adjunction $F\dashv G$ is monadic, that is $(\Vmods)^T=\Fops$.
\end{thm}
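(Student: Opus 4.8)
The plan is to apply Beck's monadicity theorem to the adjunction $F\dashv G$ supplied by Theorem~\ref{freethm}, whose associated monad is $T=G\circ F$ on $\Vmods_\C$. Recall that the comparison functor $K\colon \Fops_\C \to (\Vmods_\C)^T$ sends a strong monoidal $\O$ to the $\V$-module $G(\O)=\imath^*\O$ equipped with the structure map $G(\eps_\O)\colon TG(\O)=GFG(\O)\to G(\O)$ coming from the counit $\eps\colon FG\to \id$; one checks directly that $K$ is well defined and that $G=U^T\circ K$, where $U^T$ is the forgetful functor of the Eilenberg--Moore category. To conclude that $K$ is an equivalence it suffices, by Beck's theorem, to verify two things: (a) $G$ is conservative, i.e.\ reflects isomorphisms; and (b) $\Fops_\C$ has coequalizers of $G$-split (reflexive) pairs, preserved by $G$ --- equivalently, that $G$ creates coequalizers of $G$-split pairs.

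For (a), let $\alpha\colon \O\to \O'$ be a natural transformation of strong monoidal functors with $G(\alpha)$ invertible, so that $\alpha_{\imath(*)}$ is an isomorphism for every $*\in\V$. By axiom~\eqref{objectcond} every object $X\in\F$ admits an isomorphism $X\simeq \bigotimes_{v}\imath(*_v)$, and since $\O,\O'$ are strong monoidal their structure isomorphisms identify $\O(X)\simeq \bigotimes_v \O(\imath(*_v))$ and likewise for $\O'$. Under this identification $\alpha_X$ becomes $\bigotimes_v \alpha_{\imath(*_v)}$ up to the coherence isomorphisms, hence is an isomorphism; naturality of $\alpha$ guarantees independence of the chosen decomposition. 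Thus $\alpha$ is invertible and $G$ reflects isomorphisms.

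For (b), the essential point is that coequalizers in the functor category $\Vmods_\C={\it Fun}(\V,\C)$, when they exist, are computed objectwise in $\C$, and that a $G$-split coequalizer is in particular an \emph{absolute} coequalizer, preserved by every functor --- including the monoidal product $\otimes\colon \C\times\C\to\C$. I would proceed as follows: given a reflexive $G$-split pair in $\Fops_\C$, form its objectwise coequalizer $Q$ in $\Vmods_\C$; because the split structure makes each $\otimes$-power of the diagram again a split, hence absolute, coequalizer, the strong monoidal structure maps of the two functors descend uniquely to isomorphisms $Q(X)\otimes Q(Y)\xrightarrow{\sim} Q(X\otimes Y)$, endowing $Q$ with a canonical strong monoidal structure. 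One then checks that this $Q$, with its projection, is the coequalizer in $\Fops_\C$ and that $G$ sends it to the original split coequalizer, so that $G$ creates these coequalizers.

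The main obstacle is precisely the interaction of the strong monoidal condition with colimits in (b): a general coequalizer of strong monoidal functors need \emph{not} be computed objectwise, since $\otimes$ does not preserve arbitrary coequalizers. The whole argument hinges on restricting to $G$-split pairs, whose coequalizers are absolute and therefore survive the application of $\otimes$. Verifying that the descended coherence data satisfy the associativity and unit constraints, and that the universal property holds in $\Fops_\C$ rather than merely in ${\it Fun}(\F,\C)$, is the delicate bookkeeping; once absoluteness is invoked it reduces to a diagram chase using naturality of the splitting. With (a) and (b) established, Beck's theorem yields that $K$ is an equivalence, that is $(\Vmods_\C)^T=\Fops_\C$.
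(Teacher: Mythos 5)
The paper gives no proof of this statement, deferring entirely to the citation \cite[Corollary 1.5.5]{feynman}; there the argument is likewise an application of Beck's monadicity theorem, resting on conservativity of $G$ (via the decomposition axioms) and on the fact that $G$-split coequalizers are absolute and hence survive $\otimes$, so your proposal follows essentially the same route and is correct. One presentational point worth fixing: your $Q$ is first formed only in $\Vmods_\C$, so you should state explicitly that it is extended to all of $\F$ by $Q(X):=\bigotimes_v Q(\imath(*_v))$, well defined up to unique isomorphism by axiom (i), and functorial in $X$ by the universal property of these (absolute) coequalizers --- this is what licenses the later appearance of $Q(X)$ for general $X\in\F$.
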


The image of $F$ in $\Fops$ are the free $\Fops$ and these are equivalent to the so--called Kleisly category
$(\Vmods)_T$.

\subsection{The category of Feynman categories}

Feynman categories again form a category. A morphism of Feynman categories $\FF=(\V,\F,\imath)$ to $\FF'=(\V',\F',\imath')$ is a pair $\ff=(v,f)$, $v\in {\it Fun}(\V,\V'), f\in {\it Fun}_\otimes(\F,\F')$ such that $f\circ \imath=\imath'\circ v$.
Theorem \ref{BKthm} give an important structural theorem by establishing a factorization system.

\subsubsection{Indexed Feynman categories}
\label{indexedpar}
\begin{df}
A Feynman category $\FF=(\V,\F,\imath)$ indexed over a Feynman category $\mathfrak{B}=(\V_\B,\B,\imath_\B)$ is a morphism of Feynman categories
$\mathfrak{b}=(v_b,f_b):\FF\to\mathfrak{B}$ whose underlying functor $f_b:\F\to \mathcal{B}$ is surjective on objects.

An indexing is called strong, if it is bijective on objects and surjective on morphisms.
A strong indexing is strict, if induces an equivalence of $\V\simeq \V_\B$.\footnote{Similar conditions are necessary to obtain morphisms of the associative Hopf algebras \cite[\S1.7]{HopfPart2}.}

\end{df}

\begin{rmk}
\label{decomprmk}
Let $\fb=(v_b,f_b):\FF=(\V,\F,\imath)\to \fB=(\V_\B,\B,\imath_\B)$ be an indexing, then
\begin{enumerate}
\renewcommand{\theenumi}{\roman{enumi}}
\item Morphisms decompose fiberwise:
\begin{equation}
\label{fiberhomeq}
\Hom_\F(X,Y)=\amalg_{\phi\in \Hom_{\B}(f_b(X),f_b(Y))}f_b^{-1}(\phi)
\end{equation}
\item Composition  and the monoidal product are partially defined  fiberwise
\begin{equation}
\begin{tabular}{l}
\xymatrix{
 f_b^{-1}(\phi)\times f^{-1}_b(\psi)\ar@{.>}[r]^\circ& f_b^{-1}(\phi\circ\psi)}\\
  \xymatrix{
f_b^{-1}(\phi)\times f^{-1}_b(\psi)\ar@{.>}[r]^\ot&
 f_b^{-1}(\phi\otimes\psi)
}
\end{tabular}
\end{equation}
These two partial products  are associative, satisfy  the  interchange relation
\eqref{interchangeeq} and are compatible with the groupoid action of $\Iso((\B\downarrow \B)$ lifted to $\Iso(\F\downarrow\F)$.

If the indexing is strong, then these products are fully defined.
\item Any invertible $\hat\sigma\in \Mor(\V)$ has $f_b(\hat \sigma)\in \Mor(V_\B)$.  And for $\sigma\in \Mor(\V)$:
$f_b^{-1}(\sigma)=f_b^{-1}(\sigma)^\times \amalg \overline{f_b}^{-1}(\sigma)$, where the first set is made up of all the invertible elements in the fiber.
If the indexing is strong, then the fiber has exactly one element: $f_b^{-1}(\sigma)=\sigma$.
\item There are unit elements
\begin{equation}
id_{X}\in f_b^{-1}(id_{f_b(X)})
\end{equation}

\item The monoidal unit, since native length is preserved and a monoidal unit is unique up to isomorphism,
$\unit_\F\in f_b^{-1}(\unit_\B)=\Gpd$, where $\Gpd$ is a discrete groupoid. If the indexing is strong then $f_b^{-1}(\unit_\B)=\unit_\F$.

\end{enumerate}

\end{rmk}
Examples of indexing are given by decoration, see \S\ref{decopar} and enrichment see \S\ref{pluspar}.

\begin{rmk}
\label{enrichfunctorrmk}
Using the fact that a monoidal category is a two--category with one object, see Appendix \ref{twocatapp}, one can rephrase Remark \ref{decomprmk} as saying that for a strong indexing $f_b^{-1}$ is a lax monoidal lax 2--functor to $\underline{\Set}$.
This relationship is the basis of indexed enrichment, see \S\ref{enrichedpar}, where $\Set$ is allowed to be replaced by some other symmetric monoidal category.
\end{rmk}

\subsubsection{Pull--back/Push--forward adjunction (restriction, induction and Frobenius reciprocity)}
There is a natural pull--back  or restriction for $\ops$: $\ff^*:\F'\text{-}\ops_\C\to \Fops_\C$ given by $\ff^*\O=\O\circ f$, which
is again a strong symmetric monoidal functor.

\begin{thm}{\rm \cite[Theorem 1.6.2]{feynman}}
\label{adjointthm}
The functor $\ff^*$ has a left adjoint $\ff_!\dashv \ff^*$ which is symmetric monoidal.
\end{thm}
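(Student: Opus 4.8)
The plan is to construct $\ff_!$ as a left Kan extension along $f$ and then upgrade the resulting adjunction to the monoidal level. Write $\ff=(v,f)$ with $f\colon\F\to\F'$. Forgetting monoidal structure, $\ff^*$ is the restriction functor $\O'\mapsto\O'\circ f$ on functor categories, so the natural candidate for its left adjoint is the pointwise left Kan extension
\begin{equation}
\ff_!\O(Y')=\colim_{(f\downarrow Y')}\O\circ s,
\end{equation}
exactly as in the induction formula of \S\ref{adjointpar}. First I would record that this colimit exists: by axiom (\ref{smallcond}) of Definition \ref{feynmandef} each slice $(\F\downarrow\imath'(*))$, and hence each comma category $(f\downarrow Y')$, is essentially small, so the colimit is a small colimit in $\C$, which exists under the assumption that $\C$ admits the relevant colimits (e.g.\ $\C$ cocomplete with $\otimes$ cocontinuous in each variable, as for $\kVect$). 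Functoriality in $Y'$ is the usual functoriality of $\mathrm{Lan}_f$ recalled at the end of \S\ref{adjointpar}, and the Kan extension adjunction gives the natural bijection $\Hom(\ff_!\O,\P)\cong\Hom(\O,\ff^*\P)$ of underlying functors.

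It remains to promote $\ff_!$ to a strong symmetric monoidal functor so that the adjunction lives in $\ops$. Since $\ff^*$ is strong (indeed strict) monoidal, doctrinal adjunction supplies $\ff_!$ with a canonical \emph{oplax} symmetric monoidal structure, whose comparison maps $\ff_!\O(X'\ot Y')\to\ff_!\O(X')\ot\ff_!\O(Y')$ and $\ff_!\O(\unit_{\F'})\to\unit_\C$ are the mates of the (identity) coherence data of $\ff^*$. The whole content of the theorem is therefore that these comparison maps are \emph{isomorphisms}, and this is where the Feynman-category axioms enter.

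The main step --- and the one I expect to be the real obstacle --- is to show that the tensor functor
\begin{equation}
L\colon (f\downarrow X')\times(f\downarrow Y')\longrightarrow(f\downarrow X'\ot Y'),\qquad \big((W,\phi),(W'',\psi)\big)\mapsto (W\ot W'',\,\phi\ot\psi),
\end{equation}
is \emph{final}, so that the defining colimit may be reindexed along it. This is precisely the place to invoke the morphism-decomposition axiom (\ref{morphcond}): every object $(W^*,\chi\colon f(W^*)\to X'\ot Y')$ of the target comma category decomposes, uniquely up to unique isomorphism, as a tensor product of basic morphisms grouped according to a decomposition $X'\ot Y'\simeq\bigotimes_v\imath'(*_v)$, and because $f\circ\imath=\imath'\circ v$ this decomposition lifts along $f$ to a factorization through $L$; uniqueness up to unique isomorphism then yields connectedness of the relevant comma categories, i.e.\ finality. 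Granting finality, I would compute
\begin{equation}
\ff_!\O(X'\ot Y')\cong\colim_{(f\downarrow X')\times(f\downarrow Y')}\O(W\ot W'')\cong\Big(\colim_{(f\downarrow X')}\O(W)\Big)\ot\Big(\colim_{(f\downarrow Y')}\O(W'')\Big)=\ff_!\O(X')\ot\ff_!\O(Y'),
\end{equation}
where the middle isomorphism uses that $\O$ is strong monoidal together with the hypothesis that $\ot$ on $\C$ commutes with colimits in each variable. The unit comparison is the degenerate case: $(f\downarrow\unit_{\F'})$ is, cofinally, the one-object category on $\unit_\F$ (length is preserved, so the only morphisms into the empty word come from length-zero objects, all canonically isomorphic to $\unit_\F$), and strong monoidality of $\O$ gives $\ff_!\O(\unit_{\F'})\cong\O(\unit_\F)\cong\unit_\C$. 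Finally, because $L$ and the decomposition of axiom (\ref{morphcond}) are built from the \emph{symmetric} free monoidal structure, the resulting structure isomorphisms are automatically compatible with the commutativity constraints of $\F'$ and $\C$, so $\ff_!$ is symmetric monoidal and $\ff_!\dashv\ff^*$ is an adjunction in $\ops$. An alternative, more formal route to the same construction runs through the monad picture of Theorem \ref{freethm}: the functor $v^*$ has the left adjoint $v_!$ of \S\ref{adjointpar}, the square $G\,\ff^*=v^*\,G'$ commutes, and $G,G'$ are monadic, so an adjoint-lifting argument produces $\ff_!$; but even there, verifying that the lift is \emph{strong} (not merely oplax) monoidal reduces to the same finality statement above.
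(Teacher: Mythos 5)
Your proposal is correct and follows essentially the same route as the paper: the paper defines $\ff_!$ as the pointwise left Kan extension and points out that the only nontrivial issue is that this Kan extension is (strong symmetric) monoidal, which is exactly what you establish by using axiom (\ref{morphcond}) to identify $(f\downarrow X'\ot Y')$ cofinally with $(f\downarrow X')\times(f\downarrow Y')$ and then factoring the colimit. Your explicit flagging of the hypotheses on $\C$ (existence of the relevant colimits and cocontinuity of $\ot$ in each variable) is consistent with the standing assumptions of the cited reference.
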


The formula is again given by a left Kan extension. $\ff^*\O=Lan_f\O$. What is not obvious and is proven in {\it loc.\ cit.} is that this
Kan extension yields a {\em monoidal} functor.

\subsection{Examples}
\label{examplepar}
We will go through several examples. These examples are part of the fundamental ladder mentioned in the introduction whose base is the
trivial Feynman category. The next level is given by finite sets and their variations.
The different Feynman categories we discuss are collected in Table \ref{table1}, their non--Sigma analogues are in Table\ref{table2}.
The corresponding $\ops$ are collected in Table \ref{algtable}.

\subsubsection{Trivial Feynman category}
\label{trivialpar}
More generally, the trivial Feynman category on a groupoid
$\V$ is $\frakV=(\V,\V^\otimes,j)$. It has the following properties:

\begin{enumerate}
\item $\V^{\otimes}\text{-}\opcat_\C\simeq\Vmods_\C$, by the universal property of the free monoidal category.
    \item For $\V=\triv$, we will denote $\frakV$ by $\FFtriv$. We have $\V^{\otimes}\text{-}\opcat_\C\simeq\Vmods_\C=\Obj(\C)$. This is {\em the} trivial Feynman category.
\item If $\V=\underline{G}$ and $\C$ is $k$--linear then
$\V^{\otimes}\text{-}\opcat_\C\simeq\Vmods_\C=k[G]-mods$ in $\C$.

\item If we consider the inclusion $\imath:\underline{H}\to \underline{G}$. Then $i^*=res^G_H$ and
$i_!=ind^G_H$. The adjoitness of the functors is Frobenius reciprocity in the form \eqref{frobrepeq}.

\item More generally, given any Feynman category $\FF=(\V,\F,\imath)$ we can consider $\frakV$ and
the morphism given by ${\mathfrak i}=(id, \imath^\otimes)$. The using the isomorphism $j^*:\V^{\otimes}\text{-}\opcat_\C\stackrel{\sim}{\to}\Vmods_\C$, $ {\mathfrak i}_! \circ (j^*)^{-1}=F$
and $j^*\circ{\mathfrak i}^*=G$ are the adjoint pair of the free and forgetful functor in Theorem \ref{freethm}. Thus showing that this is a special case of Theorem \ref{adjointthm}.

\end{enumerate}
In general there may be more basic morphisms apart from those coming from $\V$.
In particular there may be basic morphisms $\unit \to \imath(*)$ and $X\to \imath(*)$

\subsubsection{Finite Sets}
\label{finsetpar}

Consider the symmetric monoidal category $(\FinSet$, $\amalg)$ whose unit is $\unit=\emptyset$, consider the inclusion functor $\imath:\underline{1}\to \FinSet$ that sends $*$ to the atom $\{*\}$. Then
$\FFinSet=(\underline{1},\FinSet,\imath)$ is a Feynman category.
The axioms are satisfied:
\begin{enumerate}
\renewcommand{\theenumi}{\roman{enumi}}
\item $\underline{1}^{\otimes}\simeq \SS=\Iso(sk(\FinSet))\simeq \Iso(\FinSet))$ where $sk(\FinSet)$ is the skeleton of $\FinSet$ whose objects are the sets $\underline{n}=\{1,\dots, n\}$, $n\in \N_0$ with $\underline{0}=\emptyset$.
\item Given any morphisms $S\to T$ between finite sets, we can decompose it using fibers as.
\begin{equation}
\label{finseteq}
\xymatrix
{
S \ar[rr]^{f}\ar[d]_{=}&& T\ar[d]^{=} \\
 \amalg_{t\in T} f^{-1}(t)\ar[rr]^{\amalg f_t}&&\amalg_{t\in T} \{*\}
 }
    \end{equation}
where $f_t$ is the unique map $f^{-1}(t)\to \{*\}$. Note that this map exists even if $f^{-1}(t)=\emptyset$.
This shows the condition (ii), since any isomorphisms of this decomposition must preserve the fibers.
\item The slice category $(\FinSet \downarrow *)$ is equivalent to its skeleton $\SS$.
\end{enumerate}

\begin{rmk}
We can also regard a skeletal version of $\FinSet$, this category has as objects the sets $\underline{n}=\{1,\dots,n\}$ with all morphisms between them.  The disjoint union is  $\underline{n}\amalg \underline{m}=\underline{n+m}$ with the unit $\underline{0}=\emptyset$. The isomorphisms are $\SS_n$ for $\underline{n}$, that is  $\Iso(sk(\FinSet))=\SS$.
This yields the strictly strict skeletal Feynman category $(\triv, sk(\FinSet),\imath)$.
\end{rmk}

$\FFinSet$ has the Feynman subcategories
$\Surj=(\triv,\surj,\imath)$ and $\Inj=(\triv,\inj,\imath)$, where the maps are restricted to be surjections resp.\ injections; see Table \ref{table1}.
This means that none of the fibers are empty in the surjective case and or all of the fibers are empty or singletons in the injective case.

\begin{prop} The following Feynman categories have   $\Vmods_\C=\Obj(\C)$ and he following $\ops$:
\label{monoidprop}
\begin{enumerate}
\item $\Surj$:
$\surj$-$\ops_\C$ is equivalent to the category of non--unital commutative monoids in $\C$.
\item $\Inj$: $\inj$-$\ops_\C$ are equivalent to pointed objects in $\C$.
\item $\FFinSet$:  $\FinSet$-$\ops$ are unital commutative monoids.
\end{enumerate}
\end{prop}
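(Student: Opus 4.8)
The plan is to use the two decomposition axioms \eqref{objectcond} and \eqref{morphcond} of Definition \ref{feynmandef} to reduce a strong symmetric monoidal functor to its values on the atom and on the basic morphisms into it, and then to read the algebraic relations off the composition relations among those basic morphisms.

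First, in all three cases the indexing groupoid is $\underline{1}$ (equivalently $\triv$), which has one object and only its identity; a functor $\V\to\C$ is therefore nothing but a choice of object, so $\Vmods_\C={\it Fun}(\underline{1},\C)=\Obj(\C)$. For the $\ops$, fix a strong symmetric monoidal $\O$ and put $A:=\O(\{*\})$. Axiom \eqref{objectcond} writes every object as a word $\{*\}^{\amalg n}$, whence $\O(\{*\}^{\amalg n})\cong A^{\otimes n}$; axiom \eqref{morphcond}, in the explicit form \eqref{finseteq}, writes every morphism as a tensor product of basic morphisms $\pi_S\colon S\to\{*\}$. Thus $\O$ is determined by $A$ and the maps $\O(\pi_S)\colon A^{\otimes|S|}\to A$, and these reduce to the skeletal generators $\pi_n\colon\underline n\to\{*\}$.

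For the main case $\FFinSet$ I would isolate $\mu:=\O(\pi_2)\colon A\otimes A\to A$ and $\eta:=\O(\pi_0)\colon\unit_\C\to A$ and extract the monoid axioms from identities in $\FinSet$: associativity from the uniqueness of the surjection $\underline 3\to\{*\}$ together with its two factorisations $\pi_2\circ(\pi_2\amalg\id)=\pi_3=\pi_2\circ(\id\amalg\pi_2)$; the unit law from $\pi_2\circ(\pi_0\amalg\id)=\pi_1=\id_{\{*\}}$; and commutativity from $\pi_2\circ(12)=\pi_2$, using that the transposition $(12)$ of $\underline 2=\{*\}\amalg\{*\}$ is exactly the symmetry of $\amalg$, so that the symmetric monoidality of $\O$ forces $\O((12))$ to be the braiding $c_{A,A}$ of $\C$ and hence $\mu\circ c_{A,A}=\mu$. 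This produces a unital commutative monoid $(A,\mu,\eta)$, with $\O(\pi_n)$ necessarily the $n$-fold product. The two remaining cases are the same bookkeeping with fewer generators: in $\surj$ there is no surjection $\emptyset\to\{*\}$, so $\pi_0$—and with it $\eta$—is absent, leaving a non-unital commutative monoid; in $\inj$ the only injections into a singleton have domain of size $0$ or $1$, so the basic morphisms are just $\id_{\{*\}}$ and $\pi_0\colon\emptyset\to\{*\}$, no nontrivial composites constrain them, and the data collapses to a bare pointed object $(A,\eta\colon\unit_\C\to A)$.

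The step I expect to be the real work is the converse, needed to promote these bijections of data to \emph{equivalences of categories}: from a monoid of the appropriate kind one must build an honest strong symmetric monoidal functor and check that it is well defined. Using \eqref{finseteq} one sets $\O(f)$, for $f\colon S\to T$, to be the tensor product over $t\in T$ of the iterated products $\mu^{(|f^{-1}(t)|)}$, precomposed with the symmetry that sorts $S$ into its fibres; the content is the identity $\O(f\circ g)=\O(f)\circ\O(g)$, which is precisely the assertion that associativity, commutativity (and, in the unital case, the unit law) generate \emph{all} relations among surjections—i.e.\ that $(\FinSet,\amalg)$ is the PROP/Lawvere theory of commutative monoids and $\surj$ that of commutative non-unital monoids (the $\inj$ case is immediate, since there are no relations to verify). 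I would establish this either by inducting on $|S|$ to bring any composite into a normal form assembled from $\mu$ and $\eta$ through the generating relations, or by invoking \eqref{morphcond} to reduce well-definedness to the basic morphisms already analysed. It then remains to match natural transformations of such functors with intertwiners commuting with $\mu$ and $\eta$, i.e.\ with monoid (respectively pointed-object) homomorphisms, which yields the three asserted equivalences.
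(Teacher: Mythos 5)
Your proposal is correct and follows essentially the same route as the paper: reduce to the skeleton so that $\O$ is fixed on objects by $A=\O(\{*\})$, identify the generating morphisms $\pi_2$ and $i\colon\emptyset\to\{*\}$, and read off associativity, commutativity and the unit law from the relations $\pi_2\circ(\pi_2\amalg\id)=\pi_2\circ(\id\amalg\pi_2)$, $\pi_2\circ\tau_{12}=\pi_2$ and $\pi_2\circ(\id\amalg i)=\id$. The only difference is one of emphasis: you flag the converse (well-definedness of the functor built from a monoid) as the real work and sketch an induction, whereas the paper disposes of it implicitly via the decomposition axiom \eqref{morphcond} and the fiberwise factorisation \eqref{finseteq}, which is exactly the second option you propose.
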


\begin{proof} The  statement about $\Vmods$ is clear, as ${\it Fun}(\trivial,\C)=\Obj(\C).$
For the first statement, let $\O\in \surj$-$\ops_\C$ and set $C:=\O(\imath(*))$.
By compatibility with the tensor product, up to equivalence, we may assume that $\O$ is strict and replace $\surj,\inj$ or $\FinSet$ with its skeleton. In all cases, the objects are the sets $\underline{n}=\{1,\dots, n\}$ with $\underline{0}=\emptyset=\unit$. Thus up to equivalence, $\O$ is fixed on objects as $\O(\underline{n})=C^{\otimes n}$ with the symmetric group acting by permuting the tensor factors using the commutativity constraints in $\C$.

The basic maps in $\surj$ are the unique surjections $\pi_n:\underline{n}\ta \underline{1}$.
Set $\mu=\O(\pi_2):\C^{\otimes 2}\to \C$. Then $(C,\mu)$ is a commutative non--unital monoid in $\C$. The multiplication
$\mu$ is associative as $\pi_2\circ (\pi_2\amalg id)=\pi_3=\pi_2\circ (id\amalg \pi_2)$
and hence $\O(\pi_2\circ (\pi_2\amalg id))=\mu\circ (\mu \otimes id)=\mu\circ (id\otimes \mu)=\O(\pi_2\circ (id\amalg \pi_2))$. For the commutativity let $\tau_{12}$ be the transposition that interchanges $1$ and $2$, then $\pi_2=\pi_2\circ \tau_{12}$ and hence $\mu_2=\O(\pi_2)
=\O(\pi_2\circ \tau_{12})=\O(\pi_2)\circ \O(\tau_{12})=\mu_2\circ C_{CC}$ where $C_{CC}$ is the commutativity constraint.

The basic morphisms for $\inj$ are $i:\emptyset=\unit\to \underline{1}$ and $id_1:\underline{1}\to \underline{1}$. Any injection can be written as a tensor product of these two maps.
The map $\eta:=\O(i):\O(\unit)=\unit_\C\to \O(\underline{1})=C$ makes $C$ into a pointed object. The values of $\underline{1}$ and $i$  determines the functor $\O$
uniquely up to  isomorphism.

Finally, the morphisms in $\FinSet$ are generated by $id_1,$ $\pi_2$ and $i$ using both the monoidal structure and concatenation. There is one more relation, that is $\pi_2\circ (id _1\amalg i)=id_1$, where we have tacitly used a strict unit constraint $\underline{1}=\underline{1}\amalg \emptyset$. Applying $\O$, we see that $\O(\pi_2\circ (id _1\amalg i))=\mu\circ (id \otimes \eta)=id=\O(id_1)$ again suppressing unit constraints. The fact that $\eta$ is a left identity follows from commutativity.
\end{proof}

\begin{rmk}
Judging by the name we chose for these categories, one could expect that to see find $FS$ and $FI$ algebras and indeed ${\it Fun}(\inj,\C)$ are $FI$--algebras and ${\it Fun}(\surj,\C)$ are $FS$ algebras.
By definition, however, $\ops$  are {\it monoidal} functors and not ordinary functors.
But, there is a free monoidal construction, see \S\ref{freemonoidpar} which to every Feynman category $\FF$ associated a Feynman category $\FF^\boxtimes$ with $\F^\boxtimes$-$\ops_\C={\it Fun}_\otimes(\F^{\otimes})={\it Fun}(\F,\C)$, and this way, we obtain $FI$--algebras as $\ops$.
\end{rmk}

\subsubsection{Ordered finite Sets}

In the non--$\Sigma$ case, a basic example is $\FFinSet_<=(\triv,\FinSet_<,\imath)$, where $\FinSet_<$ is the category of ordered finite sets with order preserving maps  with $\amalg$ as monoidal structure; the order of $S\amalg T$ is lexicographic, $S$ before $T$.
The functor $\imath$ is given by sending $*$ to the atom $\{*\}$.
 Viewing an order on $S$ as a bijection to $\{1,\dots,|S|\}$, we see that ${\bf N}_0$ is the skeleton of $\Iso(\FinSet_<)$. The diagram \eqref{finseteq} translates to this situation, and we obtain a non--$\Sigma$ Feynman category. The skeleton of this Feynman category is the strictly strict Feynman category $(\triv, \Delta_+,\imath)$, where $\Delta_+$ is the augmented simplicial category and  $\imath(*)=[0])$.
Restricting to order preserving surjections and injections, we obtain the Feynman subcategories $\Surj_<=(\triv,OS,\imath)$ and $\Inj_<=(\triv,OI,\imath)$.
We can also restrict the skeleton of $\FinSet_<$ given by $\Delta_+$ and the subcategory of order preserving surjections and injections.
See Tables  \ref{table2}. In $\Delta_+$ the image of $*^{\otimes n}$ under $\imath^{\otimes}$ will be the set $\underline{n}$ with its natural order.

NB: to make contact with the standard notation of $n$--simplices, $[n]=\underline{n+1}$, so that $[0]=\underline{1}$ and $[-1]=\underline{0}=\emptyset$ with the monoidal structure $[n]\amalg [m]=[n]*[m]=[n+m+1]$, where $*$ is the join operation.

\begin{table}
\begin{tabular}{l|l|l}
FC:&underlying $\F$ &definition\\
\hline
$\FFinSet$&$\FinSet$&Finite sets and set maps\\
$\Surj$&$FS$&Finite sets and surjections\\
$\Inj$&$\mathcal{I}\!\it nj$&Finite sets and injections\\
$\FNCSet$&$\NCSet$&Finite sets and set map with orders on the fibers\\
&&aka.\ noncommutative sets\\
$\Delta_+S$&$\Delta_+S$&Augmented crossed simplicial group\\
$\Surj^<$&$FS^<$& Finite sets and surjections with orders on the fibers
\end{tabular}
\caption{\label{table1} Set based Feynman categories Feynman categories. $\V=\underline{1}$ is trivial.}
\end{table}
\begin{table}
\begin{tabular}{l|l|l}
non-$\Sigma$ FC&underlying $\F$&definition\\
\hline
$\FF_<\fS$&$\FinSet_<$&Ordered finite sets and order preserving maps. \\
$\FF_<\fS,$&$OS$&Ordered finite sets and order preserving surjections\\
$\FF_<\fI$&$OI$&Ordered finite sets and order preserving injections\\
$\Delta_+$&$\Delta_+$&Augmented Simplicial category, Skeleton of $\FinSet_<$\\
$\Int^{op}$&$OI_{*,*}^{op}$&Subcategory of $\Delta^{op}_+$ of double base--point \\
&&preserving injections\\
\end{tabular}
\caption{\label{table2} Set based non-$\Sigma$ Feynman categories. $\V=\triv$ is trivial.}

\end{table}

\begin{prop} The following Feynman categories have   $\Vmods_\C=\Obj(\C)$ and he following $\ops$:
\begin{enumerate}
\item For $\FF_<\fS$: the
$OS$-$\ops_\C$ is equivalent to the category of non--unital associative monoids in $\C$.
\item For $\FF_<\fI$: the $OI$-$\ops_\C$ are equivalent to pointed objects in $\C$.
\item For $\FFinSet_<$: the $\FinSet_<$-$\ops$ are pointed unital associative monoids.
\end{enumerate}
\end{prop}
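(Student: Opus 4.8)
The plan is to mirror the proof of Proposition \ref{monoidprop}, replacing the symmetric combinatorics of $\SS$ by the order-preserving combinatorics of $\Delta_+$, while tracking carefully where commutativity is lost. As there, the claim $\Vmods_\C=\Obj(\C)$ is immediate since $\V=\triv$ and ${\it Fun}(\trivial,\C)=\Obj(\C)$. For the $\ops$, fix $\O\in OS$-$\ops_\C$ (respectively $OI$-, $\FinSet_<$-$\ops_\C$). Using compatibility with $\otimes$ I would first pass to skeletons --- for $\FinSet_<$ this is $\Delta_+$ with $\imath(*)=\underline 1$ and join as monoidal product --- and assume $\O$ is strict. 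Condition \eqref{nsobjectcond} then forces $\O$ on objects: writing $C:=\O(\imath(*))=\O(\underline 1)$, each $\underline n\simeq \underline 1\amalg\cdots\amalg\underline 1$ gives $\O(\underline n)=C^{\ot n}$, and because $\Iso(\FinSet_<)$ is skeletally $\N_0$ with only identity automorphisms, there is now no group action to record.

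Next I would extract the algebra from the basic morphisms. In $OS$ the basic maps are the unique order-preserving surjections $\pi_n:\underline n\ta\underline 1$; set $\mu:=\O(\pi_2):C^{\ot 2}\to C$. Associativity follows exactly as in the symmetric case from $\pi_2\circ(\pi_2\amalg id)=\pi_3=\pi_2\circ(id\amalg\pi_2)$, since there is a unique order-preserving surjection $\underline 3\ta\underline 1$; applying $\O$ yields $\mu\circ(\mu\ot id)=\mu\circ(id\ot\mu)$. The essential difference from Proposition \ref{monoidprop} is that the transposition $\tau_{12}$ is \emph{not} order-preserving, so the relation $\pi_2=\pi_2\circ\tau_{12}$ is unavailable; equivalently $\Iso(OS)$ is trivial. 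Hence no commutativity constraint is imposed and $(C,\mu)$ is merely a non-unital \emph{associative} monoid, proving (1).

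For injections I would argue as before: the basic morphisms of $OI$ are $i:\emptyset=\unit\to\underline 1$ and $id_1$, every order-preserving injection is a tensor product of these, and $\eta:=\O(i):\unit_\C\to C$ exhibits $C$ as a pointed object, giving (2). For $\FinSet_<$ I would combine the two: the morphisms are generated by $id_1$, $\pi_2$ and $i$ under $\ot$ and composition, so $\O$ yields an associative $\mu$ together with $\eta$. Crucially, the unit axioms now come from the \emph{two distinct} order-preserving injections $\underline 1\to\underline 2$, namely $id_1\amalg i$ (hitting the first slot) and $i\amalg id_1$ (hitting the second), which satisfy $\pi_2\circ(id_1\amalg i)=id_1=\pi_2\circ(i\amalg id_1)$; applying $\O$ gives $\mu\circ(id\ot\eta)=id=\mu\circ(\eta\ot id)$, a genuine two-sided unit. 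Here one cannot deduce one side from the other via commutativity, so both injections are genuinely needed. This proves (3), the unit $\eta$ also serving as the distinguished point.

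Finally I would establish the converse and the equivalence of categories: a non-unital associative, respectively pointed, respectively unital associative monoid in $\C$ determines a strict monoidal functor, uniquely up to isomorphism, and intertwiners correspond to natural transformations. I expect the \textbf{main obstacle} to be \emph{completeness of the relations}: that associativity together with the two unit laws generate \emph{all} relations among the generators $\pi_2,i,id_1$, so that the monoid data extends consistently. This is the assertion that $\Delta_+$ (with ordinal sum) is the free strict monoidal category on a monoid object --- equivalently the standard presentation of the augmented simplicial category by order-preserving surjections and injections subject to the simplicial identities --- together with the analogous statements that its surjective and injective subcategories present a semigroup, respectively a pointed object. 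In the symmetric setting of Proposition \ref{monoidprop} this bookkeeping was partly absorbed by $\SS$; in the ordered setting it must be carried out directly, and it is precisely this combinatorial presentation that both supplies associativity and, through the absence of permutations, guarantees that commutativity is not forced.
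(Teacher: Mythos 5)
Your proposal is correct and follows essentially the same route as the paper: the paper's proof simply says it is ``as above'' (i.e.\ as in the symmetric case of Proposition~\ref{monoidprop}), minus the symmetric group actions and hence minus commutativity, with both unit relations $\pi_2\circ(id_1\amalg i)=\pi_2\circ(i\amalg id_1)=id_1$ imposed separately since neither can be derived from the other. Your closing remark correctly isolates the one ingredient the paper leaves implicit --- that $\Delta_+$ with ordinal sum is presented by $id_1,\pi_2,i$ subject to associativity and the two unit laws --- but this does not change the argument's structure.
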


\begin{proof}
The proof is as above, save the action of the symmetric groups, which is not present. Hence there is no commutativity condition. For the unit, since there is no commutativity, we have two relations between $\pi_2$ and
$i:\pi_2\circ (id _1\amalg i)=\pi_2\circ (i \amalg id _1)=id _1$ giving the left and right unit equations.
\end{proof}

\begin{rmk}
Again, at this point the $\Fops$ are monoidal functors not simply functors, but see \S\ref{freemonoidpar} below.
\end{rmk}

\subsubsection{Hybrids}
To obtain the symmetric Feynman category whose $\ops$ are associative algebras or unital associative algebras one has to consider ordered sets with set maps and orders on the fibers.    $\Aut(n)$ acts trivially on the morphism $\pi_n$, which was the reason for the commutativity. To remedy the situation, we notice that  on an ordered $(S,<)$, $\Aut(S)$ acts transitively on the orders of $S$. Thus adding an order to the fibers of $\pi_n$, the different orders will prevent from  obtaining the same map by pre--composing with elements of $\Aut(S)$.

Let $\NCSet$ (noncommutative sets), cf.\ \cite{Loday,NCSetPira,NCSet} be the category whose objects are finite sets. And whose morphisms from $S$ to $T$ are pairs $(f,<_f)$ where $f:S\to T$ and $<_f$ is a
collection of orders $<_{f^{-1}(t)}, t \in T$ on the fibers $f^{-1}(t)$ of $f$. Composition is given by lexicographic composition of orders. For $S\stackrel{g}{\to}T\to \stackrel{g}{\to} U$,
$(f\circ g)^{-1}(u)=f^{-1}(g^{-1}(u))=\amalg_{s\in g^{-1}(u)}f^{-1}(s)$
so that every and the order is given by $t'<_{f\circ g}t$ if $t$ and $t'$ are in the same fiber $f^{-1}(s)$ and $t'<_f t$, or if $t'$ is in the fiber of $s'$ and $t$ is in the fiber of $s$ and $s'<_g s$.
Since isomorphisms in $\FinSet$ have one element fibers, they remain isomorphisms in $\NCSet$.

The skeleton of this category is known by the name of augmented crossed simplicial group defined by the symmetric groups  $\Delta_+S$ (aka. $\Delta\Sigma)_+$), \cite{Lodaycrossed}. In the simplicial notation $\Aut([n])=\SS_{n+1}$.

We let $\FNCSet$ be the Feynman category $(\triv,\NCSet,\imath)$ and $\Surj_<$ the Feynman subcategory whose morphisms have underlying maps that are surjections.

It is easy to check that these are Feynman categories. They are also examples of enriched Feynman categories as discussed in \S\ref{enrichedpar}. It is also obtained from a plus construction.

\begin{prop}
The category $\NCSet\dashops_\C$ and respectively $\Surj_<\dashops_\C$ are equivalent to unital associative monoids (aka.\ algebras) in $\C$ and to the category of possibly non--unital associative monoids  (aka.\ algebras) in $\C$ respectively.

There is an embedding $i:\Surj^<\to \FNCSet$, $i^*$ forgets the unit and $i_!$ is the free adjunction of a unit to an algebra.
\end{prop}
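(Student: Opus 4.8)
The plan is to follow the template of Proposition~\ref{monoidprop} and of its ordered analogue, the one new ingredient being that the fiber orders obstruct commutativity while leaving associativity intact. Since $\V=\triv$, the claim $\Vmods_\C=\Obj(\C)$ is immediate from ${\it Fun}(\triv,\C)=\Obj(\C)$. For the $\ops$, I would first fix $\O\in\NCSet\text{-}\ops_\C$, pass up to equivalence to a strict $\O$ on the skeleton $\Delta_+S$, and set $C:=\O(\imath(*))$, so that $\O(\underline n)=C^{\ot n}$ with $\Aut(\underline n)=\SS_n$ acting through the commutativity constraints of $\C$. The essential point is then to read the generators and relations of $\NCSet$ off through $\O$.

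The heart of the argument is the behaviour of the basic morphisms. Let $\pi_n\colon\underline n\to\underline 1$ denote the surjection equipped with the standard order $1<\dots<n$ on its (unique) fiber, and put $\mu:=\O(\pi_2)$. Lexicographic composition of fiber orders gives $\pi_2\circ(\pi_2\amalg\id)=\pi_3=\pi_2\circ(\id\amalg\pi_2)$, both composites carrying the order $1<2<3$, whence $\mu\circ(\mu\ot\id)=\mu\circ(\id\ot\mu)$ and $\mu$ is associative. Crucially, and in contrast with Proposition~\ref{monoidprop}, commutativity now fails: precomposing $\pi_2$ with the transposition $\tau_{12}$ reverses the fiber order, so $\pi_2\circ\tau_{12}\neq\pi_2$ in $\NCSet$ and no relation $\mu\circ C_{CC}=\mu$ is forced. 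For $\NCSet$ one also has the non-surjective basic morphism $i\colon\emptyset=\unit\to\underline 1$; setting $\eta:=\O(i)$, the two relations $\pi_2\circ(\id_1\amalg i)=\id_1=\pi_2\circ(i\amalg\id_1)$ (which hold because each composite has a singleton fiber) yield the left and right unit axioms. Thus $(C,\mu,\eta)$ is a unital associative monoid. For $\Surj^<$ the map $i$ is excluded, since an empty fiber violates surjectivity, so one obtains only $(C,\mu)$, a possibly non-unital associative monoid.

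To upgrade this to an equivalence of categories I would verify that every morphism of $\NCSet$ is a tensor product of basic morphisms, each of which is $\pi_k$ (an iterate of $\pi_2$) or $i$ precomposed with a permutation; hence $\O$ is determined up to isomorphism by $(C,\mu,\eta)$, and conversely any such monoid reconstructs an $\O$ via $\O(\underline n)=C^{\ot n}$. On morphisms, an intertwiner $N\colon\O\to\O'$ is pinned down by $N_1\colon C\to C'$, and naturality against $\pi_2$ and $i$ is exactly the condition that $N_1$ be a homomorphism of (unital) monoids; the same analysis with $i$ omitted handles $\Surj^<$. I expect this generation-and-faithfulness step to be the main obstacle: one must check that associativity, together with the symmetric-group action realized by the constraints of $\C$, is the \emph{only} relation, i.e.\ that the presentation of $\Delta_+S$ by $\pi_2$, $i$ and the $\SS_n$ introduces nothing further.

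Finally, for the adjunction, the inclusion of the Feynman subcategory supplies a morphism $i=(\id,\iota)\colon\Surj^<\to\FNCSet$ with $\iota\colon FS^<\hookrightarrow\NCSet$. Then $i^*\O=\O\circ\iota$ simply discards the value on the non-surjective generator $\emptyset\to\underline 1$, i.e.\ forgets $\eta$; under the equivalences of the first part $i^*$ is the functor forgetting the unit. By Theorem~\ref{adjointthm} it has a symmetric monoidal left adjoint $i_!\dashv i^*$, and since the functor forgetting the unit on associative monoids has as its left adjoint the unitalization $A\mapsto\unit_\C\oplus A$, uniqueness of adjoints identifies $i_!$ with the free adjunction of a unit. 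If one prefers, the same conclusion follows from a direct left Kan extension computation, in which the extra non-surjective morphisms contribute precisely the unit summand $\unit_\C$.
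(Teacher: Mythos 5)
Your proposal is correct and follows essentially the same route as the paper: identify $(\pi_2,1<2)$ and $i$ as the generators, observe that associativity holds while commutativity is no longer forced because precomposition with $\tau_{12}$ reverses the fiber order, and recover all morphisms from these together with the $\SS_n$-action (transpositions going to commutativity constraints, so $\O(\pi_2,2<1)=\mu\circ C_{C,C}=\mu^{op}$). The paper dismisses the adjunction statement as straightforward, whereas you spell it out via Theorem \ref{adjointthm} and uniqueness of adjoints, but this is the same argument in more detail.
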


\begin{proof}
As above, on objects, the monoidal functors $\O$ are fixed by the value $\O(\uone)=:A$ up to equivalence, since then up to equivalence $\O(S)=A^{\otimes S}$.
Starting with surjections, we see that these are generated up to isomorphism by the $\pi_n:\un\ta\underline{1}$ and a choice of order on the fiber, that is a choice of order on $\un$.
Let $\mu:=\O(\mu_2,1<2)$ then $\mu:A\otimes A\to A$ yields the multiplication. Associativity follows directly.
If we are in $\FinSet_<$, then we add the inclusion $i:\emptyset \to \uone$ as a generating morphism. The unique fiber is empty and has the empty order.
As before  $\eta:=\O(i):\unit_\C\to A$ provides the unit. This yields the functor from $\ops$ to algebras exhibiting the equivalence.

For the other direction, notice that
if $\tau_{1,2}:\underline{2}\to \underline{2}$ exchanges $1$ and $2$ $(\pi_2,2<1)=(\pi_2,1<2)\circ \tau_{1,2})$. More generally transpositions generate $\SS_n$, which acts transitively on the orders of the fiber of $\pi_n$. Hence, the identity map $id_{\uone}$, $(\pi_2,1<2)$  generate all surjections up to isomorphism, which are permutations of the orders of the fibers. The latter are generated by  transpositions. These maps  together with $i$ generate all maps, thus fixing their values yields a functor in the reverse direction.
Here one uses,  that  transpositions are mapped to commutativity constraints; hence e.g.\ $\O(\pi_2,2<1)=\O((\pi_2,1<2)\circ \tau_{1,2})= \mu \circ C_{A,A}=\mu^{op}$.

The last statement is straight-forward.
\end{proof}

The following is straightforward:
\begin{prop}
\label{asscoveprop}
$\fb(v_b,f_b):\FNCSet\to \FinSet$ with $f_b:\NCSet\to \FinSet$ given by the identity on objects and defined on morphisms as the forgetful functor $f_b:(f,<_f)=f$ is a strong cover, but not strict.

The pull--back $f_b^*$ is the inclusion of unital commutative algebras into unital algebras.
The push--forward is the Abelianization.

For the non--unital versions {\it mutatis mutandis} the same results hold for the restriction of $\fb:\Surj^{<}\to \Surj$.
\qed
\end{prop}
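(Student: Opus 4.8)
The plan is to verify the four assertions in turn, beginning with the structural claim that $\fb$ is a strong, non-strict cover. First I would check that $\fb=(v_b,f_b)$ is a morphism of Feynman categories at all: $v_b=\id_{\triv}$, the square $f_b\circ\imath=\imath\circ v_b$ commutes since both composites send $*$ to the atom $\{*\}$, and $f_b$ is strong (indeed strict) monoidal because forgetting the fiber orders is compatible with $\amalg$ and its lexicographic orders, $f_b(S\amalg T)=f_b(S)\amalg f_b(T)$. Strongness of the indexing is then immediate: $f_b$ is the identity on objects, hence bijective on objects, and it is surjective on morphisms because every set map $f\colon S\to T$ admits at least one lift $(f,<_f)$, obtained by choosing any orders on the fibers. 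To see that it is a cover I would invoke the criterion of Proposition \ref{coverprop}, checking the required (op)fibration/lifting property against the fiberwise decomposition \eqref{fiberhomeq}; concretely, $\NCSet$ is the decoration of $\FinSet$ by the datum of fiber orders, so the cover property amounts exactly to the fact that orders transport along isomorphisms and restrict along composition, which is built into the definition of composition in $\NCSet$.

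Non-strictness is the point where I would be careful about the definitions: although $v_b$ is an equivalence of the trivial groupoids, the cover is not strict because $f_b$ is not injective on morphisms. Over a single surjection $\pi_n\colon\un\to\uone$ the fiber $f_b^{-1}(\pi_n)$ consists of all $n!$ orders on $\un$, so for $n\ge 2$ the fiber is not a singleton and distinct lifts $(f,<_f)\ne(f,<'_f)$ collapse to the same $f$. Thus the indexing does not reduce to an isomorphism of Feynman categories, which is the failure of strictness.

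For the pull-back, recall $f_b^*\O=\O\circ f_b$. By the classifications established above, a $\FinSet$-$\op$ $\O$ is a unital commutative monoid with $\mu=\O(\pi_2)$, while an $\NCSet$-$\op$ is a unital associative monoid; composing with $f_b$ preserves the underlying object $A:=\O(\imath(*))$ and the multiplication $\mu=\O\circ f_b(\pi_2,1<2)$, which stays commutative. Hence $f_b^*$ sends a commutative algebra to itself regarded as an associative algebra, and a check on intertwiners shows it is fully faithful, so it is precisely the inclusion $\mathrm{CommAlg}\hookrightarrow\mathrm{AssAlg}$. The push-forward $f_b{}_!=\mathrm{Lan}_{f_b}$ is, by Theorem \ref{adjointthm}, a symmetric monoidal left adjoint to $f_b^*$, hence a left adjoint to this inclusion, and the left adjoint to the inclusion of commutative into associative algebras is the abelianization $A\mapsto A/\langle ab-ba\rangle$. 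I expect this last identification to be the main obstacle: one must confirm that the monoidal left Kan extension computes the universal commutative quotient and nothing larger, which I would do by unwinding the colimit formula over the comma category $(f_b\downarrow -)$, using that $\SS_n$ acts transitively on the orders of the fiber of $\pi_n$ so that the relation $\sim$ in \eqref{commamorpheq} identifies $(\pi_2,1<2)$ with $(\pi_2,2<1)$, that is $\mu$ with $\mu^{op}$, and then checking via the adjunction's universal property that the resulting commutative algebra receives the universal map from $A$. Finally, the non-unital case follows mutatis mutandis: restricting along $\Surj^<\to\Surj$, the same computations show $f_b^*$ is the inclusion of non-unital commutative into non-unital associative algebras and $f_b{}_!$ is abelianization, with the unit-related relations simply absent.
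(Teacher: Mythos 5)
The paper offers no argument for this proposition (it is declared ``straightforward'' and stated with the proof omitted), so your write-up has to stand on its own. Most of it does: the check that $\fb$ is a morphism of Feynman categories, bijective on objects and surjective on morphisms; the count of the $n!$ fiber orders over $\pi_n$ showing $f_b$ is not injective on morphisms; the identification of $f_b^*$ with the inclusion of unital commutative monoids into unital associative monoids; and the clean uniqueness-of-left-adjoints argument (via Theorem \ref{adjointthm}) identifying $f_{b!}$ with abelianization are all correct and are surely what the author intends. The colimit unwinding you sketch at the end is not needed once the adjoint argument is in place.

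The genuine gap is in your treatment of ``cover''. You assert that $\NCSet$ is ``the decoration of $\FinSet$ by the datum of fiber orders'' and that the criterion of Proposition \ref{coverprop} is satisfied; neither holds in the technical sense of \S\ref{decopar}. Decorations attach data to \emph{objects} (a set $\O(X)$ for $\O\in\Fops_{\Set}$), whereas the fiber orders are data on \emph{morphisms} --- that is the indexed-enrichment construction, and indeed Lemma \ref{assocenrichedlem} records $\FNCSet=\FFinSet_{\Assoc}$. Concretely, condition (2) of Proposition \ref{coverprop} demands that a lift be uniquely determined by its source, and this fails already for $\pi_2\colon \underline{2}\to\underline{1}$, which has the two distinct lifts $(\pi_2,1<2)$ and $(\pi_2,2<1)$ with the same source $\underline{2}$ --- the very non-injectivity you invoke two sentences later to argue non-strictness. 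The paper's own lemma in the subsection on covers, connected morphisms and indexing states that a strong indexing is a cover only if it is an isomorphism, so $\fb$, being a strong indexing that is not an isomorphism, cannot be a cover in the sense of \S\ref{decopar}; it is a strong (hence connected) indexing. The genuine cover with decoration $\Assoc$ lives one level up, namely $\operads^{\neg\Sigma}\to\operads$ of Lemma \ref{nscoverlem}, related to $\fb$ by Remark \ref{assdecoenrmk}. So either the statement's ``strong cover'' must be read as ``strong indexing'', or a looser notion of cover is intended; in either case your verification via Proposition \ref{coverprop} does not go through as written.
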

We show in Lemma \ref{assocenrichedlem} that this is an indexed enrichment. Going one level higher,
the enrichment is by the associative operad, which can be obtained via a push--forward along a forgetful map from a plus construction, see Lemma \ref{nscoverlem}. The relation between the two is in Remark \ref{assdecoenrmk}.

\begin{table}
\begin{tabular}{l|l}
$\FF$&$\fops$ equivalent to\\
\hline
$\FinSet$&unital commutative monoids/algebras\\
$\Surj$&commutative monoids/algebras\\
$\FNCSet$&unital associative monoids/algebras\\
$\Surj_<$& associative monoids/algebras\\
$\Inj$&pointed objects.
\end{tabular}
\caption{\label{algtable}Feynman categories based on finite sets and their $\ops$}
\end{table}

\subsubsection{Graphical interpretation}
 \label{graphpar}
A convenient graphical notation to write down a map with ordered fibers is given by planar planted corollas.

First, the fibers of a morphism $f:S\to \{t\}$ give a planted corolla $*_{S\amalg \{t\},t}$, that is a corolla with flags $S\amalg \{t\}$ and root $t$.
Vice--versa, any morphism $S\ta T$ can then be encoded by a forest of planted corollas
$\amalg_{t\in T} \; *_{f^{-1}(t)\amalg \{t\},t}$.
Note that empty fibers correspond to 0-ary corollas. The map is a surjection, if there are no zero-ary corollas, and an injection, if all the corollas are either 1-ary or 0-ary.

An order on the fibers $<_f$ extends to an order $<$ on $f^{-1}(t)\amalg \{t\}$, by considering $t$ to be the first element. That is the fibers can be viewed as planar planted corollas $*_{S\amalg \{t\},t,<}$. And, any morphism in $\FinSet_<$ can be written as an  forest of planar planted corollas indexed by ${t\in T}$.
An example is given in Figure \ref{plcorfig}.

\begin{figure}
  \includegraphics[height=2cm]{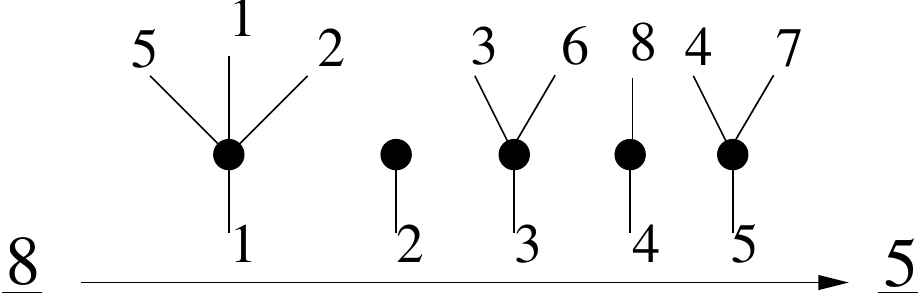}
  \caption{\label{plcorfig} A morphism $f$ from $\underline{8}=\{1\kdk,8\}$ to $\underline{5}=\{1\kdk 5\}$ with the orders and fibers:
  $2<1<5$ on $f^{-1}(1)$,
$\emptyset$ on $f^{-1}(2)$,
$3<6$ on $f^{-1}(3)$,
$8$ on $f^{-1}(4)$, and
$7<4$ on $f^{-1}(3)$.
  }
\end{figure}

\subsubsection{Graphical Feynman categories}
There is a Feynman category $\GG=(\Crl,\Agg,\imath)$ whose technical definition with all details is in Appendix \ref{GGdefsec}. It uses the technical framework of \cite{BorMan,feynman} which is also given in Appendix \ref{graphsec} to be self--contained.

To give the basic structure without all the details, we note that $\Crl$ is equivalent to a the category $\Iso(\FinSet)$ of finite sets and isomorphisms. If $S$ is a finite set, then it determines an object $*_S$ of $\Crl$ which is a corolla with $S$ with external flags and any bijection $S\leftrightarrow S'$ yields an isomorphism $*_S\leftrightarrow *_{S'}$. Consequently $\Iso(\Agg)\simeq \Iso(\FinSet)^\otimes$. Its elements are collections or aggregates of corollas. The morphisms between aggregates are rather complicated.
They are morphisms of the aggregates thought of as graphs without edges as defined in \cite{BorMan,feynman}. Given  a morphism between aggregates, $\phi$, there is an underlying graph, the ghost graph of $\phi$, which is denoted by $\gh(\phi)$. The ghost graph does not fix the morphism uniquely. It does fix the isomorphism class of a basic morphism ---that is morphism whose target is a single corolla. The extra data of for a basic morphism is given by an identification of the tails of the ghost graph with the tails of the target corolla.

A {\em graphical Feynman category} is a Feynman category indexed over $\GG$.

The $\ops$ for graphical Feynman categories include all the known operad types as well as rather new ones, see Table \ref{zootable} in Appendix \ref{operadtypepar}.
Moreover all these examples can be obtained via the operations below, especially decoration,  and taking sub--Feynman categories, aka.\ restriction.

\section{Constructions and Examples}
\label{constructionpar}
\subsection{Functors and lax monoidal functors as $\ops$}
\label{freemonoidpar}
\label{ncpar}
By definition $\Fops_\C$ are strong monoidal functors ${\it Fun}(\F,\C)$.
This begs the question if there are constructions of Feynman categories to obtain lax monoidal functors ${\it Fun}_{lax \otimes}(\F,\C)$ or
simply functors ${\it Fun}(\F,\C)$ as $\ops$, which is indeed the case:
\begin{thm}{\rm \cite[\S3]{feynman}}
Given a Feynman category $\FF$, there are Feynman categories
$\FF^\boxtimes=(\V^\boxtimes, \F^\boxtimes, \imath^\boxtimes)$ and
$\FF^{nc}=(\V^{nc},\F^{nc},\imath^{nc})$\footnote{nc stands for non--connected}
such that

\begin{eqnarray}
\F^\boxtimes\text{-}\ops&=&{\it Fun}(\F,\C)\\
\F^{nc}\text{-}\ops&=&{\it Fun}_{lax \otimes}(\F,\C)
\end{eqnarray}

\end{thm}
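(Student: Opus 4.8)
The plan is to build both Feynman categories explicitly as monoidal thickenings of $\F$ sharing the vertex groupoid $\V^\boxtimes=\V^{nc}:=\Iso(\F)$, with $\imath^\boxtimes,\imath^{nc}$ the inclusion of an object of $\F$ as a one--letter word, and then to read the claimed $\ops$ off the decomposition axiom \eqref{feydecompeq}. For $\F^\boxtimes$ I would take the free (symmetric) monoidal category $\F^\otimes$ on $\F$ regarded as a \emph{plain} category: objects are words $(X_1\kdk X_n)$ in $\Obj(\F)$, and a morphism $(X_1\kdk X_m)\to(Y_1\kdk Y_n)$ is empty unless $m=n$, in which case it is a permutation $\sigma\in\SS_n$ together with morphisms $f_i\colon X_i\to Y_{\sigma(i)}$ of $\F$. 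For $\F^{nc}$ I would keep the same objects but let a morphism $(X_1\kdk X_m)\to(Y_1\kdk Y_n)$ be a set map $p\colon\um\to\un$ together with, for each $j$, a morphism $\bigotimes_{i\in p^{-1}(j)}X_i\to Y_j$ in $\F$ (an empty fibre giving a morphism $\unit_\F\to Y_j$); composition interleaves composition of set maps, the monoidal product of $\F$ on fibres, and composition in $\F$. This is precisely the finite--set bookkeeping of Proposition \ref{monoidprop} glued on top of $\F$.

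For the $\boxtimes$ case the three axioms of Definition \ref{feynmandef} are immediate. Isomorphisms of $\F^\otimes$ are words of isomorphisms, so $(\Iso(\F))^\otimes\simeq\Iso(\F^\otimes)$, giving \eqref{objectcond}; a morphism with one--letter target $[Y]$ must have one--letter source, so the basic morphisms are exactly the morphisms of $\F$, and a general morphism decomposes as $\bigotimes_i f_i$ followed by $\sigma$, giving \eqref{morphcond}; and $(\F^\otimes\downarrow[Y])\simeq(\F\downarrow Y)$ is small, giving \eqref{smallcond}. The computation of $\ops$ is then nothing but the universal property \eqref{univeq} of the free monoidal category, namely $\F^\boxtimes\text{-}\ops={\it Fun}_\otimes(\F^\otimes,\C)\simeq{\it Fun}(\F,\C)$, where the underlying functor is recovered by restricting a strong monoidal $\O$ to one--letter words.

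For the $nc$ case I would again verify the axioms, the key point being the decomposition \eqref{feydecompeq}: a morphism into a word $(Y_1\kdk Y_n)$ is cut along the fibres $p^{-1}(j)$ into basic morphisms $\bigotimes_{i\in p^{-1}(j)}X_i\to Y_j$, uniquely up to the permutations in $\Iso(\F^{nc}\downarrow\F^{nc})$, so that \eqref{objectcond}--\eqref{smallcond} hold. To identify the $\ops$, given a strong monoidal $\O\colon\F^{nc}\to\C$ I would set $F$ equal to $\O$ on one--letter words and their morphisms, obtaining a plain functor $F\colon\F\to\C$; the basic merging morphisms $[X_1]\cdots[X_m]\to[X_1\odo X_m]$ (identity on the $\F$--target) produce, after applying $\O$ and the monoidality isomorphism, natural maps $F(X_1)\odo F(X_m)\to F(X_1\odo X_m)$, while the unique morphism $[\,]\to[\unit_\F]$ out of the empty word produces $\unit_\C\to F(\unit_\F)$. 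These are exactly the structure maps of a lax monoidal functor, and conversely every lax monoidal $F$ prescribes $\O$ on basic morphisms and hence on all of $\F^{nc}$.

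The step I expect to be the main obstacle is matching coherence on the $nc$ side. I must check that composition in $\F^{nc}$ is well defined and associative---since it interleaves composition of set maps, the symmetric monoidal product of $\F$ on fibres, and ordinary composition in $\F$, associativity is not formal---and, more importantly, that the relations forced by this composition correspond \emph{exactly} to the associativity and unit coherence diagrams for the lax structure maps $\phi$ and $\phi_0$, with neither spurious nor missing relations. Concretely, iterating the merging morphisms in two ways must reproduce the associativity coherence for $\phi$, and merging against an empty fibre must reproduce the unit coherence; establishing this bijection between $\F^{nc}$--composition and lax coherence is the technical heart of the argument. Restricting to ordered fibres, as in the $\NCSet$ discussion of \S\ref{finsetpar}, would then give the non--symmetric analogue.
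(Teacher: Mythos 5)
Your proposal follows essentially the same route as the paper: $\F^\boxtimes$ is the free (symmetric) monoidal category on $\F$ with the identification of $\ops$ via the universal property \eqref{univeq}, and $\F^{nc}$ is obtained by enlarging the basic morphisms to $\Hom_{\F^{nc}}(\boxtimes_i X_i,Y)=\Hom_\F(\bigotimes_i X_i,Y)$, which is exactly the paper's addition of the merging morphisms $\mu_{X,Y}\colon X\boxtimes Y\to X\otimes Y$ and the unit morphism $\eps\colon\unit_\boxtimes\to\unit_\otimes$ whose images under $\hat\O$ give the lax structure maps \eqref{multconstreq} and \eqref{unitconsteq}. Your fibrewise description of $nc$--morphisms via a set map $p\colon\um\to\un$ is just an unpacking of the paper's ``outer tensor products of basic morphisms,'' and your choice $\V^\boxtimes=\Iso(\F)$ agrees with the paper's $\V^\boxtimes=\V^\otimes$ up to the equivalence of axiom \eqref{objectcond}.
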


The original statements are in \cite[\S3]{feynman}. We give the  constructions below, filing in some details concerning units.

\begin{ex}
As announced, $\Delta_+^{\boxtimes}\text{-}\ops$ are augmented simplicial objects,
$FI^{\boxtimes}\text{-}\ops$ are $FI$-modules etc., see Table \ref{boxtable}.

\begin{table}
\begin{tabular}{l|l|l}
\label{boxtable}
$\FF$&$\FF^{\boxtimes}\dashops_{\C}$&$\FF^{\boxtimes}\dashops_{\C^{op}}$\\
\hline
$\Delta_+$&augmented co--simplicial objects&augmented simplicial objects\\
$\F_<\fI$&co--semi--simple objects&semi--simple objects\\
$\Inj$&$FI$--algebras&$FI$-co-algebras\\
$\Surj$&$FS$--algebras&$FS$--co--algebras\\
\end{tabular}
\caption{Examples of $\FF^\boxtimes$ whose $\ops$ in $\C$ and $\C^{\it op}$ are familiar notions}
\end{table}

\end{ex}

\subsubsection{Free construction $\FF^{\boxtimes}$}
For this $\F^\boxtimes$ is the free (symmetric) category on $\F$. We use $\boxtimes$ for the new free monoidal structure, which we also call outer.
$\V^{\boxtimes}=\V^\otimes \simeq \F$.
The basic morphisms ``are" the morphisms of $\F$: $(\F^{\boxtimes}\downarrow \V^{\boxtimes})\simeq (\F^\boxtimes \downarrow \F)=(\F\downarrow \F)$ under the equivalence $\imath^{\otimes}: \V^\otimes  \to \F$ and by the definition of the free (symmetric) monoidal category.

In the free monoidal category $\F^\boxtimes$ there is a free unit $\unit_\boxtimes$ which can be different from $\unit_\otimes$, thus for $\hat\O\in \F^\boxtimes\text{-}\ops_\C$, we have
that $\hat\O(\unit_\boxtimes) \simeq\unit_\C$, but no condition on $\unit_\otimes$.

\begin{ex} Examples are given by the Feynman category indexed over  finite sets.

\end{ex}

\subsubsection{NC-construction}
Here again $\V^{nc}=\V^\otimes$ and $\Obj(\F^{nc})=\Obj(F^\boxtimes)$,
but  the basic morphisms $(\F^{nc}\downarrow \V)$ are defined as
$Hom_{\F^{nc}}(\boxtimes_{i}X_i,Y)=Hom_\F(\bigotimes_{i}X_i,Y)$.
This effectively adds the data of functor $\mu:\F\boxtimes \F\to \F$, that is a natural family of  morphisms $\mu_{X,Y}:(X\boxtimes Y)=X\otimes Y$,  and a  morphisms $\eps:\unit_\boxtimes\to \unit_\otimes$, compatible with the unit constraints, to the morphisms of $\F^{\boxtimes}$. The data of $\eps$ was not addressed separately in \cite[\S3.1]{feynman}.

The construction of $\ops$ is as follows: If $\hat \O\in \F^{nc}\dashops_\C$, one defines $\O\in {\it Fun}_{lax \otimes}(\F\,\C)$ as $\O(X):=\hat\O(X)$
and the same on morphisms $\hat\O(\phi)=\O(\phi)$ for $\phi:X\to Y$.

The two structural morphisms are defined as follows:
\begin{equation}
\label{multconstreq}
\O(X)\otimes \O(Y)=\hat\O(X)\otimes \hat\O(Y)\simeq\hat \O(X\boxtimes Y) \stackrel{\hat\O(\mu_{X,Y})}{\to}\hat\O(X \otimes Y)=\O(X\otimes Y)
\end{equation}
and
\begin{equation}
\label{unitconsteq}
\hat\O(\unit_\boxtimes)\simeq \unit_\C\stackrel{\hat\O(\eps)}{\to} \hat\O(\unit_\otimes)
 \end{equation}
 yields the structural unit morphism for the underlying lax-monoidal functor.

Vice--versa, using the structure for the free monoidal category, we can extend a $\O\in \F^{nc}\dashops$ functor to all the morphisms of $\F^\boxtimes$ by using the functor underlying $\O$ and then extending it to the free (symmetric) monoidal category by the universal property as $\O^{\boxtimes}$. Then one only needs to define $\O^{nc}$, we only need the values on $\mu_{X,Y}$ and $\eps$, which are fixed  by the equations \eqref{multconstreq} and \eqref{unitconsteq}.

\begin{rmk}\mbox{}
\begin{enumerate}
\item It is most natural to take $\eps$ to be an isomorphism and moreover to identify $\unit_\boxtimes$ and $\unit_\otimes$. An example is taking the empty forrest to be given by an empty tree, or more generally, the empty sentence (the empty collection of  words) is identified with the empty word.
\item The nc--construction plays a crucial role in the connection to Hopf algebras \cite{HopfPart1,HopfPart2}.
\item An example of nc--construction first appeared in \cite{KWZ}.
\end{enumerate}
\end{rmk}

\subsection{Decorations, covers  and factorizations of morphisms in the category of Feynman categories}
\label{decopar}
Given an $\O\in \Fops_{\Set}$ there is a new Feynman category $\FF_{\dec\O}$ defined in \cite{decorated}.
This decoration is a variation of the Grothendieck (op)--fibration construction.
It also establishes a theory  of covers that is compatible with Galois covers in the sense of Grothendieck \cite[\S3]{BergerKaufmann}.

The objects of $\Fdeco$ are pairs $(X,a_x)$ with $X\in \Obj(\F)$ and $a_x\in \O(X)$ and morphisms are given by $$Hom_{\Fdeco}((X,a_x),(Y,a_y))=\{\phi\in Hom_\F(X,Y)|\O(\phi)a_x=a_y\}$$
Likewise, the objects of $\Vdeco$ are pairs $(*,a_*)$ with $*\in \Obj(\V)$ and $a_*\in \O(\imath(*))$ with the morphisms analogous to the ones above. The inclusion $\ideco$ is 
given by $(*,a_*)\mapsto (\imath(*),a_*)$.

\begin{rmk}
If $\O\in \Fops_{\C^{\op}}$ the condition reads $\O(\phi)(a_y)=a_x$.

\end{rmk}
\begin{thm}{\rm \cite[Theorem 4.1]{decorated}}
Given a functor $\O\in \FF\dashops_{\Set}$ then  $\FFdeco=(\Vdeco,\Fdeco,\ideco)$ as above is a Feynman category and there is a morphism of Feynman categories
$\mathfrak{p}_\O=(p_\V,p):\FFdeco\to \FF$, which forgets the decoration, i.e.\  $p((X,a_x))=X$ and $p_\V$ is its restriction to $\V$.

This construction is functorial in $\O$ and with respect to morphisms of Feynman categories;,i.e.\ the following diagrams commute.
For a morphism (natural transformation) $\eta:\O\to\P$ in $\FF\dashops$ there is a  commutative diagram
\begin{equation}
\xymatrix{
\FFdeco\ar[rr]\ar[dr]_{\mathfrak{p}_\O}&&\FF_{\rm dec \P}\ar[dl]^{\mathfrak{p}_\P}\\
&\FF&
}
\end{equation}
where the upper arrow is given by $(X, a_X)\to (X,\eta_X(a_X))$, where $\eta_X:\O(X)\to \P(X)$ is the natural transformation.

Given a morphism of Feynman categories $\ff=(v,f):\FF\to \FF'$, there is a commutative diagram
\begin{equation}
\label{decosquareeq}
\xymatrix{
\FFdeco\ar[r]^{\ff_\O}\ar[d]^{\fp_\O}&\FF'_{\dec \; f_!(\O)}\ar[d]^{\fp_{f_!(\O)}}\\
\FF\ar[r]^{\ff}&\FF'
}
\end{equation}
\end{thm}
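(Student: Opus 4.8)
The plan is to recognize $\Fdeco$ as the Grothendieck construction (category of elements) of the functor $\O\colon\F\to\Set$: objects are pairs $(X,a_x)$ with $a_x\in\O(X)$, and a morphism covering $\phi\colon X\to Y$ is the datum that $\O(\phi)a_x=a_y$, which is exactly the op--fibration alluded to in the statement. The symmetric monoidal structure on $\Fdeco$ is induced from that on $\F$ through the strong monoidal structure of $\O$: since $\Set$ is Cartesian monoidal and $\O$ is strong monoidal, $\O(X\otimes Y)\cong\O(X)\times\O(Y)$, so one sets $(X,a_x)\otimes(Y,a_y)=(X\otimes Y,a_x\times a_y)$ with unit $(\unit_\F,*)$, where $\O(\unit_\F)=\{*\}$. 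The substance of the first assertion is then to verify the three conditions of Definition \ref{feynmandef}, and the recurring mechanism is that the isomorphism $\O(X\otimes Y)\cong\O(X)\times\O(Y)$ lets every decoration split along any decomposition of its underlying object.

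First I would treat condition \eqref{objectcond}. By \eqref{objectcond} for $\FF$, each $X$ decomposes as $X\simeq\bigotimes_v\imath(*_v)$; strong monoidality gives $\O(X)\cong\prod_v\O(\imath(*_v))$, so $a_x$ corresponds to a tuple $(a_{*_v})_v$ and hence $(X,a_x)\simeq\bigotimes_v(\imath(*_v),a_{*_v})=\bigotimes_v\ideco(*_v,a_{*_v})$. As an isomorphism in $\Fdeco$ is exactly an isomorphism $\phi$ of $\F$ together with the forced relation $\O(\phi)a_x=a_y$, the isomorphisms of $\Fdeco$ are precisely those induced from $\Vdeco$, giving the equivalence $\ideco^\otimes\colon\Vdeco^\otimes\to\Iso(\Fdeco)$. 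For condition \eqref{morphcond}, given a morphism $(X,a_x)\to(Y,a_y)$ I would first decompose the underlying $\phi\colon X\to Y$ into basic morphisms $\phi_v\colon X_v\to\imath(*_v)$ using \eqref{morphcond} for $\FF$, then use $\O(\phi)=\prod_v\O(\phi_v)$ to split $\O(\phi)a_x=a_y$ fiberwise into $\O(\phi_v)a_{x_v}=a_{*_v}$; this exhibits the decorated morphism as a tensor product of basic decorated morphisms, and uniqueness is inherited from $\FF$ together with the fact that the $a_{x_v}$ are determined by $a_x$. Condition \eqref{smallcond} is immediate: the slice $(\Fdeco\downarrow\ideco(*,a_*))$ projects to the essentially small slice $(\F\downarrow\imath(*))$ with fibers contained in the small sets $\O(X)$.

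The projection $\fp_\O=(p_\V,p)$ is the evident forgetful functor $(X,a_x)\mapsto X$; it is strong monoidal because the monoidal structure on $\Fdeco$ was defined through that of $\F$, and $p\circ\ideco=\imath\circ p_\V$ holds by construction, so $\fp_\O$ is a morphism of Feynman categories. Functoriality in $\O$ is formal: for $\eta\colon\O\to\P$ the assignment $(X,a_x)\mapsto(X,\eta_X(a_x))$ is a well--defined functor, since naturality of $\eta$ gives $\P(\phi)\eta_X(a_x)=\eta_Y(\O(\phi)a_x)=\eta_Y(a_y)$, and both legs forget the decoration, so the triangle over $\FF$ commutes.

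The last statement, the square \eqref{decosquareeq}, is where the real work lies. Here I would use the unit $u\colon\O\to f^*f_!\O$ of the adjunction $f_!\dashv f^*$ (Theorem \ref{adjointthm}), whose component $u_X\colon\O(X)\to f_!\O(f(X))$ is the structural comparison of the left Kan extension, and define $\ff_\O\colon\Fdeco\to\F'_{\dec f_!\O}$ by $(X,a_x)\mapsto(f(X),u_X(a_x))$, with the evident restriction on the groupoid part. A morphism covering $\phi$ is sent to the morphism covering $f(\phi)$, and this is well defined precisely because naturality of $u$ yields $f_!\O(f(\phi))u_X(a_x)=u_Y(\O(\phi)a_x)=u_Y(a_y)$. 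Commutativity of \eqref{decosquareeq} is then immediate on objects and morphisms, since both $\fp_{f_!\O}\circ\ff_\O$ and $\ff\circ\fp_\O$ forget the decoration and apply $f$. The main obstacle is confirming that $\ff_\O$ is genuinely a morphism of Feynman categories, i.e.\ that it is strong monoidal: this needs $u$ to be compatible with the monoidal structures, which is exactly the content of $f_!\O$ being a monoidal functor (Theorem \ref{adjointthm}) and of $u$ being a monoidal natural transformation. I expect the bookkeeping of these monoidal coherences, rather than any conceptual difficulty, to be the delicate part.
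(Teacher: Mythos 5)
Your proposal is correct and takes the expected route: the paper only cites this result from \cite{decorated} without reproducing a proof, but your reading of $\Fdeco$ as a Grothendieck-type construction, the fiberwise splitting of decorations along the decomposition axioms for conditions (i) and (ii), and the use of the unit of $f_!\dashv f^*$ to define $\ff_\O$ in \eqref{decosquareeq} all agree with how the construction is used elsewhere in the paper (the proof of Proposition \ref{coverprop} runs on exactly this fiberwise lifting mechanism). The one delicate point you flag---that $u\colon\O\to f^*f_!\O$ must be a monoidal natural transformation so that $\ff_\O$ is strong monoidal---is precisely what the symmetric monoidality of $f_!$ in Theorem \ref{adjointthm} provides, so there is no gap.
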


The maps $\mathfrak{p}$ in the theorem above called covers, viz.\ $\mathfrak{f}:\GG \to \FF$ is a cover if $\GG=\FFdeco$ for some $\O\in \Fops$ and $\mathfrak{f}=\mathfrak{p}:\FFdeco\to \FF$.

\begin{prop}
\label{coverprop}
A morphism $\ff=(v,f)$ of  Feynman categories is a cover if
\begin{enumerate}
\item Any morphism whose source is in the image of $v$ respectively $f$, has a lift.
\item Any  lift of a  morphism in the image of $v$ respectively $f$ is uniquely determined by its source.
\end{enumerate}
\end{prop}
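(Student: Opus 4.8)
The plan is to reconstruct a decoration functor $\O\in\FF\dashops_{\Set}$ out of $\ff$ and then to identify the source with $\FFdeco$ over $\FF$; by the definition of cover just given, this is exactly what must be produced. Write $\ff=(v,f)\colon\GG=(\V_\GG,\F_\GG,\imath_\GG)\to\FF=(\V,\F,\imath)$. Replacing $\FF$ and $\GG$ by equivalent strict, skeletal models (MacLane coherence), objects become words in the respective groupoids and each object-fibre of $f$ is an honest set. On objects I set $\O(X)=\{Z\in\Obj(\F_\GG)\mid f(Z)=X\}$, which is empty precisely when $X\notin\mathrm{im}(f)$, in which case everything below is vacuous. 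On a morphism $\phi\colon X\to Y$ and $Z\in\O(X)$, condition~(1) produces a lift of $\phi$ with source $Z$ and condition~(2) makes it unique; I define $\O(\phi)(Z)$ to be the target of this unique lift $\widetilde\phi_Z$, which lies in $\O(Y)$ since $f(\widetilde\phi_Z)=\phi$.

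I would first check that $\O$ is a functor, and this is forced entirely by the uniqueness in~(2). Indeed $\id_Z$ is a lift of $\id_X$ with source $Z$, hence \emph{the} lift, so $\O(\id_X)=\id_{\O(X)}$; and for composable $\phi,\psi$ the morphism $\widetilde\psi_{\O(\phi)(Z)}\circ\widetilde\phi_Z$ is a lift of $\psi\circ\phi$ with source $Z$, so by uniqueness it equals $\widetilde{(\psi\phi)}_Z$, giving $\O(\psi\phi)=\O(\psi)\circ\O(\phi)$.

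The main obstacle is to upgrade $\O$ to a \emph{strong monoidal} functor, as the decoration construction requires; here the lifting hypotheses alone do not suffice and I would invoke the Feynman axioms for $\GG$ together with monoidality of $f$. The key point is that $f$ preserves native length: since $f\circ\imath_\GG=\imath\circ v$, basic objects go to basic objects, so by axiom~\eqref{objectcond} applied in $\GG$ the word decomposition of $Z$ is carried by $f$ to the corresponding word for $f(Z)$, whence $|Z|=|f(Z)|$. Consequently an object lying over $X\otimes Y$ splits, at length $|X|$, as $Z_1\otimes Z_2$ with $f(Z_1)=X$ and $f(Z_2)=Y$, and the uniqueness of word decompositions turns $(Z_1,Z_2)\mapsto Z_1\otimes Z_2$ into the required bijection $\O(X)\times\O(Y)\cong\O(X\otimes Y)$; likewise $\O(\unit_\F)$ is a single point, since $\unit_{\F_\GG}$ is the only length-$0$ object. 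Naturality of these isomorphisms in the morphism variables reduces, via axiom~\eqref{morphcond} and uniqueness of lifts, to the identity $\widetilde{\phi\otimes\psi}_{Z_1\otimes Z_2}=\widetilde\phi_{Z_1}\otimes\widetilde\psi_{Z_2}$, which holds because the right-hand side is a lift of $\phi\otimes\psi$ with the correct source.

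Finally I would define $\Phi\colon\GG\to\FFdeco$ by $Z\mapsto(f(Z),Z)$ on objects and $\widetilde\phi\mapsto f(\widetilde\phi)$ on morphisms. It is a bijection on objects by the definition of $\O$, and on each Hom-set the map $\Hom_{\F_\GG}(Z,Z')\to\{\phi\colon f(Z)\to f(Z')\mid\O(\phi)(Z)=Z'\}=\Hom_{\FFdeco}((f(Z),Z),(f(Z'),Z'))$ is surjective by the existence of lifts~(1) and injective by their uniqueness~(2); thus $\Phi$ is an isomorphism of categories, monoidal because $f$ is and because the monoidal structure on $\O$ was built from $\otimes$ in $\F_\GG$. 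Running the identical argument with $v$ in place of $f$ identifies $\V_\GG$ with $\Vdeco$, so $\Phi$ is an isomorphism of Feynman categories with $\fp_\O\circ\Phi=\ff$. Identifying $\GG$ with $\FFdeco$ along $\Phi$ exhibits $\ff$ as the projection $\fp_\O$, i.e.\ as a cover.
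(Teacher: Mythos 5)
Your proposal is correct and follows essentially the same route as the paper: define $\O(X)=f^{-1}(X)$ on objects, use existence and uniqueness of lifts to make $\O$ a functor, invoke the decomposition axioms to get strong monoidality via $f^{-1}(X\otimes Y)\cong \O(X)\times\O(Y)$, and then identify the source with $\FFdeco$. Your write-up is in fact somewhat more detailed on the monoidality and functoriality checks; the only thing the paper's proof contains that yours omits is the (strictly speaking unneeded for the stated ``if'') verification that a decoration projection $\fp_\O$ conversely satisfies conditions (1) and (2).
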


\begin{proof}
Given $\ff:\FF_{\dec\O}\to \FF$, we verify the two conditions.
The objects in the image of $f$ are the $X\in \Obj(\F)$ with $\O(X)\neq \emptyset$.
If $X$ is in the image of $f$, i.e.\ $\O(X)$ is not empty then given any $\phi:X\to Y$, $\O(Y)\supset \O(\phi)(X)\neq \emptyset$, so that $Y$ is in the image of $f$ as well.
 Moreover,  for any $a_x\in \O(X)$ fix $a_y:=\O(\phi)(a_x)\in O(Y)$ then $\phi:(X,a_x)\to (Y,a_y)$ is a lift of $\phi$ and  any lift of $\phi$ is uniquely fixed by the choice of  by $a_x$.

Vice--versa, given any morphism $\ff:\FF'\to \FF$ satisfying (1) and (2), gives rise to a functor $\O\in \Fops$, such that $\F_{\dec\O}={\F'}$. On objects $\O(X)=f^{-1}(X)$, that is the set of fibers, aka.\ elements. This will be $\emptyset$ if $X$ is not in the image.
Given a morphism $\phi\in Hom_\F(X,Y)$ in the image fix a $\hat X\in \O(X)$ using (1) and (2)
 there is a unique lift $\hat \phi(\hat X)\in Hom_{\F'}({\hat X}, {\hat Y})$ with $\hat Y=t(\hat \phi(\hat X))$. Define $\O(\phi):\O(X)\to \O(Y)$ by $\O|_{\hat X}:=\hat\phi(\hat X)$.
 If $\phi\in Hom_\F(X,Y)$ and $X$ not in the image of $f$, that is $\O(X)=\emptyset$, then $\O(\phi)$ is the unique map with source $\emptyset:\O(X)=\emptyset\to \O(Y)$.
 Finally, we check that the functor is (symmetric) monoidal. We have that $\O(X\otimes Y)=f^{-1}(X\otimes Y)= \{Z|f(Z)=X\otimes Y\}$ using that $\ff$ is a morphism of Feynman categories,we can decompose $Z$ and
 we have $f^{-1}(X\otimes Y)= \{(Z',Z'')|f(Z')=X, F(Z'')=Y\}=\O(X)\times \O(Y)\}$.

 Decorating with this functor, we get $\Fdeco$ with objects $(X,\hat X)$ and morphism $\hat\phi:(X,\hat X)\to (Y,\hat Y)$ lifting $\phi$. The isomorphism is given by sending $(X,\hat X)$ to $\hat X$,
 the inverse is fixed by $X=f(\hat X)$.
 
 The determination of the groupoid part and  is analogous and the inclusion is then clearly the restriction.
\end{proof}

\begin{ex}
Table \ref{decotable} contains shows examples decorations for graphical Feynman categories $\GG$.
\end{ex}

There is a second type of standard morphism, which is called connected.
\begin{df}
A morphism of Feynman categories $\ff:\FF'\to \FF$ is connected if $f_!(\trivial_{\F'})=\trivial_\F$ where $\trivial_{\F'}:\F'\to \Set$ and $\trivial_\F\to \Set$ are the trivial $\ops$ to $\Set$ with the Cartesian monoidal product $\times$.
\end{df}

The two sets of morphisms form an orthogonal factorization system in the sense of  \cite{BergerKaufmann}, where these types of morphisms are linked to comprehension schemes \cite{LawvereComp} and a general theory of Galois type covers.
The following theorem follows from \cite[Proposition 2.3]{BergerKaufmann}:

\begin{thm}\label{BKthm}
Any morphism of Feynman categories $\ff:\mathfrak{G}\to \FF$ has a unique factorization as $\ff=\mathfrak{p}\circ\mathfrak{i}$
where $\mathfrak{i}$ is connected and $\mathfrak{p}$ is a cover.
\end{thm}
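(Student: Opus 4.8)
The plan is to realize this as the Feynman-categorical analogue of the Street--Walters comprehensive factorization, with covers playing the role of discrete opfibrations (as characterized by Proposition \ref{coverprop}) and connected morphisms that of final functors. Given $\ff=(v,f)\colon\GG\to\FF$, I would decorate $\FF$ by the connected-components op of $\ff$. Set $\O:=f_!(\trivial_\GG)\in\Fops_{\Set}$, the left Kan extension along the underlying functor $f$ of the trivial $\Set$-valued functor on $\GG$; the point-wise formula gives $\O(Y)=\colim_{(f\downarrow Y)}\trivial=\pi_0(f\downarrow Y)$, the set of connected components of the comma category. Although computed as an ordinary Kan extension, $\O$ is strong monoidal: the decomposition of morphisms into tensor products of basic morphisms (the second Feynman axiom) makes the comma categories split along $\otimes$, so that $\pi_0(f\downarrow(X\otimes Y))\cong\pi_0(f\downarrow X)\times\pi_0(f\downarrow Y)$ and hence $\O\in\Fops_{\Set}$. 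This is exactly the op whose triviality defines connectedness.

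For existence I would first build the cover. By \cite[Theorem 4.1]{decorated} the decoration $\FFdeco=(\Vdeco,\Fdeco,\ideco)$ is again a Feynman category and $\mathfrak{p}_\O\colon\FFdeco\to\FF$, forgetting the decoration, is a cover by definition. Next I would construct the residual map $\mathfrak{i}\colon\GG\to\FFdeco$ by sending an object $X$ to $\bigl(f(X),[X,\id_{f(X)}]\bigr)$, where $[X,\id_{f(X)}]\in\O(f(X))$ is the component of $(X,\id_{f(X)})$ in $\pi_0(f\downarrow f(X))$, and a morphism $\phi\colon X\to X'$ to the underlying $f(\phi)$. The latter is decoration-preserving because $\phi$ exhibits $(X,f(\phi))$ and $(X',\id_{f(X')})$ as lying in the same component, i.e.\ $\O(f(\phi))([X,\id])=[X',\id]$. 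One checks $\mathfrak{i}$ respects $\imath$, the groupoid part $v$, and the monoidal products (using the strong monoidality of $\O$), and that $\mathfrak{p}_\O\circ\mathfrak{i}=\ff$ by construction. Finally I would verify $\mathfrak{i}$ is connected, i.e.\ $\mathfrak{i}_!(\trivial_\GG)=\trivial_{\FFdeco}$: each comma category $(\mathfrak{i}\downarrow(Y,a))$ is canonically identified with the single component $a\subseteq\pi_0(f\downarrow Y)$, which is nonempty and connected, so its $\pi_0$ is a point. This establishes the factorization.

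For uniqueness I would invoke orthogonality of the two classes. The decisive lemma is that $(\text{connected},\text{cover})$ is an orthogonal factorization system: for every commuting square with a connected morphism on the left and a cover on the right there is a unique diagonal filler. Granting this, any two factorizations $\ff=\mathfrak{p}\circ\mathfrak{i}=\mathfrak{p}'\circ\mathfrak{i}'$ yield, by applying the lifting property to the square with sides $\mathfrak{i},\mathfrak{i}',\mathfrak{p},\mathfrak{p}'$, a unique comparison isomorphism of middle Feynman categories commuting with all four maps, which is precisely uniqueness up to canonical isomorphism. This orthogonality is the content of \cite[Proposition 2.3]{BergerKaufmann}, where connected and cover morphisms are produced from a comprehension scheme.

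I expect the main obstacle to be the orthogonality/unique-lifting step rather than the explicit construction. Verifying the diagonal filler forces one to combine the fiberwise lifting characterization of covers in Proposition \ref{coverprop} (existence and source-uniqueness of lifts) with the defining property $f_!(\trivial)=\trivial$ of connected morphisms, in a way that is simultaneously compatible with composition, the monoidal product $\otimes$, and the groupoid $\V$-part. Keeping these three compatibilities in lockstep---especially ensuring the comparison map is monoidal and restricts correctly on the underlying groupoids---is the delicate bookkeeping, which is exactly what the comprehension-scheme framework of \cite{BergerKaufmann} organizes cleanly, and why the cleanest proof simply cites it.
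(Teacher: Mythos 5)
Your proposal is correct and follows essentially the same route the paper takes: the paper gives no self-contained proof but cites \cite[Proposition 2.3]{BergerKaufmann} for the orthogonal factorization system, and its accompanying remark states precisely your existence argument (the cover is the decoration by $f_!(\trivial_{\GG})$, with existence already in \cite{decorated} and uniqueness requiring the finer comprehension-scheme analysis). Your explicit computation of $\O=\pi_0(f\downarrow -)$ and of the connected leg $\mathfrak{i}$ is a correct fleshing-out of that sketch.
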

\begin{rmk}\mbox{}
\begin{enumerate}
\item A cover $\mathfrak p=(v,p):\FF'\to \FF$ is isomorphic to the decoration by $p_!(\trivial_{\F'})\in \Fops$, \cite{decorated}.
   \item The decoration construction is also intimately tied to cyclic operads, modular operads and moduli spaces; see \cite{BergerKaufmann,Ddec} and \S\ref{modulispacepar} and Appendix \ref{connectionspar} below.
   \item The existence of a factorization follows already from \cite{decorated}, the uniqueness requires a finer analysis.
 \end{enumerate}
\end{rmk}

\subsubsection{Covers, connected morphisms and indexing}
\begin{lem} \mbox{}
\begin{enumerate}
\item The forgetful functor $\mathfrak{p}:\FFdeco\to \FF$ is an indexing if $\O(X)\neq \emptyset$ for all $X$. Thus restricting to the full image of $p$, we obtain a indexing, which is not strong in general.
\item A strong indexing is a cover, if and only if it is the an isomorphism and hence strict.
\end{enumerate}
\end{lem}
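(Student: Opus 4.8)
The plan is to treat the two parts separately, in each case unwinding the definitions of indexing, strong indexing, and cover, and then leaning on the characterization of covers in Proposition \ref{coverprop}, whose proof in fact establishes an ``if and only if.''

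For part (1), I would first recall that the decoration construction (\cite{decorated}) already guarantees $\mathfrak{p}_\O=(p_\V,p)\colon\FFdeco\to\FF$ is a morphism of Feynman categories, so being an indexing reduces to checking that $p$ is surjective on objects. This is immediate: if $\O(X)\neq\emptyset$ for every $X\in\Obj(\F)$, then choosing any $a_x\in\O(X)$ yields an object $(X,a_x)\in\Obj(\Fdeco)$ with $p(X,a_x)=X$. When $\O(X)=\emptyset$ for some $X$, the image of $p$ is precisely the full subcategory of $\F$ on those $X$ with $\O(X)\neq\emptyset$; as already observed in the proof of Proposition \ref{coverprop}, this collection is closed under targets of morphisms (since $\O(\phi)(a_x)\in\O(Y)$), under the monoidal product (since $\O(X\otimes Y)=\O(X)\times\O(Y)$), and under isomorphisms, so it forms a sub--Feynman category onto which $p$ is surjective on objects, giving an indexing. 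To see this is not strong in general, I would note that $p$ collapses all decorations of a fixed object back to that object, so whenever $|\O(X)|\geq 2$ the functor $p$ fails to be injective, hence not bijective, on objects.

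For part (2), let $\fb=(v_b,f_b)\colon\FF\to\fB$ be a strong indexing. For the easy direction, if $\fb$ is an isomorphism then every morphism of $\fB$ has a unique lift through $f_b^{-1}$, so conditions (1) and (2) of Proposition \ref{coverprop} hold trivially and $\fb$ is a cover; moreover an isomorphism induces an equivalence $\V\simeq\V_\B$, hence it is strict. For the substantive direction, suppose $\fb$ is simultaneously a strong indexing and a cover. Strongness gives that $f_b$ is bijective on objects and surjective on morphisms, so it suffices to prove injectivity on morphisms. To this end I would take $\phi_1,\phi_2\in\Mor(\F)$ with $f_b(\phi_1)=f_b(\phi_2)=\psi$; then $f_b(s(\phi_i))=s(\psi)$ and, by bijectivity on objects, $s(\phi_1)=s(\phi_2)=:X$ and likewise $t(\phi_1)=t(\phi_2)=:Y$. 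Thus $\phi_1$ and $\phi_2$ are two lifts of $\psi$ sharing the same source $X$; since $f_b$ is surjective on objects the source $s(\psi)$ automatically lies in the image, so condition (2) of Proposition \ref{coverprop} applies and forces $\phi_1=\phi_2$. Hence $f_b$ is bijective on morphisms and so an isomorphism of underlying categories; compatibility with the structure inclusions via $f_b\circ\imath=\imath_\B\circ v_b$ then promotes $\fb$ to an isomorphism of Feynman categories, which is in particular strict.

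I expect the main obstacle to be the bookkeeping in part (2): carefully matching the ``source in the image'' hypothesis of Proposition \ref{coverprop} with the bijectivity-on-objects of a strong indexing, so that the lifting conditions genuinely apply to every morphism of $\fB$, and confirming that ``uniqueness of lift given its source'' is exactly injectivity on morphisms once the objects are rigidified. A secondary point requiring care is the final step of promoting ``$f_b$ is an isomorphism of the underlying categories'' to ``$\fb$ is an isomorphism of Feynman categories,'' i.e.\ verifying that $v_b$ is correspondingly an isomorphism through the commuting square $f_b\circ\imath=\imath_\B\circ v_b$ together with the defining equivalences $\imath^\otimes$ and $\imath_\B^\otimes$.
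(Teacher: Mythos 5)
Your proposal is correct and follows essentially the same route as the paper's (very terse) proof: part (1) comes down to the image of a cover being exactly the objects with $\O(X)\neq\emptyset$, and part (2) comes down to a strong indexing that is a cover having a single object in each fiber, whence unique lifts and bijectivity on morphisms, hence an isomorphism. The only cosmetic difference is that you phrase the fiber/lifting argument through the conditions of Proposition \ref{coverprop} while the paper speaks directly of the decorating functor $\O$ having singleton values, but since that proof identifies $\O(X)=f^{-1}(X)$ these are the same argument.
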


\begin{proof} The image of a cover are precisely those $X$ for which $\O(X)\neq \emptyset$. If a cover is a strong indexing, then there is only one object in the inverse image and thus every morphism has a unique left. Hence, the cover is an isomorphism.
\end{proof}

\begin{rmk}\mbox{}
\begin{enumerate}
\item Any indexing factors as a connected morphism and a cover.
 \item By the above, we see that a strong indexing is connected.
 \item Strong and strict indexings  give rise to enrichments discussed below, \ref{enrichedpar}.
 \end{enumerate}
\end{rmk}

There are interesting connected morphisms which are not strong. These typically arise from inclusions given by restrictions, see \S\ref{operadtypepar} in the Appendix for examples.

\subsubsection{$\Fdeco\dashopcat$}
In \cite{BergerKaufmann}, we showed
\begin{prop}
The $\Fdeco\dashopcat$ are $\fops$ over $\O$, that is $\P\in\fops$ with a natural transformation $\P\to \O$.
\qed
\end{prop}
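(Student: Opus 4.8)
The plan is to recognize this as the monoidal, Feynman--categorical enhancement of the classical equivalence between the functor category on a category of elements and a slice of a functor category, namely ${\it Fun}(\int\O,\C)\simeq {\it Fun}(\F,\C)/\O$, and to realize both directions through the cover $\mathfrak{p}=(p_\V,p)\colon\FFdeco\to\FF$ of the preceding theorem. Concretely, $\Fdeco$ is exactly the Feynman--categorical category of elements of $\O$, so I would construct an equivalence $\Fdeco\dashopcat_\C\simeq\fops_\C/\O$, where on the right $\O$ is regarded as the $\C$--valued op $\mathfrak{p}_!(\trivial_\C)$ (the copower $X\mapsto\coprod_{\O(X)}\unit_\C$ in the Cartesian case). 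The tools are Theorem \ref{adjointthm}, that $\mathfrak{p}_!\dashv\mathfrak{p}^*$ preserves the monoidal structure, Proposition \ref{coverprop} on the uniqueness of lifts, and the recorded identity $\mathfrak{p}_!(\trivial)=\O$.

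First I would define the forward functor. Given $\Q\in\Fdeco\dashopcat_\C$ set $\P:=\mathfrak{p}_!\Q$, which is a strong monoidal functor by Theorem \ref{adjointthm}. Because $\mathfrak{p}$ is a cover, every lift of a morphism is determined by its source (Proposition \ref{coverprop}), so the cofinal objects of $(\mathfrak{p}\downarrow X)$ are exactly the pairs $((X,a),\id)$ for $a\in\O(X)$, with no nonidentity morphisms among them, and the pointwise Kan--extension formula collapses to
\[
\P(X)=\mathfrak{p}_!\Q(X)=\coprod_{a\in\O(X)}\Q(X,a).
\]
The structural transformation $\lambda_\Q\colon\P\to\O$ is the index projection, sending the $a$--summand to the $a$--summand via the canonical map $\Q(X,a)\to\unit_\C$; naturality holds since $\P(\phi)$ carries the $a$--summand into the $\O(\phi)(a)$--summand. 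For the inverse functor I would send $(\P,\alpha\colon\P\to\O)$ to the functor $\tilde\P$ on $\Fdeco$ with $\tilde\P(X,a)$ the fibre of $\alpha_X$ over the summand $a$, and check that the two assignments are mutually inverse up to natural isomorphism, which on underlying functors is precisely the classical category--of--elements bijection.

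The main obstacle, as always in this theory, is not the set--level bijection but the verification that everything is \emph{strong monoidal} and that the two functors respect this structure --- the same point that makes the monoidality in Theorem \ref{adjointthm} nontrivial. The key computation is that, since $\O$ is strong monoidal, $\O(X\otimes Y)\cong\O(X)\times\O(Y)$, whence $(X,a)\otimes(Y,b)=(X\otimes Y,(a,b))$ in $\Fdeco$; combining this with the strong monoidality of $\Q$ gives
\[
\P(X\otimes Y)=\coprod_{(a,b)}\Q(X,a)\otimes\Q(Y,b)\cong\Big(\coprod_a\Q(X,a)\Big)\otimes\Big(\coprod_b\Q(Y,b)\Big)=\P(X)\otimes\P(Y),
\]
and dually that the fibres of $\tilde\P$ multiply, $\tilde\P((X,a)\otimes(Y,b))\cong\tilde\P(X,a)\otimes\tilde\P(Y,b)$. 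Both identities rest on $\otimes$ distributing over $\coprod$ in $\C$ and on fibres behaving well under $\otimes$; this is automatic in the Cartesian case ($\C=\Set$ and its kin, where $\unit_\C$ is terminal, so the index projection and the fibres are canonical), and I would carry out the argument there first, then indicate the extensivity/distributivity hypotheses on $\C$ needed for the linear targets. The remaining checks --- compatibility of the monoidal coherence isomorphisms, of the $\V$--level data through $p_\V$, and the essential smallness guaranteeing the colimits exist --- are then routine given the Feynman--category axioms and the cover structure.
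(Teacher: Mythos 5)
The paper records this proposition with no proof, deferring to \cite{BergerKaufmann}; your argument is correct and is essentially the one used there: $\Fdeco$ is the Feynman--categorical category of elements of $\O$, the cover $\mathfrak{p}$ behaves like a discrete opfibration so that $\mathfrak{p}_!\Q(X)$ collapses to $\coprod_{a\in\O(X)}\Q(X,a)$ with the index projection onto $\O=\mathfrak{p}_!(\trivial)$, and passing to fibres inverts this. Your explicit restriction to Cartesian/extensive targets (with distributivity of $\otimes$ over $\coprod$ flagged for the linear case) also matches the paper's own scoping, since $\O\in\Fops_{\Set}$ and the linear decoration requires the separate modified construction given in the enriched section.
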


\subsection{Modules and Plus construction}
\label{pluspar}
Remarkably, Feynman categories often can be used to encode modules as well as algebras.
As constructed above Feynman categories can be used to ``encode'' algebras , see Table \ref{algtable}. One can ask the question if there is a way to encode modules over a given algebra or more generally modules over $\F\dashops$. We will give the answer in two parts. Here we fist consider the case of Feynman categories over $\Set$, in \S\ref{enrichedpar} we will then deal with more general types of modules.
In particular, one would like to consider modules in linear categories. This is possible along the same lines presented here, but technically more involved. This is why we postpone it to the next section. In this section, we will fix the target category $\C$ to be $(\Set,\times)$. The arguments generalize in a straight--forward fashion to a Cartesian target category $\C$.

\begin{ex}[Paradigmatic example]
\label{Assex}
Consider an associative monoid $A$, then
there is an $\CalA\in \surj^<\dashops_{\Set}$ such that $A=\CalA(*)$. The set--modules, aka.\ set-- algebras, over the  associative monoid $A$ are sets $M$ with structure maps $\rho:A\times M\to A, (a,m)\mapsto am$ that satisfy
$a_1(a_2m)=(a_1a_2)m$. The morphisms of modules are  intertwiners. If $A$ is unital, with unit $1\in A$ then there is another  condition for modules: $1m=m$.
In this case $A$ is actually the value of $\CalA\in NCSet\dashops_{\Set}$ and

Assuming that $A$ is unital, we can consider   $\underline{A}$ cf.\ \S\ref{monoidpar}. The category of $A$--modules in $\Set$ is ${\it Fun}(\underline A,\Set) \simeq {\it Fun}_\otimes (\underline{A}^\otimes,\Set)$ and intertwiners
are natural transformations.
To separate out the isomorphisms, splits as a disjoint union $A=A^\times\amalg A^{red}$ where $A^\times=G(A)$ are the invertible elements, then $\V_\A=\underline{A^\times}$ and $\F=\underline{A}^\otimes$ together with the natural
inclusion form a Feynman category $\FFtriv_\A=(\underline{A^\times},\underline{A}^\otimes,\imath)$.

Let $\mathfrak{V}_\A$ be the trivial Feynman category on $\V_\A$.
There is a natural functor $\FFtriv_\CalA\to \mathfrak{V}_\A$ which is a strong indexing. It is identity on $A^\times$ and sends $A^{\red}$ to $id_*$.
The indexing is strict, if $A$ is reduced that is $A^\times=1$.
\end{ex}

The basic results  in the theory, see \cite[\S\S 3.6,3.7]{feynman} are that there is a   plus construction $\FF^+$ for a given Feynman category and that there is a quotient of it $\FF^{\it hyp}$ which is called the hyper construction. The latter is important for twisting as in \cite{GKmodular}, see \S\ref{Kpar}, \S\ref{suspensionpar} and is needed in \S\ref{barpar}..
Being more careful with the units,
we give a new construction $\FF^{+ \it gcp}$ which allows one to define modules via indexed enrichments, see \cite[\S4.1]{feynman} and \S\ref{enrichedpar}, especially \S\ref{Assenrichedex} below.

In the example above $(\FFtriv)^+=\Surj^<, (\FFtriv)^{+ \it gcp}=\FNCSet,(\FFtriv)^{\it hyp}=\FFtriv$ and $\CalA\in \NCSet\dashops_\Set$ gives rise to
$\FFtriv_\CalA$ indexed over $\mathfrak{V}_\A$. If $A$ is reduced, then $\mathfrak{V}_\A=\FFtriv$.

We now describe these construction  adding more  details to the condensed presentation in \cite[\S3]{feynman}.
A graphical version of these constructions is given in Appendix \ref{graphpluspar}.
This graphical treatment provides a solid combinatorial language to write out the proofs in detail, improving the level of precision over that of \cite{feynman}.
These, however, use the full strength of graph formalism of \cite{BorMan,feynman}, which is reviewed in Appendix \ref{graphsec}.
With this in mind, we relegated the more detailed proofs to the appendix to not hinder the flow.

\subsubsection{A look ahead}
For clarity, we will deal first with the case of $\C=\Set$ and relay the subtleties of enrichment to \S\ref{enrichedpar}.
In full generality, for any split $\O\in  \F^{+ \it gcp}\dashops_\E$ there is an indexed enriched Feynman category  $\FF_\O$ enriched over $\E$ whose $\V_\O$ is a freely enriched groupoid. With this construction, we can defined the sought after Feynman category for modules.

\begin{df}
An algebra over $\O\in \F^{+ \it gcp}\dashops$, aka.\ $\O$--module,  is an element of $\F_\O\dashops$.
\end{df}

Thus we can define modules over an $\O\in \fops_\C$ if $\FF=\tilde{\FF}^+$, for some $\tilde{\FF}$ and $\O$  lifts to $\FF^{+ \it gcp}\dashops_\C$.

\subsubsection{Plus construction}

Fix a Feynman category $\FF=(\V,\F,\imath)$ and define a new Feynman category as follows.
Set $\V^+=\Iso(\F\downarrow \V)$, that is basic morphisms and their isomorphisms. The objects of $\F^+$ are the morphisms of $\F$: $\Obj(\F^+)=\Mor(\F)$
and $Iso(\F^+)=Iso(\F\downarrow\F)$. The isomorphism are the $\sds:\phi\to \phi'=\sigma'\circ\phi\circ\sigma^{-1}$ of \eqref{sdseq}. There is a natural inclusion of $\imath^+:\Iso(\F\downarrow\V)\to \Iso(\F\downarrow\F)$.

The other morphisms obtained by decomposing the source morphism into basic morphisms and then composing these basic morphisms using concatenation and tensor products to obtain the target morphism.
More precisely, consider $\phi:= \phi_0\ot \phi_1\odo\phi_n$ such that  $\psi=\phi_0\circ (\phi_1\odo \phi_n)\in (\F\downarrow\V)$ is well defined.
There will be one basic generating morphism $\phi\to\psi$ for such a pair denoted by $\gamma(\phi_0;\phi_1\kdk\phi_n)$.

A general morphism in $\F^+$ is a concatenation of tensor products,  generating morphisms and isomorphisms modulo the relations of a monoidal category, that is associativity, units, interchange and equivariance  under the action of isomorphisms given above.

\begin{prop}
$\FF^+=(\V^+,\F^+,\imath^+)$ is a Feynman category.
\end{prop}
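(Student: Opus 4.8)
The plan is to verify the three axioms of Definition \ref{feynmandef} for the triple $\FF^+=(\V^+,\F^+,\imath^+)$, in each case reducing the assertion to the corresponding property of the input Feynman category $\FF$.

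The object axiom is almost immediate once the definitions are unwound. By construction $\V^+=\Iso(\F\downarrow\V)$ and $\Iso(\F^+)=\Iso(\F\downarrow\F)$, and the comparison functor $\imath^{+\otimes}\colon(\V^+)^\otimes\to\Iso(\F^+)$ is, after identifying these, exactly the monoidal–product functor $(\Iso(\F\downarrow\V))^\otimes\to\Iso(\F\downarrow\F)$. This functor is an equivalence precisely by the morphism–decomposition axiom for $\FF$. So the first axiom for $\FF^+$ is nothing but the second axiom for $\FF$; the only thing to check carefully is that $\imath^{+\otimes}$ really is this monoidal–product functor, which follows from the definition of $\imath^+$ on objects and on the isomorphisms $\sds$.

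For the smallness axiom I would fix an object $*\in\V^+$, that is a basic morphism $\psi\colon X\to\imath(*')$ of $\F$, and examine the slice $(\F^+\downarrow\imath^+(*))$. Since $\psi$ has length one in $\F^+$, every morphism of $\F^+$ with target $\psi$ is, up to the isomorphism action $\sds$, a single generating morphism $\gamma(\phi_0;\phi_1\kdk\phi_n)$ with $\phi_0\circ(\phi_1\odo\phi_n)=\psi$. Hence the objects of the slice are, up to isomorphism, the decompositions of the single morphism $\psi$ of $\F$ as a basic morphism $\phi_0$ post-composed with a tensor product. Each such decomposition is determined by an object $\phi_0\in(\F\downarrow\imath(*'))$ together with compatible factoring data for its source; since $(\F\downarrow\imath(*'))$ and the relevant source slices of $\F$ are essentially small by the smallness axiom for $\FF$, there is only an essentially small set of such decompositions, giving essential smallness of $(\F^+\downarrow\imath^+(*))$.

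The morphism–decomposition axiom is where the real work lies: I must show $(\Iso(\F^+\downarrow\V^+))^\otimes\simeq\Iso(\F^+\downarrow\F^+)$, i.e.\ that every morphism $\Phi$ of $\F^+$ factors, essentially uniquely, as a tensor product of basic morphisms of $\F^+$ according to a decomposition of its target. My approach is: given $\Phi\colon\phi\to\chi$, first decompose the target object $\chi$—itself a morphism of $\F$—via the second axiom for $\FF$ as $\chi\simeq\bigotimes_v\chi_v$ with each $\chi_v$ basic, hence an object of $\V^+$; then produce a matching factorization $\Phi\simeq\bigotimes_v\Phi_v$ with $\Phi_v\colon\phi_v\to\chi_v$ basic in $\F^+$, inducting on the generators, where tensor products and isomorphisms decompose by construction and by the first axiom, the $\gamma(\phi_0;\phi_1\kdk\phi_n)$ are already basic, and the crux is rewriting a composite of generators into this normal form using the imposed monoidal relations (associativity, interchange, and $\sds$-equivariance). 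The hard part will be the essential \emph{uniqueness}: two factorizations of the same $\Phi$ into basic morphisms must be shown to differ by a unique $\sds$, which amounts to a normal-form/confluence statement for words in the generators $\gamma$ modulo the monoidal relations. Cleanly bookkeeping these compositions is exactly what the graphical formalism is designed for, so I would carry out this delicate verification in the language of ghost graphs and planar planted corollas, deferring the detailed confluence argument to the graphical treatment of Appendix \ref{graphpluspar}.
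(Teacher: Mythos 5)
Your proposal is correct and follows essentially the same route as the paper: axiom (i) for $\FF^+$ is read off as axiom (ii) for $\FF$, axiom (iii) is inherited from axiom (iii) for $\FF$, and the morphism-decomposition axiom is established by decomposing the target (a morphism of $\F$) into basic pieces and then decomposing the composite of generators accordingly, with the combinatorial normal-form/uniqueness work deferred to the graphical flow-chart description of Theorem \ref{forestthm} in Appendix \ref{graphpluspar}. The paper's only additional remark is the explicit observation that each resulting flow-chart component is connected precisely because its target lies in $\V$, which is what makes the decomposition into connected components match the tensor decomposition of the target.
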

\begin{proof}
Condition \eqref{objectcond} for $\FF^+$ is the condition \eqref{morphcond} for $\FF$. For condition \eqref{morphcond} fix any morphism $\Phi:\phi\to \psi$.
We first show the existence of the decomposition \eqref{feydecompeq}. By condition \eqref{morphcond} for $\FF$, there exist isomorphisms
$\sds:\phi\to \phi'=\bigotimes_{v\in V}\phi_v$ and  $(\tau\Downarrow\tau'):\psi\simeq \psi'=\bigotimes_{w\in W}\psi_w$.
Thus we can assume that both $\phi$ and $\psi$ are decomposable.
 The statement then follows from the Theorem \ref{forestthm} in Appendix \ref{graphpluspar}.
 A short version is that any iteration of morphisms is given by a flow chart with input the source
 of $\psi$ and the output, the target of $\psi$. The tensor product acts as
 disjoint union on the flow charts.
 Decomposing the target decomposes the flow charts by following the sources upwards. Such a chart is
 connected since otherwise the target would not lie in $\V$ and thus the decomposition is into connected
 components, with the compositions along  these connected components yielding the $\psi_v$.
The last axiom holds due to the axiom \eqref{smallcond} for $\FF$.
\end{proof}

\begin{lem}
 Strictifying $\F$, we have
$\V^+$ is equivalent to $\Iso(\imath^\otimes \downarrow \imath)$,
$\Iso(\F^+)$ is equivalent to $\Iso(\imath^\otimes \downarrow \imath^\otimes)
\simeq \Mor(\V)^\otimes$.
 Assuming strict associativity constraints,
 this is generated by commutativity constraints and morphisms of $\V$.
 $\Mor(\F^+)$ is generated by these isomorphisms and the morphisms
 $\phi=\phi_0\odo\phi_n\to \psi=\phi_0\circ (\phi_1\odo\phi_n)\in (\imath^\otimes\downarrow\imath)$.
\end{lem}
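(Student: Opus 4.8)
The plan is to reduce everything to the defining data of the plus construction together with the two structural axioms of a Feynman category, after passing to a strict model of $\F$. First I would invoke MacLane coherence (as in the discussion following Definition \ref{feynmandef}) to replace $\FF$ by an equivalent strict Feynman category; this is harmless because all assertions are statements about (monoidal) equivalences. In the strict model axiom \eqref{objectcond} lets me assume that every object of $\F$ is literally $\imath^\otimes$ of a word in $\V$, so that the identity-on-objects comparison sends a basic morphism $\phi\colon X\to\imath(*)$ to a morphism $\imath^\otimes(\underline X)\to\imath(*)$ and a general morphism $\phi\colon X\to Y$ to a morphism $\imath^\otimes(\underline X)\to\imath^\otimes(\underline Y)$.

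With this in hand the first two equivalences are essentially a rewriting of the definitions. By construction $\V^+=\Iso(\F\downarrow\V)$; after strictification the source of a basic morphism is literally $\imath^\otimes$ of a word, so restricting to invertible squares identifies $\Iso(\F\downarrow\V)$ with the comma category $(\imath^\otimes\downarrow\imath)$, which is already a groupoid because $\V^\otimes$ and $\V$ are, giving $\V^+\simeq\Iso(\imath^\otimes\downarrow\imath)$. Likewise $\Iso(\F^+)=\Iso(\F\downarrow\F)$, and the substitution $X=\imath^\otimes(\underline X)$, $Y=\imath^\otimes(\underline Y)$ identifies this with $\Iso(\imath^\otimes\downarrow\imath^\otimes)$. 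The crucial point for the last clause is that a morphism of $(\imath^\otimes\downarrow\imath^\otimes)$ is a pair $(u,v)$ of morphisms of $\V^\otimes$ making the evident square commute; since $\V^\otimes$ is the free (symmetric) monoidal category on the groupoid $\V$, each of $u,v$ is a tensor product of morphisms of $\V$ composed with commutativity constraints. Hence the isomorphisms are generated by commutativity constraints and morphisms of $\V$, i.e. the resulting monoidal groupoid is $\Mor(\V)^\otimes$; under strict associativity the associators drop out and these are the only structural isomorphisms that survive.

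For the final clause I would appeal directly to the description of the morphisms of $\F^+$ recorded just above the lemma: a general morphism is a composite of tensor products of isomorphisms and of the basic generating morphisms $\gamma(\phi_0;\phi_1\kdk\phi_n)\colon \phi_0\ot\phi_1\odo\phi_n\to\phi_0\circ(\phi_1\odo\phi_n)$, taken modulo the relations of a monoidal category. Re-read in the strict model, the isomorphisms are exactly the elements of $\Iso(\F^+)$ just identified, while each generating morphism has source the word $\phi=\phi_0\odo\phi_n$ and target the basic morphism $\psi=\phi_0\circ(\phi_1\odo\phi_n)\in(\imath^\otimes\downarrow\imath)$; together with the isomorphisms these generate $\Mor(\F^+)$, which is the assertion.

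The step I expect to be the genuine obstacle is not the bookkeeping but making the second equivalence honest as an equivalence of (symmetric) monoidal groupoids: one must check that the comparison functor $(\F\downarrow\F)\to(\imath^\otimes\downarrow\imath^\otimes)$ is full, faithful and essentially surjective after restriction to isomorphisms, and that it intertwines the monoidal product on $\F^+$ (induced by $\otimes$) with the tensor product of the comma category. Concretely this requires that, using that $\imath^\otimes$ is an equivalence onto $\Iso(\F)$, every iso-square $\sds$ in $\F$ is represented by a pair $(u,v)$ of $\V^\otimes$-morphisms, uniquely up to the coherence isomorphisms absorbed by strictification; this is where axiom \eqref{morphcond} and the uniqueness-up-to-unique-isomorphism of the decomposition \eqref{feydecompeq} do the real work. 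Once that compatibility is in place, the identification with $\Mor(\V)^\otimes$ and the generation statement for $\Mor(\F^+)$ follow formally.
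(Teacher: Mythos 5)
Your proposal is correct and follows essentially the same route as the paper: strictify via coherence, use axiom \eqref{morphcond} to identify $\Iso(\F\downarrow\F)$ with the tensor powers of $\Iso(\F\downarrow\V)\simeq(\imath^\otimes\downarrow\imath)$, and then read off the generators of $\Mor(\F^+)$ directly from the definition of the plus construction. The only difference is one of exposition — you spell out the freeness of $\V^\otimes$ to justify that the isomorphisms are generated by commutativity constraints and morphisms of $\V$, and you flag explicitly where axiom \eqref{morphcond} and the uniqueness of the decomposition \eqref{feydecompeq} enter, details the paper's proof leaves implicit.
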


\begin{proof}
Note that by axiom (ii) $\Iso(\F\downarrow \F)$ is equivalent
to $(\Iso(\F\downarrow \V))^\otimes$, and by the definition above $\F^+$
 can be made strict by considering only objects in
 $(\F\downarrow\V)^\otimes\simeq(\imath^\otimes\downarrow \imath)^\otimes$
 together with their isomorphism, which are fixed by axiom (ii), and the
 generating morphisms
 $\gamma(\phi_0;\phi_1\kdk \phi_n):\phi=\phi_0\odo \phi_n\mapsto
 \psi=\phi_0\circ (\phi_1\odo \phi_n)\in (\imath^\otimes\downarrow\imath)$.
\end{proof}
The notation $\gamma$ was chosen to be reminiscent of operadic compositions. Note that in the
general case, there is a condition of composability.

\begin{cor}
\label{fplusupscor}
$\F^+\dashops_\C$ is equivalent to the category of strict monoidal functors $\D$ on the strictification, which
are uniquely determined by the data:
\begin{enumerate}
\item (Groupoid rep) A functor $\D$ from $\V^+=\Iso(\V^\otimes\downarrow \V)$ to $\C$ which is given by:
\begin{enumerate}
\item (Object data): $\D:\Obj(\V^\otimes \downarrow \V)\to \Obj(\C)$. That is an object $\D(\phi)$ of $\C$ for each basic morphism $\phi$.
\item (Iso data): Actions of the isomorphisms $\sds$.
That is a left action of isomorphisms of $\Mor(\V)^\otimes$
and a right action of $\\Mor(\V)$ on the $\D(\phi)$.
    \end{enumerate}
\item (Composition data) Morphisms
$\D(\gamma):\D(\phi_0)\otimes \bigotimes_{i=1}^n\D(\phi_i)\to \D(\psi)$
where $\psi=\phi_0\circ(\phi_1\odo \phi_n)\in (\F\downarrow \V)$.
\end{enumerate}
\end{cor}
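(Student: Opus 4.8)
The plan is to deduce this directly from the preceding Lemma, which already presents the strictified $\F^+$ by generators and relations, so that almost all of the real work has been done. First I would invoke MacLane coherence to replace the strong monoidal functors comprising $\F^+\dashops_\C$ by strict monoidal functors $\D$ on the strict model of $\F^+$; since every Feynman category is equivalent to a strict one and this equivalence passes to the functor categories, no information is lost and the task reduces to showing that a strict monoidal $\D$ is uniquely determined, and freely generated, by the data (1)--(2).

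Next I would read the generators off the Lemma and match them to the data. On objects, the strict $\F^+$ consists of words $\phi_0\odo\phi_n$ in basic morphisms, i.e.\ objects of $(\imath^\otimes\downarrow\imath)^\otimes$; since $\D$ is strict monoidal, $\D(\phi_0\odo\phi_n)=\D(\phi_0)\otimes\cdots\otimes\D(\phi_n)$, so $\D$ on objects amounts exactly to a choice of $\D(\phi)\in\Obj(\C)$ for each basic morphism $\phi$, which is the object data (1a). On morphisms, the Lemma says $\F^+$ is generated by the isomorphisms $\Iso(\F^+)\simeq\Mor(\V)^\otimes$ together with the generators $\gamma(\phi_0;\phi_1\kdk\phi_n)$. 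Restricting $\D$ to the subgroupoid $\Iso(\F^+)=\V^+$ yields a groupoid representation carrying the left $\Mor(\V)^\otimes$--action and the right $\Mor(\V)$--action, which is precisely the iso data (1b); and sending each $\gamma(\phi_0;\phi_1\kdk\phi_n)\colon\phi_0\odo\phi_n\to\psi$, with $\psi=\phi_0\circ(\phi_1\odo\phi_n)\in(\F\downarrow\V)$, to a morphism $\D(\phi_0)\otimes\bigotimes_{i=1}^n\D(\phi_i)\to\D(\psi)$ gives the composition data (2). Because every morphism of $\F^+$ is a composite of tensor products of these generators, $\D$ is determined by these values, which establishes the uniqueness half of the statement.

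The remaining and genuinely delicate point is the converse: that a consistent choice of data (1)--(2) really assembles into a functor, i.e.\ that the relations of $\F^+$ are respected. These relations are those of a monoidal category---associativity, units and the interchange law \eqref{interchangeeq}---together with equivariance of the generators $\gamma$ under the $\sds$--action. The interchange law is automatic since $\D$ is monoidal and the isomorphism actions on a tensor word are componentwise, and functoriality of the groupoid representation handles the isomorphism relations. The hard part will be verifying equivariance of the composition morphisms $\D(\gamma)$ under the $\sds$--action---the analogue of the equivariance axiom for operadic composition---while tracking the units carefully, in particular the element--type morphisms $\unit\to\imath(*)$ and the interaction of the monoidal unit with composition. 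I expect the bookkeeping of how $\Mor(\V)^\otimes$ acts on the left of a word while $\Mor(\V)$ acts on the right of the target to be the place where the relations must be matched most carefully; granting the consistency of the data, this produces a unique strict monoidal functor and hence the asserted equivalence.
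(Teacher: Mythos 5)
Your proposal is correct and follows the same route the paper intends: the corollary is stated without a separate proof precisely because it is read off from the preceding Lemma, which identifies the generators of $\Mor(\F^+)$ as the isomorphisms $\Iso(\F^+)\simeq\Mor(\V)^\otimes$ together with the $\gamma(\phi_0;\phi_1\kdk\phi_n)$, so that a strict monoidal functor is pinned down by its values on basic morphisms, on the groupoid $\V^+$, and on the $\gamma$'s. Your additional remarks on verifying the relations (interchange, unit constraints, $\sds$--equivariance of the composition data) for the converse direction are accurate and consistent with how the paper uses this corollary in Proposition \ref{plusrepprop}.
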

Note that the groupoid data states that if $\phi'\simeq \phi$ via $\sds$ in $(\F\downarrow \F)$, then $\D(\sds):\D(\phi)\stackrel{\sim}{\to}\D(\phi')$.
In particular,  if $\sigma,\sigma'\in \Mor(\V)$  are in same isomorphism class, that is are maps between elements of $\V$ which are in the same connected component of $\V$, then $\D(\sigma)\simeq\D(\sigma')$.

The corollary allows us to compute the first examples, which are the start of a ladder of complexity.
\begin{prop} \mbox{}
\label{plusrepprop}
\begin{enumerate}
\item For $\FFtriv$, the $\ops$ are  equivalent to associative monoids, aka.\ algebras.
\item For $\FinSet^+$ the $\ops$ are  equivalent to operads and for $\Surj^+$ to operads without constants.
\item For $\FinSet^+_<$ the $\ops$ are  equivalent to non--symmetric  operads and for $\Surj^+_<$ non--symmetric to operads without constants.
\end{enumerate}
\end{prop}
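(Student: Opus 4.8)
The plan is to treat all five Feynman categories uniformly through Corollary~\ref{fplusupscor}, which reduces a $\D\in\F^+\dashops_\C$ to three pieces of data: an object $\D(\phi)$ of $\C$ for every basic morphism $\phi$ of $\F$, the induced action of the isomorphisms $\sds$, and the composition morphisms $\D(\g(\phi_0;\phi_1\kdk\phi_n))\colon\D(\phi_0)\otimes\bigotimes_{i=1}^n\D(\phi_i)\to\D(\psi)$ for $\psi=\phi_0\circ(\phi_1\odo\phi_n)$. In every case the recipe is the same: the basic morphisms of $\F$ become the arity components of the target gadget, $\Iso(\F)$ supplies the (possibly trivial) symmetric action, and the generating morphisms $\g$ supply the composition. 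What then has to be checked is that the relations imposed on $\F^+$ (the monoidal-category relations together with $\sds$-equivariance) are \emph{exactly} the operad axioms, and that the construction is invertible, so as to give an equivalence of categories.

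For $\FinSet^+$ the basic morphisms are the maps $\pi_S\colon S\to\{*\}$, one for each finite set $S$; writing $\P(S):=\D(\pi_S)$ gives one object per arity, and since $\V=\underline 1$ the Iso data of Corollary~\ref{fplusupscor} is a left action of $\Mor(\underline 1)^\otimes\simeq\Mor(\SS)$, i.e.\ the collection of $\SS_n$-actions on the $\P(\un)$. A generating morphism $\g(\pi_T;\pi_{S_1}\kdk\pi_{S_n})$ has $\psi=\pi_T\circ(\pi_{S_1}\odo\pi_{S_n})=\pi_{\amalg_i S_i}$, so $\D(\g)$ is precisely the operadic composition $\P(T)\otimes\bigotimes_i\P(S_i)\to\P(\amalg_i S_i)$. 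I would verify operad associativity by comparing the two decompositions of a three-level iterate in $\F$, which is cleanest to organise through the forest/flow-chart description of iterated morphisms in $\F^+$ used in Theorem~\ref{forestthm}, and verify equivariance as the compatibility of $\D(\g)$ with the $\sds$-action recorded in the Iso data. Because $\pi_\emptyset\colon\emptyset\to\{*\}$ is a basic morphism of $\FinSet$ but not of $\Surj$ (it is not a surjection), the arity-zero component $\P(\emptyset)$ of constants is present for $\FinSet^+$ and absent for $\Surj^+$, producing operads resp.\ operads without constants. For the ordered cases the only change is that $\Iso(\FinSet_<)$ has trivial automorphism groups, so $\V^\otimes=\N_0$ and there is no symmetric action; the identical bookkeeping then yields non-symmetric operads, resp.\ non-symmetric operads without constants for $\Surj_<^+$.

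For part~(1) I would avoid recomputing from the trivial base and instead invoke the identification $\FFtriv^+=\Surj^<$ recorded after Example~\ref{Assex}: under it the basic morphisms of $\F^+$ are the ordered surjections, the single binary one $(\pi_2,1<2)$ supplies a multiplication $\mu$, and the fact that the two iterated composites of $(\pi_2,1<2)$ yield the same ordered surjection $\underline 3\to\underline 1$ forces $\mu\circ(\mu\otimes\id)=\mu\circ(\id\otimes\mu)$; this matches the already-established description of $\Surj^<\dashops_\C$ as associative monoids from the treatment of noncommutative sets, so part~(1) follows. The step I expect to be the genuine obstacle is common to all parts and lives in the second paragraph: showing that $\sds$-equivariance of the $\D(\g)$ is neither weaker nor stronger than the operad equivariance axioms---in particular that the left $\SS_n$-action together with the block-permutation behaviour of composition is reproduced with no spurious extra relations---and correspondingly that associativity of the $\D(\g)$ captures exactly the operad associativity of iterated insertions, for which the forest model of Theorem~\ref{forestthm} is the organising device.
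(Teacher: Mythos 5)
Your proposal is correct and, for parts (2) and (3), follows essentially the same route as the paper: reduce via Corollary~\ref{fplusupscor} to groupoid data plus composition data, compute $\V^+$ explicitly (obtaining $\SS$ in the symmetric case and a discrete $\N_0$ in the ordered case), identify $\D(\gamma(\pi_T;\pi_{S_1}\kdk\pi_{S_n}))$ with the operadic composition $\gamma_{m;n_1\kdk n_m}$, and observe that dropping $\pi_\emptyset$ in the surjective case removes the constants. The paper likewise defers the detailed matching of $\sds$-equivariance and associativity against the standard operad axioms to the literature, so the step you flag as the remaining obstacle is no worse than what the paper itself leaves implicit.

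The one genuine divergence is part~(1). The paper computes directly from the trivial base: $\V^+=\Iso(\V^\otimes\downarrow\V)=\{id_*\}$, so the groupoid data is a single object $A=\D(id_*)$, the composition datum is $\mu=\D(\gamma(id_*;id_*))\colon A\otimes A\to A$, and associativity follows from $(id_*\circ id_*)\circ id_*=id_*\circ(id_*\circ id_*)$. You instead invoke the identification $(\FFtriv)^+\simeq\Surj^<$ and the earlier classification of $\Surj^<\dashops_\C$ as non--unital associative monoids. This is not circular --- that identification is established independently via the graphical description (Theorem~\ref{forestthm} and Proposition~\ref{algebraprop}) --- but it imports the full forest machinery for what the paper settles in three lines, and in the paper's own ordering that identification is only proved after the present proposition. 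The direct computation is both shorter and self--contained here, so I would use it; your route is better viewed as a consistency check between the two descriptions. One further small point: in part~(3) you read $\FinSet^+_<$ as the plus construction of the non--$\Sigma$ category of ordered sets and order--preserving maps, whereas the paper's proof works with orders on the fibers (so that $\V^+$ has objects $(\pi_n,<_{\un})$ with $\SS_n$ acting freely and transitively). Both readings yield a discrete skeleton $\N_0$ and hence non--symmetric operads, so your conclusion stands, but you should be aware the computations differ.
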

For the reader unfamiliar with operads the latter two statements  can serve as a definition, see
\eqref{operadgammaeq} below.

\begin{proof} To calculate the groupoid data:
$\V=\triv$, $\V^+=\Iso(\V^\otimes \downarrow \V)=\Mor(\V)=id_*$. So the groupoid part of $\D$ is fixed by an object $A:=\D(id_*)$.
The composition data is morphism $\mu:\D(\gamma(id_*;id_*):\D(id_*\amalg id_*)=A\otimes A\to \D(id_*\circ id_*)=\D(id_*)=A$.
The associativity of $\mu$ follows from the associativity $(id_* \circ \id_*)\circ id_*=id_*\circ (id_*\circ id_*)$.

For $\FinSet^+$, $\V=\triv, \V^{\otimes}=\SS$ and $\V^+=\Iso(\SS\downarrow \{*\})$
has as objects the maps $\un\to \{*\}$ where $n$ may be zero, where $\underline{0}=\emptyset$. Since $\{*\}$ is a one-element set,
there is a unique map $\pi_n:\un\to \{*\}$ and hence the objects of $\V^+$ are  given by $\N_0$. As the action of isomorphisms on the target $\un$ is trivial, the isomorphisms are exactly the isomorphism of the source $\un$ and hence $Aut(\pi_n)=\SS_n$ and there are no isomorphisms between $\pi_n$ and $\pi_m$ for $n\neq m$. Thus $\V^+ \simeq \SS$, and the groupoid data  is a functor $\O:\SS\to \C$ that is an $\SS$--module which is a collection of objects $\O(n)=\O(\pi_n)$ together with an action of $\SS_n$ on $\O(n)$, were we reverted from the notation $\D$ to the generic $\O$ for elements of $\ops$, which also conforms with the usual operad notation. The composition data is given by the maps, which define an operad. Denoting $\O(\gamma(\pi_m;\pi_{n_1}\kdk\pi_{n_m}))=\gamma_{m;n_1\kdk n_m}$
\begin{equation}
\label{operadgammaeq}
\xymatrix{
\O(m)\otimes \O(n_1)\odo \O(n_m)\ar@-[rr]^{\gamma_{m;n_1,\dots, n_m})}\ar@{=}[d]&&\O(n)\ar@{=}[d]\\
\O(\pi_m\amalg \pi_{n_1}\amalg\cdots\amalg \pi_{n_m})&& \O(\pi_n)
}
\end{equation}
where $n=\sum n_i$. The associativity gives a condition on $\gamma$ as does the $\SS_n$ actions. These are spelled out in any text on operads, see e.g.\ \cite{MSS,woods}
or \cite[\S2.2.5]{HopfPart1}  for a formula using indexing.

If we only have surjections, the map $\pi_0$ is missing and hence there is no $\O(0)$, in other words, there are no constant terms.

If we have an order on the fibers of the maps, then the objects of $\V^+$ are $(\pi_n,<_{\un})$ with isomorphisms acting transitively on the orders,
that is we have as a groupoid the objects $\{\SS_n\}, n\in \N_0$ with $\SS_n$ acting on $\SS_n$ as automorphisms via the regular representation.
Its skeleton is simply $\N_0$ as a discrete category. A representative for each isomorphism class is given by $(\pi_n,<)$ where $<$ is the standard order on $\un$.
Thus the groupoid part is simply given by a sequence of objects $\O(n)$.
Composition is as above where $\gamma$ is fixed  above and it has to satisfy the condition of associativity.

\end{proof}
\begin{rmk}
\label{operadrmk}
One can ask about the pseudo--operad structure.
Using the subset with $\phi_i=\pi_n$, and all other $\phi_j=\pi_1$ we obtain maps
\begin{equation}
\gamma_{n;1\kdk,1,m,\kdk 1}:\O(n)\otimes \O(1)\odo \O(1)\otimes \O(m)\otimes \O(1) \odo \O(1)\to \O(n+m-1)
\end{equation}
To obtain the usual pseudo--operad maps $\circ_i$, we at this time lack an operadic unit.

If we have a unit $u:\unit\to \O(1)$ which is a unit for the operation $\g$:
\begin{eqnarray}
\label{leftuniteq}
\gamma_{1;n} \circ (u\otimes id_{\O(n)})&=&id_{\O(n)}\\
\label{rightuniteq}
\gamma_{m;1\kdk 1}\circ (id_{\O(m)}\otimes u^{\otimes m})&=&id_{\O(m)}
\end{eqnarray}
where we tacitly used the unit constraints.
Thus if there is a unit and
   then we can define
\begin{multline}
\label{circieq}
\circ_i:\O(n)\otimes \O(m)\simeq \O(n)\otimes \unit\odo \unit \otimes \O(m)\otimes
\unit \odo \unit \\
\stackrel{id_{\O(n)}\otimes u \odo u \otimes id_{\O(m)} u \odo u}{\longrightarrow}
\O(n)\otimes \O(1) \odo \O(1) \otimes \O(m)\otimes
\O(1) \odo \O(1)\\
\stackrel{\gamma_{n;1\kdk 1,m,1\kdk 1}}{\longrightarrow}  \O(n+m-1)
\end{multline}
and using \eqref{leftuniteq} and \eqref{rightuniteq}, we can recover $\g$ from \eqref{circieq} by using iterated $\circ_i$ operations and associativity.

However, if $\O$ is reduced, which means that $\O(1)=\unit$
the composition data factors trough unit constraints
then $\O$ is automatically unital. In the general case,
this is formalized by the definition below.

\end{rmk}

\begin{df}
\label{functortypedef}
A groupoid compatible pointing for a functor $\D\in \F^+\dashops$
is a collection of elements $u_\sigma:\unit_\C \to \D(\sigma)$, $\sigma\in \Mor(\V)$,
which satisfy $\D(\sigma)\circ u_{\sigma'}=u_{\sigma\circ\sigma'}$ and are compatible
with  groupoid action and  the composition data $\D(\gamma)$. I.e.\
the following diagrams commute:

\begin{equation}
\label{urightcompateq}
\xymatrix{
\D(\phi_0)\otimes \D(\sigma_1)\odo \D(\sigma_n)\ar[r]^{\D(\gamma)}&
\D(\phi_0\circ (\sigma_1\odo\sigma_n))\\
\D(\phi_0)\otimes \unit\odo \unit
\ar[u]_{id\otimes u_{\sigma_1}\odo u_{\sigma_n}}
&\ar[l]_{\text{unit constraints}}^\simeq
\D(\phi_0)
\ar[u]_{\D((\sigma^{-1}_1\odo\sigma^{-1}_n\Downarrow id))}^\simeq
}
\end{equation}
where the right morphisms is given by the groupoid data and
\begin{equation}
\label{uleftcompateq}
\xymatrix{
\D(\sigma_0)\otimes \D(\phi_1)\ar[r]^{\D(\gamma)}
&\D(\sigma_0\circ \phi_1)\\
\unit \otimes \D(\phi_1)\ar[u]_{u^{\sigma_0}\otimes id}
&
\D(\phi_1)\ar[l]_{\text{unit constraint}}^{\simeq}
\ar[u]_{\D((id\Downarrow \sigma_0))}^{\simeq}
}
\end{equation}

A functor $\D$ is called reduced if $\D(\sigma)\simeq\unit$ for all $\sigma\in \Mor(\V)$.

A functor $\D$ and a choice of groupoid compatible pointing, is called {\em groupoid compatibly pointed (gcp)} functor.

A functor $\D\in \F^+\dashops$ is a {\em hyper--functor} if it is reduced and gcp using the identification $\D(\sigma)\simeq\unit$.

\end{df}

\begin{rmk}
\label{unitalrmk}
\mbox{}
\begin{enumerate}

\item  Due to the groupoid data, to check that  $\D$ is reduced it suffices to check $\D(id_*)\simeq \unit$ for a set of representatives $*$ for the isomorphism classes of $\V$.

 \item \label{usigmapart}
Due to the compatibility with the action of the groupoid, the $u_\sigma$ are also already fixed by a choice of the $u_{id_*}$  where  $*$ runs through representative of  the isomorphism classes of objects of $\V$. Concretely, if $\sigma: *\to *'$ an isomorphism, then
$\D((id\Downarrow\sigma))\circ u_{id_*}=u_\sigma$.
\item From \eqref{uleftcompateq} and \eqref{urightcompateq} it follows that
 $\D(\gamma(\sigma;\sigma'))\circ (u_{\sigma}\otimes u_{\sigma'})=u_{\sigma\circ\sigma}$.

\end{enumerate}

\end{rmk}
\begin{ex}

For $\FFtriv$, we retrieve the motivating Example \ref{Assex}. The $(\FFtriv)^+=\ops$ are associative mo\-noids in $\C$. There is again only one isomorphism
in the skeleton of $\V$ namely $id_*$.
Gcp means that the monoids are unital and reduced means that $M\simeq\unit$.
Thus a hyper--functor is trivial.

For $\FinSet$, we retrieve Remark \ref{operadrmk}. The $\FinSet^+\dashops$ are operads,  the gcp respectively reduced $\FinSet^+\dashops$ are unital operads, respectively the reduced operads. The hyper functors are unital reduced operads.

\end{ex}

\begin{rmk}
For a gcp functor $\D$, we can define the analogue of the $\circ_i$ of \eqref{circieq}.
\end{rmk}
\begin{df} For a gcp functor $\D$, let $\phi:*_1\odo *_n\to Y$
and $\psi:*'_1\odo *'_n\to *_i$,
then define:

\begin{multline}
\label{circigeneq}
\circ_i:\D(\phi)\otimes \D(\psi)\simeq \D(\phi)\otimes \unit\odo \unit \otimes \D(\psi)\otimes
\unit \odo \unit \\
\stackrel{id_{\D(\phi)}\otimes u_{id_{*_1}} \odo u_{*_{i-1}} \otimes id_{\D(\psi)}
u_{*_{i+1}} \odo u_{*_n}}{\longrightarrow}
\D(\phi)\otimes \D(id_{*_1}) \odo \D(id_{*_{i-1}}) \otimes \D(\psi)\\
\otimes
\D(id_{*_{i+1}}) \odo \D(id_{*_n})\\
\stackrel{\gamma}{\to}  \D(\phi\circ id_{*_1}\odo id_{*_{i-1}}\otimes \phi)
\end{multline}
\end{df}

\begin{rmk}
The difference between this and the equation \eqref{operadgammaeq} is that there
are possibly many objects
in $\F$ and hence more identities. Using the general compatibility  conditions
\eqref{rightcompateq} and \eqref{leftcompateq}, we can recover the $\g$ from the $\circ_i$.
Thus the $\circ_i$ give another generating set which is ``local'' in the sense that it involves only two morphisms and
equivalently only one edge in the graphic description of Appendix \ref{graphpluspar}.
\end{rmk}

\subsubsection{Signs as hyper--functors}
\label{Kpar}

For a finite-dimensional $k$ vector space $V$ of dimension $n$, let $\det(V)$
be the graded vector space
$
\det(V) = \Sigma^{-n} \Lambda^nV ;
$
this is the one-dimensional top exterior power of $V$, concentrated in
degree $-n$. If $S$ is a finite set, let $\det(S)=\det(k^S)$. There is a natural action of $\Aut(S)$ on $\det(S)$. Choosing an order on $S$, thereby identifying the set $\Aut(S)$ with $\SS_n$ as an $\SS_n$-module $\Sigma^{-|S|}\sign(\SS_{|S|})$, where $\sign$ is the sign representation.

\begin{ex}[Signs: $\fK$]
\label{Kex}
For $\GG$  (see Appendix \ref{graphsec}, especially \S\ref{GGdefsec}), we define  $\fK\in\Agg^+\dashops$ as follows.

\begin{equation}
\fK(\phi)=\det(E_{ghost}(\phi))
\end{equation}
The composition is given by $\fK(\phi_0)\otimes \fK(\phi_1)\odo \fK(\phi_n)=\det(E_{ghost}(\phi)\otimes \det(E_{ghost}(\phi_1)) \odo \det( E_{ghost}(\phi_n))\to \det(E_{ghost(\phi_0)\amalg E_{ghost}(\phi_1)}\amalg \cdots \amalg E_{ghost}(\phi_n))= \det(E_{ghost}(\phi_0\circ (\phi_1\odo \phi_n))$ with the identification according to \eqref{icompeq}, see also Lemma \ref{ghostamalglem}.
Since an isomorphism $\sigma$ does not have any edges, $\fK(\sigma)=\unit=k$ and the composition is simply given by the unit constraints.
Hence $\fK$ is a hyper--functor.
This generalizes \cite{GKmodular}.

As the twist is a hyper--functor, for the compositions data it is enough to give the data of the $\circ_i$
which are simply
$\fK(\phi)\circ_i\fK(\psi)=\det(E_{ghost}(\phi)\amalg E_{ghost}(\psi))$ and check that these are appropriately associative, which is
straightforward.

The fact that this gives a hyper-functor basically boils down to the fact that the ghost graph uniquely determines the isomorphism class of a basic morphism in $\Agg$.
\end{ex}

\begin{ex}[Homology twist]
We similarly define ${\it Det}$ via ${\it Det}(\phi)=\det(H_1(\gh(\phi)))$.
\end{ex}
\subsubsection{$\V$--twists and suspension}
\label{suspensionpar}
There are special types of hyper--functors called $\V$--twists, see \cite[\S4.2.1]{feynman} generalizing
cobordism twist of \cite{GKmodular}. Consider $Pic(\C)$, that is the full subcategory of  tensor invertible elements in $\C$ and an $\fL\in \V\dashopcat_{Pic(\C)}$.

Given a hyper--functor $\D$ one defines the $\fL$ twist as
\begin{equation}
\label{Ktwisteq}
\D_{\fL}(\phi)=\fL(t\phi)^{-1}\ot \D(\phi)\ot \fL(s(\phi))
\end{equation}
where again $s,t$ are the source and target maps and the composition uses the morphism $\fL(X)^{-1}\ot \fL(X)\simeq \unit$.
\begin{ex}{Suspensions}
One of the most important twist sis given by using suspension.
These are defined for graph based Feynman categories. In particular for the operadic and modular operad categories.

There are two interesting versions. The first is the naive suspension. $\Sigma$
which takes values $\Sigma(*_S)=\Sigma \unit$.

The second in the genus marked case is $\fs$ given by $\fs(*_{S,g})=\Sigma^{-2(g-1)-|S|}\sign_{\SS_|S|}$.
Without the genus marking $g=0$.
\end{ex}

There is a fundamental relation
\begin{equation}
\K \simeq {\it Det}_{\fs\Sigma}
\end{equation}
This states that if $\gh(\phi)$ is contractible, the $\K$ is simply the suspension $\fs\Sigma$. This is precisely the odd structure for the bar complex, see \S \ref{baralgebrasec}.
But, {\em if the underlying graph has topology, the mere suspensions do not suffice.}

This is one of the basic mantras explained in detail in \cite{KWZ}.

\subsubsection{Gcp version and hyper version of $\FF^+$}

For the different types of functors in Definition \ref{functortypedef} there are Feynman categories
through which these functors factor.

For this one adjoins morphisms and mods out by relations.
The paradigmatic example is $\FinSet$ which is generated by $\surj$ and the morphism
$i:\emptyset \to \{*\}$, which satisfies the relation $\pi_{S_+}\circ (id_S\amalg i)=\pi_S$, where $\pi_S:S\to \{*\}$ is
the unique surjection, $S_+=S\amalg \{*\}$ and $id_S\amalg i:S=S\amalg \emptyset\to S$.

\begin{df}
\label{gcpdef}
We define $\FF^{+\it gcp}=(\V^{+\it gcp},\F^{+\it gcp},\imath^{+\it gcp})$
as follows: $\V^{+\it gcp}=\V^+, \Iso(\F^{+\it gcp})=\Iso(\F^+)$ and $\imath^{+\it gcp}=\imath^+$.
But for $\F^{+ \it gcp}$,
we first freely adjoin  a morphism
$i_\sigma:\unit_{\F^+}=id_{\unit_\F}\to \sigma$ for all $\sigma \in \V^+$, and then
 mod out by the relations
 \begin{equation}
 \label{gcpreleq}
\sds (i_\tau)=i_{ \sigma'\circ\tau\circ \sigma}
 \end{equation}
 implementing the compatibility with the groupoid structure of $\V^+$.
The compatibility with the generating morphisms is forced by modding out by the relations postulating that the following diagrams commute
\begin{equation}
\label{rightcompateq}
\xymatrix{
\phi_0\otimes \sigma_1\odo \sigma_n\ar[r]^{\gamma}&
\phi_0\circ (\sigma_1\odo\sigma_n)\\
\phi_0\otimes \unit_{\F^+}\odo \unit_{\F^+}
\ar[u]_{id\otimes i_{\sigma_1}\odo i_{\sigma_n}}
\ar[r]_{\text{unit constraints}}^\simeq
&\phi_0\ar[u]_{(\sigma^{-1}_1\odo\sigma^{-1}_n, id)}^\simeq
}
\end{equation}
where the right morphisms is given by the groupoid data and

\begin{equation}
\label{leftcompateq}
\xymatrix{
\sigma_0\otimes \phi_1\ar[r]^{\gamma}
&\sigma_0\circ \phi_1\\
\unit_{\F^+} \otimes \phi_1\ar[u]^{i_{\sigma_0}\otimes id}
&
\phi_1\ar[l]_{\text{unit constraint}}^{\simeq}
\ar[u]_{(id\Downarrow \sigma_0)}^{\simeq}
}
\end{equation}

$\FF^{\it hyp}=(\V^{\it hyp},\F^{\it hyp},\imath^{\it hyp})$ is defined as follows: $\F^{\it hyp}$ is the quotient of $\F^{+ \it gcp}$ by the relation that $i_\sigma$ is invertible.
$\V^{\it hyp}$ is the full sub--groupoid of $\V^+$ whose objects are non--isomorphisms and $\imath^{\it hyp}$ is the restriction of $i^+$.
\end{df}
\begin{prop}
\label{gcpfactorprop}
Both $\FF^{+ \it gcp}$ and $\FF^{\it hyp}$ are Feynman categories. Moreover any gcp functor factors through $\FF^{+ \it gcp}$ and any hyper--functor factors through $\FF^{\it hyp}$.
\end{prop}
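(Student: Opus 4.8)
The plan is to reduce all four assertions to the already-proven fact that $\FF^+$ is a Feynman category (the preceding Proposition), together with a normal-form analysis of the freely adjoined element-type generators $i_\sigma$. I would establish the three axioms of Definition \ref{feynmandef} for $\FF^\gcp$ first. Axiom \eqref{objectcond} is inherited directly: by construction $\V^\gcp=\V^+$, $\Iso(\F^\gcp)=\Iso(\F^+)$ and $\imath^\gcp=\imath^+$, and each $i_\sigma\colon\unit_{\F^+}\to\sigma$ is of element type, hence of negative native length and never invertible; so neither adjoining the $i_\sigma$ nor imposing the relations \eqref{gcpreleq}, \eqref{rightcompateq}, \eqref{leftcompateq} enlarges the isomorphism classes, and \eqref{objectcond} holds because it holds for $\FF^+$. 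Axiom \eqref{smallcond} is likewise inherited, since we have adjoined only a set's worth of generators and then passed to a quotient.

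The real content is axiom \eqref{morphcond} for $\FF^\gcp$. The basic morphisms of $\F^\gcp$ are those of $\F^+$, namely the $\gamma(\phi_0;\phi_1\kdk\phi_n)$, together with the new $i_\sigma$. I would prove the decomposition by exhibiting a normal form: relations \eqref{rightcompateq} and \eqref{leftcompateq} allow one to commute any $i_\sigma$ feeding into a composition morphism $\gamma$ upward past that $\gamma$, so that after finitely many moves every element-type generator sits at a leaf of the associated flow chart of Appendix \ref{graphpluspar}, while \eqref{gcpreleq} reconciles this placement with the groupoid action. Once all $i_\sigma$ are at the leaves, the remaining skeleton is an honest morphism of $\F^+$, whose decomposition into basic morphisms and its uniqueness up to isomorphism are supplied by axiom \eqref{morphcond} for $\FF^+$. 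Checking that the relations are homogeneous for this normal form---so that no two genuinely distinct basic decompositions get identified---then yields the required equivalence $(\Iso(\F^\gcp\downarrow\V^\gcp))^\otimes\simeq\Iso(\F^\gcp\downarrow\F^\gcp)$.

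The factorization of gcp functors is then just the universal property of this presentation. A strong monoidal $\tilde\D\in\F^\gcp\dashops$ restricts along the inclusion $\F^+\hookrightarrow\F^\gcp$ to some $\D\in\F^+\dashops$ and records morphisms $u_\sigma:=\tilde\D(i_\sigma)\colon\unit_\C\to\D(\sigma)$; under $\tilde\D$ the defining relations \eqref{gcpreleq}, \eqref{rightcompateq} and \eqref{leftcompateq} become exactly the groupoid-compatibility of Remark \ref{unitalrmk} and the commuting squares \eqref{urightcompateq}, \eqref{uleftcompateq} of Definition \ref{functortypedef}, so $(\D,\{u_\sigma\})$ is a gcp functor. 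Conversely a gcp functor determines $\tilde\D$ on generators by $\tilde\D(i_\sigma)=u_\sigma$, and the same relations make this well defined; this gives an equivalence between $\F^\gcp\dashops$ and the category of gcp functors, which is the asserted factorization. For $\FF^\hyp$ I would observe that $\F^\hyp$ is the localization of $\F^\gcp$ inverting the $i_\sigma$; inverting an element-type morphism $\unit\to\sigma$ forces $\sigma\simeq\unit$, so that the objects of $\V^+$ acquiring an invertible $i_\sigma$ become trivial, and passing to the full subgroupoid $\V^\hyp$ of non-isomorphism objects of $\V^+$ removes exactly these, so axioms \eqref{objectcond}--\eqref{smallcond} are verified as before. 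A hyper-functor is by Definition \ref{functortypedef} a reduced gcp functor, i.e. one with every $u_\sigma$ invertible; under the equivalence just established this is precisely an object of $\F^\gcp\dashops$ inverting the $i_\sigma$, equivalently an object of $\F^\hyp\dashops$, giving the factorization.

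The step I expect to be the main obstacle is axiom \eqref{morphcond} for $\FF^\gcp$: showing that the relations \eqref{gcpreleq}, \eqref{rightcompateq} and \eqref{leftcompateq} are exactly strong enough to produce a unique-up-to-isomorphism basic decomposition and no stronger, so that they neither fail to move every $i_\sigma$ to a leaf nor collapse distinct morphisms. This confluence and bookkeeping are cleanest in the graphical language of Appendix \ref{graphpluspar}, where the $i_\sigma$ correspond to capping inputs of corollas and the relations to the evident moves on the flow charts.
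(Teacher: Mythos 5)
Your proposal is correct and follows essentially the same route as the paper: the paper likewise disposes of $\FF^{+\it gcp}$ by noting that only element-type morphisms are adjoined and that the relations preserve decomposability, handles $\FF^{\it hyp}$ by observing that inverting the $i_\sigma$ makes the subcategory spanned by $\unit$ and the $\sigma\in\Mor(\V)$ contractible onto $\unit$ (leaving an equivalent Feynman category with vertices $\V^{\it hyp}$), and declares the factorization statements straightforward. You simply supply the details (normal form for axiom \eqref{morphcond}, the universal-property reading of the relations) that the paper's terse proof leaves implicit.
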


\begin{proof}
The last statement is straight-forward. The first statement for
for $\FF^{+ \it gcp}$ this is again straight--forward, as we are only adding element--type morphisms and the relations preserve decomposability.
For $\FF^{\it hyp}$ this follows from a two--step argument:
First, inverting the $i_\sigma$, we see that in $\F^{\it hyp}$ the full subcategory spanned by the elements $\sigma\in \Mor(\V)$ and $\unit$ is equivalent to a discrete category. There is a unique morphism between any two objects which is an isomorphism: $\Hom_{\F^{\it hyp}}(\sigma,\sigma')=\{i_{\sigma'}\circ i_\sigma\}$, $\Hom(\unit,\sigma)=\{i_\sigma\}$, $\Hom(\sigma,\unit)=\{i^{-1}_\sigma\}$ and $\Hom(\unit,\unit)=id_\unit$.  Contracting this isomorphism class to $\unit$, we obtain an equivalent category that is clearly a Feynman category with vertices $\V^{\it hyp}$.

\end{proof}

\begin{rmk}
\label{irmk}
From \eqref{leftcompateq} and \eqref{rightcompateq} it follows that $\gamma(i_\sigma\otimes i_{\sigma'})=i_{\sigma\circ\sigma'}$.
Also, the analog of Remark \ref{unitalrmk} applies using $i_\sigma$ in lieu of the $u_\sigma$.
\end{rmk}

\subsection{Computations for the plus construction: realizing the first ladder}
In this section, we will apply the explicit graphical presentation for the plus construction of Appendix \ref{graphpluspar} and starting  from $\FFtriv$ construct the categories corresponding to various types of algebras in a first step and in a second step the Feynman categories for various forms of operads.

\subsubsection{From objects to monoids, via the plus construction}

Using Theorem \ref{forestthm}, we can compute $(\FFtriv)^+$ and $(\FFtriv)^\gcp$ to obtain the first rung of the fundamental ladder.

In $\Crl_{(\FFtriv)^+}$, see \S\ref{crlFdefpar}, the vertex color is necessarily $id_*$ as it is the only possible morphism and all the flag colors are $*$. The decorations $(\bs,\be)$ for morphisms are necessarily given by identities. There is only one objects in $\Crl_{(\FFtriv)^+}$ whose isomorphism group is trivial. Hence the objects of
$(\Crl_{(\FFtriv)^+})^\otimes$ are the $*_{id_n},$ with $id_n=:id_{*^{\otimes n}}=(id_*)^{\otimes n}$ whose automorphisms are   $\SS_n$,.
Identifying $n$ with $\id_*^{\otimes n}$
  automorphisms  $Iso(\Agg_{(\FFtriv)^+})\simeq (\Crl_{(\FFtriv)^+})^\otimes=\SS$

A general morphism in  $\Agg_{(\FFtriv)^+}$, see \S\ref{aggFdefpar}, is given by decorated forests whose trees are linear, i.e.\ they all have bi--valent vertices, since $id_*$ is the only possible decoration.
 The map $g_V$ is a surjection $\un\ta\um$. Let $|g_V^{-1}(i)|=n_i$.
  Writing a 2--regular, aka.\ linear, tree with $n$ vertices as $\tn$, we see that the morphisms in $\Agg_{(\FFtriv)^+}$
have underlying trees $\dottree n_1\cdots \dottree n_m$. The vertices in each $\dottree n_i$ are ordered from the root to the top,
thus giving an order on the fibers $g_V^{-1}(i)$, as each vertex has a unique outgoing flag.
The extra data identifies the root, that is the only outgoing flag, of $\dottree n_i$ with the target $\dottree$ corresponding to $i$ in $\um$.

A basic morphism, is thus given by one linear tree with an ordering of its vertices $(\tn,<)$. Here $<$ can alternatively be thought of as an identification of the vertices of the tree with the factors of $id$ in $(id_*)^{\otimes n}$, that is a bijection $\un\leftrightarrow\un$ aka.\ an order on $\un$.
The isomorphisms are given by permuting the (source) vertices. By pre--composing, they act as permutations on the vertices of the linear tree and hence transitively on the linear orders on the fibers.

Considering $(\FFtriv)^\gcp$, there is one added  morphism $i_{id_*}:\emptyset\to \dottree $.
It is convenient to introduce the notation $\dottree u$ the ghost-graph of the morphism $i_{id_*}$.
There are the relations
$(\pi_{2},1<2)\circ (id,1)\amalg i=(\pi_{2},1<2)\circ i \amalg(id,1)=
(\pi_2, 1<2)$. Note that $(\pi_2,2<1)=(\pi_2,1<2)\circ \tau_{12}$ and thus we also have the unit equation regardless of the position and order of the unit insertion.

An example is given in Figure \ref{linearfig}.

 \begin{figure}
   \includegraphics[width=.8\textwidth]{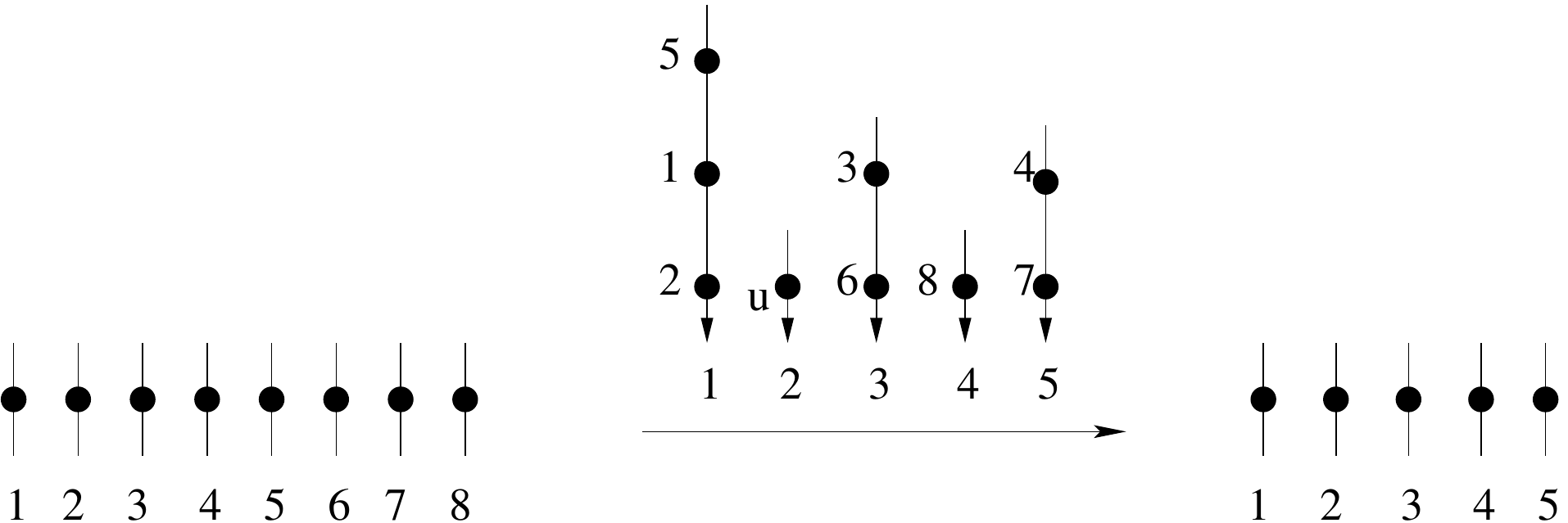}
   \caption{\label{linearfig} A morphism given by a marked linear forest. This yields the same morphism in $(\FFtriv)^\gcp=\FNCSet$ given in Figure \protect{\ref{plcorfig}}. The underlying forest is
   $\dottree 3 \, \dottree 0\, \dottree 2\, \dottree 1\, \dottree 2$, which fixes the surjection $\underline{8}\ta\underline{5}$.}
 \end{figure}

\begin{prop}
\label{algebraprop}
As Feynman categories
$(\FFtriv)^+\simeq \Surj^>$ and $(\FFtriv)^\gcp\simeq\FNCSet$.
\end{prop}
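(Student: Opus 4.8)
The plan is to realize both equivalences by an explicit functor and then check that it is a morphism of Feynman categories which is fully faithful and essentially surjective. I would work throughout with the strictified, skeletal models and rely on the graphical presentation of $\F^+$ supplied by Theorem~\ref{forestthm}. This is precisely why the discussion preceding the statement recorded that every morphism of $\Agg_{(\FFtriv)^+}$ is a decorated \emph{linear} forest $\dottree{n_1}\cdots\dottree{n_m}$: the only admissible vertex decoration is $id_*$, which forces all vertices to be bivalent, so the combinatorics reduces to chains of vertices.

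For $(\FFtriv)^+\simeq\Surj^<$ (the finite sets with surjections carrying orders on their fibers; the superscript merely records the root-to-top order convention written $>$ in the statement) I would first match the groupoid data: Proposition~\ref{plusrepprop} already gives $\V^+\simeq\triv$ with $\Iso(\Agg_{(\FFtriv)^+})\simeq\SS$, and for $\Surj^<$ one has $\V=\triv$ with $\Iso(FS^<)=\SS$, so both Feynman categories have identical groupoid and length-one automorphism data indexed by $\N_0$. I would then send a basic morphism of $(\FFtriv)^+$, i.e.\ a single ordered linear tree $(\tn,<)$, to the basic morphism $\pi_n\colon\un\ta\uone$ of $\Surj^<$ whose unique (full) fiber carries the order read off from the root-to-top ordering of the vertices of $\tn$. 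By axiom~(ii) every morphism decomposes uniquely, up to the groupoid action, as a tensor product of basic morphisms, so extending this assignment $\otimes$-multiplicatively across a forest $\dottree{n_1}\cdots\dottree{n_m}$ produces exactly the surjection $g_V\colon\un\ta\um$ together with its fiberwise orders. This is visibly a bijection on morphism sets and carries the monoidal product (disjoint union of forests) to $\amalg$, so essential surjectivity and the monoidal structure are immediate; the inclusion $\imath^+$ matches $\imath$ since both pick out the single-corolla basic morphisms.

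The one genuinely non-formal point, and the step I expect to be the main obstacle, is compatibility with composition: I must verify that the operadic composition of $\gamma$-generators in $\F^+$ — grafting one linear tree into a vertex of another — translates into exactly the lexicographic composition of fiber orders defining composition in $\NCSet$ and $\Surj^<$. Concretely, grafting $\dottree{k}$ onto a vertex of $\dottree{n}$ splices the $k$ new vertices into the chain, and I would check, fiber by fiber, that the root-to-top order on the resulting longer chain agrees with the rule ``$t'<_{f\circ g}t$ when $t,t'$ lie in a common $g$-fiber ordered by $<_g$, and otherwise according to the $g$-images'' recorded in the construction of $\NCSet$. This is a careful but elementary bookkeeping on the total order obtained by concatenating vertex chains along the grafted tree; once it is in place, functoriality and hence the equivalence $(\FFtriv)^+\simeq\Surj^<$ follow.

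Finally, for $(\FFtriv)^\gcp\simeq\FNCSet$ I would invoke Definition~\ref{gcpdef}: passing from $\FF^+$ to $\FF^{\gcp}$ adjoins the single element-type morphism $i_{id_*}\colon\unit\to id_*$, whose ghost graph is the extra piece $\dottree u$, subject to the relations \eqref{rightcompateq} and \eqref{leftcompateq}. Under the functor built above, $i_{id_*}$ corresponds precisely to the inclusion $i\colon\emptyset\to\{*\}$ of $\FNCSet$, i.e.\ the nullary, empty-fiber corolla, and the gcp relations specialize to the unit relations $\pi_{S_+}\circ(id_S\amalg i)=\pi_S$ already exploited in the proof of Proposition~\ref{monoidprop}. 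Adjoining empty fibers to surjections-with-fiber-orders yields all set maps carrying orders on their nonempty fibers, which are exactly the morphisms of $\NCSet$ — the comparison of Figures~\ref{linearfig} and~\ref{plcorfig} is the prototypical instance. Hence the extended functor is again a bijection on morphisms compatible with composition and $\otimes$, and since the groupoid data and essential surjectivity are unchanged from the previous case, this gives the claimed equivalence.
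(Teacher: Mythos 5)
Your proposal is correct and follows essentially the same route as the paper: the paper's proof simply writes down the functor $v\colon\dottree\mapsto 1$, $f\colon(\tn,<)\mapsto(\pi_n,<)$ (with $i_{id_*}\mapsto i\colon\emptyset\to\uone$ in the gcp case) and relies on the preceding graphical analysis of $\Agg_{(\FFtriv)^+}$ as decorated linear forests for the verification. The composition check you flag as the main obstacle — grafting of linear chains matching lexicographic composition of fiber orders — is indeed the substantive point the paper leaves implicit, so spelling it out is a reasonable addition rather than a deviation.
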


\begin{proof} The morphism $\ff$ of Feynman categories  consists of the functor $v$ given by $\dottree=\dottree id_*\to 1$ and the functor $f$
 given by $\dottree^{\otimes n}\to \un$ and  on basic morphisms $f(\tn,<)=(\pi_n,<), n\geq 1$ where the order of $\tn$ is given by the height of the vertex $i$ and the order on $\un$ is the order induced by this. In the case of $(\FFtriv)^\gcp$ we also have $f(\dottree\; 0=i_{id_*})=i:\emptyset \to \underline{1}$, as a generator with the appropriate relations.
\end{proof}

\begin{rmk}
This gives another natural interpretation of $\NCSet$ and $\Delta S$ complementary to \cite{Loday,Lodaycrossed,NCSetPira,NCSet}.
\end{rmk}

\subsubsection{Operads and non--Sigma operads via the plus construction: The second rung}
One can now apply the plus construction again that is compute $((\FFtriv)^+)^+=\FF\fS_>^+\simeq \operads^{\neg \Sigma}$. We can also obtain operads through the plus construction.
Let $\operads$ be the Feynman category for operads, $\operads^{\neg\Sigma}$ that for non--Sigma,  operads and let $\operads_{\it unital}$ and $\operads^{\neg\Sigma}_{\it unital}$ the ones for unital operads and non--Sigma operads. The unital operads are sometimes called May operads and the corresponding Feynman category was called $\FF_{May}$ in \cite{feynman}.
 The latter have an adjoined unit morphism $u:\empty\to \O(1)$ which satisfies the unit equations, see \cite[\S2.2]{feynman}, see also Table \ref{zootable}. We will also consider $\operads_0$ and $\operads^{\neg\Sigma}_0$ whose $\ops$ whose $\O(1)\simeq \unit$.

\begin{prop}
\label{operadprop}
We have the following identifications:

\begin{tabular}{lll}
$\FFinSet^+\simeq \operads$& $\FFinSet^\gcp\simeq \operads_{\it unital}$& $\FFinSet^\hyp=\operads_0$\\
$\FNCSet^+\simeq \operads^{\neg \Sigma}$&
$\FNCSet^\gcp \simeq \operads^{\neg \Sigma}_{\it unital}$&$\FNCSet^\hyp=\operads^{\neg-\Sigma}_0$
\end{tabular}

Restricting to surjections, one obtains the Feynman categories for the same representations, but without constants, i.e.\ $\O(0)$.
In particular, $\Surj^\hyp=\operads^{\it red}$ that is the Feynman category  whose $\ops$ are reduced operads, no $\O(0)$ and $\O(1)\simeq \unit$.
\end{prop}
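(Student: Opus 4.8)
The plan is to upgrade the representation-level statements of Proposition \ref{plusrepprop} to equivalences of the underlying Feynman categories by matching explicit presentations. Two Feynman categories agree once their groupoids, their basic morphisms, and the generators-and-relations for the remaining morphisms coincide; for a plus construction this data is furnished by Corollary \ref{fplusupscor} together with the graphical forest calculus of Theorem \ref{forestthm} in Appendix \ref{graphpluspar}. I would establish $\FFinSet^+\simeq\operads$ in full and then obtain every other entry of the table by the same argument with a single structural modification.

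For the base case I would first compute $\V^+$. As in the proof of Proposition \ref{plusrepprop}, the basic morphisms $S\to\{*\}$ of $\FinSet$ are rooted corollas with $|S|$ inputs and their isomorphisms are the permutations of $S$, so $\V^+\simeq\SS$, which is exactly the groupoid of rooted corollas underlying the graphical Feynman category $\operads$. The objects of $\F^+$ are $\Mor(\FinSet)$, and each $f\colon S\to T$ decomposes fiberwise into the aggregate $\amalg_{t\in T}*_{f^{-1}(t)}$ of corollas, matching $\Obj(\Agg)$ for $\operads$. Finally the generating morphisms $\gamma(\phi_0;\phi_1\kdk\phi_n)$ of Corollary \ref{fplusupscor}, with $\phi_0\colon\un\to\uone$ and $\phi_i\colon\underline{n_i}\to\uone$, are precisely the two-level graftings of corollas; by Theorem \ref{forestthm} a general morphism of $\F^+$ is an iterated such grafting encoded by a rooted forest, and the relations imposed on it (associativity, interchange, and equivariance under $\Iso(\F\downarrow\F)$) are the associativity and $\SS$-equivariance relations defining an operad through \eqref{operadgammaeq}. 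This gives $\FFinSet^+\simeq\operads$. Running the identical argument with the fiber-ordered corollas of $\NCSet$---whose isomorphisms act transitively on the fiber orders, so that $\V^+$ has discrete skeleton $\N_0$ and the graftings are planar---replaces the symmetric relations by their planar counterparts and yields $\FNCSet^+\simeq\operads^{\neg\Sigma}$.

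For the gcp and hyper rows I would invoke Definition \ref{gcpdef} and Proposition \ref{gcpfactorprop}. Passing from $\FF^+$ to $\FF^{+\it gcp}$ freely adjoins the element-type morphisms $i_\sigma\colon\unit\to\sigma$ subject to \eqref{gcpreleq}, \eqref{rightcompateq} and \eqref{leftcompateq}; applied to the corolla $\sigma=\id_*$ this is exactly an operadic unit $u\colon\unit\to\O(1)$, and the two compatibility squares are the left and right unit axioms \eqref{leftuniteq}--\eqref{rightuniteq} recorded in Remark \ref{operadrmk}. Hence $\FFinSet^\gcp\simeq\operads_{\it unital}$ and, with planar corollas, $\FNCSet^\gcp\simeq\operads^{\neg\Sigma}_{\it unital}$. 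For the hyper row the two-step contraction in the proof of Proposition \ref{gcpfactorprop} forces $i_\sigma$ to be invertible, i.e.\ $\D(\sigma)\simeq\unit$ for all $\sigma\in\Mor(\V)$; on the operad rung this says $\O(1)\simeq\unit$, identifying $\FFinSet^\hyp=\operads_0$ and $\FNCSet^\hyp=\operads^{\neg-\Sigma}_0$. Finally, restricting the source to surjections deletes the unique empty-fiber basic morphism $i\colon\emptyset\to\{*\}$, hence the nullary corolla and the constant $\O(0)$; together with the hyper condition this gives $\Surj^\hyp=\operads^{\it red}$.

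The main obstacle is the relation-matching in the second paragraph: one must verify that the relations built into $\F^+$---the monoidal-category relations together with associativity and equivariance of the $\gamma$-generators---are neither weaker nor stronger than the operad axioms, with the $\SS_n$-equivariance faithfully carried by the groupoid action of $\Iso(\F\downarrow\F)$ and, in the gcp case, the unit correctly threaded through that action via \eqref{urightcompateq}--\eqref{uleftcompateq}. This bookkeeping is precisely what the forest presentation of Appendix \ref{graphpluspar} is built to control, so I would carry out the verification there rather than by hand.
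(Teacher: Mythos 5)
Your proposal is correct and follows essentially the same route as the paper: both arguments identify $\V^+$ with the groupoid of (planar) rooted corollas, use the forest calculus of Theorem \ref{forestthm} to match general morphisms of $\F^+$ with tree-grafting morphisms of $\operads$ (resp.\ $\operads^{\neg\Sigma}$), and then read off the gcp and hyper rows from the adjoined unit $i_{id_{\uone}}$ and its inversion. If anything, your treatment of the gcp/hyper columns and the surjection restriction is slightly more explicit than the paper's, which disposes of those cases in one line.
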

\begin{proof}
Consider $\FFinSet^+$. There is only one flag color $\uone$. Up to isomorphism, the basic morphisms are
given by $\pi_n:\un\to \uone$, where $\pi_0=i:\emptyset\to \uone$. These give a smaller, but equivalent subcategory, known as the biased version.
These give rise to the corollas $*_{\pi_n}$, which are the isomorphism classes of objects in $\Crl_{\FFinSet^+}$. The automorphisms in $\Crl_{\FFinSet^+}$ of $*_{\pi_n}$ are $\SS_n$ as $\pi_n\circ\sigma=\pi_n$ and $\sigma$ acts trivially on the flag labeling up to equivalence, since \eqref{flagequiveq} holds for any permutation. Thus, replacing $\Crl_{\FFinSet^+}$ by a skeleton, we have $\Crl_{\FFinSet}^+\simeq \SS$.

The objects in $\Agg_{\FinSet^+}$ are then forests of $*_{\pi_n}$ or  more generally $*_{\pi_S}$, where $\pi_S:S\ta \uone$.
The only color isomorphism is $id_{\uone}$, thus the data $(\bs,\be)$ is trivial.
Thus, up to isomorphism,
the  basic morphisms in $\Agg_{\FFinSet}$ are exactly the graph morphisms $\phi:*_{\un_1} \amalg\dots \amalg *_{\un_m}\to *_{\un}$ whose underlying ghost graph is a tree. This implies that  $n=\sum_i n_i$  The arity of vertex decorated by $\pi_n$ is $n$ and hence knowing the arity fixes this decoration. The flag colors, the edge and tail decorations are all redundant as well. This means that we have obtained the skeletal version of $\operads$.

More generally allowing for the morphisms $\pi_S:S\ta \{r\}$, the vertices $*_{\pi_S}$ will be or arity $|S|$ and have incoming flag set in bijection with
$S$ and an outgoing flag corresponds to $\{r\}$. Hence we have $*_{\pi_S}$ is completely determined by the rooted corolla $*_{S\amalg \{r\},t}$.
And we obtain $\FFinSet^+=\operads$.

In $\FFinSet^\gcp$ there is an extra morphism $u:=i_{id_{\uone}}:\emptyset\to id_{\uone}$ which provides a unit-element morphism, \cite[2.2]{feynman}.
It is straightforward to check that for $\O\in \FFinSet^\gcp\dashops$, $\O(u)$ is a unit.

In the case of $\FNCSet$, consider
the objects of $\Crl_{\FNCSet^+}$. In the biased version, that is using only the sets $\un$, the objects are $*_{\pi_n,<_n}$, where $<_n$ is an order on $n$.
The permutations act transitively on the orders of the decorated corollas: $(p\Downarrow id)(*_{\pi_n,<_n})=*_{\pi_n,p(\circ<_n)}$, were $p(\circ<_n)$ is the permutation of \S\ref{crlFdefpar}. Thus there are no automorphisms and a skeletal version of $\Crl_{\FNCSet^+}$ is the discrete category $\N_0$ whose objects are the natural numbers with only identity morphisms.
This means that up to isomorphism, there is a unique ordered vertex of arity $n$ for each $n$ which can be represented by the planar, planed corolla $*_{\un\amalg \{r\},r}$.
Proceeding to $\Agg_{\FNCSet^+}$, we see that the planar, planted forest underlying the morphism uniquely determines the morphism up to isomorphism.
Thus, we have objects given by planar, planted corollas and basic morphisms given by graph morphisms whose ghost graph is a tree.
This tree becomes planar if one pulls back the orders from the orders on the source using the identification of the vertices of the graph and the source.
Finally, compatibility states that the  order of the leaves of the target corolla is that of the leaves of the tree.
This is the description of $\O^{\neg\Sigma}=\O_{\dec \Ass}$, see \cite{decorated} and Appendix \ref{connectionspar} below.
\end{proof}

\begin{lem}
\label{nscoverlem}
 Forgetting the order yield a forgetful functor $\mathfrak{p}:\operads^{\neg \Sigma}\to \operads$.
This morphism is a cover with decoration $p_!(\trivial)=\Assoc$.

Furthermore $\Assoc((*_{S\amalg \{r\}},r))=\{\text{all orders $<_S$ on $S$}\}$, with the usual composition:
$\gamma((*_{S \amalg \{r_0\}.,r_0,<_S};(*_{T_1\amalg \{r_1\}},r_n,<_n) \kdk (*_{T_1\amalg \{r_1\}},r_1,<_1))=*_{T\amalg \{r_0\},r_0,<_T}$ where $T=T_1\amalg \dots \amalg T_n$ with the order $<_T=<_1\amalg \dots \amalg <_n$.

This cover restricts to the non--unital and reduced $\O(1)$ and reduced cases.
\end{lem}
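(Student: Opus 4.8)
The plan is to realize $\mathfrak p$ as the plus-construction analogue of the cover $\fb\colon\FNCSet\to\FinSet$ of Proposition \ref{asscoveprop}, and to verify directly that it is a cover by checking the two conditions of Proposition \ref{coverprop}. First I would fix the explicit presentations obtained in the proof of Proposition \ref{operadprop}: an object of $\operads^{\neg\Sigma}=\FNCSet^+$ is an aggregate of planar planted corollas $*_{\un\amalg\{r\},r,<}$ and a basic morphism is a graph morphism whose ghost graph is a planar tree, while an object of $\operads=\FFinSet^+$ is an aggregate of rooted corollas $*_{\un\amalg\{r\},r}$ and a basic morphism has a tree ghost graph. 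The functor $\mathfrak p=(v,p)$ is then defined by forgetting the orders: $v$ sends a planar corolla to its underlying corolla (on skeleta it is the inclusion $\N_0\hookrightarrow\SS$), and $p$ sends a planar tree morphism to its underlying tree morphism. Functoriality and monoidality are immediate, since forgetting orders is compatible with grafting and with disjoint union.

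Next I would check the two conditions of Proposition \ref{coverprop}. Every object of $\operads$ lies in the image of $p$, since any corolla admits an order, so for condition (i) I take a basic morphism $\phi\colon *_{\un_1}\amalg\cdots\amalg *_{\un_m}\to *_{\un}$ of $\operads$ together with a choice of orders $<_i$ on its source vertices, and produce the lift by equipping the target with the order induced by reading off the planar tree determined by $\phi$ and the $<_i$; this is exactly the grafting of orders and exists by the graphical analysis of Appendix \ref{graphpluspar} (cf.\ Theorem \ref{forestthm}). For condition (ii), uniqueness, I would argue that a planar tree morphism is determined by its underlying tree morphism together with the orders on its source vertices: the order on every internal and on the target corolla is forced by grafting, so any two lifts with the same ordered source coincide. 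Together these give that $\mathfrak p$ is a cover.

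The hard part will be the bookkeeping of how orders propagate through iterated composition — namely that the order induced on the target of a composite planar tree is the lexicographic composite of the orders on the pieces, and that this is precisely what rigidifies the lift; I would extract exactly this from the grafting and ghost-graph composition in Appendix \ref{graphpluspar}. Granting it, the decoration is identified as follows: by the remark following Theorem \ref{BKthm} a cover is the decoration by $p_!(\trivial)$, and by the computation in Proposition \ref{coverprop} one has $p_!(\trivial)(X)=p^{-1}(X)$, the set of planar structures over $X$; for a corolla this is exactly the set of orders $<_S$ on $S$, so $p_!(\trivial)(*_{S\amalg\{r\},r})=\{\text{orders on }S\}=\Assoc(*_{S\amalg\{r\},r})$. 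The induced operadic composition is read off from how lifts compose, giving the stated lexicographic grafting $<_T=<_1\amalg\cdots\amalg<_n$, which is the composition of the associative operad $\Assoc$. Finally, the restriction statements follow by running the same argument inside the sub- and quotient Feynman categories of Proposition \ref{operadprop}: forgetting orders yields covers between the surjective versions (no arity-zero operations), between the gcp (unital) versions, and between the hyper (reduced) versions, with $\Assoc$ restricted accordingly — in the reduced case $\Assoc$ of a $1$-ary corolla is the single order, i.e.\ $\unit$.
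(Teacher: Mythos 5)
Your proposal is correct and follows essentially the same route as the paper: define $\mathfrak p$ by forgetting the planar/order data, verify the cover conditions via the key observation that the planar structure of the forest underlying a morphism — and hence the lift — is completely determined by the planar structures of the source corollas, identify the fiber over a corolla as the set of orders on $S$ to get $\Assoc$, and read off the lexicographic composition from grafting. The paper's proof is terser (it defers to \cite{decorated} and does not spell out the order-propagation bookkeeping you flag as the hard part), but the argument is the same.
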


\begin{proof}
This is contained in \cite{decorated}. The key steps are: to define $\pp=(p_\V,p)$,  functor $p_\V$ is given simply by $(*_{S\amalg_\{r\}},r,<_S)\mapsto (*_{S\amalg \{r\}},r)$ on objects. On morphisms of $\operads^{\neg\Sigma}$, $p$ is given by forgetting the planar structure on the forests. This is a cover, as it is surjective on objects and morphisms and every morphism has a unique lift once the target is fixed. This latter is the case as the planar structure on the forest is completely fixed by the planar structure of the source corollas.
 On objects  computing  $p_!(*_{S\amalg \{r\}},r)$ yields the fiber that is the set $\{(*_{S\amalg\{r\}},r,<_S)\}$ or simply the set of orders on $S$. Composition of the linear orders is that according to the forests. I.e.\ lifting the planar structure of the source corollas yields a planar structure on each tree underlying a morphism and this in turn yields the given order of the leaves, which are the set $T$.
\end{proof}
\section{Modules and Enriched Feynman categories}
\label{enrichedpar}
\subsection{Enriched categories}
\label{enrichedcatpar}
To consider modules of algebras,  or more  generally if $\O\in\Fops_\C$ their modules will need Feynman  categories enriched in $\C$.
The general reference is \cite{kellybook}, to which we refer for full details. We will give the salient features here.

Recall that a category enriched $\F$ in a symmetric monoidal category $(\E,\otimes)$ has a class of objects $\Obj(\F)$ and morphisms  $\Hom(X,Y)\in \Obj(\E)$
together with a associative unital composition maps which are morphisms in $\E$: $\circ:\Hom(Y,Z)\ot_\E \Hom(X,Y)\to \Hom(X,Z)$ and units $id_X:\unit_\E\to \Hom(X,X)$.
Likewise for a monoidal structure, all structure morphisms are morphisms and objects $\E$, in particular:
\begin{equation}
\ot:\inthom(X,Y)\ot_\E \inthom(Z,W)\to \inthom(X\ot Z,Y\ot W)
\end{equation}
is a morphism in $\E$.
\begin{ex}
Consider the category $\underline{G}_k$ enriched over $\kVect$ which has one object $*$ and morphism set $k[G]$ with composition on basis elements
given by $\circ(g,h)=g\circ h$.
\end{ex}
\begin{ex}[Internal Hom]
A monoidal category can be enriched over itself. The standard example is $\kVect$, since $\inthom(V,W)$ again has the structure of a vector space.
More generally, $\E$ can be enriched over itself if it is closed. This means that it an internal hom,
$\inthom(X,Y)\in E$, and $\inthom(X\otimes Y,Z)\simeq \inthom(X, \inthom(Y,Z))$ functorially,  see \cite[\S1.6]{kellybook} for details.
\end{ex}

One can then consider functors between two categories $\C$ and $\D$ enriched over the same $\E$ in  straightforward formalism.
In particular,  \\
$\Hom_\C(X,Y)\to \Hom_\D(\O(X),\O(Y)):\phi\mapsto \O(\phi)$ is a morphism in $\E$.

\begin{ex}\label{kgmodex}
Functors $\O$ from $\underline{G}_k$ to $\kVect$ thought of as enriched over $\kVect$ are $k[G]$-modules. Let $M=\O(*)$  the fact that the map $\Hom(*,*)=k[G]\to \inthom(M,M)$ is a morphism in $\kVect$ means that the action is $k$-linear, i.e.\ the action is given by $\mu:k[G]\otimes_k M\to M$.
\end{ex}

\subsubsection{Freely enriched categories}
Given an enriched category one can define an underlying category by defining the underlying morphism via  $Hom_{\Set}(X,Y):=$ $\Hom_\E(\unit_\E,\inthom(X,Y))$. This is actually a 2--functor, see \cite{kellybook}, which has an adjoint, called free enrichment. We will use the notation $F_\E$ for the free enriched version of $\E$.
E.g.\ if $\E=\kVect$ then $Hom_{\F_\E}(X,Y)$ is the free vector space on $Hom_\F(X,Y)$. If $\E=\Top$ the   $Hom_{\F_\E}(X,Y)$ is  $Hom_\F(X,Y)$ with the discrete topology.

\subsubsection{Cartesian vs. Linear Enriched}
There are basically two types, Cartesian enriched and linearly enriched.
Cartesian enriched means that $\ot_\E$ is also a Cartesian product like in $\Top$\footnote{Fixing a convenient topological category.}. Linear means that one is at lead $\Ab$ enriched, and $\ot_\E$ is ``bi--linear''. Typical examples for $\E$ are $\kVect, dg\mdash\kVect$, etc..

There are basically no big modifications to Feynman categories in the Cartesian enriched case.
In the linear case, there are necessary modifications as the notion of a groupoid becomes unavailable. Note that $GL(V)\subset End(V)$ is a subspace, but not a linear subspace.

\subsection{Modifications in case of  enrichment}
In this sub-para\-graph, we will collect the modifications that are necessitated in the  enriched case, especially  in that of linear enrichment.
\subsubsection{Cartesian enriched Feynman Categories}
Generally in the enriched case axiom (ii) is be replaced by the rather technical axiom (ii').
\begin{itemize}
\item[(ii')] The pull-back of preserves $\imath^{\otimes \wedge}\colon [\F^{op},Set]\to [\V^{\otimes op},Set]$
{\em restricted to representable pre--sheaves} is monoidal.
\end{itemize}
The monoidal structure on pre--sheaves is given  by Day convolution $\day$, thus (ii') means that
\begin{multline}
\label{dayeq}
\imath^{\otimes \wedge}Hom_{\F}(\,\cdot\, , X\otimes Y)=
Hom_{\F}(\imath^{\otimes}\, \cdot\, ,X\otimes Y)=\\
\imath^{\otimes\wedge}Hom_{\F}( \,\cdot\,, X)\day \imath^{\otimes\wedge}  Hom_{\F}(\,\cdot\, , Y)
\end{multline}

Using the definition of the Day convolution the right hand side of (\ref{dayeq})  becomes the co--end condition:
\begin{multline}
\imath^{\otimes\wedge}Hom_{\F}( \,\cdot\,, X)\day\imath^{\otimes\wedge}
Hom_{\F}( \,\cdot\, , Y)
 =Hom_{\F}(\imath^{\otimes} \,\cdot\,, X)\day Hom_{\F}(\imath^{\otimes} \,\cdot\, , Y)\\
=\int^{Z,Z'}Hom_{\F}(\imath^{\otimes} Z, X)\times
Hom_{\F}(\imath^{\otimes} Z' , Y)\times Hom_{\V^{\otimes}}( \,\cdot\,,Z\otimes Z')
\end{multline}

The co--end formula expresses the ``bi--linearity'' of composition \cite{Auslander,MacLane}.

Just like condition \eqref{morphcond}, the smallness condition \eqref{smallcond} should be modified in the enriched case as (co)limits become so--called indexed (co)--limits, see \cite{kellybook}.
\begin{itemize}
\item[(iii')] For all $*\in \V$, the  indexing functors $\tilde \imath^{\otimes}(*):=Hom_{\F}(\imath^{\otimes} *, -)$ are essentially small.
\end{itemize}
The indexing functor takes care of the ``linearity'' of morphisms.

\begin{df} A Feynman category $\FF$ enriched
over a Cartesian $\mathcal E$ is a triple $(\asts,\clusters,\imath)$ of a category $\clusters$ enriched
over $\mathcal E$ and an enriched category $\asts$ which satisfy the enriched
version of the axioms of Definition \ref{feynmandef}. That is (i), (ii') and (iii') as given above
\end{df}

\subsubsection{Linear enrichment/Weak Feynman categories/Index enriched Feynman categories}
As there is no good notion of groupoid, in the linear case, the axiom (i) has to be modified to (i').
\begin{df}
\label{weakdef}
A weak Feynman category is a triple $({\mathcal W},\F,\imath)$, where both ${\mathcal W}$ and $\F$ are categories enriched over $\CalE$,
$\imath: \mathcal{W}\to \F$ is a functor enriched over $\E$, $\F$ is symmetric monoidal enriched over $\E$, and $\mathcal W$ symmetric monoidal tensored over $\CalE$ satisfying: (i') $\imath^{\otimes}$ is essentially surjective, and (ii') and (iii') as above.
\end{df}
This notion is closely related to Getzler's patterns, see \cite{Getzler,feynman,BKW}.

\begin{df} An indexed enriched Feynman category is a weak Feynman category $\FF=(\V,\F,\imath)$ indexed over a $\Set$ Feynman category
$\fB=(\V_\B,\B,\imath_\B)$, such that $\V=(\V_\B)_\E$.
\end{df}

\begin{as}
From now an, we assume that $\C=\E$ is enriched over itself and
has a co--product. For representations, one is interested in  Abelian categories $\C$, which is why we denote the co--product by $\oplus$.
\end{as}

\subsection{Enrichment functors}
Enrichment functors are the generalization of Remark \ref{enrichfunctorrmk}. We refer to Appendix \ref{twocatapp} for the two--categorical notions.

\begin{df} A weak enrichment functor for a Feynman category $\FF$ is a lax 2--functor $\F\to \underline{\E}$
with is strictly monoidal, see Remark \ref{enrichfunctorrmk}.
An enrichment functor is a weak enrichment functor that also satisfies $\D(\sigma)=\unit$.
\end{df}

\subsubsection{Indexed enriched Feynman categories over $\FF$ and $\FF^\hyp$}
The following is proved in \cite[Proposition 4.1.2, Theorem 1.4.1]{feynman} connecting the plus construction and enrichment.
\begin{thm} \label{hypthm}\mbox{}
\begin{enumerate}
\item \label{functorpart} There is a 1--1 correspondence  between indexed enriched Feynman categories over $\FF$ and enrichment functors.
\item \label{catpart} There is a 1--1 correspondence between enrichment functors and $\FF^{\hyp}\dashops_\E$.
\end{enumerate}
Denote the indexed enriched Feynman category of $\FF$ corresponding to $\D\in \FF^+\dashops_\E$ by $\FF_\D$, then
monadicity holds for the weak Feynman category  $\FF_\D$.
\end{thm}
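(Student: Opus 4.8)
The plan is to treat the three assertions in order, reducing each to structures already built in the excerpt. For part \eqref{functorpart} I would exhibit mutually inverse constructions. Starting from an indexed enriched Feynman category $\FF'=(\V_\E,\F',\imath')$ with its (necessarily strong) indexing $\fb\colon\FF'\to\FF$ over the $\Set$ base $\FF=(\V,\F,\imath)$, the fiberwise decomposition of Remark \ref{decomprmk}, enriched by replacing $\Set$ with $\E$, supplies for each $\phi\in\Mor(\F)$ the $\E$-object $\D(\phi)$ sitting in the fiber, together with the partial composition and monoidal maps $\D(\phi)\otimes\D(\psi)\to\D(\phi\circ\psi)$ and $\D(\phi)\otimes\D(\psi)\to\D(\phi\otimes\psi)$. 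As recorded in Remark \ref{enrichfunctorrmk}, these assemble into a strictly monoidal lax $2$-functor $\D\colon\F\to\underline{\E}$, and the hypothesis $\V'=\V_\E$ forces the fibers over isomorphisms to be free on one point, i.e.\ $\D(\sigma)=\unit$; thus $\D$ is an enrichment functor. Conversely, given an enrichment functor $\D$, I would build $\F'$ with $\Obj(\F')=\Obj(\F)$ and $\Hom_{\F'}(X,Y)=\coprod_{\phi\in\Hom_\F(X,Y)}\D(\phi)$, with composition and tensor induced by the lax-functor and monoidality structure maps of $\D$ and units supplied by $\D(\sigma)=\unit$, setting $\V'=\V_\E$ and $\imath'$ the evident enriched inclusion. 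The two passages are visibly inverse; the only real verification is that the object so constructed satisfies axioms (i'), (ii'), (iii') of Definition \ref{weakdef}, which I would check by transporting the corresponding $\Set$-level axioms for $\FF$.

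For part \eqref{catpart} the key is Corollary \ref{fplusupscor}: a strict monoidal $\D\in\F^+\dashops_\E$ is exactly the data of an object $\D(\phi)$ for each basic morphism, an action of the isomorphisms $\sds$, and composition morphisms $\D(\gamma)\colon\D(\phi_0)\otimes\bigotimes_i\D(\phi_i)\to\D(\psi)$. This is precisely the data of a strictly monoidal lax $2$-functor $\F\to\underline{\E}$, so weak enrichment functors coincide with objects of $\F^+\dashops_\E$. Imposing the enrichment condition $\D(\sigma)=\unit$ is exactly the reducedness of Definition \ref{functortypedef}, and by Proposition \ref{gcpfactorprop} every such reduced functor factors uniquely through $\FF^{\hyp}$, while any object of $\FF^{\hyp}\dashops_\E$ pulls back to a reduced functor on $\F^+$. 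This yields the claimed bijection between enrichment functors and $\FF^{\hyp}\dashops_\E$.

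For the final monadicity claim I would set up the free--forgetful adjunction for $\FF_\D$ in exact parallel with Theorem \ref{freethm} and the monadicity corollary following it, now $\E$-enriched. The forgetful functor is $G=(\imath')^{*}\colon\F_\D\dashops_\C\to\V_\D\text{-}\smodcat_\C$, $G(\O)=\O\circ\imath'$. Its left adjoint $F$ is the enriched left Kan extension along $\imath'$, computed by the pointwise coend formula; axiom (iii') guarantees that these indexed colimits exist in the cocomplete $\C=\E$, and axiom (ii') in its coend form \eqref{dayeq} is precisely what makes $F$ strong monoidal. With $F\dashv G$ in hand I would invoke Beck's monadicity theorem. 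First, $G$ is conservative: by axiom (i') every object decomposes as in \eqref{feydecompeq} into a tensor word on $\V_\D$, so a morphism of $\ops$ that is invertible after restriction to $\V_\D$ is, by monoidality, the tensor of invertible components and hence invertible. Second, $G$ creates coequalizers of $G$-split pairs, because the $\ops$-structure is encoded object-wise by the maps $\D(\gamma)$, which are preserved by the split coequalizers computed in $\C$. Beck's criterion then shows the comparison functor $\F_\D\dashops_\C\to(\V_\D\text{-}\smodcat_\C)^{T}$ with $T=GF$ is an equivalence, i.e.\ monadicity.

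The main obstacle I anticipate is the strong monoidality of the free functor $F$: the coend defining $FX$ must be compatible with $\otimes$, and this is exactly where the Day-convolution/coend axiom (ii') of \eqref{dayeq} is indispensable, mirroring the nontrivial point flagged after Theorem \ref{adjointthm}. Once this is secured, the remaining steps are a faithful transcription of the $\Set$-level arguments, with coproducts and colimits replaced throughout by their $\E$-indexed analogues.
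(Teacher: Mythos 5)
Your part \eqref{functorpart} and your monadicity argument track the paper's own route (the enriched versions of Remarks \ref{decomprmk} and \ref{enrichfunctorrmk}, the hom--formula \eqref{enrichedhomeq}, and an $\E$--enriched replay of Theorem \ref{freethm} and its monadicity corollary), so there is nothing to object to there. The genuine gap is in part \eqref{catpart}. You identify the data of a strictly monoidal lax $2$--functor $\F\to\underline{\E}$ with the data listed in Corollary \ref{fplusupscor} and conclude that weak enrichment functors ``coincide with objects of $\F^+\dashops_\E$''. This contradicts the paper's own Proposition \ref{functorequivprop}: a lax $2$--functor carries, as part of its structure, unit constraints $\unit\to\D(id_X)$ (condition \eqref{idXcond} of Definition \ref{enrichfunctdf}), data which is \emph{absent} from Corollary \ref{fplusupscor}. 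Weak enrichment functors therefore correspond to $\FF^{\gcp}\dashops_\E$, not to $\FF^{+}\dashops_\E$.

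This mismatch then breaks your next step. You impose $\D(\sigma)=\unit$, call this ``exactly the reducedness of Definition \ref{functortypedef}'', and invoke Proposition \ref{gcpfactorprop} to factor through $\FF^{\hyp}$. But that proposition factors \emph{hyper}--functors, and by Definition \ref{functortypedef} a hyper--functor is reduced \emph{and} gcp using the identification $\D(\sigma)\simeq\unit$: the identification must satisfy the compatibility diagrams \eqref{urightcompateq} and \eqref{uleftcompateq}, i.e.\ composition with isomorphisms must be given by unit constraints. A merely reduced $\F^{+}$--op need not satisfy this --- the composition morphism $\unit\otimes\D(\phi)\simeq\D(\sigma)\otimes\D(\phi)\to\D(\sigma\circ\phi)$ may differ from the groupoid action $\D(\sds)$ by a nontrivial automorphism --- and then the relations in $\F^{\hyp}$ making $i_\sigma$ invertible are not respected, so no factorization exists. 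The repair is exactly the point the paper flags in Remark \ref{proofrmk}: the unit data \eqref{idXcond} that every enrichment functor carries, combined with the groupoid action, yields via Remark \ref{irmk} a groupoid--compatible pointing with invertible $\D(i_\sigma)$, so an enrichment functor is automatically a hyper--functor and only then does Proposition \ref{gcpfactorprop} apply. Verifying this coherence is the one genuinely nontrivial step of part \eqref{catpart}, and your proposal skips it.
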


The correspondence is represented by the formula  \eqref{enrichedhomeq} which is a generalization of \eqref{fiberhomeq}.

\begin{equation}
\label{enrichedhomeq}
\inthom_{\F_\D}(X,Y)=\bigoplus_{\phi\in \Hom_\F(X,Y)}\D(\phi)
\end{equation}
with composition
\begin{equation}
\D(\phi)\otimes\D(\psi)\to \D(\phi\circ\psi)
\end{equation}

\begin{rmk}
\label{proofrmk}

The first two parts the statement for $\E=\Set$ is contained in Remarks \ref{decomprmk} and \ref{enrichfunctorrmk}.
The general proof of these statements is similar. also follows from the definitions, in particular Definition \ref{enrichfunctdf} and Corollary  \ref{fplusupscor}. Note that the condition that
 composition with isomorphism is strict, i.e.\ it is given by unit constraints from \cite{feynman} follows from  Definition \ref{enrichfunctdf}  condition \eqref{idXcond}  together with the groupoid action
 via Remark \ref{irmk}. This is why we could remove this extra condition in the definition of an enrichment functor.
\end{rmk}

\subsubsection{Generalizing to $\FF^\gcp$}
The results and constructions are analogous to Example \ref{Assex}.

Relaxing the condition of an enrichment functor to a weak enrichment functor, we obtain the generalization of
Theorem \ref{hypthm} \eqref{functorpart}.
\begin{prop}\mbox{}
\label{functorequivprop}
 Weak enrichment functors are in 1--1 correspondence with $\FF^\gcp\dashops_\E$.
\end{prop}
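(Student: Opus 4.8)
The plan is to factor the desired correspondence through the notion of a groupoid compatibly pointed (gcp) functor of Definition \ref{functortypedef}, mirroring the proof of Theorem \ref{hypthm}(\ref{catpart}) but dropping the reduction hypothesis $\D(\sigma)\simeq\unit$. Concretely, I would establish two identifications, weak enrichment functors $\leftrightarrow$ gcp functors and gcp functors $\leftrightarrow\FF^\gcp\dashops_\E$, and compose them; the point of the argument is that the lax--unit data carried by a lax $2$--functor is exactly a pointing, and the gcp relations used to build $\F^\gcp$ are exactly the coherence for that pointing.

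First I would unpack $\FF^\gcp\dashops_\E$. By Definition \ref{gcpdef}, $\F^\gcp$ is presented from $\F^+$ by freely adjoining the element--type morphisms $i_\sigma\colon\unit_{\F^+}\to\sigma$ and imposing the relations \eqref{gcpreleq}, \eqref{rightcompateq} and \eqref{leftcompateq}. By the universal property of such a presentation, a strong monoidal functor $\D\colon\F^\gcp\to\E$ is the same datum as a strong monoidal functor on $\F^+$, i.e. an $\F^+$--op described by Corollary \ref{fplusupscor} through its groupoid data and its composition data $\D(\gamma)$, together with a choice of morphisms $u_\sigma:=\D(i_\sigma)\colon\unit_\E\to\D(\sigma)$ whose images satisfy those three relations. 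Applying $\D$ to \eqref{gcpreleq}, \eqref{rightcompateq} and \eqref{leftcompateq} turns them into precisely the groupoid--compatibility condition and the commuting diagrams \eqref{urightcompateq}, \eqref{uleftcompateq} that define a groupoid compatible pointing. Hence $\FF^\gcp\dashops_\E$ is naturally identified with the category of gcp functors; this upgrades Proposition \ref{gcpfactorprop} from ``every gcp functor factors through $\FF^\gcp$'' to a genuine bijection.

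Next I would unpack a weak enrichment functor, that is a strictly monoidal lax $2$--functor $\D\colon\F\to\underline{\E}$, using the $2$--categorical structure on $\F$ whose $2$--cells are the squares $\sds$ (cf. Remark \ref{enrichfunctorrmk} and Appendix \ref{twocatapp}). Such a $\D$ consists of an object $\D(\phi)\in\E$ for each morphism $\phi$ of $\F$ together with an action on the squares $\sds$; since $\Obj(\F^+)=\Mor(\F)$ and $\Iso(\F^+)=\Iso(\F\downarrow\F)$, this is exactly the groupoid data of an $\F^+$--op. The lax composition cells $\D(\psi)\otimes\D(\phi)\to\D(\psi\circ\phi)$, restricted to composable basic morphisms (the generators $\sds$ and $\gamma(\phi_0;\phi_1\kdk\phi_n)$ of $\Mor(\F^+)$), give the composition maps $\D(\gamma)$, their associativity coherence becoming associativity of $\gamma$. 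The lax unit cells $\unit_\E\to\D(id_X)$ are, by strict monoidality, determined by their values on basic objects $\imath(*)$, and together with the groupoid action they assemble via Remark \ref{unitalrmk} into a family $\{u_\sigma\}_{\sigma\in\Mor(\V)}$ whose unit--coherence axioms are exactly \eqref{urightcompateq} and \eqref{uleftcompateq}. Thus a weak enrichment functor is the same as a gcp functor, and composing with the first identification yields the correspondence. Finally I would check that a morphism of weak enrichment functors, namely a family of maps $\D(\phi)\to\D'(\phi)$ natural in the squares $\sds$ and compatible with the composition and unit cells, matches a monoidal natural transformation of the associated $\FF^\gcp$--ops (naturality with respect to $i_\sigma$ reading $\eta_\sigma\circ u_\sigma=u'_\sigma$), so that the correspondence is an equivalence of categories and not merely a bijection on objects.

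The main obstacle I anticipate is the bookkeeping in the second identification: verifying that the lax--unit coherence of the $2$--functor, which a priori supplies a cell over every identity $id_X$, collapses correctly onto the pointing $\{u_\sigma\}$ indexed by $\sigma\in\Mor(\V)$ without loss or redundancy, and that the strictness of composition with isomorphisms noted in Remark \ref{proofrmk} comes out automatically here rather than as an extra hypothesis (it follows from the groupoid action together with $\gamma(i_\sigma\otimes i_{\sigma'})=i_{\sigma\circ\sigma'}$ of Remark \ref{irmk}). Once this dictionary between lax--functor coherences and the diagrams \eqref{urightcompateq}--\eqref{uleftcompateq} is pinned down, the remaining verifications (associativity, interchange, and equivariance under $\Iso(\F\downarrow\F)$) are the same routine checks already used for Theorem \ref{hypthm}, now carried out without forcing $\D(\sigma)\simeq\unit$.
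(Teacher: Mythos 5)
Your proposal is correct and follows essentially the same route as the paper, which disposes of the statement by citing Remark \ref{proofrmk}, Definitions \ref{functortypedef} and \ref{enrichfunctdf}, and Proposition \ref{twofunctorprop}: you simply spell out the dictionary the paper leaves implicit, matching the freely adjoined $i_\sigma$ and relations \eqref{gcpreleq}, \eqref{rightcompateq}, \eqref{leftcompateq} with the lax--unit cells and their coherences, and correctly noting (as in Remarks \ref{proofrmk} and \ref{irmk}) that strictness of composition with isomorphisms comes for free. No gaps.
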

\begin{proof}
This is straightforward as in Remark \ref{proofrmk} using Definitions \ref{functortypedef},  \ref{enrichfunctdf} and Proposition \ref{twofunctorprop}.
\end{proof}

There is a generalization of the results of Theorem \ref{hypthm} \eqref{catpart} to  $\FF^\gcp\dashops$.
 In the enriched case, we have to be careful about splitting.

\begin{df}
\label{splitdef}
We call $\D\in \F^{+}\dashops_\C$ split, if each for all $\sigma\in \Mor(\V)$.
\begin{enumerate}
\item  $\D(\sigma)=\D(\sigma)^\times \oplus \D^{\it red} (\sigma)$.
\item Any invertible $\phi\in \D(\sigma)$ with $\phi^{-1}\in \D(\sigma^{-1})$ is in $D(\sigma)^{\times}$.
\item \label{gpdcond} $\bigoplus_{\sigma\in \Mor(\V)}\D(\sigma)=\Gpd(\D)_\E$ that is the free enrichment of a set--groupoid $\Gpd(\D)$.

\end{enumerate}
A $\D\in \F^{gcp +}\dashops_\C$ is called split if it satisfies the conditions above and furthermore
The morphism $\D(i_\sigma):\unit\to \D(\sigma)$ is split. I.e. $\D(\sigma)^\times=\unit \oplus \overline{\D}(\sigma)^\times$, where
the first summand is $im(\D(i_\sigma))$.

A weak enrichment functor is split, if the corresponding functor $\D\in \FF^\gcp\dashops$ is split.
\end{df}

\begin{rmk} \mbox{}
\begin{enumerate}
\renewcommand{\theenumi}{\roman{enumi}}
\item  Condition (3) means
 that  $\Obj(\Gpd(\D))=Obj(\V)$, but $Mor(\Gpd(\D))=\amalg_{\sigma\in \Mor(\V)}\amalg_{\sigma_i\in I_\sigma}\sigma_i$
 and $\D(\sigma)^\times=\bigoplus_{\sigma_i\in I_\sigma}\unit$ and there is are composition morphisms
 $I_\sigma\times I_{\sigma'}\to I_{\sigma\sigma'}$ for composable $\sigma,\sigma'$, such that fixing an element in $I_\sigma$ or an element in $I_{\sigma'}$ the composition morphism is a bijection.
\item In the $\gcp$ case, all the
$\D(\sigma)^\times=\unit\oplus \bar\D(\sigma)^{\times}$ split with the
first component being $\D(i_\sigma)$. In the language above $I_\sigma$ is a pointed set $(I_\sigma,0)$ and there is an inclusion $\V\to \Gpd(\D)$, with the image of $\sigma$ being $0\in I_\sigma$. In particular, there is an involution,
$\bar{}:I_\sigma\to I_{\sigma^{-1}}$ for which for $\sigma:X\to Y$ the composition
$\unit_{\sigma_i^{-1}}\otimes \unit_{\sigma_i}\to \unit_{id_X}$ where $id_X$ is the base point of $I_{id_X}$ .
\item Any $\D\in \F^{\it hyp}\dashops_\C$ is split.
\end{enumerate}
\end{rmk}

\begin{as}
From now on, we will assume that all functors from $\F^+$ and $\F^{+gcp}$ are split.
\end{as}

In the case $\E=\Set$  split is simply given by $\D(\sigma)=\D(\sigma)^\times\amalg \D(\sigma)^{red}$

Given a split $\D\in \F^\gcp$, set $\V_\B:=\Gpd(\D)$ and let $\F_B$ be the trivially extended
 monoidal category along the projection $j:\V_\B\to \V$ given  by
$I_\sigma\mapsto \sigma$.
This means the $\Obj(\F_\B)=\Obj(\F)$. To give the morphisms, note that $\Mor(\F)$ is a $\V^\otimes\mdash\V^\otimes$ bi--module with the action $\sds$.
This action is extended to bi--module action of $\V_\B$ by $(\sigma_i\Downarrow\sigma'_j)(\phi)=\sds(\phi)$.

There is the natural  inclusion $i_\B:\V_\B\to \F_\B$. Set $\fB(\D)=(\V_\B,\F_\B,\imath_\B)$.

\begin{thm}
\label{weakthm}
For a split weak  enrichment functor $\D:\FF^+$ there is weak Feynman category $\FF_\D$ indexed over the Feynman category
$\fB(\D)$.
\end{thm}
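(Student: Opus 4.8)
The plan is to exploit the split structure of $\D$ to separate its data into a \emph{rigid, group-like} part, which is absorbed into the base Feynman category $\fB(\D)$, and a \emph{fiber} part, which becomes the enriched hom-objects of $\FF_\D$; the indexing is then precisely the bookkeeping of this separation, generalizing \eqref{fiberhomeq} of Remark \ref{decomprmk}. First I would check that $\fB(\D)=(\V_\B,\F_\B,\imath_\B)$ is a genuine $\Set$-Feynman category. Since $\V_\B=\Gpd(\D)$ is a set-groupoid by condition \eqref{gpdcond} of Definition \ref{splitdef} and the subsequent remark (with $\Mor(\Gpd(\D))=\amalg_\sigma I_\sigma$), and since $\F_\B$ has $\Obj(\F_\B)=\Obj(\F)$ with its isomorphism groupoid enlarged from $\Iso(\F)\simeq\V^\otimes$ to $\V_\B^\otimes$ through the extended bimodule action $(\sigma_i\Downarrow\sigma'_j)(\phi)=\sds(\phi)$, the three axioms of Definition \ref{feynmandef} transport from $\FF$: axiom \eqref{objectcond} holds because objects and their length decompositions are unchanged while the iso-groupoid is replaced by $\V_\B$; axiom \eqref{morphcond} holds because the non-invertible basic morphisms and their tensor factorizations are literally those of $\F$, only re-decorated by the enlarged action along $j\colon\V_\B\to\V$; and axiom \eqref{smallcond} follows from essential smallness of the slices of $\F$ together with smallness of each $I_\sigma$. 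Associativity and interchange of the extended action are exactly the composition morphisms $I_\sigma\times I_{\sigma'}\to I_{\sigma\sigma'}$ guaranteed by the remark following Definition \ref{splitdef}.

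Next I would construct $\FF_\D=(\V_\D,\F_\D,\imath_\D)$. Setting $\V_\D:=(\V_\B)_\E$ makes the defining condition $\V=(\V_\B)_\E$ of an indexed enriched Feynman category hold by fiat. I define $\F_\D$ through the hom-formula \eqref{enrichedhomeq},
\begin{equation*}
\inthom_{\F_\D}(X,Y)=\bigoplus_{\phi\in\Hom_\F(X,Y)}\D(\phi),
\end{equation*}
with composition induced by the structure maps $\D(\phi)\otimes\D(\psi)\to\D(\phi\circ\psi)$ of the weak enrichment functor and units supplied by the groupoid-compatible pointing $\D(i_\sigma)\colon\unit\to\D(\sigma)$ of Definition \ref{functortypedef}; that these assemble into an associative, unital $\E$-enriched composition is precisely the content of Proposition \ref{functorequivprop} (identifying $\D$ with an object of $\F^{\gcp}\dashops_\E$) together with Corollary \ref{fplusupscor}. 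I then verify the weak Feynman axioms of Definition \ref{weakdef}: (i$'$) essential surjectivity of $\imath_\D^\otimes$, because objects of $\F_\D$ coincide with those of $\F$ and decompose by axiom \eqref{objectcond} for $\FF$; (ii$'$), the Day-convolution co-end condition, from axiom \eqref{morphcond} for $\FF$ once one uses that $\D$ is strictly monoidal, so $\D(\phi\otimes\psi)\simeq\D(\phi)\otimes_\E\D(\psi)$ and the direct sum in \eqref{enrichedhomeq} distributes over $\day$; and (iii$'$) from axiom \eqref{smallcond} for $\FF$ since each $\D(\phi)$ is an object of $\E$.

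Finally I would produce the indexing $\fb=(v_b,f_b)\colon\FF_\D\to\fB(\D)$, with $f_b$ the identity on objects (hence surjective on objects, as required) and $v_b$ its restriction collapsing the free enrichment $(\V_\B)_\E$ onto $\V_\B$. On hom-objects $f_b$ is the regrouping of \eqref{enrichedhomeq} along $\Hom_{\F_\B}$: using the split decompositions $\D(\sigma)=\D(\sigma)^\times\oplus\D^{\it red}(\sigma)$ and $\D(\sigma)^\times=\bigoplus_{\sigma_i\in I_\sigma}\unit$, the invertible group-like summands are exactly the morphisms of $\V_\B=\Gpd(\D)$ already built into $\F_\B$, so that
\begin{equation*}
\inthom_{\F_\D}(X,Y)=\bigoplus_{\phi\in\Hom_\F(X,Y)}\D(\phi)=\bigoplus_{\bar\phi\in\Hom_{\F_\B}(X,Y)}f_b^{-1}(\bar\phi),
\end{equation*}
the enriched analogue of \eqref{fiberhomeq}. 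Checking that $f_b$ respects composition and the monoidal product then reduces to the lax-monoidal lax-$2$-functoriality of $f_b^{-1}$ recorded in Remark \ref{enrichfunctorrmk}. I expect the main obstacle to be precisely this last compatibility: showing that the structure maps $\D(\phi)\otimes\D(\psi)\to\D(\phi\circ\psi)$ and the enlarged action $(\sigma_i\Downarrow\sigma'_j)$ descend coherently to the fibers over $\F_\B$, i.e.\ that splitness separates the invertible data cleanly enough for the partial fiberwise products of Remark \ref{decomprmk} to be well defined and associative. This is exactly where being \emph{split} (not merely a weak enrichment functor) is used, paralleling Example \ref{Assex}, and where the interplay between the $\Set$-level base and the $\E$-enriched fibers is most delicate.
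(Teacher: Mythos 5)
Your proposal is correct and follows the same route as the paper, whose entire proof consists of declaring the indexing clear and deferring the Feynman-category property of $\fB(\D)$ to the analogous argument in \cite[Theorem 4.1.4]{feynman}; your write-up simply fills in the details of exactly those two steps (transport of axioms (i)--(iii) to $\F_\B$ along the extended $\V_\B^\otimes$-bimodule action, the hom-formula \eqref{enrichedhomeq} with units from the gcp pointing, and the fiberwise regrouping generalizing \eqref{fiberhomeq}). The place you flag as delicate --- that splitness is what makes the invertible summands $\bigoplus_{i\in I_\sigma}\unit$ match the morphisms of $\Gpd(\D)$ so the fibers over $\F_\B$ compose coherently --- is indeed the point where the hypothesis is used, consistent with the paper's treatment.
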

\begin{proof}
The fact that $\FF_\D$ is indexed over $\FF_\D$ is clear. The fact that $\fB$ is a Feynman category follows in a straightforward fashion similar to \cite[Theorem 4.1.4]{feynman}
\end{proof}

\begin{cor}
A split weak enrichment functor $\D$ for $\FF$ lifts to an enrichment functor $\tilde \D$ over $\fB(\D)$.
\end{cor}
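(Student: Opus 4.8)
The plan is to build $\tilde\D$ directly from $\D$ by transporting the splitting of Definition \ref{splitdef} into the enlarged groupoid $\V_\B=\Gpd(\D)$. By Proposition \ref{functorequivprop} the datum of $\D$ is the same as a split $\D\in\FF^\gcp\dashops_\E$, and Theorem \ref{weakthm} already produces the weak Feynman category $\FF_\D$ together with its indexing over $\fB(\D)$; what remains is to name the enrichment functor on $\fB(\D)$ that realizes this indexing and to verify that it is genuinely \emph{reduced}, i.e.\ an enrichment functor rather than merely a weak one. First I would set $\tilde\D(\sigma_i):=\unit$ for every generating isomorphism $\sigma_i\in I_\sigma$ of $\V_\B$, the copy of $\unit$ being the corresponding summand of $\D(\sigma)^\times=\bigoplus_{\sigma_i\in I_\sigma}\unit$, and let $\tilde\D$ carry over the values of $\D$ on the reduced parts and on the composition morphisms $\gamma$. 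By construction $\tilde\D(\tau)=\unit$ on all of $\Mor(\V_\B)$, so once well-definedness is established $\tilde\D$ is an enrichment functor for $\fB(\D)$, equivalently a $\fB(\D)^\hyp\dashops_\E$ by Theorem \ref{hypthm}\eqref{catpart}.

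Next I would check that $\tilde\D$ is a well-defined object of $\fB(\D)^\gcp\dashops_\E$ in three essentially routine steps. The groupoid-compatibility relations \eqref{gcpreleq} for $\tilde\D$ amount to the statement that the composition maps $I_\sigma\times I_{\sigma'}\to I_{\sigma\sigma'}$ become bijections after fixing one factor, which is exactly condition \eqref{gpdcond} of Definition \ref{splitdef} as reformulated in the following remark; this also identifies the left and right $\Mor(\V_\B)$-actions on the objects $\tilde\D(\phi)$ with the bi-module action $\sds$ extended to $\V_\B$ that was used to build $\F_\B$ in Theorem \ref{weakthm}. Associativity and interchange for $\tilde\D(\gamma)$ are inherited verbatim from the composition data of $\D$, since they do not involve the invertible labels. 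Finally, the gcp pointing $\D(i_\sigma)$ is carried to the distinguished basepoints $0\in I_\sigma$ via Remark \ref{irmk}, matching the inclusion $\V\to\Gpd(\D)$, $\sigma\mapsto 0\in I_\sigma$; this is the paradigmatic situation of Example \ref{Assex}, where $A=A^\times\amalg A^{red}$ and the invertibles are absorbed into the groupoid $\underline{A^\times}$.

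The substance of the proof is the verification that $\tilde\D$ \emph{lifts} $\D$, namely that the enriched Feynman category $(\fB(\D))_{\tilde\D}$ assembled from $\tilde\D$ through the hom-formula \eqref{enrichedhomeq} coincides with $\FF_\D$ over the indexing. Expanding \eqref{enrichedhomeq} for $\tilde\D$ and summing first over the groupoid labels $\sigma_i$, each contributing one copy of $\unit$, and then over the reduced data should reconstruct $\bigoplus_{\phi\in\Hom_\F(X,Y)}\D(\phi)$, because the multiplicities $|I_\sigma|$ formerly recorded in the values $\D(\sigma)^\times$ are now recorded in $\Mor(\V_\B)$. The hard part will be precisely this bookkeeping: one must confirm that moving the invertible summands into $\V_\B$ neither loses nor double-counts the reduced data $\D(\sigma)^{red}$, and that the free-enrichment condition \eqref{gpdcond} makes the $\V_\B$-action free, so that \eqref{enrichedhomeq} re-sums correctly and the compositions on both sides agree. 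This is the only place where the splitting hypothesis is genuinely used, and it is where I would concentrate the argument.
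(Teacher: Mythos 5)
Your construction is exactly the one the paper uses: the paper defines $\tilde\D$ on a morphism of $\F_\B$ represented by $(\sigma_i,\phi)$ as $\unit_{\sigma_i}\otimes\D(\phi)\simeq\D(\phi)$, with composition obtained by pre-composing $\D(\sigma)\otimes\D(\phi)\to\D(\sigma\circ\phi)$ with the inclusion of the summand $\unit_{\sigma_i}\hookrightarrow\D(\sigma)^\times$, so that $\tilde\D$ is reduced on $\Mor(\V_\B)$ and hence an honest enrichment functor. Your re-summation check against \eqref{enrichedhomeq} and the role you assign to condition \eqref{gpdcond} of Definition \ref{splitdef} make explicit what the paper leaves implicit, but the route is the same.
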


The values of $\tilde{\D}$ on morphisms are $\tilde{\D}(\sigma_i,\phi)=\unit_{\sigma_i}\otimes \D(\phi)\simeq \D(\phi)$ where $\sigma_i$ is the isomorphism corresponding to $i\in I_\sigma$, $\unit_{\sigma_i}$ is the corresponding component of $\Gpd(\D)$ and the morphism is given by pre--composing the morphism  $\D(\sigma) \ot \D(\phi)\to \D(\phi)$ with the inclusion of $\unit_{\sigma_i}\to \D(\sigma)$.

\subsubsection{$\FNCSet$ as an indexed enriched Feynman category over $\FFinSet$}
We know that $\FFinSet^\gcp=\operads$, so enrichment functors will be operads.
Let $\Assoc\in \operads\dashopcat_\Set$ be the associative operad as in Lemma \ref{nscoverlem},
then $\Assoc\in \operads^\hyp$, since $\Assoc(*_{\{s,t\},t})$ has only one element.
The following is not straightforward
\begin{lem}\label{assocenrichedlem}
$\FNCSet=\FFinSet_{\Assoc}$ is indexed enriched over $\FFinSet$. \qed
\end{lem}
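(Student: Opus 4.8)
The plan is to exhibit $\Assoc$ as the enrichment functor that produces $\FNCSet$, and then to match hom-objects directly. Since $\Assoc(\ast_{\{s,t\},t})$ is a single point, $\Assoc$ is reduced, so $\Assoc(\sigma)\simeq\unit$ for the only isomorphism class $\sigma=id_*$ of $\V=\triv$; hence $\Assoc$ is a genuine enrichment functor over $\FFinSet$ (not merely a weak one) and corresponds, under Theorem~\ref{hypthm}\eqref{catpart}, to an element of $\FFinSet^\hyp\dashops_\Set$, namely the reduced operad of Proposition~\ref{operadprop} and Lemma~\ref{nscoverlem}. By Theorem~\ref{hypthm}\eqref{functorpart} it then determines an indexed enriched Feynman category $\FFinSet_\Assoc$ indexed over $\FFinSet$; because $\Assoc$ is a hyper-functor, the associated groupoid $\Gpd(\Assoc)$ adds no new isomorphisms, so $\V=(\underline 1)_\Set=\triv$, in agreement with $\FNCSet$. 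The forgetful indexing $\FFinSet_\Assoc\to\FFinSet$ will be compared with the strong indexing of Proposition~\ref{asscoveprop}, and it remains to identify $\FFinSet_\Assoc$ with $\FNCSet$ on the nose.

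First I would compute the value of the enrichment functor on an arbitrary map $f\colon S\to T$. The canonical decomposition \eqref{feydecompeq} writes $f$ as the disjoint union $\amalg_{t\in T}\pi_{f^{-1}(t)}$ of basic morphisms, and monoidality over $(\Set,\times)$ gives $\Assoc(f)=\prod_{t\in T}\Assoc(\pi_{f^{-1}(t)})=\prod_{t\in T}\{\text{orders on }f^{-1}(t)\}$, using the value of $\Assoc$ recorded in Lemma~\ref{nscoverlem}. Hence an element of $\Assoc(f)$ is exactly a system of orders $<_f=\{<_{f^{-1}(t)}\}_{t\in T}$ on the fibers. Substituting into the defining formula \eqref{enrichedhomeq}, in which $\oplus=\amalg$ since $\E=\Set$, yields
\begin{equation}
\Hom_{\FinSet_\Assoc}(S,T)=\coprod_{f\colon S\to T}\Assoc(f)=\{(f,<_f)\}=\Hom_{\NCSet}(S,T),
\end{equation}
which is the definition of $\NCSet$. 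The objects coincide (finite sets in both cases), and the monoidal product is disjoint union on objects with fiber-orders juxtaposed on morphisms, matching the monoidal structure of $\NCSet$; the $0$-ary piece $\Assoc(\pi_\emptyset)=\{\ast\}$ corresponds to the generator $i\colon\emptyset\to\{\ast\}$, so units agree as well.

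The step requiring genuine care --- and the main obstacle --- is the compatibility of composition. For $S\xrightarrow{f}T\xrightarrow{g}U$ the composition map $\Assoc(g)\otimes\Assoc(f)\to\Assoc(g\circ f)$ supplied by \eqref{enrichedhomeq} is precisely the operadic structure $\gamma$ of $\Assoc$; on the fiber over $u\in U$ it equips $(g\circ f)^{-1}(u)=\amalg_{t\in g^{-1}(u)}f^{-1}(t)$ with the order that ranks the blocks $f^{-1}(t)$ by $<_{g^{-1}(u)}$ and orders each block internally by $<_{f^{-1}(t)}$, i.e.\ $<_1\amalg\dots\amalg<_n$ in the notation of Lemma~\ref{nscoverlem}. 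I would check that this is verbatim the lexicographic composition of fiberwise orders that defines composition in $\NCSet$ (primary key the order of the later map, secondary key the order within a fiber), paying attention to which map supplies the ``outer'' order and observing that associativity and unitality are just the operad axioms already established for $\Assoc$. Once this bookkeeping is settled, $\FFinSet_\Assoc$ and $\FNCSet$ agree as symmetric monoidal categories; together with the identification of groupoids and of the forgetful indexing with that of Proposition~\ref{asscoveprop} from the first paragraph, this yields the asserted equality $\FNCSet=\FFinSet_\Assoc$ of indexed enriched Feynman categories.
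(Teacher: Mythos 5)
Your argument is correct, and it is exactly the intended route: the paper states this lemma with a bare \qed (after remarking it is ``not straightforward''), so the content is precisely the unwinding you carry out --- identifying $\Assoc$ as a hyper/enrichment functor, computing $\Hom_{\FinSet_{\Assoc}}(S,T)=\coprod_{f}\prod_{t\in T}\{\text{orders on }f^{-1}(t)\}$ from \eqref{enrichedhomeq} and strict monoidality, and matching the operadic $\gamma$ of $\Assoc$ with the lexicographic composition of fiber orders in $\NCSet$. You correctly isolate the one point of genuine care (which map supplies the outer, block-ordering key in the composition) and resolve it consistently with Lemma \ref{nscoverlem} and Proposition \ref{asscoveprop}, so nothing is missing.
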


\begin{rmk}
\label{assdecoenrmk}
We now have two description of $\operads^{\neg\Sigma}$. Using the Lemma  \ref{assocenrichedlem} above, Proposition \ref{operadprop} and Lemma \ref{nscoverlem}

\begin{equation}
(\FFinSet_{\Assoc})^\gcp=(\FinSet^\gcp)_{\dec \Assoc}
\end{equation}
This is part of a general statement, see \S\ref{plusdecpar} and \cite{pluspaper}.
\end{rmk}

\subsubsection{Enriching quivers}
\label{enrichedquiversec}
Quivers give a generalization of Example \ref{kgmodex}.
As an example consider of a simple quiver $Q:\bullet_1\to \bullet_2$. That is the category has two isomorphisms $\id_{\bullet_1}$ and $id_{\bullet_2}$ together with a morphism $\phi:\bullet_1\to \bullet_2$. This give rise to a Feynman category $\FF_Q$, where $\V$ is the discrete category with objects $\bullet_1,\bullet_2$ and $\F=Q^\otimes$.

\begin{prop} The weak enrichments of $\FF_Q$ are in 1--1 correspondence with $(A,B,\leftsub{A}{M}_B)$ of
 two unital split, in the sense of split enrichment functors, algebras and a bi--module.
\end{prop}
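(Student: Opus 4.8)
The plan is to identify weak enrichments of $\FF_Q$ with objects of $\FF_Q^\gcp\dashops_\E$ via Proposition \ref{functorequivprop}, and then to unwind this category of ops completely by the explicit description of the plus construction in Corollary \ref{fplusupscor}. So the whole proof reduces to reading off the object- and composition-data of a split gcp functor on $\F_Q^+$.

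First I would determine the basic morphisms of $\FF_Q$. Since $\F = Q^\otimes$ is the free monoidal category on the quiver $Q$, every morphism is a tensor product of the generators $\id_{\bullet_1},\id_{\bullet_2},\phi$, and in particular preserves tensor length; hence the basic morphisms, i.e. the objects of $(\F\downarrow\V)$, are exactly the three length-one morphisms $\id_{\bullet_1},\id_{\bullet_2},\phi$. As $\V$ is discrete, $\V^\otimes$ has no nontrivial isomorphisms between length-one words, so $\V^+=\Iso(\F\downarrow\V)$ is the discrete groupoid on these three objects. By Corollary \ref{fplusupscor}, a functor $\D\in\FF_Q^+\dashops$ is therefore the same as a choice of three objects $A:=\D(\id_{\bullet_1})$, $B:=\D(\id_{\bullet_2})$, $M:=\D(\phi)$ of $\E$ together with its composition data.

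Next I would enumerate the composition morphisms $\D(\gamma)$. Because all basic morphisms have length-one source, the only composites $\phi_0\circ\phi_1$ of basic morphisms are $\id_{\bullet_1}\circ\id_{\bullet_1}$, $\id_{\bullet_2}\circ\id_{\bullet_2}$, $\id_{\bullet_2}\circ\phi=\phi$ and $\phi\circ\id_{\bullet_1}=\phi$; these produce, respectively, maps $\mu_A\colon A\otimes A\to A$, $\mu_B\colon B\otimes B\to B$, a left action $\lambda\colon B\otimes M\to M$ and a right action $\rho\colon M\otimes A\to M$. The associativity constraints packaged into Corollary \ref{fplusupscor} make $\mu_A,\mu_B$ associative and $\lambda,\rho$ into actions, while the two ways of forming the triple composite $\id_{\bullet_2}\circ\phi\circ\id_{\bullet_1}$ force $\lambda\circ(\id_B\otimes\rho)=\rho\circ(\lambda\otimes\id_A)$, the bimodule compatibility. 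Thus an $\F_Q^+$-op is precisely a pair of non-unital associative algebras $A,B$ together with a $(B,A)$-bimodule $M$ (the labelling of sides being the convention opposite to $\leftsub{A}{M}_B$ that is fixed by the direction of $\phi$ and the order of the tensor factors).

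Finally I would pass to the gcp version. By Proposition \ref{functorequivprop} a weak enrichment functor is an $\FF_Q^\gcp$-op, equivalently a gcp functor in the sense of Definition \ref{functortypedef}, which adjoins pointings $u_\sigma\colon\unit_\E\to\D(\sigma)$ for $\sigma\in\Mor(\V)=\{\id_{\bullet_1},\id_{\bullet_2}\}$, i.e. elements $u_A\colon\unit_\E\to A$ and $u_B\colon\unit_\E\to B$. Specializing the compatibility squares \eqref{urightcompateq} and \eqref{uleftcompateq} to the four compositions above shows exactly that $u_A$ is a two-sided unit for $\mu_A$, that $u_B$ is one for $\mu_B$, and that $\lambda$ and $\rho$ are unital actions; so $A,B$ become unital algebras and $M$ a unital bimodule. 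The standing splitness assumption together with Definition \ref{splitdef} is precisely the condition that $A$ and $B$ be split as unital algebras, their invertible parts arising as the free enrichment of a set-groupoid. Running every step backwards assigns to a triple $(A,B,\leftsub{A}{M}_B)$ a split gcp functor $\D$, giving the inverse correspondence. The main obstacle is the bookkeeping of this last paragraph: checking that the abstract gcp relations \eqref{urightcompateq}, \eqref{uleftcompateq}, specialized to the four generating compositions, reproduce exactly the two-sided unit axioms and the unitality of both module actions, and matching the splitness data of Definition \ref{splitdef} to the ordinary notion of a split unital algebra.
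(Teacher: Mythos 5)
Your proposal is correct and follows essentially the same route as the paper's (much terser) proof: read off the values of the enrichment functor on the generating morphisms $\id_{\bullet_1},\id_{\bullet_2},\phi$ to get $A$, $B$, $M$, and extract the algebra structures from $\id_{\bullet_i}\circ\id_{\bullet_i}=\id_{\bullet_i}$ and the bimodule structure from $\id_{\bullet_2}\circ\phi=\phi\circ\id_{\bullet_1}=\phi$. Your extra care with the gcp/unit diagrams, the splitness condition, and the left/right labelling of the actions fills in details the paper leaves implicit but does not change the argument.
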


\begin{proof}
We can define a class of
 enrichment functors by giving two groups and a bi--module over them
  $\D$ will have values $\D(id_{\bullet_1})=A$, $\D(id_{\bullet_2})=B$ and $\D(\phi)=\leftsub{A}{M_B}$.
  The algebra structure comes from the compositions $id_{\bullet_i}\circ id_{\bullet_i}=id_{\bullet_i}$
 The bi--module structure comes from the compositions $\phi\circ id_{\bullet_1}=id_{\bullet_2}\circ \phi=\phi$.

\end{proof}

If the functors are weak or not depends on the algebras $A$ and $B$.

\begin{ex}
  A particular example over $\Set$ is the choice $A=GL(V), B=GL(W), Hom(V,W)$, where $V,W\in \kVect$.
  Notice that this is not enriched over $k$ as discussed above. It is also a weak indexing.
  With the base category $\fB$ having two objects with automorphisms $GL(V)$ and $GL(W)$ respectively and one morphism between the two objects, i.e.\ the trivial $A\mdash B$ module.
\end{ex}
\begin{rmk} Going over to finite graphs,
this type of example is tied to the quantum graph symmetries \cite{graphsym,KKWKsym}, where the enrichment now
takes values in $C^*$ algebras and has applications to material science, \cite{chapter}.

\end{rmk}

\subsubsection{Twists}
One reason one uses the categories $\FF_\D$ is to obtain the necessary sign twist for the bar and co--bar constructions.
In particular, the twists of \S\S\ref{Kpar} and \ref{suspensionpar} are important for the transforms in \S\ref{barpar}.

There is also a nice interpretation of twisting the triples for indexed Feynman categories, which we will not describe in detail here,  but refer to \cite[Proposition 4.1.7]{feynman}. We wish to note, that the $\V$--twists modify the triples in an isomorphic way, hence one obtains isomorphisms between $\F_\D\dashopcat$ and $\F_{\D_\fL}\dashopcat$, which is what one is used to in the algebra case, see \S\ref{baralgebrasec} and in general \cite{KWZ} for relevant examples.

\subsection{Modules} With this preparation, we can finally define modules for $\FF^\gcp\dashops$. This generalizes the definition of \cite{feynman}, where the modules were only defined for $\FF^\hyp\dashops$.

\begin{df} Given a $\D\in \FF^\gcp\dashops_E$, $\D$-modules in a monoidal category $\C$ enriched over $\D$ are $\FF_\D\dashops_\C$.
\end{df}

\subsubsection{Modules over an associative algebra}
\label{Assenrichedex}
We can now do the construction of Remark \ref{Assex}  full justice, that is we can consider modules over an algebra and not just a monoid.
The example also exhibits all the features above.

Let $A$ be an associative  algebra over $k$, that is the value of $\A\in \Surj^>\dashops_{\kVect}=(\FFtriv)^+$.
We consider the category $\F_\A$ whose objects are the natural numbers and whose morphisms are given by $\inthom_{\F_\A}(n,n)=A^{\otimes n}$ and $\inthom_{\F_\A}(n,m)=0$ for $n\neq m$ with permutation action and unit $\unit=k=A^{\otimes 0}$.
This is the free symmetric category on the category enriched over $k$, $\V_A$, which has one object $*$ and morphisms $Hom_{\V_\A}=A$.
Thus ${\it Fun}_\otimes(\F_\A,\E)={\it Fun}(\V_\A,\E)=A\mdash mod_\E$ that is the category of $A$--modules in $\E$. Here we assume that $\E$ is also enriched over $\kVect$ and the monoidal functors are over $\kVect$.

Now, if $\A$ is split as a functor in $(\FFtriv)^+\dashops$, then $A=A^\times\oplus A$ and any invertible element lies in $A^\times =G_k=k[G]$ for some group $G$.

If $A$ is unital that is $\A\in NCS\dashops_{\kVect}=(\FFtriv)^\gcp$, being pointed by the unit and
 $A^\times=\unit\oplus \bar{A}^\times$, which is the inclusion of $e\in k[G]$.
In this case, $\fB_\A=\mathfrak{V}_\A$. Again, hyper--means that $A$ is reduces, that is $\bar(A)^\times=0$ and $A^\times =\unit$.
In this case indeed $\fB=\FFtriv$ and $\F_\A$ is indexed enriched over $\FFtriv$.

\begin{rmk}
This suggests the study of group like elements of a Hopf algebra as a replacement for the condition of having an underlying set--groupoid.
This will be explored in the future.
\end{rmk}

\subsubsection{Enrichments of $FI$, $FI_G$ and $FI_d$}
\label{FIpar}
Consider a functor $\D\in FI^\gcp \mdash \ops_\E$.
There are two generating morphisms in $FI$: $id_*$ and $i:\emptyset\to 1$. Let $\D(id_*)=A=A^\times\oplus \bar A$ and $\D(i)=M$.
The morphism $i_{id_*}$ provides $\unit\to A^{\times}$.

\begin{prop}
The weak indexed enrichments of $FI$ are in 1--1 correspondence with pairs $(A,M)$ where $A$ is a unital algebra (monoids) and $M$ is an $A$--module.
\end{prop}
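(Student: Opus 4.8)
The plan is to unravel a weak indexed enrichment of $\inj$ into its generating data and relations, using Proposition \ref{functorequivprop} and Theorem \ref{weakthm} to identify it with a split functor $\D\in \inj^\gcp\dashops_\E$, and then Corollary \ref{fplusupscor} and Definition \ref{gcpdef} to present $\D$ concretely. First I would pin down the groupoid data. The objects of $\V^+$ are the basic morphisms of $\inj$, i.e.\ the injections with target the atom $\{*\}$; since an injection $S\hookrightarrow\{*\}$ exists only when $|S|\le 1$, these are exactly $id_*\colon\{*\}\to\{*\}$ and $i\colon\emptyset\to\{*\}$. Both have trivial automorphism group and there is no isomorphism between them (their sources $\emptyset$ and $\{*\}$ are not isomorphic), so $\V^+$ is discrete. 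As $\Mor(\V)=\{id_*\}$ is trivial, the equivariance in Corollary \ref{fplusupscor} is vacuous, and the groupoid data of $\D$ amounts to the single pair of objects $A:=\D(id_*)$ and $M:=\D(i)$.

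Next I would read off the composition data. A generating morphism $\gamma(\phi_0;\phi_1\kdk\phi_n)$ requires $\phi_0\colon \{*\}^{\amalg n}\to\{*\}$ to be basic, hence injective, which forces $n\le 1$; the case $n=0$ contributes no new data. The only substantive generators are thus $\gamma(id_*;id_*)$ and $\gamma(id_*;i)$, yielding
\begin{equation}
\mu:=\D(\gamma(id_*;id_*))\colon A\otimes A\to A,\qquad
\rho:=\D(\gamma(id_*;i))\colon A\otimes M\to M.
\end{equation}
The associativity of the maps $\D(\gamma)$, which is part of the Feynman-category structure of $\inj^+$ (operadically $A=\O(1)$ and $M=\O(0)$ are the only nonempty arities), reflects the identities $id_*\circ(id_*\circ id_*)=(id_*\circ id_*)\circ id_*$ and $id_*\circ(id_*\circ i)=(id_*\circ id_*)\circ i$, and gives precisely $\mu\circ(\mu\otimes id_A)=\mu\circ(id_A\otimes\mu)$ together with $\rho\circ(\mu\otimes id_M)=\rho\circ(id_A\otimes\rho)$. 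Thus $A$ is an associative algebra and $\rho$ an associative $A$-action on $M$.

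It remains to extract the units from the gcp structure and to run the construction backwards. The adjoined morphism $i_{id_*}\colon\unit\to id_*$ produces $u:=\D(i_{id_*})\colon\unit_\E\to A$, which lands in $A^\times$ by the splitting of Definition \ref{splitdef}. Specializing the compatibility squares \eqref{rightcompateq} and \eqref{leftcompateq} to the unique isomorphism $\sigma=id_*$ forces $\mu\circ(u\otimes id_A)=id_A=\mu\circ(id_A\otimes u)$ and $\rho\circ(u\otimes id_M)=id_M$, so $(A,\mu,u)$ is a unital algebra and $(M,\rho)$ a unital $A$-module. Conversely, given such a pair I would set $\D(id_*)=A$, $\D(i)=M$, $\D(\gamma(id_*;id_*))=\mu$, $\D(\gamma(id_*;i))=\rho$, $\D(i_{id_*})=u$, extend along the monoidal and (trivial) groupoid structure, and check that the defining relations of $\inj^\gcp$ — associativity, interchange \eqref{interchangeeq}, and the gcp relations \eqref{gcpreleq}, \eqref{rightcompateq}, \eqref{leftcompateq} — hold; they do, being exactly the algebra and module axioms, and the two assignments are visibly mutually inverse. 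The step needing the most care is this bookkeeping of generators and relations: one must verify that no generator of arity $\ge 2$ can appear (impossible, as $\{*\}$ receives no injection from a set of size $\ge 2$), and that the gcp compatibility delivers the module unit law $1_A\cdot m=m$ and not merely the algebra unit; the split condition enters only to record $A^\times$ as the free enrichment of the units and is otherwise inert in the correspondence.
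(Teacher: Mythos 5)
Your proposal is correct and follows the same route as the paper's proof: reading off $\mu$ from the composition $id_*\circ id_*=id_*$, $\rho$ from $id_*\circ i=i$, and the unit from the gcp morphism $i_{id_*}$. You simply supply more of the bookkeeping (discreteness of $\V^+$, absence of higher-arity generators, the unit laws from \eqref{rightcompateq}--\eqref{leftcompateq}, and the inverse construction) that the paper leaves implicit.
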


\begin{proof}
The composition $id_*\circ id_*=id_*$ provides the multiplication map $\mu:\D(id_*)\otimes \D(id_*)=A\ot A\to A=\D(id_*)$, the unit is provided by $\unit\to \D(id_*)=A$ from the $\gcp$ data,
while the composition $id_*\circ i=i$ provides the module map $\rho:\D(id_*)\ot \D(i)=A\otimes M\to M= \D(i)$.\end{proof}
\begin{rmk}\mbox{}
\begin{enumerate}
\item As $\Inj\subset\FinSet$ we have that $\Inj^\gcp\subset \FFinSet^\gcp=\operads_{\it unital}$ and hence $\Inj^\gcp\dashops$ are unital operads with only $\O(1)$ and $\O(0)$. It is well known that this pair is a pair of a unital algebra and a module over it.
\item Considering $\Inj^+\dashops$, one arrives at a non--unital algebra and a module over it.
\end{enumerate}
\end{rmk}

Two special cases over $\Set$ have been considered in \cite{FIG,FId}. Here we generalize these in two ways. There is a general solution
yielding the two construction as special cases and this can be performed in any enriched setting, that is also $k$--linearly.

Let us briefly review the two constructions. For the first the category $G$--maps is studied. Objects are finite
sets with a morphism between $R$ and $S$ given by a pair of maps: and injection $f:R\to S$ and a map $\rho:R\to G$. Composition is given by $(g,\sigma)\circ (f,\rho)=(g\circ f,\tau)$ with $\tau(x)=\sigma(f(x))\rho(x)$.

The category $FI_d$ is given as follows: again the objects are finite sets and morphisms given by pair $(f,m)$, where $f:R\to S$ is an injection, but now $m:S\setminus f(R)\to \{1,\dots,d\}$. For the composition $(f,m)\circ (g,n)=(f\circ g, p)$ with $R\stackrel{g}{\to}S\stackrel{f}{\to}T$, where $p$ is defined on
$T \setminus f\circ g(R)=f(T\setminus g(R))\amalg T\setminus g(S)$ as $m\circ f^{-1}\amalg n$.

The extra data is compatible with orders and defines the categories $OI_G$ and $OI_d$.
\begin{prop}
Restricting $A$ to be group $G$ and $M$ to be trivial, we recover the category $FI_G$ .
Restricting $A$ to be trivial and $M=\{1,\dots,d\}$ then we recover the category $FI_d$.

Similar results hold in the ordered cases $OI_G$ and $OI_d$.
\end{prop}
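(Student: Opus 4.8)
The plan is to compute the underlying category of the indexed enriched Feynman category $\Inj_\D$ directly from the hom--formula \eqref{enrichedhomeq} and, after specializing the pair $(A,M)$, to identify it with $FI_G$ respectively $FI_d$. Throughout I work over $\E=\Set$, so that $\oplus=\amalg$, $\ot=\times$, and an enriched category is just an ordinary category. The two basic morphisms of $\inj$ are $id_*\colon 1\to 1$ and $i\colon\emptyset\to 1$, with $\D(id_*)=A$ and $\D(i)=M$, so the whole computation reduces to bookkeeping the fiber decomposition of injections through $\D$.

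First I would record the morphism sets. Any injection $f\colon S\hookrightarrow T$ decomposes fiberwise as in \eqref{finseteq}: each $t\in f(S)$ has a singleton fiber, contributing a factor $id_*$, while each $t\in T\setminus f(S)$ has empty fiber, contributing a factor $i$. Hence
\begin{equation}
\D(f)=A^{\otimes |S|}\otimes M^{\otimes(|T|-|S|)},
\end{equation}
the $A$--factors indexed by $S$ (via $f$) and the $M$--factors by $T\setminus f(S)$. Substituting into \eqref{enrichedhomeq} gives
\begin{equation}
\Hom_{\inj_\D}(S,T)=\coprod_{f\colon S\hookrightarrow T} A^{S}\times M^{\,T\setminus f(S)} .
\end{equation}
A morphism is thus an injection $f$ together with an $A$--labelling $\rho\colon S\to A$ of the source and an $M$--labelling of the target complement. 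Specializing $A=G$ and $M=\ast$ leaves exactly the data $(f,\rho\colon S\to G)$, the morphisms of $FI_G$; specializing $A=\ast$ and $M=\{1,\dots,d\}$ leaves exactly $(f,m\colon T\setminus f(S)\to\{1,\dots,d\})$, the morphisms of $FI_d$. On objects both sides are finite sets, so objects match as well.

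Next I would verify that the enriched composition reproduces the two composition laws. Composition in $\inj_\D$ is induced by the plus--construction composition $\gamma$ together with the unit morphisms $i_\sigma$ of Definition \ref{gcpdef}; concretely it combines $A$--labels along composable chains by the algebra product $\mu=\D(\gamma(id_*;id_*))$ and acts on $M$--labels by the module map $A\ot M\to M$ of \S\ref{FIpar}. For $R\xrightarrow{\psi}S\xrightarrow{\phi}T$ in $\inj$ the complement of the composite splits as $T\setminus\phi\psi(R)=\phi(S\setminus\psi(R))\amalg(T\setminus\phi(S))$, and reading off the decomposition: the $A$--label of $x\in R$ becomes the $\mu$--product of the $\phi$-- and $\psi$--labels along $x\mapsto\psi(x)\mapsto\phi\psi(x)$; the $M$--labels on $T\setminus\phi(S)$ are carried over from $\phi$; and those on $\phi(S\setminus\psi(R))$ are the $\psi$--labels transported by $\phi^{-1}$ under the module action. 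For $A=G,\ M=\ast$ this is $\tau(x)=\sigma(\psi(x))\,\rho(x)$, the composition of $FI_G$; for $A=\ast,\ M=\{1,\dots,d\}$ it is the union $m\amalg(n\circ\phi^{-1})$ of the two colourings, the composition of $FI_d$. The units of $A$ and of the $A$--action on $M$, supplied by the $i_\sigma$, make the identities behave as required.

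Finally, the ordered statements $OI_G$ and $OI_d$ follow by running the identical computation for the non--$\Sigma$ Feynman category $\Inj_<$ in place of $\Inj$: the fibers of a morphism now carry orders (equivalently the corollas are planar, cf.\ \S\ref{graphpar}), so the symmetric group action is absent while the labellings and their composition are still governed by $\mu$ and the same module map, now respecting the orders. I expect the main obstacle to be the bookkeeping in the composition step: one must track carefully how the two pieces of the complement $T\setminus\phi\psi(R)$ arise from the fiber decomposition and check that the $\gamma$--composition combines the $A$-- and $M$--labels \emph{exactly} as in the stated formulas (in particular getting the multiplication order in $G$ and the transport $n\circ\phi^{-1}$ right). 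Once this dictionary between fiberwise $\gamma$--composition and the two classical composition rules is set up, both specializations, and their ordered analogues, are immediate.
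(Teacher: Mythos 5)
Your proposal is correct and follows essentially the same route as the paper: decompose an injection fiberwise into basic morphisms $(id_*,a\in A)$ for points in the image and $(i,m\in M)$ for points in the complement, identify this labelling data with the morphisms of $FI_G$ resp.\ $FI_d$, and check that the $\gamma$--composition (algebra product on the $A$--labels, module action and transport on the $M$--labels) reproduces the two classical composition rules, with the ordered cases handled identically for $\Inj_<$. Your explicit use of the hom--formula \eqref{enrichedhomeq} and the careful splitting $T\setminus\phi\psi(R)=\phi(S\setminus\psi(R))\amalg(T\setminus\phi(S))$ is just a more detailed write--up of the same argument.
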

\begin{proof}
In general, decomposing $S$ as $\amalg_{s\in S}*$, according to the decomposition \eqref{feydecompeq} a morphism in $Hom_{FI_\D}(R,S)$ is given by a tensor product of pairs $(id_*,g),a\in A)$ and $(i,m\in M)$,
 with a factor of $id_*$ for each element in the image of $R$ and a factor of $i$ for each element not in the image.
If $A$ is $G$ and $M$ is trivial, then an injection $R\to S$ is given by a tensor product of pairs $(id_*,g),g\in G)$ and $(i,1)$.
The data of the $(id_*,g)$ is equivalent to the data of a map $R\to g$ which assigns $g$ to the factor of $id_*$ corresponding to $r$.
Composition gives $(id_*,g)\circ (id_*,h)=(id_*,gh)$, where $(id<h)$ is the factor that corresponds to $f(x)$ if $(id_*,x)$ corresponds to $x$.
 and $(id_*,g)\circ (i,1)=(i,1)$, since the action is trivial $\rho(g)1=1$. Thus the two categories coincide.

If $A$ is trivial and $M=\{1,\dots, d\}$ then a morphism $R\to S$ is given by tensor factors of $(id_*,1)$ and $(i,j)$ with $j\in \{1,\dots,d\}$.
This data is encoded in the injection $f:R\to S$ and a morphism $S\setminus f(R)$. Composition now reads $(id_*,1)\circ (i,j)=(i,j)$ which is the first part of the formula for $p$. The second part is simply the insertion of factors of $i$ for the elements missed by $g$.

The proofs for the ordered cases is analogous.
\end{proof}
Note that the case of $A=G$ is one in which $A^\red=\emptyset$. An in this case the indexing,  $A^\times=G$ is not reduced. The (weak) Feynman category is actually indexed over itself.

\subsection{Decoration}
In the case of decoration in the linear case, we need the modification that is spelled out in \cite[\S 2.2]{decorated}.

\begin{df} For a fixed choice of   $\jmath:\Iso(\F) \to V^{\otimes}$ realizing the equivalence of condition \eqref{objectcond}:

The objects of $\Fdeco$ are  tuples $(X, a_{v_1},\dots,a_{v_{|X|}})$, where
for $\jmath(X)=\bigotimes_{v\in I}\ast_v=\ast_{v_1}\odo \ast_{v_{|X|}}$, $a_{v_i}\in \O(\ast_{v_i})$.

The morphism of $\Fdeco$ are given by the subset
$$Hom_{\Fdeco}((X, a_{w_1},\dots,a_{w_{|X|}}),(Y,b_{v_1},\dots,b_{v_{|Y|}}))\subset Hom_{\F}(X,Y)$$
of those morphisms $\phi:X\to Y$, such that if $\bigotimes_v \phi_v$ is the decomposition of $\phi$ according to
the diagram \eqref{feydecompeq}, with $\phi_v:X_v=\bigotimes_{w\in I_v}\imath(*_w)\to \imath(\ast_v)$,  then $\O(\phi_v)(\bigotimes_{w\in I_v}(a_w))=b_v$.

The  monoidal structure is given by $$(X, a_{w_1},\dots,a_{w_{|X|}})\otimes (Y,b_{v_1},\dots,b_{v_{|Y|}}))=
(X\otimes Y, a_{w_1},\dots,a_{w_{|X|}},b_{v_1},\dots,b_{v_{|Y|}})$$ and the commutativity constraints are given by those of $\F$ on the first component and the respective permutations on the others.
\end{df}
The result can also be made into enriched category,  if $\F$ is tensored over $\E$.

\begin{df}Assuming the conditions above,
the category $\Fdeco$ as an enriched category over $\CalE$ has objects $X\otimes \O(X)$ and formally the same set of morphisms $\F$--$Hom_{\Fdeco}(X\otimes \O(X),Y\otimes \O(Y))=Hom_{\F}(X,Y)$. A morphisms $\phi$ via tensoring  becomes the morphism $\phi\otimes \O(\phi)$ in  $\C$.
Its symmetric monoidal structure is given by $(X\otimes \O(X))\otimes_{\Fdeco} (Y\otimes \O(Y))=(X\otimes_\F Y)\otimes \O(X\otimes Y)$, with composition of morphisms and symmetries given by the isomorphism
$(X\otimes Y)\otimes \O(X\otimes Y)\simeq (X\otimes \O(X))\otimes (Y\otimes \O(Y))$ in $\C$ provided by the strong symmetric monoidal structure of $\O$.
 $\Vdeco$ is likewise defined by objects $(V \otimes \O(\imath(V)))$ with $V\in \V$ and the morphisms of $\V$. The inclusion is given by
 $\imath (V\otimes \O(\imath (V)))=(\imath(V)\otimes \O(\imath(V)))$.
\end{df}
Similar modification allow for the decoration of enriched Feynman categories with functors to $\C$ which is also enriched over $\E$.

\subsubsection{The relation between decoration and plus construction}
\label{plusdecpar}
With the notion of enrichment for decoration, we can state a Proposition generalizing Remark \ref{assdecoenrmk} that will be proven in \cite{pluspaper}
\begin{prop}
  In general $(\FF^\gcp)_{\dec \O}=(\FF_\O)^\gcp$.
\end{prop}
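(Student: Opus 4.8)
The plan is to exhibit a strict isomorphism of Feynman categories between the two sides by comparing objects, generating morphisms, and relations. First I would pin down the meaning of the decorating datum: by Proposition \ref{functorequivprop} an $\O\in\FF^\gcp\dashops_\E$ is the same thing as a split weak enrichment functor $\D=\O$ for $\FF$, and this $\D$ is exactly the datum producing the indexed enriched Feynman category $\FF_\O=\FF_\D$ of Theorem \ref{weakthm}, whose hom-objects are given by formula \eqref{enrichedhomeq}, $\inthom_{\F_\O}(X,Y)=\bigoplus_{\phi\in\Hom_\F(X,Y)}\D(\phi)$. Thus a single $\O$ legitimately plays both roles. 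On objects the two sides then coincide tautologically: the objects of $(\FF^\gcp)_{\dec\O}$ are decorated objects of $\F^{+\gcp}$, i.e. pairs $(\phi,a)$ with $\phi\in\Obj(\F^{+\gcp})=\Mor(\F)$ and $a\in\O(\phi)=\D(\phi)$, while the objects of $(\FF_\O)^\gcp$ are $\Mor(\F_\O)$, which by \eqref{enrichedhomeq} are precisely such pairs $(\phi,a)$. The monoidal structures agree because decoration tensors the decorations and the outer free product on $\F_\O$ does the same.

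The heart is matching morphisms, and here I would use the generating morphisms of the plus construction from Corollary \ref{fplusupscor} together with Definition \ref{gcpdef}. A basic morphism of $\F_\O$ is a pair $(\phi,a)$ with $\phi$ basic in $\F$ and $a\in\D(\phi)$, and the generating composition morphism $\gamma$ of $(\FF_\O)^+$ sends the word $(\phi_0,a_0)\otimes\cdots\otimes(\phi_n,a_n)$ to $(\psi,\D(\gamma)(a_0\otimes\cdots\otimes a_n))$, where $\psi=\phi_0\circ(\phi_1\odo\phi_n)$ and $\D(\gamma)$ is the composition data of Corollary \ref{fplusupscor}. On the other side, the generating morphism $\gamma(\phi_0;\phi_1,\dots,\phi_n)$ of $\F^{+\gcp}$, once decorated, is by the very definition of the decoration construction of \S\ref{decopar} a morphism $(\phi_0\otimes\cdots\otimes\phi_n,(a_0,\dots,a_n))\to(\psi,b)$ which exists exactly when $\O(\gamma)(a_0\otimes\cdots\otimes a_n)=b$; since $\O$ evaluated on the generating morphism $\gamma$ is $\D(\gamma)$ by Corollary \ref{fplusupscor}, these are the same morphisms. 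I would then match the gcp-data: the adjoined $i_\sigma$ of Definition \ref{gcpdef} become, after decoration, the choice of a unit decoration via $\D(i_\sigma)\colon\unit\to\D(\sigma)$, and these are identified with the adjoined units of $(\FF_\O)^\gcp$; the relations \eqref{gcpreleq}, \eqref{rightcompateq} and \eqref{leftcompateq} hold on both sides because $\D$ is groupoid-compatible and $\gamma(i_\sigma\otimes i_{\sigma'})=i_{\sigma\circ\sigma'}$ by Remark \ref{irmk}. The isomorphisms $\sds$ and their equivariance transport identically, so the two generators-and-relations presentations agree and the identity-on-generators assignment is an isomorphism of Feynman categories.

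The main obstacle is the non-Cartesian (linear) enriched case, where none of the elementwise reasoning is literally available. Here I would invoke the modified decoration construction of \S\ref{decopar} (objects $X\otimes\O(X)$, morphisms $\phi\otimes\O(\phi)$) and match it with the enriched hom $\bigoplus_\phi\D(\phi)$ of \eqref{enrichedhomeq}, the delicate point being that the coproduct decomposition of morphisms must be compatible with the ``bilinear'' enriched composition --- precisely the coend condition (ii') that both weak Feynman categories are required to satisfy. I expect the real work to be checking that the split structure of Definition \ref{splitdef} forces the groupoid parts to agree: one must identify the decoration of $\V^{+\gcp}$ by the invertible part $\D(\sigma)^\times$ with the freely enriched groupoid $\Gpd(\D)_\E$ entering $\fB(\D)$, and verify that the units $\D(i_\sigma)$ select the basepoints of the $I_\sigma$ coherently, so that $\V_{(\FF^\gcp)_{\dec\O}}\simeq\V_{(\FF_\O)^\gcp}$. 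Once the object- and morphism-level identifications are strictified (passing to strict models on both sides by MacLane coherence, as throughout), the bijection of presentations upgrades to the asserted equality of Feynman categories, specializing to Remark \ref{assdecoenrmk} when $\FF=\FFinSet$ and $\O=\Assoc$.
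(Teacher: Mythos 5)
First, a caveat on the comparison itself: the paper does not actually prove this proposition --- it is stated in \S\ref{plusdecpar} with the proof explicitly deferred to \cite{pluspaper} --- so there is no in-paper argument to measure your proposal against, only the special case of Remark \ref{assdecoenrmk}. That said, the parts of your argument that match objects and composition generators are correct and would surely appear in any proof: $\Obj(\F^{+\it gcp})=\Mor(\F)$ decorated by $a\in\O(\phi)$ does coincide with $\Mor(\F_\O)=\amalg_{\phi}\O(\phi)$ via \eqref{enrichedhomeq}, and on both sides the generator $\gamma$ pushes decorations forward through the same structure map $\O(\gamma)$.

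The genuine gap sits exactly where you write that ``the isomorphisms $\sds$ and their equivariance transport identically'' and then defer the groupoid identification to ``the real work'': for a general split $\O\in\FF^\gcp\dashops$ the two groupoids do \emph{not} agree. On the left, $\Iso\bigl((\F^{+\it gcp})_{\dec\O}\bigr)$ consists only of decorated lifts of $\Iso(\F\downarrow\F)$, which by the relations \eqref{rightcompateq}--\eqref{leftcompateq} act on decorations through the distinguished units $u_\sigma=\O(i_\sigma)$, and there is a single decorated element morphism $i_\sigma$ landing on $u_\sigma$. On the right, $\Iso(\F_\O)$ is the whole of $\Gpd(\O)$, so $\Iso(\F_\O\downarrow\F_\O)$ conjugates decorations by \emph{arbitrary} invertible elements $s\in\O(\sigma)^\times$, and $(\FF_\O)^{+\it gcp}$ adjoins a unit $i_{(\sigma,s)}$ for every such $s$. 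Concretely, take $\FF=\FFtriv$ and $\O=\A$ a unital monoid $A$ in $\Set$ with $A^\times\neq\{1\}$ (split in the sense of Definition \ref{splitdef}, as in Example \ref{Assex}): the groupoid of $\bigl((\FFtriv)^\gcp\bigr)_{\dec\A}=(\FNCSet)_{\dec\A}$ is discrete on $A$, whereas the groupoid of $(\FFtriv_\A)^{+\it gcp}$ is the action groupoid of $A^\times\times A^\times$ acting on $A$ by two-sided translation; the two sides then differ already on isomorphism classes of basic objects, so they are not equivalent Feynman categories. The identity does hold when $\O(\sigma)^\times$ is trivial for all $\sigma\in\Mor(\V)$ --- in particular for hyper--functors such as $\Assoc$, which is the case of Remark \ref{assdecoenrmk} --- so your proof needs either to impose such a reducedness hypothesis or to supply a genuinely different identification of the groupoid and unit data, which at present it only promises to check.
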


\section{Bar, co--bar, Feynman Transforms, \& Master Equations}
\label{barpar}
In analogy with (co)--algebras, there are three transforms we will consider for $\fops$: the bar--, the cobar transform and the Feynman transform aka.\ dual transform.
The bar--cobar transforms yield a pair of adjoint functors.
These transforms serve a dual purpose. One is to give resolutions, the other is to give deformations through master--equations, which generalize the Maurer--Cartan equation.

\subsection{Motivating example: Algebras}
\label{baralgebrasec}
\subsubsection{Bar transform}
If $(A,\eps,d_A,|\cdot |)$ is an augmented associative algebra dg, then the bar transform is the dg--co--algebra given  by the free  co--algebra $BA=T\Sigma^{-1}\bar A$, where $\bar A=ker(\eps)$ together with differential from algebra structure. The usual notation for an element in $BA$ is $a_0|a_1|\dots|a_n$ it will have degree $\sum_{i=0}^n (|a_i|+1)=\sum_i |a_i|+n+1$. In this notation the co--product is
\begin{eqnarray}
\Delta(a_0|\dots|a_n)&=&1\otimes a_0|\dots|a_n+ a_0|\dots|a_n\otimes 1 +\bar\Delta(a_0|\dots|a_n)\nn\\
\bar\Delta(a_0|\dots|a_n)&=&\sum_{i=1}^{n} a_0|\dots|a_{i-1} \otimes a_{i}|\dots |a_n
\end{eqnarray}
Using the differentials
\begin{eqnarray}
\del_i(a_0|a_1|\dots|a_n)&=&a_0|\dots |a_{i-1}a_{i}|\dots |a_n \nn\\\
\del_B (a_0|a_1|\dots|a_n)&=&\sum_{i=1}^n (-1)^{\sum_{j=1}^{i-1}(|a_j|+1}) \del_i (a_0|a_1|\dots|a_n)
\end{eqnarray}
 The total differential on $BA$ is $d_B+d_A$.

\subsubsection{Odd version}
Notice that looking at $\Sigma BA$, the degree of an element $a_0|\dots|a_n$ is $n$ which is the number of bars.
As a mnemonic one can put $deg(|)=1$.
In the shifted version the co--product reduced co--product $\bar \Delta$ has degree $-1$ as there is one bar less.
This type of odd structure lies at the heart of the story for deformations. It actually gives the right degree to the Hochschild complex,
see \cite{KWZ} for a detailed exposition.

\subsubsection{Cobar and Bar/cobar}
Likewise let $(C,\eta,|\cdot|)$ be an associative co--augmented connected dg--co--algebra, $\bar C=coker(\eta)$.  The co--bar transform is the dg--algebra
 $\Omega C:=Free_{alg}(\Sigma^{-1}\bar C)$ together with a differential coming from co--algebra structure dual to the structures above.
The bar--cobar transform
$\Omega BA$ is a  resolution of $A$.

\subsubsection{Dual and Feynman transforms}
For the dual or Feynman transform consider  a finite--dimensional algebra $A$ or a graded algebra with finite dimensional pieces and let $\check A$ be its (graded) dual co--algebra.
Then the dual or Feynman transform of $A$ is $FA:=\Omega \check A$ together with a differential from multiplication.  Now,  the double Feynman transform $FFA$ a resolution.

There is particular interest in the differential for deformations. In particular one considers Maurer-Cartan elements which satisfy the equation $dm+\frac{1}{2}[m,m]=0$.

\subsection{Transforms for Feynman categories}
As before ,one can ask the question of how much of the structure of these transforms can be pulled back to the Feynman category side. The answer is: ``Pretty much all of it''. We shall not discuss all the details which  can be found in \cite{feynman}, but will give an overview following\cite{matrix}.

We start with a general overview and a discussion of the necessary structures.
The result of the transforms $BA$ or $FA$ is actually an odd version of a (co)--free co--algebra or an odd algebra with a (co)differential.

Algebras are a general case of elements of $\Fops$ and hence the transforms will be defined for inputs $\O\in \fopcatc$.
The co--algebra output means that one is going to the opposite category $\C^{op}$ for the target category in the output of the transforms.
The construction will be  free constructions, which, however, also have the extra structure of an additional (co)differential.
 The ``oddness'' is necessary for the signs that are needed in order for the differentials to square to zero. In general, this means that one needs an odd version $\FF^{odd}$ of the Feynman category $\FF$. In order to define this odd version, one needs to make assumptions on the Feynman category and fix a presentation.
 The  transform will transform an $\op$ into a new $\op$ for the odd version of the Feynman category $\FF^{odd}$ either in $\C^{op}$ or $\C$.
 In the graphical case, this is achieved by the twist by $\K$, see Example \ref{Kex}. The equation \eqref{Ktwisteq} then neatly explains the relation between odd version and the shifts. The twist by $\K$ means that each edge gets degree $1$, which is exactly the convention that $deg(|)=1$ in the bar construction; see   Figure \ref{barfig}.

Thus the resulting Feynman category is actually a category of chain complexes in a category enriched over $\Ab$.
Furthermore, for the (co)-differential to work, we have to have signs. These are exactly what is provided by the odd versions. In order to be able to define the transforms, one has to fix an odd version $\FF^{odd}$ of $\FF$, see \cite[\S5.2.3,\S5.2.6]{feynman} for full details. This is analogous to the suspension in the usual bar transforms. In fact, the following is more natural, see \cite{feynman, KWZ}. The degree is $1$ for each bar and in the graph case the edges get degree $1$. The basic example are graphical Feynman categories, for which the odd version is given by the twist with $\K$. In general, one needs an ordered presentation.

\begin{figure}
    \centering
    \includegraphics[width=0.5\textwidth]{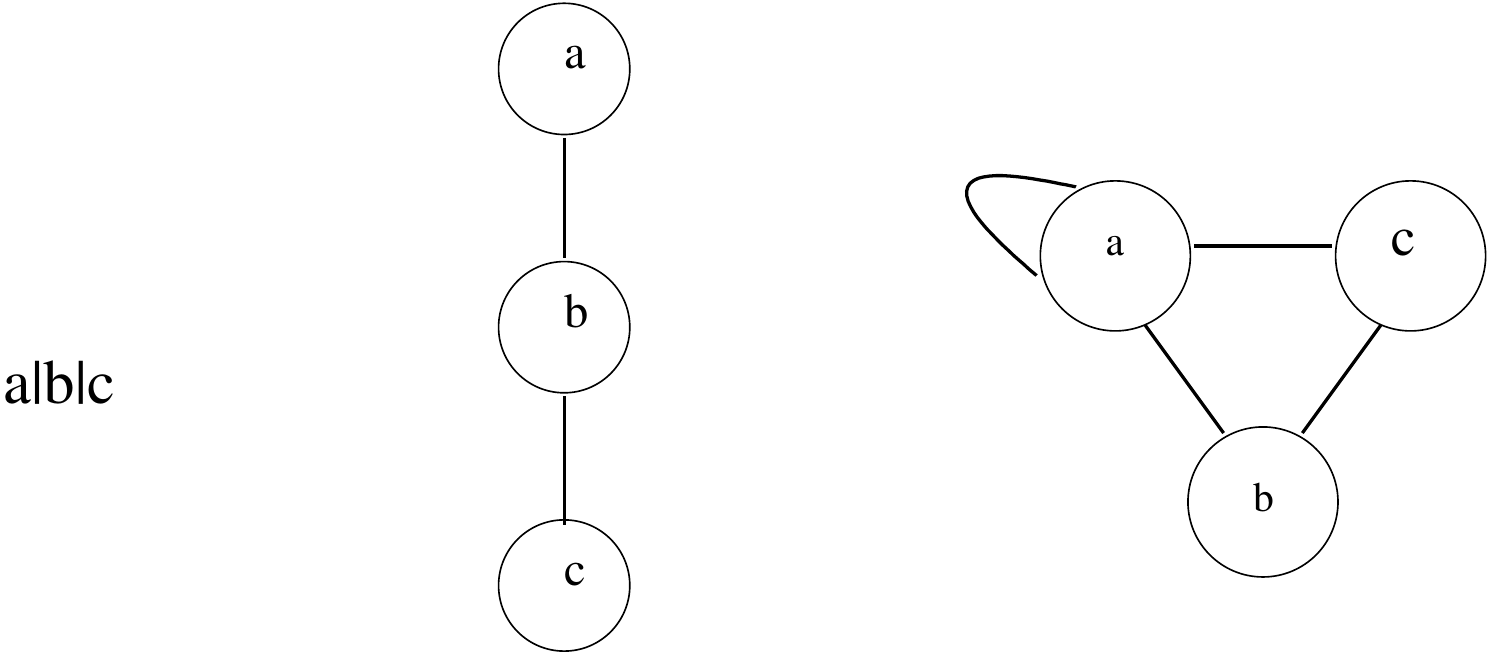}
    \caption{The sign mnemonics for the bar construction, traditional version with the symbols $|$ of degree $1$, the equivalent linear tree with edges of degree $1$,  and a more general graph with edges of degree $1$. Notice that in the linear rooted  case  there is a natural order of edges, this ceases to be the case for more general graphs}
    \label{barfig}
\end{figure}

The transforms are of interest in themselves, but one common application is that the bar-cobar transform as well as the double Feynman transform give a ``free'' resolution. In general, of course, ``free'' means co-fibrant. For this kind of statement one needs a Quillen model structure, which is provided in \S\ref{modelpar}.

The Feynman transform is  quasi--free, that is for  $\O\in \fopsc$, $F\O=\imath^{odd}_!(\imath^*{\O})\in \F^{odd}\mdash\ops$ is free, if one ignores the differential. The dg structure is compatible with $F\O$ precisely if it satisfies a Master Equation, which is fixed by the choice of $\FF^{odd}$.

\subsubsection{Presentations } As mentioned, in order to define the transforms,  we have to give what is called an ordered presentation \cite{feynman}.
Rather then giving the technical conditions, we will consider the graph case  and show these structures in this case.

\subsubsection{Basic example $\GG$}
In $\GG$ the presentation comes from the following set of morphisms $\Phi$
\begin{enumerate}
\item There are 4 types of basic morphisms: Isomorphisms, simple edge contractions, simple loop contractions and mergers. Call this set $\Phi$.
\item
 These morphisms generate all one--comma generators upon iteration. Furthermore, isomorphisms act transitively on the other classes.  The relations on the generators are  given by commutative diagrams.
 \item
 The relations are  quadratic for edge contractions as are the relations involving isomorphisms.
 Finally there is a non--homogenous relation coming from  a simple merger and a loop contraction being equal to an edge contraction.

\item  We can therefore assign degrees as $0$ for isomorphisms and mergers, $1$ for edge or loop contractions and split $\Phi$ as $\Phi^0\amalg \Phi^1$. This gives a degree to any morphism.
\end{enumerate}

Up to isomorphism any morphism of degree $n$ can be written in $n!$ ways up to morphisms of degree $0$. These are the enumerations of the edges of the ghost graph.

There is also a standard order in which isomorphisms come before mergers which come before edge contractions, cf.\ \cite{BorMan,feynman}.
This gives an ordered presentation.

In general, an ordered presentation is a set of generators $\Phi$ and extra data such as the subsets $\Phi^0$ and $\Phi^1$; we refer to \cite{feynman} for details.

\subsubsection{Differential} Given a
$d_{\Phi^1}=\sum_{[\phi_1]\in \Phi^1/\sim} \phi_1\circ$ defines an endomorphism on the Abelian group generated by the isomorphism classes morphisms.
The non--defined terms are set to zero. $\Phi^1$ is called resolving if this is a differential.

In the graph case, this amounts to the fact that for any composition of edge contractions $\phi_e \circ \phi_{e'}$, there is precisely another pair of edge contractions
$\phi_{e''} \circ \phi_{e'''}$ which contracts the edges in the opposite order.

This differential will induce differentials for the transforms, which we call by the same name. We again refer to \cite{feynman} for details.

\subsubsection{Setup} $\FF$ be a Feynman category enriched over $\Ab$ and with an ordered presentation and let $\FF^{odd}$ be its corresponding odd version. Furthermore let $\Phi^1$ be a resolving subset of one-comma generators and let $\mathcal{C}$ be an additive category, i.e.\ satisfying the analogous conditions above. In order to give the definition, we need a bit of preparation.
Since $\V$ is a groupoid, we have that $\V\simeq \V^{op}$. Thus, given a functor $\Phi:\V\to \C$, using the equivalence we get a functor from $\V^{op}$ to $\C$ which we denote by $\Phi^{op}$. Since the bar/cobar/Feynman transform adds a differential, the natural target category from $\fops$ is not $\C$, but complexes in $\C$, which we denote by $Kom(\C)$. Thus any $\O$ may have an internal differential $d_\O$.

\subsubsection
{ The bar construction} This is the functor
\begin{equation*}
\Bar \colon \fops_{Kom(\C)}\to \foddops_{Kom(\C^{op})}
\end{equation*}
\begin{equation*}
\Bar(\O):=\imath_{\FF^{odd} \; *}(\imath_{\FF}^*(\O))^{op}
\end{equation*}
together with the differential $d_{\mathcal{O}^{op}}+d_{\Phi^1}$.

\subsubsection{  The cobar construction} This is the functor
\begin{equation*}
\Cobar \colon \foddops_{Kom(\C^{op})}\to \fops_{Kom(\C)}
\end{equation*}
\begin{equation*}
\Cobar(\mathcal{O}):=\imath_{\FF \; *}(\imath^\ast_{\FF^{odd}}(\mathcal{O}))^{op}
\end{equation*}
together with the co-differential $d_{\mathcal{O}^{op}}+d_{\Phi^1}$.

\subsubsection{Feynman transform}
 Assume there is a duality equivalence $\vee\colon \CalC\to \CalC^{op}$.
The Feynman transform is a pair of functors, both denoted $\FT$,
\begin{equation*}
\FT\colon \fops_{Kom(\C)}  \leftrightarrows \foddops_{Kom(\C)}\colon \FT
\end{equation*}
defined by
\begin{equation*}
 \FT(\O):=\begin{cases} \vee\circ \Bar(\O) & \text{ if } \O \in \fops_{Kom(\C)} \\ \vee\circ \Cobar(\O) & \text{ if } \O \in \foddops_{Kom(\C)}
\end{cases}
\end{equation*}

\begin{prop} {\rm \cite[Lemma 7.4.2]{feynman}} The bar and cobar construction form an adjunction.
\begin{equation*}
\adj{\Cobar}{\foddops_{Kom(\mathcal{C}^{op})}}{\fops_{Kom(\mathcal{C})}}{\Bar}
\end{equation*}
\end{prop}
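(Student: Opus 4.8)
The plan is to prove the adjunction in two movements: first establish the natural Hom-set bijection at the level of the underlying graded functors, and then verify that it is compatible with the total differentials, so that it restricts to a bijection of morphisms of complexes. Forgetting differentials, I would record that both transforms are the same composite of three operations, namely the restriction $\imath^*$, the free functor $\imath_*$ (the left Kan extension, left adjoint to $\imath^*$), and the duality $(-)^{op}$ coming from the groupoid equivalence $\V\simeq\V^{op}$; explicitly
\begin{equation*}
\Bar=\imath_{\FF^{odd}\,*}\circ(-)^{op}\circ\imath_{\FF}^*,\qquad
\Cobar=\imath_{\FF\,*}\circ(-)^{op}\circ\imath_{\FF^{odd}}^*.
\end{equation*}
Each constituent already carries the structure I need: the pair $\imath_{\FF\,*}\dashv\imath_\FF^*$ is exactly Theorem \ref{freethm} (the free--forgetful adjunction, a special case of Theorem \ref{adjointthm}), the analogous pair $\imath_{\FF^{odd}\,*}\dashv\imath_{\FF^{odd}}^*$ holds for the odd version, and $(-)^{op}$ is an involutive equivalence between $\Vmods_{Kom(\C)}$ and $\Vmods_{Kom(\C^{op})}$. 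Threading the duality between the two free--forgetful bijections then produces a natural isomorphism
\begin{equation*}
\Hom_{\fops_{Kom(\C)}}(\Cobar P,\O)\;\cong\;\Hom_{\foddops_{Kom(\C^{op})}}(P,\Bar\O)
\end{equation*}
of graded Hom-sets, natural in $\O$ and $P$; naturality is inherited from the three pieces, and the only care required is to keep track of the direction-reversal introduced by $(-)^{op}$ when it is composed with the two adjunction isomorphisms.

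Second, I would upgrade this graded bijection to one of dg-morphisms. The total differential on each transform has the shape $d_{\O^{op}}+d_{\Phi^1}$: the first summand is transported from the internal differential of $\O$ (resp.\ $P$), and the second is the twist differential determined by the ordered presentation and the resolving subset $\Phi^1$ of degree-one one-comma generators. Compatibility with the internal summand is automatic, because the graded bijection is assembled from functorial operations that commute with $d_\O$ and $d_P$. The real content is that a graded morphism is a chain map for $d_{\Phi^1}$ on the $\Cobar$-side precisely when its transpose is a chain map for $d_{\Phi^1}$ on the $\Bar$-side. By (co)freeness a morphism out of $\Cobar P$, resp.\ into $\Bar\O$, is determined by its components on the $\V$-generators, and under the bijection these components coincide; the two $d_{\Phi^1}$-compatibility conditions then unwind to the single quadratic twisting-morphism relation, the Feynman-categorical analogue of the Maurer--Cartan equation. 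Here one uses that $d_{\Phi^1}$ is built out of exactly the composition data of $\F$ that the free functor encodes, that edge and loop contractions are assigned degree one (the convention $\deg(\,|\,)=1$ of Figure \ref{barfig}), and that $\Phi^1$ is resolving, so that $d_{\Phi^1}^2=0$ and the dual pairs of contractions match.

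I expect the differential-matching step to be the main obstacle. The delicate point is the sign bookkeeping: one must verify, carrying the signs supplied by the odd version $\FF^{odd}$ through the duality $(-)^{op}$, that the generator-level relation produced by the counit of the adjunction is simultaneously the chain-map condition on both sides. These signs are precisely what the passage to $\FF^{odd}$ and the associated degree shifts are designed to furnish, so the verification, although combinatorial, reduces to the quadratic relations of the ordered presentation spelled out for $\GG$ in \S\ref{barpar}. Once these are in place, the graded isomorphism restricts to the desired bijection of morphisms of complexes, and naturality yields the adjunction.
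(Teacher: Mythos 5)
A preliminary remark: the paper does not prove this proposition itself --- it only states it with a citation to \cite[Lemma 7.4.2]{feynman} --- so your argument can only be measured against the standard proof of that lemma, which runs through the free/cofree universal properties and the twisting-morphism (Maurer--Cartan) identification. Your second movement reproduces that architecture correctly in outline.

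The gap is in your first movement. Both $\Bar$ and $\Cobar$ are built from the \emph{same-handed} Kan extension along $\imath$, namely the free functor of Theorem \ref{freethm}, which is a \emph{left} adjoint to $\imath^*$. Writing $\Bar=F_{\FF^{odd}}\circ D\circ U_{\FF}$ and $\Cobar=F_{\FF}\circ D^{-1}\circ U_{\FF^{odd}}$ with $F\dashv U$ and $D$ the duality equivalence on $\Vmods$, the adjunction lets you strip the free functor off the \emph{source} of a Hom, giving
\begin{equation*}
\Hom_{\fops_{Kom(\C)}}(\Cobar \P,\O)\;\cong\;\Hom_{\Vmods}\bigl(U_{\FF^{odd}}\P,\,D\,U_{\FF}\O\bigr),
\end{equation*}
but on the other side you face $\Hom(\P,\,F_{\FF^{odd}}(D\,U_{\FF}\O))$, i.e.\ maps \emph{into} a free object, about which $F\dashv U$ says nothing; for ``threading the duality between the two free--forgetful bijections'' to work you would need $F_{\FF^{odd}}$ to be \emph{right} adjoint to $U_{\FF^{odd}}$, which it is not. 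What actually closes this --- and what you silently invoke later with ``by (co)freeness a morphism \ldots into $\Bar\O$ is determined by its components on the $\V$-generators'' --- is the \emph{couniversal} property of $\Bar\O$: the free construction performed with target $Kom(\C^{op})$ is the cofree (conilpotent) object when read back in $Kom(\C)$, and morphisms of $\foddops$ into it are classified by their corestriction to the $\V$-components, under a connectivity/conilpotency hypothesis on $\P$. This is the analogue of the fact that coalgebra maps $C\to T^c(V)$ are classified by linear maps $C\to V$ only for conilpotent $C$; it is not a consequence of Theorem \ref{freethm} or \ref{adjointthm} and must be established separately. Once that is in place, your differential-matching step --- both chain-map conditions unwinding to the single quadratic twisting-morphism relation on $\V$-components, with signs supplied by $\FF^{odd}$ --- is the standard and correct way to finish.
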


The quadratic relations in the graph examples are a feature that can be generalized to the notion of {\em cubical} Feynman categories. The name reflects the fact that in the graph example the $n!$ ways to decompose a morphism whose ghost graph is connected and has $n$ edges into simple edge contractions correspond to the edge paths of $I^n$ going from $(0,\dots,0)$ to $(1,\dots, 1)$. Each  edge flip in the path represent one of the quadratic relations and furthermore the $\SS_n$ action on the coordinates is transitive on the paths, with transposition acting as edge flips. We will not give the full detailed definition, but note that $\GG$ is cubical and refer to \cite[Definition 7.2.1]{feynman}
for the technical details.

This is a convenient generality in which to proceed.

\begin{thm} {\rm \cite[Theorem 7.4.3]{feynman}} Let $\FF$ be a cubical Feynman category and $\mathcal{O}\in \fops_{Kom(\C)}$.  Then the co--unit $\Cobar\Bar(\mathcal{O})\to\mathcal{O}$ of the above adjunction is a levelwise quasi-isomorphism.
\end{thm}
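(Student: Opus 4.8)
The plan is to reduce the statement to a purely combinatorial acyclicity computation governed by the cubical structure, and then to conclude by a contracting homotopy coming from the cube. First I would unwind the definitions: since $\Bar$ and $\Cobar$ are assembled from the free functor $\imath_{*}$ (a left Kan extension) and the restriction $\imath^{*}$, the composite $\Cobar\Bar(\O)$ is a quasi--free $\op$ whose value on a morphism is, after strictification, a direct sum indexed by the iterated decompositions of that morphism into the one--comma generators $\Phi$, equipped with the total differential $d_{\O}+d_{\Phi^1}$. By axiom \eqref{morphcond} every morphism decomposes tensorially into basic morphisms and all the transforms are monoidal, so it suffices to prove the quasi--isomorphism on a single basic morphism $\phi$; this is precisely the meaning of ``levelwise''.

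Next I would identify the combinatorial model. Fix a basic morphism $\phi$ whose ghost graph $\gh(\phi)$ has edge set $E$ with $|E|=n$. The degree--$1$ generators in $\Phi^1$ are exactly the simple edge and loop contractions, so a summand of $\Cobar\Bar(\O)(\phi)$ assigns $\O$--data to a factorization of $\phi$ into such contractions together with degree--$0$ morphisms. By the cubical hypothesis the $n!$ orders in which the edges of $\gh(\phi)$ may be contracted are exactly the maximal edge--paths of the cube $I^{n}=[0,1]^{E}$ from $(0,\dots,0)$ to $(1,\dots,1)$, the quadratic relations are the square (edge--flip) relations, and the $\SS_{n}$--action permuting coordinates is transitive on these paths with transpositions acting as flips. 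Together with the resolving condition on $\Phi^1$, which is exactly $d_{\Phi^1}^{2}=0$, this lets me match $d_{\Phi^1}$ (with the signs dictated by $\FF^{odd}$) to the cellular boundary of $I^{n}$. Concretely, the $d_{\Phi^1}$--part of $\Cobar\Bar(\O)(\phi)$ becomes the normalized cubical chain complex of $I^{n}$, with coefficients assembled from the values of $\O$ on the faces, taken relative to the initial vertex.

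I would then conclude by acyclicity of the cube. Filtering by the number of contracted edges makes $d_{\O}$ preserve the filtration and $d_{\Phi^1}$ the associated graded differential, so on the $E^{0}$--page the differential is the purely combinatorial cube boundary. The normalized chain complex of $I^{n}$ relative to a vertex is the $n$--fold tensor power of the acyclic two--term complex $(\unit \stackrel{\sim}{\to}\unit)$ computing the reduced chains of $I^{1}$, hence is contractible for $n\geq 1$; an explicit contraction is the ``extra degeneracy'' splitting off the outermost edge via $I^{n}\cong I^{n-1}\times I^{1}$. Thus only the $n=0$ summand, the trivial factorization of $\phi$, survives, and it carries exactly $\O(\phi)$ with its internal differential $d_{\O}$. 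Since this homotopy is natural in $\O$ and acts only in the edge direction, it is compatible with $d_{\O}$, the spectral sequence collapses at $E^{1}$, and the counit $\Cobar\Bar(\O)\to\O$, which by construction is the projection onto the $n=0$ summand, is a levelwise quasi--isomorphism.

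The main obstacle is the identification in the second step: matching the algebraically defined $d_{\Phi^1}$, whose signs come from the chosen odd version $\FF^{odd}$ and the ordered presentation, to the cubical cellular boundary. Arranging every sign so that the two differentials literally coincide, and so that the cube contraction is compatible both with the $\SS_{n}$--symmetry of the paths and with $d_{\O}$, is the technical heart. Once the cubical bookkeeping is set up correctly, the acyclicity of $I^{n}$ makes the remainder formal.
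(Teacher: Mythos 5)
Your plan follows essentially the same route as the cited proof: filter by the degree (number of ghost edges) of the total morphism, identify the associated graded differential with the cellular boundary of $I^n$ using the cubical axioms (the $n!$ edge--paths, the quadratic flip relations, and $\SS_n$--transitivity), and contract the cube edge by edge so that only the degree--zero summand, carrying $\O(v)$ with $d_{\O}$, survives. The one point to tighten is the order of reductions: the direct--sum decomposition over total morphisms $\phi$ is \emph{not} preserved by the bar part of the differential (which composes an edge into $\O$ and changes $\phi$), so the reduction to a single $\phi$ and its cube is only legitimate after passing to the associated graded, not as a first step justified by ``levelwise.''
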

\begin{rmk}
In the case of $\mathcal{C}=dgVect$, the Feynman transform can be intertwined with the aforementioned push-forward and pull-back operations to produce new operations on the categories $\mathcal{F}-\mathcal{O}ps_\mathcal{C}$.  A lifting (up to homotopy) of these new operations to $\mathcal{C}=Vect$ is given in \cite{Ward}.  In particular this result shows how the Feynman transform of a push-forward (resp. pull-back) may be calculated as the push-forward (resp. pull-back) of a Feynman Transform.  One could thus assert that the study of the Feynman transform belongs to the realm of Feynman categories as a whole and not just to the representations of a particular Feynman category.
\end{rmk}

\subsection{Master equations}
\label{masterpar}
In \cite{KWZ}, we identified the common background  of master equations that had appeared throughout the literature for operad--like objects and extended them to all graphs examples. An even more extensive theorem for Feynman categories can also be given.

The Feynman transform is quasi--free. An algebra over $F\O$ is dg--if and only if it satisfies the relevant Master Equation. First, we have the tabular theorem from \cite{KWZ} for the usual suspects.

\begin{thm}\label{methm}(\cite{Bar},\cite{MerkVal},\cite{wheeledprops},\cite{KWZ})  Let $\mathcal{O}\in \fopsc$ and $\mathcal{P}\in \foddops_\mathcal{C}$ for an $\F$ represented in Table $\ref{MEtable}$.  Then there is a bijective correspondence:
\begin{equation*}
\Hom(\FT(\mathcal{P}),\mathcal{O})\cong {\rm ME}(\lim_\V(\mathcal{P} \tensor\mathcal{O}))
\end{equation*}

\end{thm}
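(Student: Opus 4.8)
The plan is to exploit the quasi--freeness of the Feynman transform together with the free--forgetful adjunction of Theorem~\ref{freethm}, reducing a morphism $\FT(\mathcal P)\to\mathcal O$ to a map of generators and then reading off the chain--map condition as the Master Equation. First I would forget all differentials. By construction $\FT(\mathcal P)=\vee\circ\Cobar(\mathcal P)$ with $\Cobar(\mathcal P)=\imath_{\FF\,*}(\imath_{\FF^{odd}}^*(\mathcal P))^{op}$, so after forgetting the differential it is the free $\op$ $\imath_{\FF\,*}(\mathcal G)$ on the $\V$--module $\mathcal G$ of generators, namely the restriction to $\V$ of the dual $\vee\mathcal P$; the differential is the only additional datum. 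The free--forgetful adjunction $\imath_{\FF\,*}\dashv\imath_\FF^*$ of Theorem~\ref{freethm} then gives
\begin{equation*}
\Hom_{\fops}\big(\imath_{\FF\,*}(\mathcal G),\mathcal O\big)\cong\Hom_{\vmods}\big(\mathcal G,\imath_\FF^*\mathcal O\big),
\end{equation*}
so a morphism of the underlying graded objects is the same as a map of $\V$--modules from the generators to $\imath_\FF^*\mathcal O$.

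Next I would identify this set with the limit. Since $\V$ is a groupoid we have $\V\simeq\V^{op}$, and a morphism of $\V$--modules is precisely a $\V$--equivariant collection of maps, i.e.\ an element of $\lim_\V\inthom(\vee\mathcal P,\mathcal O)$. Under the duality equivalence $\vee$ and the finiteness hypothesis there is a natural isomorphism $\inthom(\vee\mathcal P,\mathcal O)\cong\mathcal P\tensor\mathcal O$, whence
\begin{equation*}
\Hom_{\vmods}\big(\mathcal G,\imath_\FF^*\mathcal O\big)\cong\lim_\V(\mathcal P\tensor\mathcal O).
\end{equation*}
Write $m$ for the element of $\lim_\V(\mathcal P\tensor\mathcal O)$ corresponding to a given map; at this stage the bijection is one of \emph{graded}, differential--free, data.

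Finally I would impose the chain--map condition. A map out of the quasi--free object $\FT(\mathcal P)$ is a chain map exactly when it commutes with the two summands of the transform differential: the internal part inherited from $d_{\mathcal P}$ and $d_{\mathcal O}$, and the part $d_{\Phi^1}$ built from the resolving one--comma generators of the chosen ordered presentation. Evaluated on generators, the internal part produces the linear term $dm$, while $d_{\Phi^1}$, post--composed with the structure maps of $\mathcal O$ and summed over the two ways to resolve a two--edge composite, produces the quadratic term $\tfrac12[m,m]$, with the non--homogeneous relations accounting for any further terms displayed in Table~\ref{MEtable}. The chain--map condition is therefore $dm+\tfrac12[m,m]+\dots=0$, that is $m\in{\rm ME}(\lim_\V(\mathcal P\tensor\mathcal O))$, which establishes the asserted bijection.

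The main obstacle will be carrying out this last step uniformly across the rows of Table~\ref{MEtable}: one must match the precise shape of $d_{\Phi^1}$ to the bracket of the corresponding Master Equation and, above all, verify that every Koszul sign agrees. These signs are exactly those supplied by the odd version $\FF^{odd}$ (the $\K$--twist assigning each edge degree one), and confirming that the resolving property of $\Phi^1$ together with the equivariance encoded by $\lim_\V$ reproduces the correct quadratic expression---not merely some quadratic expression---is the genuinely case--dependent content of the theorem.
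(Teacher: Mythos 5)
Your proposal is correct and follows exactly the route the paper indicates: the paper does not prove Theorem~\ref{methm} in the text but cites it and summarizes the argument in one line (``The Feynman transform is quasi--free. An algebra over $F\O$ is dg if and only if it satisfies the relevant Master Equation''), which is precisely your combination of quasi--freeness, the free--forgetful adjunction reducing to $\V$--module maps into $\lim_\V(\P\tensor\O)$, and the chain--map condition on generators producing the Master Equation. You also correctly locate the genuinely case--dependent content --- matching $d_{\Phi^1}$ and the odd (edge--degree--one) signs to each row of Table~\ref{MEtable} --- which is exactly what the cited references supply.
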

Here {\rm ME} is the set of solutions of the appropriate master equation set up in each instance.

\begin{table}[htb] \centering
\begin{tabular}{p{2.2cm}||l|l}
Name of  $\fopsc$& Algebraic Structure of $F\O$& Master Equation ({\rm ME})\\ \hline\hline
operad,\cite{GJ}& odd pre-Lie & $d(-)+ -\circ- =0$  \\ \hline
cyclic operad \cite{GKcyclic}  & odd Lie & $d(-)+ \frac{1}{2}[-,-] =0$  \\ \hline
modular operad \cite{GKmodular}& odd Lie + $\Delta$& $d(-)+ \frac{1}{2}[-,-]+\Delta(-) =0$  \\ \hline
properad  \cite{Vallette}& odd pre-Lie & $d(-)+ -\circ- =0$  \\ \hline
wheeled properad \cite{wheeledprops} & odd pre-Lie + $\Delta$ & $d(-)+ -\circ- +\Delta(-) =0$  \\ \hline
wheeled prop \cite{KWZ} & dgBV & $d(-)+ \frac{1}{2}[-,-] +\Delta(-) =0$  \\
\end{tabular}
\caption{
\label{MEtable} Collection of Master Equations for operad--type examples}
\end{table}

 With Feynman categories this tabular theorem can be compactly written and generalized.  The first step is the realization that the differential specifies a natural operation, in the above sense, for each arity $n$. Furthermore, in the Master Equation there is one term form each generator of $\Phi^1$ up to isomorphism.

  The natural operation which lives on a space associated to an $\mathcal{Q}\in \fopcat$ is denoted $\Psi_{\mathcal{Q},n}$ and is formally defined as follows:

\begin{df}\cite[\S7.5]{feynman}
For a Feynman category $\FF$ admitting the Feynman transform and for $\mathcal{Q}\in\fopsc$ we define the formal master equation of $\FF$ with respect to $\mathcal{Q}$ to be the completed co--chain $\Psi_{\mathcal{Q}}:= \prod \Psi_{\mathcal{Q},n}$.  If there is an $N$ such that $\Psi_{\mathcal{Q},n}=0$ for $n>N$, then we define the master equation of $\FF$ with respect to $\mathcal{Q}$ to be the finite sum:
\begin{equation*}
d_{\mathcal{Q}}+\ds\sum_{n}\Psi_{\mathcal{Q},n} = 0
\end{equation*}
We say $\alpha\in \lim_\V(\mathcal{Q})$ is a solution to the master equation if $d_\mathcal{Q}(\alpha)+\sum_{n}\Psi_{\mathcal{Q},n}(\alpha^{\tensor n}) = 0$, and we denote the set of such solutions as ${\rm ME}(\lim_\V(\mathcal{Q}))$.
\end{df}
Here the first term is the internal differential and the term for $n=1$ is the differential corresponding to $d_{\Phi^1}$, where $\Phi^1$ is the subset of odd generators.

\begin{thm}{\rm \cite[Theorem 5.7.3]{feynman}} Let $\mathcal{O}\in \fopsc$ and $\mathcal{P}\in \foddops_\mathcal{C}$ for an $\F$ admitting a Feynman transform and master equation.  Then there is a bijective correspondence:
\begin{equation*}
\Hom(\FT(\mathcal{P}),\mathcal{O})\cong {\rm ME}(\lim_\V(\mathcal{P} \tensor\mathcal{O}))
\end{equation*}
\end{thm}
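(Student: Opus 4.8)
The plan is to exploit the fact that the Feynman transform is quasi--free, together with the free--forget adjunction of Theorem~\ref{freethm}, so that the problem reduces to bookkeeping of the differential. Write $\mathcal{Q}:=\mathcal{P}\tensor\mathcal{O}$. By construction $\FT(\mathcal{P})=\vee\circ\Cobar(\mathcal{P})$, so as a graded (differential--forgetting) object it is the free $\ops$ generated by the $\V$--module $\vee\,\imath^*_{\FF^{odd}}(\mathcal{P})$, equipped with the differential $d_{\mathcal{P}}+d_{\Phi^1}$, where $d_{\Phi^1}$ is the differential attached to the resolving subset $\Phi^1$ of the chosen ordered presentation. First I would identify the underlying graded Hom--set, ignoring differentials, and only afterwards impose chain--map compatibility; the claim will be that this compatibility is exactly the master equation.

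For the first step, freeness gives, via the adjunction $F\dashv G$ of Theorem~\ref{freethm} (in the guise $\imath_!\dashv\imath^*$), a natural bijection between graded $\ops$--morphisms $\FT(\mathcal{P})\to\mathcal{O}$ and morphisms of $\V$--modules from the generators to $\imath^*\mathcal{O}$, i.e.\ elements of $\Hom_{\vmods}(\vee\,\imath^*\mathcal{P},\imath^*\mathcal{O})$. Using the duality equivalence $\vee$ to turn $\Hom(\vee A,B)$ into the pairing $\Hom(\unit,A\tensor B)$, and recalling that a $\V$--module map over a groupoid is precisely a $\V$--invariant, this Hom--set is naturally $\lim_\V(\mathcal{P}\tensor\mathcal{O})=\lim_\V\mathcal{Q}$. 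Thus a graded morphism is the same datum as an element $\alpha\in\lim_\V\mathcal{Q}$, and it remains to determine which $\alpha$ extend to genuine morphisms in $Kom(\C)$.

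The heart of the argument, and the step I expect to be the main obstacle, is matching the condition ``$\phi_\alpha$ commutes with the differential'' with the formal master equation. Since $\phi_\alpha$ is determined on generators and is multiplicative, it suffices to evaluate $d_\mathcal{O}\circ\phi_\alpha-\phi_\alpha\circ(d_\mathcal{P}+d_{\Phi^1})$ on generators. The internal part $d_{\mathcal{P}}$ together with $d_\mathcal{O}$ contributes the term $d_\mathcal{Q}(\alpha)$. The presentation differential $d_{\Phi^1}$ sends a generator to a sum of composites of generators---in the graphical case, the sum over edges of the two--step decompositions produced by the $\Phi^1$--contractions---so that applying the multiplicative $\phi_\alpha$ produces composites of copies of $\alpha$. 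Collecting these composites according to the number $n$ of generators involved is exactly the definition of the natural operation $\Psi_{\mathcal{Q},n}$ evaluated on $\alpha^{\tensor n}$; here one uses that there is one term of the master equation for each isomorphism class of generator in $\Phi^1$ and that isomorphisms act transitively on these classes. Hence the chain--map equation becomes $d_\mathcal{Q}(\alpha)+\sum_n\Psi_{\mathcal{Q},n}(\alpha^{\tensor n})=0$, i.e.\ $\alpha\in{\rm ME}(\lim_\V\mathcal{Q})$.

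Finally I would address the finiteness and naturality bookkeeping. The hypothesis that $\FF$ admits a master equation supplies an $N$ with $\Psi_{\mathcal{Q},n}=0$ for $n>N$, so the a priori completed cochain $\prod_n\Psi_{\mathcal{Q},n}$ reduces to the finite sum above and no convergence issue arises; when such an $N$ fails one keeps the completed form and the same argument applies verbatim. Naturality of every identification in $\mathcal{O}$ and $\mathcal{P}$ is inherited from naturality of the adjunction and of the duality, giving the asserted bijective correspondence. The only genuinely delicate point throughout is the combinatorial identification of $d_{\Phi^1}$ with $\sum_n\Psi_{\mathcal{Q},n}$, which rests on the ordered (cubical) presentation and the resolving property of $\Phi^1$; the tabular Theorem~\ref{methm} is then recovered by specializing $\Phi^1$ and reading off the corresponding Lie/BV terms.
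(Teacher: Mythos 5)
Your proposal is correct and follows exactly the route the paper indicates: it uses that $\FT(\mathcal{P})$ is quasi--free, so the free--forget (Kan extension) adjunction identifies graded morphisms out of it with $\V$--invariants of $\mathcal{P}\tensor\mathcal{O}$, and then the chain--map condition against $d_{\Phi^1}$ is unwound into the master equation term by term. The paper itself only cites \cite[Theorem 5.7.3]{feynman} and sketches precisely this quasi--freeness argument in the surrounding discussion, so there is nothing to add beyond noting that your reconstruction matches it.
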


\section{W-construction and cubical structures}
\label{wpar}
\subsection{Setup}
In this section we start with a cubical Feynman category $\FF$.

\subsubsection{The category   $w(\FF,Y)$, for  $Y \in \F$}

\mbox{}\\

{\sc Objects:}
The objects are the set $\coprod_n C_n(X,Y)\times [0,1]^n$, where $C_n(X,Y)$ are chains of morphisms from $X$ to $Y$ with  $n$ degree $\geq1$ maps
modulo contraction of isomorphisms.

  An object in $w(\FF,Y)$ will be represented (uniquely up to contraction of isomorphisms) by a diagram
\begin{equation*}
X\xrightarrow[f_1]{t_1} X_1\xrightarrow[f_2]{t_2} X_2\to\dots\to X_{n-1}\xrightarrow[f_n]{t_n} Y
\end{equation*}
where each morphism is of positive degree and where $t_1,\dots,t_n$ represents a point in $[0,1]^n$.  These numbers will be called weights.  Note that in this labeling scheme isomorphisms are always unweighted.

{\sc Morphisms:}
\begin{enumerate}
\item  Levelwise commuting isomorphisms which fix $Y$, i.e.:
\begin{equation*}
\xymatrix{X \ar[r] \ar[d]^{\cong} & X_1 \ar[d]^{\cong} \ar[r] & X_2 \ar[d]^{\cong} \ar[r] & \dots \ar[r] & X_n \ar[d]^{\cong} \ar[r] & Y  \\ X^{\prime} \ar[r] & X^{\prime}_1\ar[r] & X^{\prime}_2\ar[r] &\dots \ar[r] & X^{\prime}_n \ar[ur] & }
\end{equation*}
\item  Simultaneous $\SS_n$ action.
\item  Truncation of $0$ weights: morphisms of the form $(X_1\stackrel{0}\to X_2\to\dots\to Y)\mapsto (X_2\to\dots\to Y)$.
\item  Decomposition of identical weights:  morphisms of the form $(\dots \to X_i\stackrel{t}\to X_{i+2} \to \dots) \mapsto (\dots\to X_i\stackrel{t}\to X_{i+1}\stackrel{t}\to X_{i+2}\to\dots)$ for each (composition preserving) decomposition of a morphism of degree $\geq 2$ into two morphisms each of degree $\geq 1$.
\end{enumerate}

\begin{definition}
Let $\mathcal{P}\in\fopst$.  For $Y \in ob(\F)$ we define
\begin{equation*}
W(\mathcal{P})(Y):= \colim_{w(\FF,Y)}\mathcal{P}\circ s (-)
\end{equation*}
\end{definition}

\begin{thm} {\rm \cite[Theorem 8.6.9]{feynman}} Let $\FF$ be a simple Feynman category and let $\mathcal{P}\in\fopst$ be $\rho$-co--fibrant.  Then $W(\mathcal{P})$ is a co--fibrant replacement for $\mathcal{P}$ with respect to the above model structure on $\fopst$.
\end{thm}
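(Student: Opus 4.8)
The plan is to follow the Boardman--Vogt paradigm, reducing the statement to two separate claims: first, that there is a natural weak equivalence $W(\mathcal{P})\to \mathcal{P}$, and second, that $W(\mathcal{P})$ is cofibrant in the model structure on $\fopst$ transferred along the free--forgetful adjunction $F\dashv G$ of Theorem \ref{freethm}. Since weak equivalences and fibrations in this transferred structure are detected levelwise on $\V$, both claims may be checked after evaluating at each object $Y\in\F$, where $W(\mathcal{P})(Y)=\colim_{w(\FF,Y)}\mathcal{P}\circ s(-)$ is the indicated colimit over weighted chains. The natural map to $\mathcal{P}$ is the \emph{augmentation}: a weighted chain $X\to X_1\to\dots\to Y$, carrying a point of $\mathcal{P}(X)=\mathcal{P}\circ s(-)$, is sent to its image in $\mathcal{P}(Y)$ under $\mathcal{P}$ applied to the total composite $X\to Y$. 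One first checks that this respects the four types of morphisms in $w(\FF,Y)$ --- the levelwise isomorphisms, the simultaneous $\SS_n$--action, the truncation of $0$--weights and the decomposition of equal weights --- so that it descends to the colimit and is natural in $Y$.

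To prove that the augmentation is a weak equivalence I would filter $w(\FF,Y)$ by the number $n$ of positive--degree morphisms in a chain, presenting $W(\mathcal{P})(Y)$ as glued from the cubical pieces $C_n(X,Y)\times[0,1]^n$. The interval direction then supplies an explicit contracting homotopy: one flows all weights monotonically to the boundary value $0$, which via the truncation relation collapses each chain onto the trivial length--zero chain at $Y$ while applying $\mathcal{P}$ to the morphisms that are absorbed. This realizes a deformation retraction of $W(\mathcal{P})(Y)$ onto $\mathcal{P}(Y)$ whose associated retraction is exactly the augmentation, so the augmentation is a homotopy equivalence and hence a weak equivalence. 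Here the hypothesis that $\FF$ is \emph{simple} is what makes the cubical bookkeeping go through, ensuring the chains are organized by a genuine cubical complex with no degenerate overlaps.

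For cofibrancy I would exhibit $W(\mathcal{P})(Y)$ through a cellular filtration indexed by the number of internal edges, in which the $n$--th stage is obtained from the $(n-1)$--st by attaching the cubes $[0,1]^n$ along their boundaries. Because $\FF$ is cubical, the $n!$ decompositions of a connected ghost graph with $n$ edges correspond precisely to the edge--paths of $I^n$, so the attaching data is governed by the boundary inclusion $\partial[0,1]^n\hookrightarrow[0,1]^n$, which is a cofibration. The $\rho$--cofibrancy of $\mathcal{P}$ is exactly the input making the coefficient objects that feed these attachments cofibrant, so that each pushout defining a stage is a cofibration; hence $W(\mathcal{P})$ is a transfinite colimit of cofibrations and is cofibrant. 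Assembling the levelwise statements and the naturality then shows that $W(\mathcal{P})\to\mathcal{P}$ is a cofibrant replacement.

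The step I expect to be the main obstacle is the interface between the homotopy of the second paragraph and the colimit identifications: one must verify that the weight--flow is strictly compatible with the truncation of $0$--weights and the decomposition of equal weights, so that it is well defined on $W(\mathcal{P})(Y)$ rather than merely on each cube $C_n(X,Y)\times[0,1]^n$. Equivalently, the delicate point is to identify the colimit with an honest cubical complex of the expected cellular type, which is precisely where the cubical and simplicity hypotheses on $\FF$ do the real work; once that identification is in place, both the contracting homotopy and the latching--cofibration analysis become formal.
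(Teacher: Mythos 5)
The paper does not actually prove this statement: it is quoted verbatim from \cite[Theorem 8.6.9]{feynman}, so there is no internal proof to compare against. Judged on its own terms, your proposal has the right overall Boardman--Vogt shape (an augmentation $W(\mathcal{P})\to\mathcal{P}$ shown to be a weak equivalence by scaling all weights to $0$, plus a filtration by the number of positive-degree morphisms to get cofibrancy), and the weak-equivalence half is essentially sound: the scaling homotopy is compatible with the four generating morphisms of $w(\FF,Y)$, so it descends to the colimit and exhibits the length-zero chains $\mathcal{P}(Y)$ as a deformation retract, levelwise in $Y$.

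The genuine gap is in the cofibrancy half. You assert that ``both claims may be checked after evaluating at each object $Y\in\F$,'' but in the model structure of Theorem 8.2.13 only fibrations and weak equivalences are created by the forgetful functor to $\vseq$; cofibrations are defined by the left lifting property in $\fopst$, and cofibrancy is emphatically \emph{not} a levelwise condition. Your third paragraph only shows that each space $W(\mathcal{P})(Y)$ is built by attaching cubes along $\partial[0,1]^n\hookrightarrow[0,1]^n$, i.e.\ that it is cofibrant in $\Top$ --- which says nothing about lifting against acyclic fibrations of $\ops$. What is actually needed is that the filtration by number of edges is a filtration by sub-$\ops$, and that each stage is obtained from the previous one by a pushout \emph{in $\fopst$} along a map of the form $F(A)\to F(B)$, where $F$ is the free functor of Theorem \ref{freethm} and $A\to B$ is a cofibration of $\V$-modules assembled from $\partial[0,1]^n\hookrightarrow[0,1]^n$ tensored with the chain data, equivariantly for the relevant automorphism groups. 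This is where the hypothesis that $\mathcal{P}$ is $\rho$-cofibrant enters (it makes those $\V$-module maps cofibrations), and where the decomposition of $\partial[0,1]^n$ into the weight-$1$ faces (decomposable morphisms, absorbed by operadic composition from lower stages) and weight-$0$ faces (shorter chains) must be shown to match the operadic structure --- the role of the ``simple''/cubical hypothesis. Without establishing that the attachments are \emph{free} operadic extensions rather than mere topological cell attachments, the conclusion that $W(\mathcal{P})$ is cofibrant in $\fopst$ does not follow.
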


Here ``simple'' is a technical condition satisfied by all graph examples.

\subsection{W-construction yields Associahedra}
To see that this indeed yields the usual W-construction, we turn to the well known case of associahedra and its cubical decomposition, see \cite{BoardmanVogt,MSS}.
The relevant Feynman category is the Feynman sub--category of $\operads$ were the vertices are restricted to be at most trivalent rooted planar corollas. This is a sub-Feynman category of $\operads^{pl}=\operads_{\dec Ass}$

Let $*_{n_+}$ be the corolla whose flags are $\{0,\dots,n\}$ where $0$ is the root with the natural linear order on the set of flags as an object of $\operads^{pl}$.

\begin{prop} Let $\trivial$ be the trivial operad to $\Set$.
$W\trivial(*_{n_+})=K_{n}$
\end{prop}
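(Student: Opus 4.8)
The plan is to unwind the colimit defining $W\trivial(*_{n_+})$ and to recognize it as the Boardman--Vogt space of planar metric trees, which is the standard cubical model for the associahedron. Since the target is $\Top$ and $\trivial$ is constant at the monoidal unit $\unit$, a one--point space, the functor $\trivial\circ s$ is constant, and
\[
W\trivial(*_{n_+})=\colim_{w(\FF,*_{n_+})}\trivial\circ s
\]
is the quotient of $\coprod_k C_k(X,*_{n_+})\times[0,1]^k$ by the four classes of morphisms in the definition of $w(\FF,*_{n_+})$. So the first task is to describe $C_k(X,*_{n_+})$ explicitly.

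In the present $\FF$ every positive--degree morphism is a composite of simple edge contractions, so a chain $X=X_0\to\cdots\to X_k=*_{n_+}$ of positive--degree maps is determined, up to the levelwise isomorphisms and the simultaneous $\SS_k$--action, by its source tree together with the ordered partition of the internal edges of that source recording which edges are contracted at each step. Here the source ranges over planar rooted trees with $n$ leaves whose vertices are all at most trivalent; the maximal chains are those whose source is a binary tree, and a binary planar rooted tree with $n$ leaves has exactly $n-2$ internal edges. Hence the top cells are cubes $[0,1]^{n-2}$, one for each binary tree, and there are Catalan--many ($\mathrm{Cat}_{n-1}$) of them.

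Next I would read off the gluing. Interpreting the weight attached to a contraction step as the common length of the edges collapsed at that step, a point of $C_k(X,*_{n_+})\times[0,1]^k$ becomes a planar rooted tree with $n$ leaves whose internal edges carry lengths in $[0,1]$. Truncation of a $0$--weight then becomes the rule that an internal edge of length $0$ is contracted (passing to the smaller tree); the decomposition--of--equal--weights relation becomes the rule that refining a single multi--edge contraction into two successive contractions with the same parameter leaves the metric tree unchanged; and the isomorphisms together with the $\SS_k$--action identify all chains presenting the same planar metric tree. Consequently the colimit is precisely the space of planar rooted trees with $n$ leaves, vertices at most trivalent, internal edges metrized by $[0,1]$ and length--$0$ edges collapsed, carrying the cubical decomposition whose top cubes are indexed by binary trees.

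Finally I would invoke the classical identification of this metric--tree space with the associahedron together with its Boardman--Vogt cubical decomposition into $\mathrm{Cat}_{n-1}$ cubes of dimension $n-2$, one per binary tree, meeting along the degeneration strata \cite{BoardmanVogt,MSS}; matching these strata with the truncation and decomposition relations yields $W\trivial(*_{n_+})=K_n$. The main obstacle is exactly this final matching: one must verify that the four morphism classes of $w(\FF,*_{n_+})$ reproduce the Boardman--Vogt face identifications without introducing spurious gluings and without omitting any stratum, so that the top cubes tile $K_n$ correctly. The dimension count $n-2$, the Catalan enumeration of top cubes, and the small cases ($K_2=\mathrm{pt}$, $K_3=[0,1]$, $K_4$ the pentagon tiled by five squares) serve as the decisive consistency checks.
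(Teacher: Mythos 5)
Your proof is correct and follows essentially the same route as the paper's: decompose positive-degree morphisms into edge contractions so that each morphism class, determined by its (planar planted) ghost tree, contributes a cube of edge-lengths, with the top cubes indexed by binary trees, the $wt\to 0$ boundaries realizing chain truncation/edge collapse, and the decomposition relation supplying the diagonal identifications. The only cosmetic difference is that you phrase the colimit as a space of planar metric trees before matching it to the Boardman--Vogt cubical decomposition of $K_n$, which the paper does implicitly.
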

\begin{proof}
A basic morphism is given by an edge contraction. Any morphism can be decomposed into edge contractions $\phi=\phi_{e_1}\circ \cdots\circ \phi_{e_n}$ which up to isomorphism can be taken to be pure.
That means that we get a cube of dimension $n$ for each class of morphism of degree $n$. Such a class is uniquely determined by its ghost tree $\gh(\phi)$, which is a planted planar tree. The maximal degree of a morphisms with the given target $*_{n_+}$ has a source given by $n-1$ trivalent corollas. Any other morphism is obtained by shortening a maximal chain, which correspond to the boundaries $wt(e)\to 0$ by definition.

The compositions two or more morphisms of degree $1$ which are identified in the relations of $w$ with diagonals, not  noted  In the pictures. \end{proof}

\begin{rmk}
The first two cases are given by Figure \ref{K4decomp}, which is taken from \cite{KSchw}. Note that the outer boundary are the pieces, where $wt(e)\to 1$ and the inner boundaries are the ones where $wt(e)\to 0$, which results in the  shortening of the chain of morphisms and hence the contraction of the edge whose weight goes to $0$.
\end{rmk}

\begin{figure}[h]
    \centering
    \includegraphics[width=.8\textwidth]{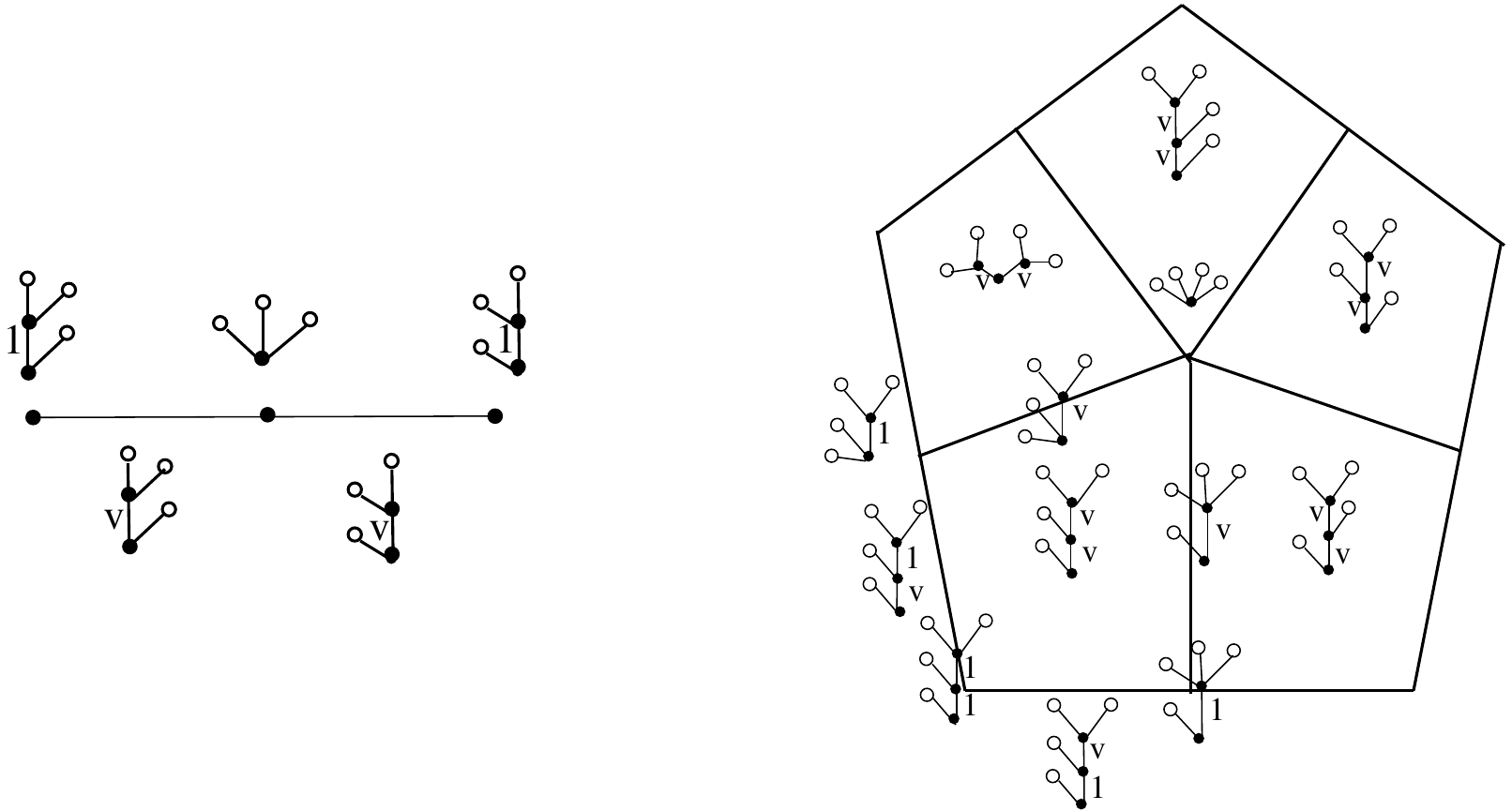}
    \caption{The cubical decomposition for $K_3$ and $K_4$, $v$ indicated a variable height.}
    \label{K4decomp}
\end{figure}

The $\opcat$ structure is the usual operad structure found by Stasheff \cite{Stasheff}. Namely, if $\ccirc{0}{i}: *_{n+}\amalg *_{m_+} \to *_{(m+n-1)_+}$ then
$W\trivial(\phi_{e_i})$ glues the two flags $0$ and $i$ of any two points in $W\trivial(*_{n_+})$ and $W\trivial(*_{m_+})$ and marks the new edge by $1$, that is we obtain the face of $K_{n+m}$ which is $K_n\circ _i K_m=K_n\times K_m$

\begin{rmk}
It is intriguing that this cubical decomposition also turns up in the stability conditions for $A_n$ quivers, which was pointed out to us by K.\ Igusa. It is furthermore worth pointing out that the central charge function results in
particular in an embedding of $K_4$ into $\C$. Such embeddings were sought after since \cite{Stasheff}, cf.\ e.g.\ \cite{MSS} for various truncations and constructions.
\end{rmk}

\section{Outlook}
\label{outlookpar}

\subsection{W-construction and moduli spaces.}
\label{modulispacepar}
Using the language set up in Appendix \ref{connectionspar}, especially \eqref{modulardiag}, in collaboration with C.\ Berger we prove \cite{Ddec}:
\begin{thm}{\rm \cite{Ddec}}
\begin{enumerate}
\item The derived push--forward $k'_!W{\final}(\ast_{g,s,S_1,\dots, S_b})$ is homotopy equivalent to $M_{g,s,S_1,\dots S_b}$.\\[-4mm]
\item The derived push--forward $k_!W{\O_{cA}}(\ast_{g,S})=\amalg_{b,s,S= S_1\amalg \dots \amalg S_b}$ is homotopy equivalent to $M_{g,s,S_1,\dots S_b}$. \\[-4mm]
\item The derived push--forward $j_!W{\final}$ is the cubical complex appearing in the Cutkosky rules in \cite{BlochKreimer, Kreimercut} and Outer Space
in \cite{Vogtout}.
\end{enumerate}
\end{thm}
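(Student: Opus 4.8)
The plan is to reduce all three statements to a single geometric input --- the homotopy equivalence between a space of metric graphs and the moduli space of curves --- after first describing both the W-construction and the relevant push-forwards completely combinatorially. First I would compute $W\final$ explicitly, generalizing the associahedra computation just carried out. Exactly as in the proof that $W\trivial(\ast_{n_+})=K_n$, a point of $W\final(\ast_{g,S})$ is represented by a chain of positive-degree morphisms with weights in $[0,1]$ modulo contraction of isomorphisms; since the basic positive-degree morphisms in the graphical Feynman category $\GG$ (Appendix \ref{graphsec}) are edge and loop contractions, such a point is precisely a graph of genus $g$ with legs labelled by $S$ together with a length in $[0,1]$ assigned to each internal edge. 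The W-category relations then identify this with the standard cubical complex of \emph{metric graphs}: one $n$-cube for each isomorphism class of graph with $n$ edges, with the face $wt(e)\to 0$ gluing in the edge-contracted graph and the face $wt(e)\to 1$ giving the outer boundary. This is the same local cubical analysis as in the $K_n$ case, now performed for arbitrary genus and leg decoration rather than for planar trivalent trees.

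Second I would identify the morphisms $k'$, $k$, $j$ of Feynman categories appearing in the diagram \eqref{modulardiag} of Appendix \ref{connectionspar} and evaluate their push-forwards. By Theorem \ref{adjointthm} each push-forward $f_!$ is a left Kan extension, hence a colimit, and the pointwise formula shows this colimit assembles the cubical complexes $W\final(\ast)$ over the fibres of $f$. For part (1), $k'$ is the forgetful morphism to the base recording only the discrete data $(g,s,S_1,\dots,S_b)$, so that $k'_!W\final(\ast_{g,s,S_1,\dots,S_b})$ is the colimit of all metric-graph cubes whose underlying graph carries that genus and those boundary components; this is exactly the stratified space of metric graphs dual to the stable-curve stratification. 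For part (2) the decorated functor $\O_{cA}$ (the $cA$-decoration of Appendix \ref{connectionspar}) cuts out the subcomplex of graphs admitting the relevant cyclic/ribbon structure, and the push-forward $k_!$ sums over the distributions of the marked set $S=S_1\amalg\dots\amalg S_b$, reproducing the disjoint-union indexing displayed in the statement.

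Third, with both sides now described as explicit cubical complexes of metric graphs, the remaining content is the identification with $M_{g,s,S_1,\dots S_b}$. Here I would invoke the classical theorem (Harer, Penner, Kontsevich; decorated Teichm\"uller theory) that the moduli space of curves deformation retracts onto its spine, the space of metric ribbon graphs, which is precisely the complex produced above. For part (3), the same metric-graph description with the Outer Space marking identifies $j_!W\final$ with Culler--Vogtmann Outer Space and with the cubical cell structure underlying the Cutkosky rules of \cite{BlochKreimer,Kreimercut}; the argument is a cell-by-cell comparison of the contraction/expansion face maps.

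The hard part will be the last step: the passage from the combinatorial metric-graph complex to the moduli space $M_{g,s,S_1,\dots S_b}$ is the only place where genuine geometry --- Strebel differentials or decorated Teichm\"uller theory --- enters, and it cannot be obtained by formal Feynman-category manipulation. Everything up to that point (the shape of $W\final$ and the computation of the push-forwards as colimits) is bookkeeping analogous to the $K_n$ calculation, but matching the resulting complex to the known spine of moduli space, with the correct treatment of the $wt(e)\to 1$ outer boundary and of the orbifold isotropy coming from graph automorphisms, is where the real work and the appeal to \cite{Ddec} lie.
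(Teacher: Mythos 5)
The paper does not prove this theorem: it is quoted verbatim from the companion work \cite{Ddec}, and the only internal support offered is the surrounding discussion (the remark that it generalizes Igusa and Culler--Vogtmann, and the companion statement that $W(\final)(*_{g,s,S_1,\dots,S_b})$ is the cone over the Penner--Kontsevich compactification). So there is no in-paper argument to compare against line by line. That said, your outline is the natural reconstruction and is consistent with everything the paper says: the cubical metric-graph description of $W\final$ generalizing the $K_n$ computation, the push-forward assembling cells over ribbon graphs, and the final appeal to Strebel/decorated Teichm\"uller theory as the one genuinely geometric input. You also correctly locate the delicate points (the $wt(e)\to 1$ outer boundary versus the $wt(e)\to 0$ contraction faces, and graph automorphisms giving orbifold isotropy).

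Two substantive gaps remain. First, the statement is about the \emph{derived} push-forward, and you compute $f_!W\final$ as a plain pointwise colimit. For that colimit to have the right homotopy type you must invoke the machinery of Appendix \ref{modelpar}: the model structure on $\fops_{\Top}$, the fact that $(f_!,f^*)$ is a Quillen adjunction (Lemma \ref{qalem}, which requires checking that $f^*$ preserves (acyclic) fibrations on the module level), and the cofibrancy of $W\final$, which in turn requires the Feynman category to be simple and $\final$ to be $\rho$-cofibrant. None of this is automatic bookkeeping, and without it "colimit = homotopy colimit" is unjustified; the whole reason $W$ appears in the statement is to serve as a cofibrant replacement so that $k_!W\final$ models $\mathbb{L}k_!\final$. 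Second, your identification of the morphisms is off for part (1): in diagram \eqref{modulardiag} the map $\frk'$ is the \emph{connected} morphism $\CCyclic^{\neg\Sigma}\to\modular^{\neg\Sigma}$ that includes cyclic non-sigma corollas as genus-zero modular ones (a modular-envelope--type map), not a forgetful projection onto discrete data $(g,s,S_1,\dots,S_b)$. The comma category over $\ast_{g,s,S_1,\dots,S_b}$ is then the category of ribbon graphs of that topological type, which is what feeds the Harer--Penner--Kontsevich identification; getting this indexing category right is exactly what distinguishes parts (1), (2) and (3) from one another, and your description currently blurs that distinction.
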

This generalizes and puts the results of Igusa \cite{CullerVogtmann,Igusa} into this simple Feynman context.
It also makes some of the constructions of Costello \cite{Costelloribbon,Costelloenvelope} more transparent and rigorous.
The last statement is actually brand new and gives a framework for the cubical categories that have appeared in physics and topology.
\begin{thm}{\rm \cite{Ddec}}
$W(\final)(*_{g,s, S_1,\dots, S_b})$ is the cone over the combinatorial aka.\ Penner--Kontsevich compactification $\overline M_{g,s, S_1,\dots, S_b}^{KP}$.
\end{thm}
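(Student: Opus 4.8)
The plan is to generalize the associahedron computation $W\trivial(*_{n_+})=K_n$ to the surface case, recognizing the cubical cells of the W-construction as cells of a metric ribbon graph complex. First I would unwind the colimit defining $W(\final)(*_{g,s,S_1,\dots,S_b})$: since $\final$ is the trivial $\Set$-valued functor, $\final\circ s(-)$ is constant at a point, so $W(\final)(Y)=\colim_{w(\GG,Y)}\final\circ s(-)$ is just the geometric realization of the indexing category $w(\GG,Y)$ as a cubical complex, where $Y=*_{g,s,S_1,\dots,S_b}$. Its objects are chains of positive-degree morphisms into $Y$ carrying weights in $[0,1]$, and in the graphical setting each such chain is a sequence of edge contractions of a graph whose ghost graph realizes $Y$; a maximal chain begins at a trivalent ribbon graph of the prescribed genus and boundary decoration.

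Next I would use the morphisms of $w(\GG,Y)$ to collapse the chain data to a single metric datum. The simultaneous $\SS_n$-action together with the decomposition-of-identical-weights relation make the order in which edges are contracted irrelevant, so a point of $W(\final)(Y)$ is determined by a pair $(\Gamma,\ell)$ of a ghost-graph type and an edge-weighting $\ell\colon E(\Gamma)\to[0,1]$, and the cube $[0,1]^{E(\Gamma)}$ parametrizes one cell. The two boundary types carry their expected combinatorial meaning, exactly as in the associahedron picture: the face $\{\ell(e)=0\}$ is truncation, i.e.\ contraction of $e$, gluing the cube of $\Gamma$ to that of $\Gamma/e$ and reproducing the face maps of the ribbon graph complex, while the face $\{\ell(e)=1\}$ is the outer boundary. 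I would then check that the resulting gluing data on $\coprod_\Gamma [0,1]^{E(\Gamma)}$ agrees with the cell structure of the space of metric ribbon graphs with edge lengths normalized into $[0,1]$.

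Then I would extract the cone coordinate. Setting $r(\Gamma,\ell):=\max_{e}\ell(e)\in[0,1]$, the locus $\{r=0\}$ is the single fully contracted configuration, the corolla $Y$ with empty ghost graph, which is the cone point; for $r>0$ the rescaling $\ell\mapsto\ell/r$ identifies $\{r>0\}$ with $(0,1]$ times the projectivized edge-length space $\{\max_e\ell(e)=1\}$. This projectivized space is precisely the combinatorial moduli space, and I would match its cell-by-cell description, with open cells indexed by ribbon-graph types and faces obtained by shrinking edges, against Penner's and Kontsevich's description of $\overline M^{KP}_{g,s,S_1,\dots,S_b}$, taking care of the decoration by $s$ punctures and the $b$ boundary sets $S_1,\dots,S_b$. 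Concluding, $W(\final)(Y)$ is the base $\{r=1\}=\overline M^{KP}_{g,s,S_1,\dots,S_b}$ coned off at $r=0$, that is the cone over $\overline M^{KP}_{g,s,S_1,\dots,S_b}$.

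The hard part will be establishing that this combinatorial identification is a homeomorphism of compactified spaces rather than a mere bijection of cells. I must verify that the attaching maps of the cubes under edge contraction agree with the degeneration strata of the Penner--Kontsevich compactification, that the $\{\ell(e)=1\}$ outer faces assemble correctly into the top stratum, and that graph automorphisms introduce no collapsing beyond what $\overline M^{KP}$ already records. Controlling these gluings uniformly across all genera and boundary decorations, using the detailed graph formalism of Appendix \ref{graphsec}, is where the real work lies.
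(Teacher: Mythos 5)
The paper does not prove this theorem; it is quoted from \cite{Ddec} without argument, so there is no in-text proof to compare against. Your strategy is exactly the natural generalization of the paper's own $W\trivial(*_{n_+})=K_n$ computation in \S\ref{wpar}: identify $W(\final)(Y)$ with the cubical complex $\coprod_\Gamma[0,1]^{E(\Gamma)}/\!\sim$ indexed by ghost (ribbon) graphs, with inner faces $\ell(e)=0$ glued by edge contraction and the outer subcomplex $\{\max_e\ell(e)=1\}$ as the link of the cone point, and the cone structure given by uniform rescaling of weights (which is compatible with all four relation types, since they are scale-invariant). This is sound and is the expected route.

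Two points deserve explicit attention. First, your argument as written identifies $W(\final)(*_{g,s,S_1,\dots,S_b})$ with the cone over the \emph{projectivized metric ribbon graph complex} (in its max-normalized model, homeomorphic to the usual perimeter-normalized simplicial model by radial rescaling in each orthant, quotiented by $\Aut(\Gamma)$ via the levelwise-isomorphism relation). To conclude that this link is $\overline M^{KP}_{g,s,S_1,\dots,S_b}$ you must either take the combinatorial description of the Penner--Kontsevich compactification as the definition, or import the Jenkins--Strebel/Penner homeomorphism between decorated moduli space and the metric ribbon graph complex as external input; your proposal elides which of these you intend. Second, the degenerate strata: the $\ell(e)=0$ faces include contractions of loops and of edges joining distinct vertices, which in $\modular^{\neg\Sigma}$ change the genus labels according to \eqref{genuseq}; one must check that exactly these degenerations, and no others, appear in $\overline M^{KP}$ (this is where the combinatorial compactification differs from Deligne--Mumford, and where the stability constraints on genus-labelled ribbon graphs enter). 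You correctly flag the gluing verification as the hard part; these are the two places where it concentrates.
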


\subsection{Further connections to representation theory}
\label{furtherreppar}
Generalizations of some of the work presented here is already in the works.
The plus construction shall be generalized for arbitrary monoidal categories in \cite{pluspaper}. The new Feynman category  $\FinSet_<$ can be generalized to fibers with cyclic orders. This should be related via the plus construction  yield the Feynman category for cyclic operads. It is conceivable that if one bases everything on categories with a duality there is a plus construction which yields graphs.

A further point of study will be the connections to cluster algebras. Here there are already several strands especially in the form \cite{Igusamodulated}. The first is the appearance of the cubical decomposition of the Associahedra introduced above. Here the cubical cells are   related to wall crossings.  We expect a similar story for other polytopes. Especially for cyclohedra, which appear in the context of the little discs operad in the form of the cactus operad and Deligne's conjecture for $A_\infty$ algebras. These polytopes are at the vertices of the  diagrams of  \cite[Example 4.0.2]{Igusamodulated} and the cyclic versions in the Feynman theory are possibly related to  \cite{Igusacyclic,Igusacyclic2}, the appearance of cacti being a common theme.

Cluster transformations and cluster varieties yield a method to glue these local complexes together to global data. This is parallel to the results for moduli spaces or Cutkosky rules where the cubical complexes have face transitions according to shrinking edges of different graphs. The prerogative is to
weave these strands together. In this realm, we also expect to encounter 2--Segal spaces of \cite{DyKap1,Dykap2} which basically categorify the pentagon relation, which also appears in mathematical physics and number theory as the identity for quantum di--logarithms.

The enrichment of quivers \S\ref{enrichedquiversec} via \cite{graphsym} provides a new link to mutations and is possibly related to \cite{HananyCoulomb}.

Another intriguing aspect is given by moduli spaces and arcs, which also naturally appear in Feynman categories \cite{feynman,Ddec2} , operadic theory \cite{KLP} as well as in the theory of cluster algebras, see \cite{opper2018geometric,Kontsevicharcs}.

\appendix

 \section{Graph Glossary and Graphical Feynman categories}
\label{graphsec}

\subsection{The category of graphs}

Interesting examples of Feynman categories used in operad--like  theories are indexed over a Feynman category built from graphs.
It is important to note that although we will first introduce a category of graphs $\Graphs$, the relevant
Feynman category is given by a full subcategory $\Agg$  whose
objects are disjoint unions or aggregates of corollas. The corollas themselves
play the role of $\asts$.

Before giving more examples in terms of graphs it will be useful to recall some terminology.
A very useful presentation is given in \cite{BorMan} which we follow here.

\subsubsection{Abstract graphs}
An abstract graph $\G$ is a quadruple $(V_{\G},F_{\G},i_{\G},\del_{\G})$
of a finite set of vertices $V_{\G}$, a finite
set of half edges or flags $F_{\G}$,
an involution on flags $i_{\G}\colon F_{\G}\to F_{\G}; i_{\Gamma}^2=id$ and
a map $\del_{\G} \colon F_{\G}\to V_{\G}$.
We will omit the subscript $\G$ if no confusion arises.

Since the map $i$ is an involution, it has orbits of order one or two.
We will call the flags in an orbit of order one {\em tails} and denote the set of tails by $T_{\Gamma}$. The flags in an orbit of order two will
be called internal flags and this set of tails will be denoted by $F_\G^{\rm int}$. Thus $F_\G=T_g\amalg F_\G^{\rm int}$
We will call an orbit of order two an {\em edge} and denote the set of edges by $E_{\Gamma}$. The flags of
an edge are its elements.
The function $\del$ gives the vertex a flag is incident to.
It is clear that the set of vertices and edges form a 1-dimensional CW complex.
The realization of a graph is the realization of this CW complex.

A graph is (simply) connected if and only if its realization is.
Notice that the graphs do not need to be connected. Lone vertices, that
is, vertices with no incident flags, are also possible.

We also allow the empty graph $\egr$, that is, the unique graph with $V=\varnothing$.
 It will serve as the monoidal unit.
\begin{ex}
A graph with one vertex and no edges is called a {\em corolla}.
Such a graph only has tails.
For any set $S$ the corolla $\crl_{p,S}$ is the unique graph with $V=\{p\}$ a singleton and $F=S$.

We fix the short hand notation $*_S$ for the corolla with $V=\{\ast\}$ and $F=S$.
\end{ex}

Given a vertex $v$ of a graph, we set
$F_v=\del^{-1}(v)$ and call it {\em the
flags incident to $v$}. This set naturally gives rise to a corolla.
The {\em tails} at $v$ is the subset of tails of $F_v$.

As remarked above, $F_{v}$ defines a corolla $\crl_{v}=\crl_{\{v\},F_v}$.

\begin{rmk}
The way things are set up, we are talking about (finite) sets, so
changing the sets even by bijection changes the graphs.
\end{rmk}

\begin{rmk}
\label{disjointrmk}
 As the graphs do not need to be connected, given two graphs $\G$ and $\G'$
we can form their disjoint union:\\
$$\G\sqcup\G'=(F_{\Gamma}\sqcup F_{\Gamma'},V_{\Gamma}\sqcup V_{\G'},
i_{\Gamma}\sqcup i_{\G'}, \del_{\G}\sqcup \del_{\G'})$$

One actually needs to be a bit careful about how disjoint unions are defined.
Although one tends to think that the disjoint union $X\sqcup Y$
is strictly symmetric, this is not the case. This becomes apparent
if $X\cap Y\neq\emptyset$. Of course
there is a bijection $X\sqcup Y \stackrel{1-1}{\longleftrightarrow}Y\sqcup X$.
Thus the categories here are symmetric monoidal, but not strict symmetric monoidal.
This is important, since we  consider functors into other not necessarily strict monoidal categories.

Using MacLane's theorem it is however possible to make a technical construction that makes the monoidal
structure (on both sides) into a strict symmetric  monoidal structure
\end{rmk}

\begin{ex}
An {\em aggregate of corollas} or aggregate for short is a finite disjoint union of corollas, that is,
a graph with no edges.

Notice that if one looks at $X=\bigsqcup_{v\in I} \ast_{S_v}$ for some finite index set $I$ and some finite sets of flags $S_v$,
 then the set of flags is automatically the disjoint
union of the sets $S_v$. We will just say
just say $s\in F_X$ if $s$ is in some $S_v$.
\end{ex}

\subsubsection{Grafting of graphs into vertices}
Fix a graph $\G=(V,F,\del,i)$, a collection of graphs $\G_v=(F(v),V(v),\del(v),i(v))$
indexed by $v\in V$ and a set of bijections $\beta_v:F_v \to T_{\G_v}$ that is a bijection of   the flags at $v$ with the tails of $\G_v$.
Set $\beta=\amalg_{v\in V} \beta_v$.

We define the graph $\amalg_{v\in V}\G_v\circ_\beta\G$ to be the graph obtained from grafting $\G_v$ into the vertex $v$. As a graph the vertices, flags and boundary are those of $\amalg_{v\in V}\G_v$, that is  $\amalg_{v\in V}V(v)$ the flags are $\amalg_{v\in V} F(v)$ with the boundary map $\amalg_{v\in V} \del(v)$. But, the involution given by  $\amalg_{v\in V} i(v)$ on the internal edges of  $\amalg_v \G_v$ is continued to the tails $\amalg_{v \in V} T_{\G_v}$ by $\beta \circ i \circ \beta^{-1}$.
This equivalent to the statement that the edges of the graph $\amalg_{v\in V}\G_v\circ_\beta\G$ are the set $\amalg_{v\in V}E_{\G_v}\amalg \beta(E_\G)$.

\subsubsection{Category structure: Morphisms of Graphs}

\begin{df}\cite{BorMan,feynman}\label{grmor}
Given two graphs $\G$ and $\G'$, consider a triple $(\phi^F,\phi_V, i_{\phi})$
where
\begin{itemize}
\item [(i)]
$\phi^F\colon F_{\G'}\hookrightarrow  F_{\G}$ is an injection,
\item [(ii)] $\phi_V\colon V_{\G}\twoheadrightarrow V_{\G'}$ and $i_{\phi}$ is a surjection and
\item [(iii)] $i_{\phi}$
is a fixed point free involution on the tails of $\G$ not in the image of $\phi^F$.
\end{itemize}

One calls the edges and flags that are
not in the image of $\phi$ the contracted edges
and flags. The orbits of $i_{\phi}$ are called \emph{ghost edges} and denoted by $E_{ghost}(\phi)$. The ghost edges are uniquely determined by and uniquely determine $i_\phi$.

Such a triple is {\it a morphism of graphs} $\phi\colon \G\to \G'$ if

\begin{enumerate}
\item The involutions are compatible:
\begin{enumerate}
\item An edge of $\G$
is either a subset of the image of $\phi^F$ or not contained in it.
\item
If an edge is in the image of $\phi^F$ then its pre--image
is also an edge.
\end{enumerate}
\item $\phi^F$ and $\phi_V$ are compatible with the maps $\del$:
 \begin{enumerate}
\item Compatibility with $\del$ on the image of  $\phi^F$:\\
\quad If $f=\phi^F(f')$ then
$\phi_V(\del f)=\del f'$
\item Compatibility with $\del$ on the complement of the image of  $\phi^F$:\\
 The two vertices of a ghost edge in $\G$ map to
the same vertex in $\G'$ under $\phi_V$.
\end{enumerate}

 \end{enumerate}

If the image of an edge under $\phi^F$ is not an edge,
we say that $\phi$ grafts the
two flags.

The composition $\phi'\circ \phi\colon\G\to \G''$
of two morphisms $\phi\colon\G\to \G'$ and $\phi'\colon\G'\to \G''$
is defined to be  $(\phi^F\circ \phi^{\prime F},\phi'_V\circ \phi_V,i_{\phi'\circ\phi})$
where $i_{\phi'\circ\phi}$ is defined by its orbits viz.\ the ghost edges.
This means that $E_{ghost}(\phi\circ\phi')=E_{ghost}(\phi)\amalg \phi^F(E_{ghost})(\phi')$.

More explicitly, let $F_{\phi}=F'\setminus \phi^{F}(F')$, $F'_{\phi'}=F'\setminus \phi^{\prime F}(F'')$
and $F_{\phi'\circ\phi}=F\setminus \phi^F\circ \phi^{\prime F}(F'')$, then $F_{\phi'\circ \phi}=F_\phi \amalg \phi^F(F'_{\phi'})$ and
\begin{equation}
\label{icompeq}
i_{\phi'\circ\phi}=i_\phi\amalg \phi^{F}\circ i_{\phi'}\circ \phi^{F -1}: F_{\phi'\circ\phi}\to F_{\phi'\circ\phi}
\end{equation}

\end{df}

The following definition from \cite{feynman} is essential.
\begin{df}
\label{ghostdef}
The underlying ghost graph of
a morphism of graphs $\phi\colon\G\to \G'$ is the graph
 $\gh(\phi)=(V_\Gamma,F_{\Gamma},\hat \imath_{\phi})$ where $\hat \imath_{\phi}$ is $i_{\phi}$ on the complement of
$\phi^F(\Gamma')$ and identity on the image of  flags of  $\Gamma'$ under $\phi^F$.
Or, alternatively, the edges of $\gh(\phi)$ are  the ghost edges of $\phi$  that is $E_{ghost}(\phi)$.
\end{df}
\begin{lem}
\label{ghostamalglem}  Using the usual notation,  the following hold for the ghost graphs:
\begin{enumerate}
\item   Let $\phi:\G\to \G'$ then $\gh(\phi)=\amalg_{v'\in V'}\gh_{v'}$ and $\phi^F|_{F'_{v'}}:F'_{v'}\to T_{\gh_{v'}}$ is a bijection.
\item  $\gh(\phi\amalg \psi)=\gh(\phi)\amalg\gh(\psi)$.
\item For a composition of morphisms $\G\stackrel{\phi}{\to}\G'\stackrel{\phi}{\to}\G''$: $\gh(\phi')\circ_\beta \phi= \gh(\phi)\circ_{\phi^F} \gh(\phi)$
\end{enumerate}
\end{lem}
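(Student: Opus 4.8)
The plan is to read off all three identities directly from the definition of the ghost graph (Definition \ref{ghostdef}), the disjoint-union construction (Remark \ref{disjointrmk}), the grafting of graphs into vertices, and the composition of graph morphisms (Definition \ref{grmor}), the one genuinely computational input being the formula \eqref{icompeq} for the ghost involution of a composite. Since $\gh(\phi)$ shares its vertex set $V_\G$ and flag set $F_\G$ with $\G$ and records exactly the ghost edges $E_{ghost}(\phi)$ as its edges (the two-orbits of $\hat\imath_\phi$, all other flags being tails), each claim reduces to a bookkeeping statement about how $i_\phi$ behaves under $\amalg$, fiberwise decomposition, and composition. Part (2) is easiest and I would do it first: for $\phi\amalg\psi$ all three structure maps split as disjoint unions, in particular $i_{\phi\amalg\psi}=i_\phi\amalg i_\psi$, so $E_{ghost}(\phi\amalg\psi)=E_{ghost}(\phi)\amalg E_{ghost}(\psi)$ and the augmented involutions $\hat\imath$ likewise split; as the underlying vertex and flag sets of $\gh(\phi\amalg\psi)$ are the disjoint unions of those of $\gh(\phi)$ and $\gh(\psi)$, the identity $\gh(\phi\amalg\psi)=\gh(\phi)\amalg\gh(\psi)$ follows at once.

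For part (1) the point is that ghost edges respect the fibers of $\phi_V$. By compatibility condition 2(b) of Definition \ref{grmor} the two endpoints of every ghost edge map to the same vertex of $\G'$ under $\phi_V$; hence, defining $\gh_{v'}$ to be the full subgraph of $\gh(\phi)$ supported on the fiber $\phi_V^{-1}(v')$, no ghost edge crosses between distinct fibers and, since $\phi_V$ is surjective, $\gh(\phi)=\amalg_{v'\in V'}\gh_{v'}$. For the tail statement, recall that the tails of $\gh(\phi)$ are precisely the flags in the image of $\phi^F$ (these are fixed by $\hat\imath_\phi$). Compatibility condition 2(a) gives $\phi_V(\del(\phi^F(f')))=\del(f')$, so a flag $f'\in F'_{v'}$ is sent by $\phi^F$ to a tail incident to a vertex of the fiber over $v'$, i.e.\ a tail of $\gh_{v'}$, and conversely every tail of $\gh_{v'}$ arises this way. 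Since $\phi^F$ is injective, its restriction $\phi^F|_{F'_{v'}}\colon F'_{v'}\to T_{\gh_{v'}}$ is a bijection, as claimed.

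The third identity I read (correcting the evident typographical slips) as $\gh(\phi'\circ\phi)=\gh(\phi)\circ_{\phi^F}\gh(\phi')$: the pieces $\gh_{v'}$ of $\gh(\phi)$ produced by part (1) are grafted into the vertices $v'$ of the base graph $\gh(\phi')$ along the bijections $\phi^F|_{F'_{v'}}\colon F'_{v'}\to T_{\gh_{v'}}$, the flags of $\gh(\phi')$ at $v'$ being exactly $F'_{v'}$. To prove it I would compare the two graphs structure map by structure map. Both carry the vertex set $V_\G$ and the flag set $F_\G$, so the only content is the equality of involutions. By the grafting recipe the involution on the right-hand side is $\bigl(\amalg_{v'} i(\gh_{v'})\bigr)$ continued over the grafted tails $\amalg_{v'}T_{\gh_{v'}}=\phi^F(F_{\G'})$ by $\phi^F\circ\hat\imath_{\phi'}\circ\phi^{F\,-1}$; its two-element orbits are therefore $E_{ghost}(\phi)\amalg\phi^F(E_{ghost}(\phi'))$, which is exactly the orbit set of $i_{\phi'\circ\phi}=i_\phi\amalg\phi^F\circ i_{\phi'}\circ\phi^{F\,-1}$ recorded in \eqref{icompeq}. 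The main obstacle is this final matching of involutions: one must track carefully which flags are tails at each stage, verifying that a tail $\phi^F(f')$ of some $\gh_{v'}$ stays a tail of the composite precisely when $f'$ is already a tail of $\gh(\phi')$, whereas it is re-paired into an internal ghost edge precisely when $f'$ is matched by a ghost edge of $\phi'$, and that this re-pairing is transported correctly through $\beta=\phi^F$. Since \eqref{icompeq} is built to encode exactly this behaviour, once the decomposition of part (1) is in hand the remaining verification is bookkeeping rather than substance.
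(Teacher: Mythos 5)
Your proof is correct and follows essentially the same route as the paper's: part (1) via compatibility conditions 2(a)/(b) and injectivity of $\phi^F$, and part (3) by matching the grafted involution against \eqref{icompeq}, including the same reading of the garbled formula. The only (immaterial) difference is that you prove part (2) directly from the splitting of the structure maps under $\amalg$, whereas the paper deduces it from part (1).
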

\begin{proof}
For the first statement, we define $\gh(v')(\phi)$ as follows: The vertices
are $\phi_V^{-1}(v')$, the flags are $\amalg_{v\in \phi_V^{-1}(v')}F_{v}$. The morphisms $\del_v$ and $i_v$ are the restriction of $\del_\G$ and $\hat i_\phi$, which are well defined.
For $\del_v$ this is guaranteed by condition (2) and  for
$i_v$ we have to check that if a flag  is in $F_{\phi_V^{-1}}(v')$ then so is $i_\phi(f)$ which is guaranteed by condition (2) (b). Now
 $\phi^F$ is a bijection onto its image, which are precisely the tails of the ghost graph. This restricts to the $\gh_v$.
 The second statement follows from the first.
 The last statement is clear for the vertices and the flags. We have to check that the involution coincide. These are determined by the ghost edges. The ghost edges of the composition are the disjoint union of the ghost edges of $\phi$ and the image of those of $\phi'$ which is guaranteed by \eqref{icompeq}.
\end{proof}
\begin{df}
We let $\Graphs$ be the category whose objects are abstract graphs and
whose morphisms are the morphisms described in Definition \ref{grmor}.
We consider it to be a monoidal category with monoidal product $\sqcup$ (see
Remark \ref{disjointrmk}).
\end{df}

\subsection{The Feynman category $\GG$}
\label{GGdefsec}
Let $\Agg$ be the full subcategory of $\Graphs$ whose objects are aggregates of corollas.
Let $\Crl$ be the sub--groupoid of $\Agg$ whose objects are corollas and whose morphisms are the isomorphisms between them and denote
the inclusion by $\imath$.

\begin{lem}
\label{decomlem}
Given a morphism $\phi\colon X\to Y$ where $X=\bigsqcup_{w\in V_X} \ast_w$ and $Y=\bigsqcup_{v\in V_Y}\ast_v$
are two aggregates, we can decompose $\phi=\bigsqcup \phi_v$ with $\phi_v\colon X_v\to \ast_v$ where $X_v$ is the sub--aggregate $\bigsqcup_{\phi_V(w)=v}*_w$, and $\bigsqcup_v X_v=X$.

Furthermore, $\gh(\phi)=\amalg_{v\in V}\gh(\phi_v)$ and  $\phi_{v}^{\prime F}: F_{v}\to T_{\gh(\phi_v)}$ is a bijection.

\end{lem}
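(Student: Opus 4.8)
The plan is to build the decomposition directly from the vertex map $\phi_V$, read off the flag data and the involution by restriction, and then invoke Lemma~\ref{ghostamalglem} for the ghost-graph assertions. First I would record the simplifications forced by the aggregate hypothesis: since $X=\G$ and $Y=\G'$ have no edges and every flag is a tail, condition~(1) of Definition~\ref{grmor} is vacuous, so $\phi$ is nothing but a triple $(\phi^F,\phi_V,i_\phi)$ with $\phi^F\colon F_Y\hookrightarrow F_X$ an injection, $\phi_V\colon V_X\twoheadrightarrow V_Y$ a surjection, and $i_\phi$ a fixed-point-free involution on the tails $F_X\setminus\phi^F(F_Y)$, subject only to the compatibility~(2). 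The surjection $\phi_V$ partitions $V_X=\bigsqcup_{v\in V_Y}\phi_V^{-1}(v)$, which is exactly the prescription $X_v:=\bigsqcup_{\phi_V(w)=v}\ast_w$ and yields $\bigsqcup_v X_v=X$ on the nose.

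Next I would define $\phi_v=((\phi_v)^F,(\phi_v)_V,i_{\phi_v})$ by restriction and verify each datum is well defined. The vertex map $(\phi_v)_V$ is the restriction of $\phi_V$ to $V_{X_v}=\phi_V^{-1}(v)$, a constant surjection onto $\{v\}$. For flags I set $(\phi_v)^F:=\phi^F|_{F_v}$; the one thing to check is that its image lands in $F_{X_v}$, which is immediate from~(2)(a): if $f=\phi^F(f')$ with $\del f'=v$, then $\phi_V(\del f)=\del f'=v$, so $\del f\in\phi_V^{-1}(v)$. A short bookkeeping step then identifies the tails of $X_v$ outside the image of $(\phi_v)^F$ with $\{\,f\in F_X\setminus\phi^F(F_Y): \phi_V(\del f)=v\,\}$.

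For the involution I would use condition~(2)(b): the two flags of any ghost edge have $\del$-values lying over a common target vertex, so $i_\phi$ preserves the partition of $F_X\setminus\phi^F(F_Y)$ indexed by $V_Y$ and restricts to a fixed-point-free involution $i_{\phi_v}$ on the tails of $X_v$ just described. Each $\phi_v$ then inherits~(2) by restriction, so it is a genuine morphism $X_v\to\ast_v$, and since the three data of $\phi$ are literally the disjoint unions of those of the $\phi_v$, we get $\phi=\bigsqcup_v\phi_v$. For the ``furthermore'' I would simply match the construction against Lemma~\ref{ghostamalglem}: the ghost component $\gh_{v}$ over $v$ produced there has vertex set $\phi_V^{-1}(v)$, flag set $\bigsqcup_{w\in\phi_V^{-1}(v)}F_w$, and involution the restriction of $\hat\imath_\phi$, which is exactly $\gh(\phi_v)$; hence $\gh(\phi)=\amalg_{v\in V}\gh(\phi_v)$ and $(\phi_v)^F=\phi^F|_{F_v}\colon F_v\to T_{\gh(\phi_v)}$ is the bijection asserted in that lemma.

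The only genuinely load-bearing point is the restriction of the involution, i.e.\ the fact that ghost edges never join flags lying over distinct target vertices; so I expect condition~(2)(b) of Definition~\ref{grmor} to be the crux of the argument. Everything else—the partition of vertices, the landing of $\phi^F|_{F_v}$ in $F_{X_v}$, and the reassembly $\phi=\bigsqcup_v\phi_v$—is routine bookkeeping that the aggregate hypothesis (no edges in $X$ or $Y$) renders entirely mechanical, and the ghost-graph half of the statement is a direct specialization of Lemma~\ref{ghostamalglem}.
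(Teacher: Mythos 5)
Your proposal is correct and follows essentially the same route as the paper's proof: define $(\phi_v)_V$, $\phi_v^F$, and $i_{\phi_v}$ by restriction, observe that condition~(2) of Definition~\ref{grmor} is exactly what makes these restrictions well defined (with (2)(b) doing the work for the involution), and then quote Lemma~\ref{ghostamalglem} for the ghost-graph statements. The paper's version is just terser, leaving the verification of (2)(a) and (2)(b) implicit where you spell it out.
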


\begin{proof}
Explicitly $(\phi_v)_V$ is the restriction of $\phi_V$ to $V_{X_v}$ and   $\phi_v^F$ is the restriction of
 $\phi^F$ to $(\phi^{F})^{-1}(F_{X_v}\cap \phi^F(F_Y))$. This map is still injective.
Finally $i_{\phi_v}$ is the restriction of $i_{\phi}$ to $F_{X_v}\setminus \phi^F(F_Y)$.
These restrictions are possible due to the condition (2) above. The penultimate statement follows from Lemma \ref{ghostamalglem}.
The tails of the ghost graph are precisely the  elements in the image of $\phi^F$.

\end{proof}

\begin{prop} The triple $\GG=(\Crl,\Agg,\imath)$ is a strictly strict Feynman category.
\end{prop}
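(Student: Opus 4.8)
The plan is to verify the three defining conditions of Definition \ref{feynmandef} for the triple $(\Crl,\Agg,\imath)$ and then to check that, after strictifying the disjoint union as in Remark \ref{disjointrmk}, the resulting equivalences are identities, so that $\GG$ is strictly strict. For condition \eqref{objectcond} I would first observe that an object of $\Agg$ is by definition a finite disjoint union $\bigsqcup_{v\in I}\ast_{S_v}$ of corollas, i.e.\ exactly a word in objects of $\Crl$; hence on the strict free monoidal structure $\imath^\otimes\colon\Crl^\otimes\to\Iso(\Agg)$ is the identity on objects. On morphisms, since $\Crl$ is a groupoid, $\Crl^\otimes$ is a groupoid, and a morphism of edgeless graphs is invertible precisely when its underlying $\phi^F$ is a bijection with no grafted flags, i.e.\ a permutation of the corolla factors together with bijections $S_v\to S_{\sigma(v)}$ compatible with $\del$. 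These are exactly the morphisms of $\Crl^\otimes$, so $\imath^\otimes$ restricts to an isomorphism $\Crl^\otimes\xrightarrow{\sim}\Iso(\Agg)$.

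The substantive step is condition \eqref{morphcond}, and here Lemma \ref{decomlem} does the essential work. Every $\phi\colon X\to Y$ with $Y=\bigsqcup_{v\in V_Y}\ast_v$ decomposes as $\phi=\bigsqcup_v\phi_v$ with $\phi_v\colon X_v\to\ast_v$ basic, where $X_v=\bigsqcup_{\phi_V(w)=v}\ast_w$; this decomposition is forced by the fibers $\phi_V^{-1}(v)$ and is therefore unique. This yields essential surjectivity of the monoidal product functor $(\Iso(\Agg\downarrow\Crl))^\otimes\to\Iso(\Agg\downarrow\Agg)$, realizing the decomposition \eqref{feydecompeq}. For full faithfulness on isomorphisms I would take an isomorphism square $\sds\colon\phi\to\phi'$ as in \eqref{sdseq}; since the target map $\sigma'$ is an isomorphism of aggregates it is a permutation of the target corollas, and restricting the commutative square to the fiber over each $\ast_v$ produces the isomorphisms $\phi_v\to\phi'_{\sigma'(v)}$. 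The uniqueness clause of Lemma \ref{decomlem}, together with the compatibility of ghost graphs with disjoint union and composition recorded in Lemma \ref{ghostamalglem}, shows that this assignment is a bijection compatible with the category structure, giving \eqref{morphcond}.

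Finally, for condition \eqref{smallcond} I would note that a morphism $X\to\ast_S$ is encoded by finite combinatorial data, namely the ghost graph $\gh(\phi)$ on the finite flag set of $X$ together with the identification $\phi^F$ of the remaining tails with $S$, and that finite graphs form a set up to isomorphism; hence $(\Agg\downarrow\ast_S)$ is essentially small. Strictness is then immediate: once $\sqcup$ is strictified the two equivalences above are identities of categories on the strict free monoidal structures, so $\GG$ is strictly strict. I expect the main obstacle to be the full faithfulness on isomorphisms in \eqref{morphcond}, that is, checking that an isomorphism of morphisms splits compatibly along the decomposition of its target; this is precisely where the uniqueness of the fiber decomposition in Lemma \ref{decomlem} and the ghost-graph bookkeeping of Lemma \ref{ghostamalglem} are indispensable.
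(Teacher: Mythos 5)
Your proposal is correct and follows essentially the same route as the paper: condition (i) is checked directly on objects and isomorphisms, condition (ii) is derived from the fiberwise decomposition of Lemma \ref{decomlem} (with uniqueness of the decomposition forced by the fibers of $\phi_V$), condition (iii) reduces to essential smallness of finite sets, and strictness is immediate from the definitions. Your added detail on full faithfulness of the decomposition on isomorphism squares and the ghost-graph bookkeeping of Lemma \ref{ghostamalglem} only makes explicit what the paper's proof states more tersely.
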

\begin{proof}
The proof follows from the Lemma above. Condition (i) is clear on the object level. By the lemma an isomorphism factors as a disjoint union of isomorphisms. It also shows that condition (ii) holds. Note that this condition implies that decompositions are unique up to unique isomorphism. Indeed any decomposition will be given by a permutation of the decomposition above and isomorphisms of the vertices this will uniquely determine the diagram.
Finally, (iii), follows from the category of finite sets is essentially small.
Strictness is clear from the definition.
\end{proof}
\subsection{Extra structures}
\subsubsection{Dictionary}
This section is intended as a reference section.
Recall that  an order of a finite set $S$ is a bijection $S\to \{1,\dots ,|S|\}$.
Thus the group $\SS_{|S|}=\Aut\{1,\dots,n\}$ acts on all orders.
  An  orientation of  a finite set $S$ is an
equivalence class of orders, where two orders are equivalent if they are
obtained from each other by an even permutation.
With this Table \ref{dicttable} provides a dictionary for standard  graph terminology.
\begin{table}
\begin{tabular}{l|l}
A tree& is
a connected, simply connected graph.\\

{A directed  graph $\G$}& is a graph together with  a map $F_{\G}\to \{in,out\}$\\
&such that the two flags of each edge are mapped\\
&to different values. \\

A rooted tree& is a directed tree such that each vertex  has exactly\\& one ``out'' flag.\\
A {ribbon or fat graph} &is a graph together with a cyclic order on each of \\&the
sets $F_{v}$.  \\
A planar graph& is a ribbon graph that can be embedded
into the\\
& plane such that the induced cyclic orders of the \\
&sets $F_v$ from the orientation of the plane  \\
&coincide with the chosen cyclic orders.\\

A planted planar tree&is a rooted planar tree\\
An oriented graph& is a graph with an orientation on the set of its edges.\\
An ordered graph& is a graph with an order on the set of its edges.\\
A $\gamma$ labelled graph&is a graph together with a  map $\gamma:V_{\Gamma}\to \N_0$.\\
A b/w graph&is a graph $\G$ with a map $V_{\G}\to \{black,white\}$.\\
A bipartite graph& is a b/w graph whose edges connect only \\
&black to white vertices. \\
A $c$ colored graph& for a set $c$ is a graph $\G$ together with a map $F_{\G}\to c$\\
&s.t.\ each edge has flags of the same color.\\
A connected 1--PI graph& is a connected graph that stays connected, \\
&when one severs any edge.\\
A 1--PI graph&is a graph whose every component is 1--PI.
\end{tabular}
\caption{\label{dicttable}Graph Dictionary}
\end{table}

\subsubsection{Remarks and language}\mbox{}
\begin{enumerate}

\item  In a directed graph one speaks about the ``in'' and the ``out''
edges, flags or tails at a vertex. For the edges this means the one flag of the edges
 is an ``in'' flag at the vertex. In pictorial versions the direction
is indicated by an arrow. A flag is an ``in'' flag if the arrow points to the vertex.

\item
As usual there are edge paths on a graph and the natural notion
of an oriented edge path. An edge path is a (oriented) cycle if it starts and
stops at the same vertex and all the edges are pairwise distinct. It is called simple if
each vertex on the cycle has exactly one incoming flag and one outgoing flag belonging to the cycle.
An oriented simple cycle will be called a {\em wheel}.
An edge whose two vertices coincide is called a {\em (small) loop}.

\item There is a notion of a the genus of a  graph, which is the minimal dimension
of the surface it can be embedded on. A ribbon graph is planar if this genus is $0$.
\item
 For any graph, its Euler characteristic is given by
$$\chi(\Gamma)=b_0(\Gamma)-b_1(\G)=|V_{\Gamma}|-|E_{\Gamma}|; $$
where $b_0,b_1$ are the Betti numbers of the (realization of) $\Gamma$.
Given a $\gamma$ labelled graph, we define
the total $\gamma$ as
\begin{equation}
\label{gammaeq}
\gamma(\Gamma)=1-\chi(\Gamma)+\sum_{v \text{ vertex of $\Gamma$}}\gamma(v)
\end{equation}

If $\G$ is {\em connected}, that is $b_0(\G)=1$ then a $\gamma$ labeled graph is traditionally called a genus labeled graph and
\begin{equation}\label{genuseq}
\gamma(\Gamma)=\sum_{v\in V_{\Gamma}}\gamma(v)+b_1(\Gamma)
\end{equation}
is called the genus of $\G$.
This  is  actually not the genus of the underlying graph, but the genus of
a connected Riemann surface with possible double points whose dual graph is the genus labelled graph.

A genus labelled graph is called {\em stable} if each vertex with genus labeling $0$ has at least 3 flags and
each vertex with genus label $1$ has at leas one edge.
\item A planted planar tree induces a linear order on all sets $F_v$, by declaring the first
flag to be the unique outgoing one.
Moreover, there is a natural order on the edges, vertices and flags
given by its planar embedding.

\item A rooted tree is usually taken to be a tree with a marked vertex. Note that necessarily a rooted tree as described above has exactly  one ``out'' tail. The unique vertex whose ``out'' flag is not a part of an edge is the root vertex.
The usual picture is obtained by deleting this unique ``out'' tail.
\end{enumerate}

\subsubsection{Category of directed/ordered/oriented graphs.}
\label{ordsec}
\begin{enumerate}

\item
Define the category of directed graphs $\Graphs^{dir}$ to be the category whose
objects are directed graphs. Morphisms are morphisms $\phi$ of the underlying graphs,
which additionally satisfy that $\phi^F$ preserves orientation of the flags and the $i_{\phi}$ also
only has orbits consisting of one ``in'' and one ``out'' flag, that is the ghost graph is also directed.

\item

The category of edge ordered graphs $\Graphs^{or}$ has as objects graphs with an order on the edges.
A morphism is a morphism together  with an order $ord$  on  all of the edges of the ghost graph.

The composition of orders on the ghost edges is as follows.
$(\phi,ord)\circ \bigsqcup_{v\in V}(\phi_v,ord_v):=(\phi \circ \bigsqcup_{v\in V}\phi_v,ord\circ\bigsqcup_{v\in V}ord_v)$ where the order on the set of all ghost edges, that is $E_{ghost}(\phi)\sqcup \bigsqcup_vE_{ghost}(\phi_v)$,
is given by first enumerating  the elements of $E_{ghost}(\phi_v)$ in the
order $ord_v$ where the order of the sets  $E(\phi_v)$ is given by
the order on $V$, i.e. given by the explicit  ordering of the tensor product in  $Y=\bigsqcup_v \ast_v$.\footnote{Now we are working with ordered
tensor products. Alternatively one can just index the outer order
by the set $V$ by using \cite{TannakaDel}}
 and then enumerating the edges of $E_{ghost}(\phi)$ in their order $ord$.

\item The oriented version $\Graphs^{or}$ is then obtained by passing from orders to equivalence classes.

\end{enumerate}

\subsubsection{Category of planar aggregates and tree morphisms}
\label{planarsec}
Although it is hard to write down a consistent theory of planar graphs with planar morphisms, if not impossible,
there does exist a planar version of special subcategory of $\Graphs$.

We let $\Crl^{pl}$ have as objects planar corollas --- which simply means that there is a cyclic order on
the flags --- and as morphisms isomorphisms of these, that is isomorphisms of graphs, which preserve the cyclic order.
 The automorphisms of a corolla $\ast_S$ are then isomorphic to $C_{|S|}$, the cyclic group of order $|S|$.
Let $\CCyclic^{pl}$ be the full subcategory of aggregates of planar
corollas whose morphisms are morphisms of the underlying
 corollas, for which the ghost graphs in their planar structure
 induced by the source  is compatible
 with the planar structure on the target via $\phi^F$. For this we use the fact that the tails of a planar tree have a cyclic order.

Let $\Crl^{pl,dir}$ be directed planar corollas with one output and let $\operads^{pl}$ be the  subcategory of $\Agg^{pl,dir}$ of aggregates of corollas of the type just mentioned, whose morphisms are morphisms of the underlying directed corollas such that their associated ghost graphs are compatible with the planar structures as above.

In general, one needs to use so--called almost ribbon graphs, see e.g.\ \cite{decorated}
or \cite[Appendix A1]{postnikov}, and in \S\ref{connectionspar}.

\subsection{Insertion}
Given graphs, $\G$,$\G'$, a vertex $v\in V_{\G}$ and an isomorphism $\phi$: $F_v\mapsto T_{\G'}$
 we define $\G\circ_v\Gamma'$ to be the graph obtained by deleting $v$ and identifying the flags of $v$ with
the tails of $\G'$ via $\phi$. Notice that if $\G$ and $\G'$ are ghost graphs
of a morphism then it is just the composition of ghost graphs, with the morphisms at the other vertices being the identity.

\subsection{Operad-types and their graphical Feynman categories}
\label{operadtypepar}

There is a substantial list of examples that are generated by decorating the graphs of $\GG$ and restricting to certain subcategories,
see Table \ref{zootable}.
The decorations are actually decorations in the technical sense of \S\ref{decopar}.
Examples of the needed decorations are listed in Table
\ref{decotable}.

\begin{table}\begin{tabular}{llll}
$\FF$&Feynman category for&condition on ghost graphs $\gh_v$ for basic \\
&&morphisms and additional decoration&\\
\hline
$\operads$&(pseudo)--operads&rooted trees\\
$\operads^{\neg\Sigma}$&non-Sigma operads &planar rooted trees\\
$\operads_{\it mult}$&operads with mult.&b/w rooted trees.\\
$\CCyclic$&cyclic operads&trees& \\
$\CCyclic^{\neg\Sigma}$&non--Sigma cyclic operads&planar trees& \\

$\GG$&unmarked nc modular operads& graphs \\
$\GG^{\it ctd}$&unmarked  modular operads&connected graphs \\
$\modular$&modular operads&connected + genus marking \\
$\modular^{\it nc}$&nc modular operads &genus marking \\
$\dioperads$&dioperads&connected directed graphs w/o directed\\
&&loops or parallel edges\\
$\props$&PROPs&directed graphs w/o directed loops\\
$\properads$&properads&connected directed graphs \\
&&w/o directed loops\\
$\dioperads^{\circlearrowleft }$&wheeled dioperads&directed graphs w/o parallel edges \\
$\props^{\circlearrowleft,ctd}$& wheeled properads&connected directed graphs \\
$\props^{\circlearrowleft}$& wheeled props &directed graphs\\
$\FF^{\it 1\mdash PI}$&1--PI algebras&1--PI connected graphs.
\end{tabular}

\caption{\label{zootable} List of Feynman categories with conditions and decorations on the graphs, yielding the zoo of examples}
\end{table}

\subsubsection{Flag labeling, colors, direction and roots as a decoration}
Recall that $*_S$ is the one vertex graph with flags labelled by $S$ and these are the objects of $\V=\Crl$ for $\GG$.
For any set $X$ introduce the following $\GG$-$\oper$: $X(*_S)=X^{S}$. The compositions are simply given by restricting to the target flags.

Now let the set $X$ have an involution $\bar{}:X\to X$.
Then a natural subcategory $\FF_{\dec X}^{dir}$ of $\GG_{\dec X}$ is given by the wide subcategory, whose morphisms additionally satisfy that only flags marked by
elements $x$ and $\bar x$ are glued and then contracted; viz\ $\imath_\phi$ only pairs flags of marked $x$ with edges marked by $\bar x$.  That is the underlying ghost graph has edges whose two flags are labelled accordingly.
 In the notation of  graphs:  $X(f)=\overline{\imath_{\phi}(f)}$.

 If $X$ is pointed by $x_0$, there is the subcategory of $\GG_{\dec X}$ whose objects are those generated by $*_S$ with exactly one flag labelled by $x_0$
 and where the restriction on graphs is that for the underlying graph additionally, each edge has one flag labelled by $x_0$.

Now if $X=\Z/2\Z=\{0,1\}$ with the involution $\bar 0=1$,  we can call $0$ ``out'' and $1$ ``in''.  As a result, we obtain the category of directed graphs $\GG_{\dec\Z/2Z}$.
Furthermore, if $0$ is the distinguished element, we get the rooted version. This explains the relevant
examples  Table \ref{decotable}.
More generally, in quantum field theory the involution sends a field to its anti--field and this is what decorates the lines or propagators in a Feynman graph.

Finally, if the involution is trivial, then we obtain the colored version, where ghost edges have flags of the same color.
\subsubsection{Other decorating operads and connecting the bootstraps}
\label{connectionspar}

The other decoration operads are
 \begin{enumerate}

 \item $\N\in \GG^{\it ctd}\dashopcat_{\Set}$
 given by  $\N(*_S)=\N$ with the composition given by addition for edge contractions and $n\mapsto n+1$ for loop contractions.
 \item $\Assoc \in \operads\dashopcat_{\Set}$, as defined in Lemma \ref{nscoverlem}.
 $\Assoc(*_S)=\{$orders on the set $S\}$

 \item $\CycAssoc\in \CCyclic\dashops_{\Set}$: $\CycAssoc(*_S)=
 \{$cyclic orders on the set $S\}$ here composition is given by
 splicing in the cyclic order, see e.g.\ \cite{woods}.
\end{enumerate}
\begin{table}
\begin{tabular}{llll}
$\FFdeco$&Feynman category for&decorating $\O$&restriction\\
\hline
$\FF^{\it dir}$&directed version&$(\Z/2\Z$, $\bar{0}=1)$&\\
$\FF^{\it rooted}$&root&$(\Z/2\Z$, $\bar{0}=1)$&vertices have one\\
&&&  output flag.\\
$\FF^{\it genus}$&genus marked&$\N$&\\
$\FF^{\it C\mdash col}$&colored version&$(C,\bar{c}=c)$&\\
$\operads^{\neg\Sigma}$&non-Sigma-operads&$\Assoc$&\\
$\CCyclic^{\neg\Sigma}$&non-Sigma-cyclic operads&$\CycAssoc$&\\
$\modular^{\neg\Sigma}$&non--Signa-modular&$\ModAssoc$&\\
\end{tabular}
\caption{\label{decotable}List of decorated Feynman categories with decorating $\O$ and possible restriction. $\FF$ stands for an example based on $\GG$ in the list or more generally indexed over $\GG$ (see \cite{feynman}). }
\end{table}

Using these decorations, we have a diagram of Feynman categories.

\begin{thm} The following equalities can serve as a natural definition of the right hand side or as a theorem  identifying the right hand side as a decorated Feynman category
\begin{enumerate}
\item $\operads_{\dec \Assoc}=opereads^{\neg\Sigma}$.
\item The morphism $\fri:\operads\to \CCyclic$, given by forgetting the root, is an indexing, but neither connected nor a cover.
 \item  $i^*(\CycAssoc)=\Assoc$.
 \item $\GG^{ctd}_{dec \, j_!(\final)}=\modular$. By general theory $\frj$ factors as $\frj=\forget\circ \frk$ with $\frk:\CCyclic\to \modular$, where $\frk$ is the connected morphism defined by assigning the marking $g=0$ to a corolla.

\item  $\CCyclic_{dec\, \O_{\CycAssoc}}=\CCyclic^{\neg \Sigma}$, the Feynman category whose $\ops$ are non--sigma cyclic operads.

\item $\modular_{dec\, k_!(\O_{\CycAssoc})}= \modular^{\neg \Sigma}$, the Feynman category whose $\ops$ are non--sigma modular operads as defined in \cite{Marklnonsigma,decorated}. Here, $k_!(\CycAssoc)$ is the modular envelope of $\CycAssoc$.
\end{enumerate}
These fit into the diagram \eqref{modulardiag}, where the upper squares are the one of the type \eqref{decosquareeq} and the triangle is also such a square by using the identification $\FF_{dec \final}=\FF$.
\end{thm}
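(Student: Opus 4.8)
The plan is to treat all six identifications uniformly, as readouts of the decoration construction $\FF_{\dec\O}$ of \S\ref{decopar} combined with the pull-back/push-forward adjunction $\ff_!\dashv\ff^*$ of Theorem \ref{adjointthm} and the functoriality square \eqref{decosquareeq}. In each case one unwinds the objects of the decorated category (a corolla together with an element of the decorating $\ops$) and its basic morphisms (those graph morphisms $\phi$ whose value under the decorating functor carries the source decorations to the target one), and then matches the result against the graphical descriptions of Table \ref{zootable} and Table \ref{decotable}. The pull-backs are computed directly from $\ff^*\O=\O\circ f$, while the push-forwards are computed either from the Kan-extension colimit or, more cheaply, by exploiting the cover/connected factorization of Theorem \ref{BKthm}.

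For the ``order equals planar structure'' statements (1), (5) and (3), I would start from Lemma \ref{nscoverlem}: $\Assoc(*_{S\amalg\{r\}},r)$ is the set of linear orders on $S$ with composition given by concatenation along the ghost tree, and analogously $\CycAssoc(*_S)$ is the set of cyclic orders with splicing composition. An object of $\operads_{\dec\Assoc}$ is then a rooted corolla with a linear order on its inputs, i.e.\ a planar rooted corolla, and a basic morphism is a rooted-tree morphism for which the source orders induce the target order; by the composition law of $\Assoc$ this is exactly the compatibility of the induced planar structure of the ghost tree with that of the target. This is the description of $\operads^{\neg\Sigma}$ in Table \ref{zootable}, giving (1), and the identical argument with cyclic orders and unrooted trees yields (5). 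For (3) one computes $\fri^*(\CycAssoc)(*_{S\amalg\{r\}})=\CycAssoc(*_{S\amalg\{r\}})$, the cyclic orders on $S\amalg\{r\}$; marking the root $r$ gives a natural, composition-compatible bijection with linear orders on $S$, i.e.\ with $\Assoc$.

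For the root-forgetting morphism $\fri$ in (2), it is surjective on objects since every unrooted corolla is $\fri$ of a rooted one, hence an indexing. To see it is not connected I would evaluate the colimit formula for $\fri_!(\trivial)(*_S)$ over $(\fri\downarrow *_S)$: the distinct rootings of $*_S$ are cofinal and carry no morphisms between them, so $\fri_!(\trivial)(*_S)\cong S\neq\{*\}$ and $\fri_!(\trivial)\neq\trivial$. By Proposition \ref{coverprop} it is not a cover, since a lift of an unrooted tree morphism is not pinned down by its source alone (the target may be rooted in several ways). For the genus statements I would use the factorization directly: $\frj=\forget\circ\frk$ is the cover/connected factorization of Theorem \ref{BKthm}, with $\frk$ the connected morphism attaching $g=0$ and $\forget$ the cover forgetting the genus label. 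Then $\frj_!(\trivial)=\forget_!(\frk_!\trivial)=\forget_!(\trivial)$ because $\frk$ is connected, and since $\forget$ is a cover the remark after Theorem \ref{BKthm} identifies $\forget_!(\trivial)$ with the genus decoration $\N$ (value $\N_0$ on each corolla, composition by $b_1$-addition via \eqref{genuseq} and $+1$ on loop contractions). Hence $\GG^{ctd}_{\dec\frj_!(\trivial)}=\GG^{ctd}_{\dec\N}=\modular$, which is (4); statement (6) then follows by decorating $\modular$ by the modular envelope $k_!(\O_{\CycAssoc})=\ModAssoc$ and reading off non-Sigma modular operads from Table \ref{decotable}. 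The diagram \eqref{modulardiag} is assembled by noting that each square built from a morphism of Feynman categories together with the push-forward of the relevant decorating $\ops$ is an instance of \eqref{decosquareeq}, and that the triangle closes using the canonical identification $\FF_{\dec\trivial}=\FF$.

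The main obstacle will be the explicit push-forward in (6), namely verifying that $k_!(\O_{\CycAssoc})$ really is the modular envelope $\ModAssoc$. Unlike the other statements, where the matching of decorated objects and morphisms is bookkeeping, this requires controlling the Kan-extension colimit over comma categories of connected genus-labelled graphs and checking that the induced composition is precisely the splicing of cyclic orders along contracted edges; concretely, one must show that the colimit assigns to a genus-$g$ corolla the set of almost-ribbon structures of \S\ref{planarsec} and that this is compatible with both edge and loop contractions. This is the step where the combinatorics of the graph formalism, rather than a formal diagram chase, does the real work.
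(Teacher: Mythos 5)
Your proposal follows essentially the same route the paper takes: the paper's own ``proof'' is only the short \textsc{Details} paragraph, which asserts that everything reduces to computing push--forwards/Kan extensions of $\final$ and $\CycAssoc$ and matching the resulting decorated corollas and ghost--graph conditions against Tables \ref{zootable} and \ref{decotable}; your write--up is a correctly fleshed--out version of exactly that computation, including the use of the cover/connected factorization to identify $\frj_!(\final)$ with the genus decoration $\N$ and the honest admission that item (6), the modular envelope $\frk_!(\O_{\CycAssoc})$, is where the real combinatorial work sits.

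One local slip in item (2): the reason $\fri$ fails to be a cover is not that ``the target may be rooted in several ways.'' Given a rooting of the source aggregate, each vertex of the ghost tree acquires one distinguished flag; when this yields a valid rooted--tree morphism the global root of the target is forced (it is the unique distinguished flag that is a tail), so the uniqueness condition (2) of Proposition \ref{coverprop} actually holds. What fails is the existence condition (1) read fiberwise: for some rootings of the source there is \emph{no} lift at all, e.g.\ a cyclic composition that grafts root flag to root flag has no preimage in $\operads$ with that source. The conclusion is unaffected, but the condition you should cite is failure of lifting, not failure of uniqueness.
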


\begin{equation}
\label{modulardiag}
\xymatrix{
\operads^{\neg \Sigma}\ar[rr]^{\fri'}\ar[d]^{p(\Ass)}&&
\CCyclic^{\neg\Sigma} \ar[rr]^{\frk'} \ar[d]^{p(\CycAssoc)} && \modular^{\neg\Sigma}\ar[d]^{p(\frk_!(\CycAssoc))} \\
\operads\ar[rr]_\fri&&
\CCyclic \ar[rr]_\frk\ar[drr]_\frj && \modular\ar[d]^{p(\frj_!(\final))}\\
&&
&&\GG^{ctd}}
\end{equation}

{\sc Details:}
By calculating the push--forwards, one obtains that the basic objects of $\Fe^{mod}$ are  of genus marked corollas $*_{g,S}$ with $g\in \mathbb{N}_0$, while the basic objects of $\Fe^{\neg \Sigma \, mod}$
are  marked corollas $\ast_{g,s,S_1,\dots, S_b}$, where $s,g\in \mathbb{N}_0$ and $S_1,\dots,S_b$ are non--empty sets which each have a cyclic order.

\vskip 2mm
\noindent As a geometric mnemonic $\ast_{g,s,S_1,\dots, S_b}$ represents an oriented {\em topological surface} of genus $g$ with $s$ internal punctures, $b$ boundaries and marked points $S_i$ on the boundary $i$.

These facts are now just a neat calculation using a Kan extension, with the only inputs being $j$ and $cAss$. This is another example of a radical reduction of complicated concepts to  more basic structures.

\section{Graph description of $\FF^+$, $\FF^{+ \it gcp}$ and $\FF^{\it hyp}$}
\label{graphpluspar}
In this section, we will give a graphical version for $\FF^+,\FF^\gcp$ and $\FF^\hyp$. This is a variation of the category for operads.  There is a discrete version, see \cite{pluspaper}, which uses the fact that for a strictly strict $\FF$, $\Obj(\F\downarrow \V)$ are a $\Obj(\V)$ colored operad and hence there is a decoration of the Feynman category for $\Obj(V)$--colored operads.  To obtain the correct behaviour for the isomorphism, we have to consider the
``$\V$--colored operad   $Iso(\F\downarrow \V)$'' and regard the corresponding decorated Feynman category. This is what is captured below.

\subsection{Combinatorial graph based description of $\FF^+$ in the strictified case.}
\label{combpluspar}
In this section, we consider $\FF$ to be strictly strict, that is $Iso(\F)=\V^\otimes$ where the latter is the strict free symmetric monoidal category.

\subsubsection{Planar planted corollas}
Recall that  a corolla is a graph with one vertex and no edges. In the in the notation of \cite{BorMan,feynman}, see Appendix \ref{graphsec}, a corolla with vertex $v$ and set of flags $S$ is given by  $v_S=(\{v\},S,\del:S\to \{v\},id)$. It is planted planar, if $S$ has a linear order. We say the smallest element is the root flag, and denote is by $s_0$. This gives $S$ a natural structure of pointed set $(S,s_0)$ and we denote the planar planted corolla by $(v_S,s_0,<)$.
The order is equivalent to a map  $\lab:S\leftrightarrow \{0,\kdk,n\}$ with $\lab(s_0)=0$ or in other words a bijection of pointed sets.

\subsubsection{The groupoid $\pCrl$}
\label{crlFdefpar}
An $\V$--colored $(\F\downarrow \V)$ decorated corolla is a morally rooted corolla whose lone vertex is $\phi \in \Obj(\F\downarrow \V)$ and whose leaves
$s(\phi)=*_1\odo *_n$ and whose root is $t(\phi)$, where $s,t$ are the source and target maps. This is the view taken in \cite{feynman}.

To use graphical notation as introduced above with all the bells and whistles, consider $(v_S,s_0,<, \clr_F,\dec_V )$, that is a planar planted corolla together with a map $\clr_F$ that decorates the flags by objects of $\V: \clr_F:S\to \Obj(\V)$, and a decoration of the vertex $\dec_V:\{v\}\to \Obj(\imath^\otimes\downarrow \imath)$.
is a compatible decoration of the lone vertex by a morphisms that is a choice $\dec(v)=\phi_v \in \Obj(\F\downarrow \V)$, such that
$\phi_v: \clr_F(\lab^{-1}(1))\odo \clr_F(\lab^{-1}(n))\to \clr_F(\lab^{-1}(0))$.
Two  corollas are equivalent if   $(v_S,s_0,<, \clr_F,\dec_V )\sim (v_S,s_0,<', \clr_F,\dec_V )$  they provide the same $s(\phi)$.
\begin{equation}
\label{flagequiveq}
\clr_F(\lab^{-1}(1))\odo \clr_F(\lab^{-1}(n))=\clr_F(\lab'^{-1}(1))\odo \clr_F(\lab'^{-1}(n))
\end{equation}

An $\V$--colored $(\F\downarrow \V)$ decorated corolla is  an equivalence class of planar planted decorated corollas. $[(v_S,s_0,<, \clr_F,\dec_V )]$.

Each morphisms $\phi:*_1\odo *_n\to *_0$  thus naturally determines an $\FF$--colored corolla $[(v,\{0,1,\to n\},0, \clr_F(n)=n, \dec_V(v)=\phi)]$, which will simply be denoted by $*_\phi$. This carries over to the notation used in \cite{feynman} using the formalism of \cite{TannakaDel}: $\phi:\bigotimes_{s\in S}*_s\to *_t$.

Let $\pCrl$ which is short for $\V\mdash\Crl_{(\F\downarrow \V)}$ be the groupoid whose {\rm objects} are  $\V$--colored $(\F\downarrow \V)$ decorated corollas.

The {\em morphisms} of the groupoid $\pCrl$ are isomorphisms of corollas compatible with the decorations. That is a tuple $(f,\bs)$ where $f$
a graph morphism $v_S\to v'_{S'}$ given by: the only possible map $f_V:\{v\}\to \{v'\}$, a bijection $f^F:S\leftrightarrow S'$ and involution $\imath_f=\emptyset$ which means no ghost edges. Note that $f^F$ induces a permutation: $p=\lab\circ (f^F)\circ \lab^{\prime -1}$. Let $*_i=\clr_F(\lab^{-1}(i)), *'_i=\clr_F(\lab'^{-1}(i))$.
Then $\bs=(\sigma_0,\sigma_1\kdk\sigma_n)$ is an ordered tuple of isomorphisms
$\sigma_i:*'_i \stackrel{\sim}{\to}*_{p(i)}$.
Now let $C_p: *_1\odo *_n\to *_{p(1)}\odo *_{p(n) }$ be the permutation commutativity constraint.

{\em Compatibility} means that $\phi'_{v'}=\sigma_0\circ \phi \circ C_p^{-1}\circ (\sigma_1\odo \sigma_n)$, i.e.\ the following diagram commutes.
\begin{equation}
\xymatrix
{\ast_1\odo \ast_n\ar[r]^{C_p}_\sim\ar[d]^{\phi_v}&\ast_{p(1)}\odo \ast_{p(n)}\ar[d]^{\phi_v\circ C_{p^{-1}}}&\ast'_1\odo \ast'_n\ar[l]_{\sigma_1\odo \sigma_n}^\sim
\ar[d]^{\phi_{v'}}\\
\ast_0\ar@{=}[r]&\ast_0\ar[r]^{\sim}_{\sigma_0}&\ast'_0
}
\end{equation}
It is straightforward to check that this is compatible with the equivalence relation $\sim$ and hence independent of representative.

Note that these are precisely the isomorphisms allowed in $(\F\downarrow\V)$.

\subsubsection{The  monoidal category  $\pAgg$}
\label{aggFdefpar}
Again, $\pAgg$ is short for $\V\mdash\Agg_{(\F\downarrow \V)}$.

Let $\pAgg$ be the category whose underlying objects are $\\Obj(\pAgg)=\\Obj(\pCrl^{\otimes})$
and whose  morphisms are given by morphisms of decorated aggregates of corollas.
That is {\em compatible} tuples $(g,\bs,\be)$, where
\begin{enumerate}
\item $g$ is an morphism of the underlying rooted corollas $g=(g_V,g^F,\imath_g)$. This means that $\gh(g)$ is a rooted forest and
every ghost edge has exactly one rooted flag that it is naturally directed $e= (f=r,\imath(f))$ with $r$ in the set of roots.

\item  If $F'$ is the set of flags of the target, then
$\bs=\{\sigma_f':f'\in F'\}$ is  a collection of isomorphisms $\sigma_{f'}:\clr(f')\stackrel{\sim}{\to}\clr(g^F(f'))$ and
\item  $\be=\{\sigma_e\},e\in E_{\gh(g)}$ is a collection of  isomorphisms $\sigma_e:\clr(f)\to \clr(\imath(f))$ for each edge directed as $e=(f,\imath_g(f))$
\end{enumerate}
This again clearly compatible with $\sim$.

An example of such a decorated tree is given in Figure \ref{Fplusgpdfig}.
\begin{figure}
  \includegraphics[width=.9\textwidth]{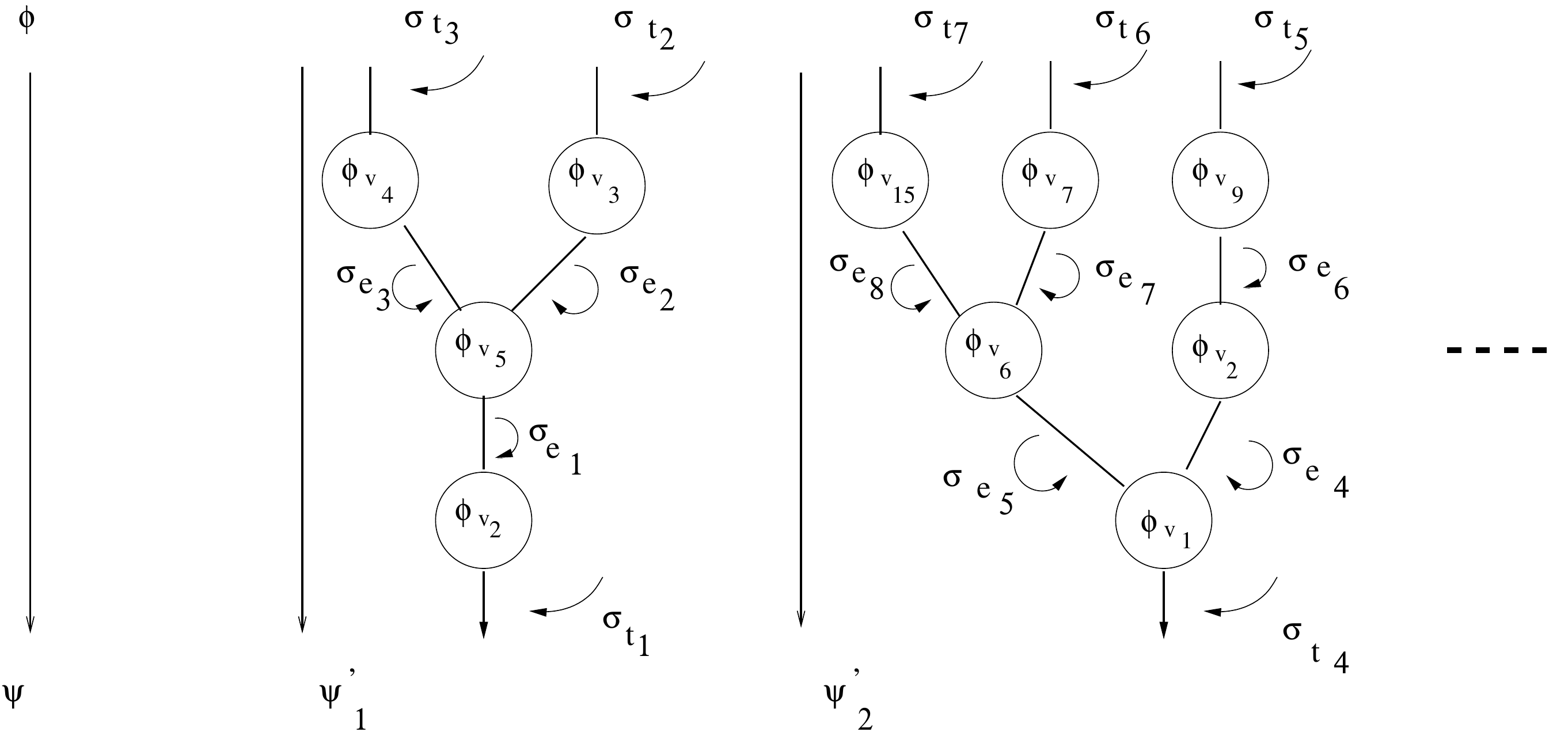}
  \caption{\label{Fplusgpdfig} A decorated rooted forrest. The $\sigma_e\in \be$ are written next to the internal edges, while the $\sigma_t\in \bs$ are written next to the tails. These trees give the flow charts to the $\psi'_i$ and the forrest is a map $\phi=\phi_1\odo \phi_n\to \psi=\psi_1\odo \psi_m$ where the $\psi_j$ are given by \protect{\eqref{psieq}}}
\end{figure}

To explain the compatibility condition, let $\t\subset \gh(g)$ be a connected component, viz.\ a rooted tree. Each such tree  corresponds to a vertex of $w_j$ the target via the map $g_v$, viz.\ all the vertices the tree are precisely the set $f_V^{-1}(w_j)$ and thus the components are can be enumerated as $\tau_j$.
The short version  of {\em compatibility} is that using $g$ and $\be$ each decorated $\tau_j$ represents a flow chart yielding a morphism $\psi_j$ and up to an isomorphism determined by $\bs$:
$\psi_1\odo \psi_m\simeq \psi$.
To define the {\em flow chart},  we first notice that since there is a root, each vertex has a height which is the distance from the root.
Next, we notice that picking a representative of the vertices, the tree actually is planar as there is a linear order at each vertex. This gives a linear order to all the leaves.
We proceed by induction. To start, assume that  $\tau_j$ has  only one vertex, the root vertex $v_i$ with $g_V(v_i)=w_j$.
Let $n=ar(v_i)$ be the arity, i.e.\ the number of incoming flags, of $v_i$, let
the incoming flags be colored by $*_{i_1}\kdk *_{j_n}$, the outgoing (root) flag be colored by $*_{j_0}$
 and let  $\phi_i$ be the color of $v_j$ that is $\phi_i:*_{j_1}\odo *_{i_n}\to *_{j_0}$.  We set $\psi'_j=\phi_j$.

We can now proceed by induction. Assume that we have defined a morphism $\psi'_j(\tau_j)$ for every decorated tree $\tau_j$ of maximal height $n$. Let $\tau_j$ be a tree of maximal height $n+1$. Consider the leaf vertices of $\tau_j$. Since $\tau_j$ is planar, they are ordered. There are two types of vertices. The first is at height $n+1$ and the second is at lower height.
Let $v_i$ be such a leaf vertex then the incoming flags are colored by $*_{i_1}\kdk *_{i_{ar(v_i)}}$ if $v_i$ is not at maximal height we set $\psi'_i=id_{*_{i_1}}\odo id_{*_{i_{ar(v_i)}}}$

If $v_i$ is of maximal height  and $\phi_i$, the color of $v_i$,
maps $*_{i_1}\odo *_{i_{ar(v_i)}}$ to $*_{i_0}$.
There is a (ghost) edge $e(i_0,j_k)$ which connects $v_i$ to some $v_j$. Then $\sigma_e: *_{i_0}\stackrel{\sim}{\to} *_{v_k}$ and we set $\psi_i'=\sigma_e\circ \phi_i$.
Let $\{v_i\}\,i=1\dots l$ be the set of leaf vertices. Let $\tau'$ be the tree obtained from $\tau'$ by cutting the outgoing edges of the leaf vertices of maximal height. Then we set
$\psi'(\tau):=(\psi'_1\odo \psi'_l)\circ \psi'(\tau')$.
This defines a morphism $\psi'(\tau_j)$. The source of the morphism is given  by the color of the leaf flags. Let $*_{l_1}\kdk *_{l_L}$ be the color of the leaf flags and $*_{j_0}$ be the color of the root of $\tau_j$ then $\psi'(\tau_j):*_{l_1}\odo *_{l_L}\to*_{j_0} $.

The map $g^F$ then identifies the flags of the vertices $w_j$ with the tails, that is the leaves and the root, of  $\tau_j$ and the decoration gives further isomorphisms.
The decoration  is {\em compatible} if using these isomorphisms $\psi'(\tau_j)$ is isomorphic to $\psi_j$, the color of $w_j$. More precisely, let $f_{j_1}\kdk f_{j_L}$ be the incoming flags of $w_j$
and $f_{j_0}$ be the root flag let $\sigma_{k}:\clr_F(f_{j_k})\to \clr_F(g^F(f_{j_k}))$ be the isomorphisms provided by $\bs$. Just as before, $g^F$ induces a permutation $p$ on the image and a corresponding commutativity constraint $C_p$.
Then the compatibility equation read

\begin{equation}
\label{psieq}
\psi_j=\sigma_0^{-1}\circ \psi'_j\circ C_P^{-1}\circ(\sigma_1\odo \sigma_L)
\end{equation}
In terms of morphisms such a forrest gives a morphism of vertex colors $\phi_1\odo \phi_n\to \psi_1\odo \psi_m$.
This construction is compatible with $\sim$, and thus does not depend on the choice of particular order $<$ in an equivalence class.

The monoidal structure on morphisms is given by disjoint union of decorated forests.
Composition of morphisms is given by composition of the underlying graph morphisms and decorations.
The composition of the graph morphisms has the effect of inserting  trees $\tau_j$ into the vertices $w_j$ of the ghost forest of the second morphism in a composition
$\amalg v_k\stackrel{g}{\to} \amalg w_j \stackrel{h} {\to} \amalg u_l$.
One tree per vertex, see \cite{feynman}, where the incoming flags of $w_j$ are identified with the leaves of $\tau_j$ and the outgoing flag of $w_j$ with the root of $\tau_j$.
The decoration of this ``blown-up'' tree is given as follows. For $\bs$ this is simply the concatenation of isomorphisms. I.e.\ if $h^F(f)=f'$ and $g^F(f')=f''$ then
the isomorphism is $\sigma_{f'}\circ \sigma_{f''}:\clr_F(f'')\to \clr_F(f)$.
For $\be$, for the edges of the $\tau_j$, the isomorphisms remain. For the image of the edges that connect the $w_j$  one again composes the isomorphisms.
Let $e'=(r',f')$ be a directed edge of $\gh(h)$. And let $g^F(f')=f, g^F(r')=r$ then the edge $e=(r,f)$ is an edge of $\gh(h\circ g)$ and the isomorphism is given by
$\sigma_e=\sigma_f \circ\sigma_{e'} \circ \sigma_r:\clr(r)\to \clr(f)$.

It is straightforward to check that this is associative. Let $\imath$ be the natural inclusion of $\pCrl\to \pAgg$.

\begin{thm}
 \label{forestthm}
 The triple $\pO=(\pCrl,\pAgg,\imath)$ is a Feynman category equivalent to $\FF^+$.
\end{thm}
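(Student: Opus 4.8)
The plan is to prove the statement in two stages: first verify directly that $\pO=(\pCrl,\pAgg,\imath)$ satisfies the three axioms of Definition \ref{feynmandef}, and then exhibit an explicit equivalence of Feynman categories $\Xi\colon \FF^+\to\pO$ matching the combinatorial presentation of $\F^+$ with the forest model. Since \S\ref{combpluspar} assumes $\FF$ strictly strict, I may use $\Iso(\F)=\V^\otimes$ throughout.

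\emph{$\pO$ is a Feynman category.} For axiom \eqref{objectcond} note that by construction $\Obj(\pAgg)=\Obj(\pCrl^\otimes)$, so the content is that every isomorphism of decorated aggregates decomposes uniquely up to permutation into isomorphisms of its constituent decorated corollas. An isomorphism of aggregates has empty ghost graph, hence is governed by its underlying graph isomorphism together with the flag data $\bs$; by Lemma \ref{decomlem} the underlying map splits as a disjoint union, and the compatibility condition \eqref{flagequiveq} shows the decorated pieces are exactly the isomorphisms allowed in $\pCrl$. Axiom \eqref{smallcond} reduces to essential smallness of the category of finite sets, exactly as in the proof that $\GG$ is a Feynman category. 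The substantive point is axiom \eqref{morphcond}: a general morphism of decorated forests $(g,\bs,\be)$ must decompose as a tensor product of basic morphisms. Here one uses that the underlying ghost forest splits into connected components (rooted trees) $\tau_j$, one per target vertex $w_j$ via $g_V^{-1}(w_j)$, exactly as in Lemma \ref{ghostamalglem}(1); the flow-chart recursion $\psi'(\tau_j)$ of \S\ref{aggFdefpar} together with the decoration data then realizes each component as a single basic morphism landing in $\pCrl$, while \eqref{psieq} records the unique isomorphism matching it to $w_j$. Uniqueness up to unique isomorphism holds because the planar structure and the height function on a rooted tree determine the iterated composition, so any two decompositions differ only by the permitted corolla isomorphisms.

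\emph{Construction of the equivalence.} On objects I send a basic morphism $\phi\colon\ast_1\odo\ast_n\to\ast_0$ in $(\F\downarrow\V)$ to the decorated corolla $\ast_\phi$ of \S\ref{crlFdefpar}, extending by tensor product; since $\Obj(\F^+)=\Mor(\F)$ decomposes, by axiom \eqref{morphcond} for $\F$, into tensor products of basic morphisms, this matches $\Obj(\pAgg)$. On isomorphisms the elements $\sds\in\Iso(\F\downarrow\F)$ correspond precisely to the tuples $(f,\bs)$, by the observation at the end of \S\ref{crlFdefpar} that these are exactly the isomorphisms allowed in $(\F\downarrow\V)$. The generating morphism $\gamma(\phi_0;\phi_1\kdk\phi_n)$ is sent to the two-level decorated tree obtained by grafting the corollas $\ast_{\phi_1},\dots,\ast_{\phi_n}$ onto the leaves of $\ast_{\phi_0}$ with trivial $\be$, whose flow chart $\psi'$ is by construction $\phi_0\circ(\phi_1\odo\phi_n)$. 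By Corollary \ref{fplusupscor} a morphism of $\F^+$ is an equivalence class, under the monoidal relations and $\Iso$-equivariance, of concatenations of tensor products of such generators and isomorphisms, and the flow-chart assignment $\tau\mapsto\psi'(\tau)$ is exactly the inverse bookkeeping; hence $\Xi$ is essentially surjective and fully faithful.

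\emph{Main obstacle.} I expect the crux to be checking that composition in $\pAgg$ — insertion of trees into vertices with the concatenation rules $\sigma_{f'}\circ\sigma_{f''}$ for tails and $\sigma_e=\sigma_f\circ\sigma_{e'}\circ\sigma_r$ for edges of \S\ref{aggFdefpar} — corresponds on the nose to the composition of generators $\gamma$ in $\F^+$ together with the equivariance relations. Concretely this amounts to verifying that the recursive flow chart $\psi'$ is associative and strictly compatible with grafting as in Lemma \ref{ghostamalglem}(3), so that inserting $\tau_j$ into the vertices $w_j$ reproduces the iterated $\gamma$-composition up to the monoidal relations; once this isomorphism bookkeeping is confirmed, the two descriptions agree and $\Xi$ is an equivalence of Feynman categories.
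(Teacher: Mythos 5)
Your proposal is correct and follows essentially the same route as the paper: verify the three axioms via the decomposition of the ghost forest into rooted trees indexed by target vertices, send $\phi\mapsto\ast_\phi$ and the generators $\gamma(\phi_0;\phi_1\kdk\phi_n)$ to two--level decorated trees, and invert via the flow--chart recursion $\psi'(\tau)$, with associativity of composition reflected in the freedom to collapse edges in any order. The compositional compatibility you flag as the main obstacle is exactly the point the paper also treats as a (straightforward) verification rather than spelling out in full.
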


\begin{proof}
Condition \eqref{objectcond} is clear on the object level by definition. For the isomorphisms, we notice that a morphism in $\pAgg$ is an isomorphisms if only if the forest only has trees of height zero, that is each tree is simply a corolla. In the corolla case, the morphisms in $\pAgg$
co\-incides with the morphism in
$\pCrl$ and hence  $\Iso(\pAgg)\simeq \pCrl^{\otimes}$.

For the condition $\eqref{morphcond}$ we first notice that each morphism in $\Agg^{\rm rt,<}_\FF$ is a tensor product of morphisms whose underlying graph is a tree, and a these are precisely the morphisms $\Obj(\Agg^{\rm rt,<}_\FF\downarrow\Crl^{\rm rt,<}_\FF)$. Thus the condition follows on the object level.
For the morphism level, it is clear that any other decomposition up to isomorphism is given by  permutation of the trees of the forest and an
isomorphism of the $\psi'(\tau)_j$.

Condition \eqref{smallcond} holds, since it holds in $\FF$ and the morphisms in the slice categories are the morphisms are given by decorated trees and the trees as well as the decorations satisfy \eqref{smallcond}, due to the said condition for $\operads$ and for $\FF$.

Thus the triple $\pO$ is a Feynman category. It remains to prove that this equivalent to $\FF^+$. For this it is enough to assume that $\FF$ is strict.

First,  we show that $\V^+$ is equivalent to $\pCrl$. The morphisms for $\V^+$ due to \eqref{morphcond} for $\FF$
 are given two generators, permutations of the source factors and isomorphisms of the source factors and the target. The former correspond to corollas whose decoration $\bs$ is given by identities, but with non--trivial $p$. The latter correspond to non--trivial isomorphisms in  $\bs$, but trivial $p$. These are also exactly the generators of
$\pCrl$. The relations are simply from composition of permutations and isomorphisms in both cases.
Formally the functor is given by sending $\phi:*_1\odo *_n\to *_0$ to $*_\phi$\footnote{For the fastidious reader a canonical choice of the one element set $\{v\}$, would be $\{\phi\}$, viz. using $\phi$ as an atom. Similarly  naturally the set of flags
 is simply the set $\{*_1\kdk  *_n\}$.}.
An iso-morphism $\sds:\phi\to \phi'$ in $\V^+$ to $(f,\bs,\sigma')$ where if $\sigma=\sigma_1\odo\sigma_n\circ p,p\in\SS_n$,
$f=(id,p^{-1}\amalg id,i_\emptyset)$, with $i_\emptyset$ the empty morphism, and $\bs=(\sigma_1\kdk\sigma_n)$.

To show the equivalence for $\pAgg$, we proceed in a similar manner. Having already matched the isomorphisms, we
define the value of a functor on an elementary morphism in $\FF^+$ to be to the morphism $f$ whose ghost tree is a two level tree whose source is the aggregate corresponding to the $\phi_i,i=0,\to n$ and whose target is the decorated corolla corresponding to $\psi=\phi_0\circ(\phi_1\odo \phi_n)$, where $f_V$ is fixed by this data, $f^F$ is the inclusion and $\imath$ is given by connecting the roots of vertices 1\kdk n to the flags of the vertex $0$.
The decorations $\bs$ and $\be$ given by identities, . Furthermore the two monoidal structures also agree in that taking tensor products in $\F^+$ corresponds to $\amalg$ in the $\pAgg$. We extend the definition to all maps of $\FF^+$ by functoriality. This is possible since the relations are preserved. Composing isomorphisms composes the permutations and the isomorphisms of the sources and targets on both sides. Composing elementary morphisms corresponds to gluing in two--level trees into vertices. And composing isomorphisms with elementary morphisms is also the same on both sides.
Finally, we see that there is a functor going backwards, which is the given by the induction process above. At each step, we have a collapse of a the leaf vertices with those below them, which is a combination of the image of an elementary morphism and an isomorphism. It is straightforward to check that this allows to recover any morphism in $\FF^+$ by showing that every morphism of $\FF^+$ is encoded in a unique decorated forest. Notice that the associativity of composition manifests itself in
the fact that one can ``collapse" all edges in any order. Here collapse means that this is a concatenation with an elementary morphism and isomorphisms leading to a blow--up of the collapsed vertex.
\end{proof}

\subsubsection{Leveling}
In the description of the flow chart, we secretly leveled up the trees by introducing unit morphisms for the leaf vertices not of maximal height.
This can be formalized in the tree/forest picture by introducing a new type of bi-valent non-decorated vertex. We will call this a black vertex.
This allows one to level--up the tree to a level tree, see Figure \ref{levelfig}.The leveling-up  replaces the flags of the vertices that are not of maximal height by a string of black vertices to get a level tree of the height of the original tree. This  is what we used above to define the flow chart.  This is not a unique choice.
There are many choices of inserting black vertices,  but these do not alter the result of the flow chart. Thus,
one has to introduce the equivalence relation that two trees with black and white vertices are equivalent if they agree after deleting all black vertices.
The original trees  become  canonical representatives as does the leveled--up tree.

For the composition of flow charts, one composes the underlying
  trees via insertion and then levels up again. It is in this sense that the morphisms of $\F^+$ are level trees/forests \cite[\S3.6]{feynman}.

\begin{figure}
  \includegraphics[height=2.5cm]{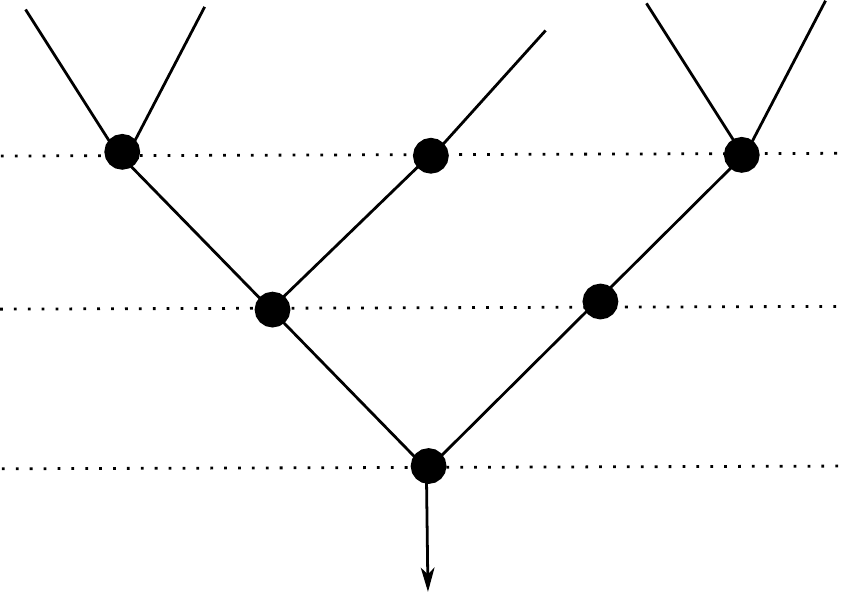} \qquad
  \includegraphics[height=2.7cm]{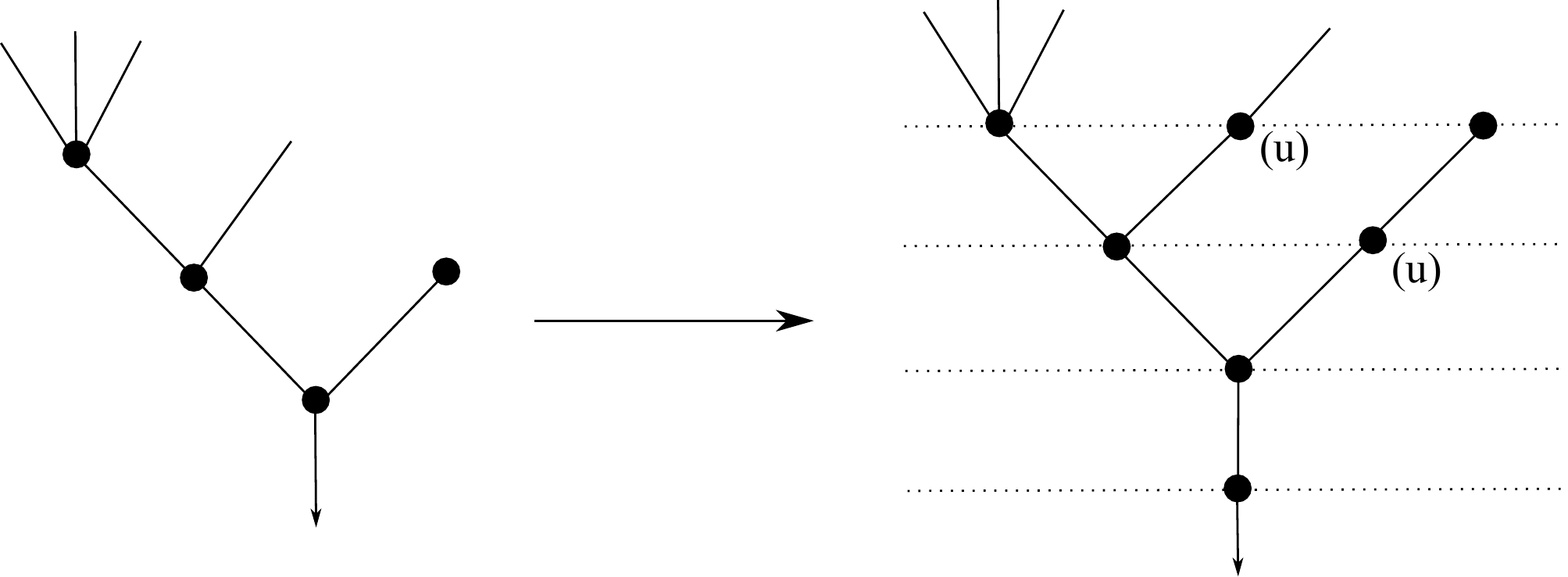}
  \caption{\label{levelfig} A tree with levels and the leveling of a tree. The marking $(u)$ indicates the presence of a unit. }
\end{figure}

\subsubsection{The decorations $(\be,\bs)$ as b/w trees with marked black vertices}
One can think of  $\be$  as a tail  as follows: Using that $T(\gh(g))=g^G(F')$, set $\clr_T: T\to \Mor(\V)$ $f\mapsto\sigma_{(g^F)^{-1}(f)}$. This is  used in Figure \ref{Fplusgpdfig}.
The datum $\bs$ is directly a directed edge decoration. Replacing each edge and each tail by a black two valenced vertex, we can put the decoration on the vertex and
thus encode the morphisms completely combinatorially as $\dottree \; \sigma$, where the roots are now marked by $\dottree \sigma_0^{-1}$.
See Figure \ref{Fplubwfig}.

\begin{figure}
  \includegraphics[width=.8\textwidth]{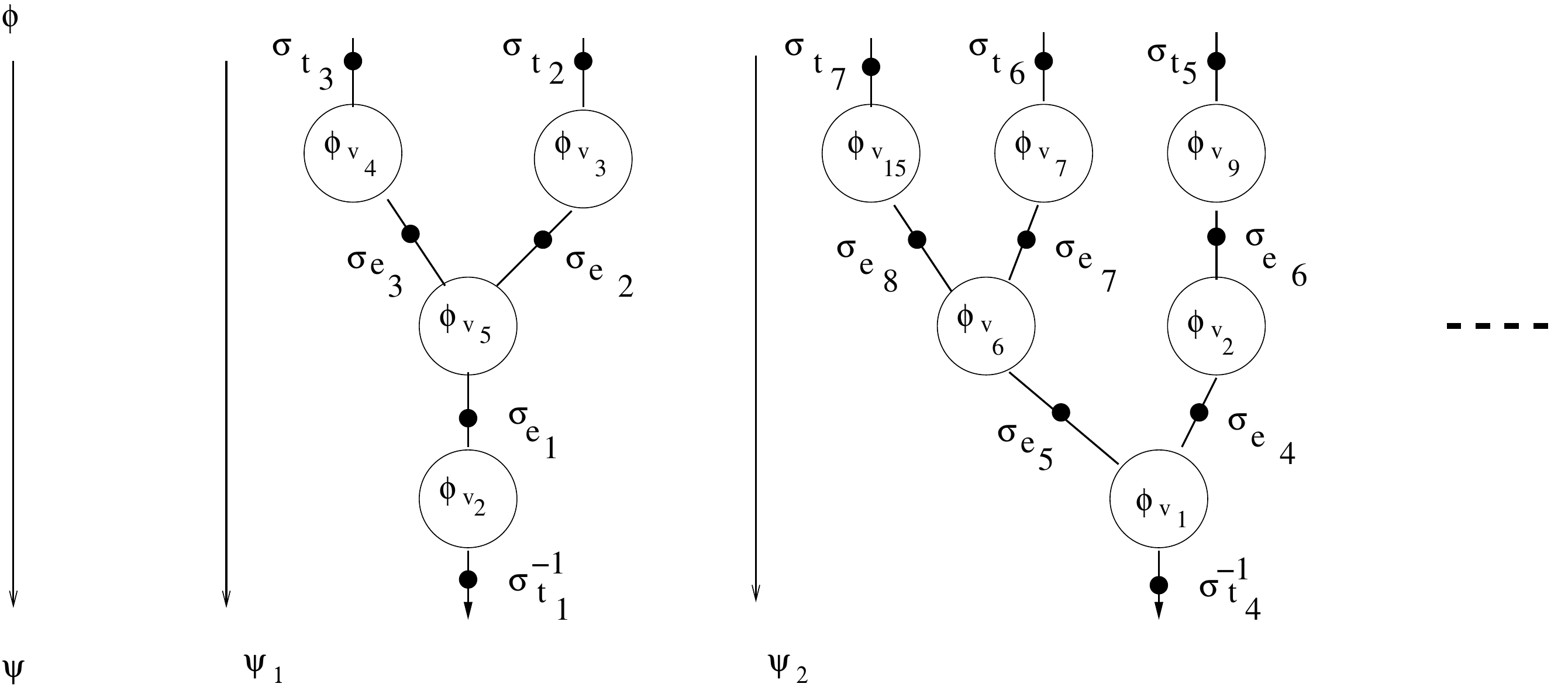}
  \caption{\label{Fplubwfig} The morphism of Figure \protect{\ref{Fplusgpdfig}} as a b/w forest with isomorphism decorations on the black vertices.}
\end{figure}

In particular the morphisms $\sds$ now are encoded as marked black vertices, See Figure \ref{sdscorfig}.

\begin{figure}
  \includegraphics[height=2cm]{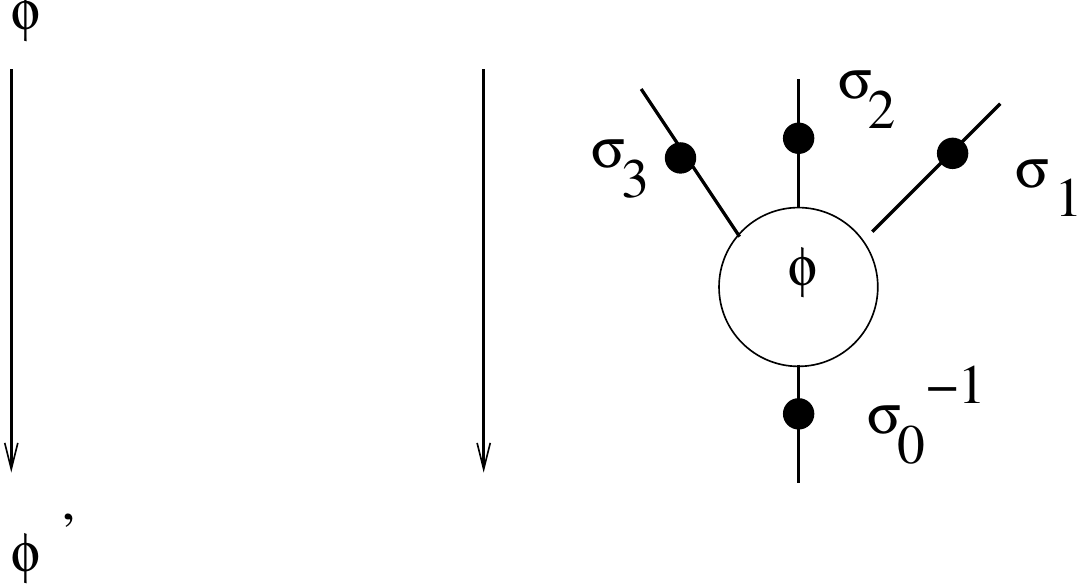}
  \caption{\label{sdscorfig} The morphism $D(\sds):\phi\to \phi'$ with $\sigma=\sigma_1\ot\sigma_2\ot\sigma_3$ and $\sigma'=\sigma_0$ as a b/w corolla with isomorphism decorations on the black vertices.}
\end{figure}

\subsection{The graphical gcp and hyper versions $\FF^{gcp +}$, $\FF^{\it hyp}$}
In the gcp version of the plus construction there are the extra morphisms $i_\sigma: \unit\to *_\sigma$.
We will write these morphisms as $\unit \stackrel{\dottree \; \sigma}{\to} *_\sigma$. Pre--composition with such maps, changes the color of the vertex
$*_\sigma$ to black in the above b/w picture. The well--definedness of the flow-chart is then guaranteed by \eqref{rightcompateq} and  \eqref{leftcompateq}
as the black vertices implement the morphisms $\sds$.

In the case of $\FF^{\it hyp}$, one can either omit the vertices $*_\sigma$, or by abuse of notation  regard black vertices $\dottree \; \sigma$ in lieu of $*_\sigma$.
In the first case, the morphisms are given by decorations, where none of the (non-black) vertices is decorated by an isomorphism.
In the second case, one has to be careful that the $\dottree \; \sigma$ are not quite vertices as they do not belong to $\V^{\it hyp}$, but rather denote an object
isomorphic to $\unit$.Making the vertex black is a useful mnemonic of this.
The abuse of notation is that we identify the target of the isomorphism $\dottree \; \sigma$ with the morphism itself.

\section{Double categories, 2--categories and monoidal categories}
\label{twocatapp}
In this appendix, we show that many of the constructions become natural in the language of 2--categories and double categories as founded in \cite{EhresmannDouble,Benabou} with the further developments in \cite{BrownHolonomy, BrownMosa}, see also \cite{Fioredouble}.
\subsection{2--categories and double categories}
Before going into the definition of 2--categories, we give a natural example:
\begin{ex}[The 2-category of categories]
\label{2catex}
We can set up a category of categories, whose objects are categories and whose morphisms are functors. There is another structure here though, namely there are natural transformation, which are  morphisms between functors. That is morphisms of morphisms or 2--morphisms. These satisfy natural compatibility conditions which can be encoded into a 2--category.
\end{ex}

We will need a slightly more general notion. This is based on the fact that a category can be given by the source, target and identity maps $s,t,id$ with $(s\circ id)(X)=(t\circ id)(X)=X$ and an associative unital composition $\circ$

\begin{equation}
\label{catdiageq}
\begin{tabular}[c]{c} 
$\Obj(\C)$\\
$s{\uparrow}\downarrow {\it id}\uparrow t$\\
$\Mor(\C)$
\end{tabular}
\quad 
\circ: \Mor(\C) \bisub{s}{\times}{t}\Mor(\C)\to \Mor(\C)
\end{equation}
\begin{df} 
A double category is given by a diagram \eqref{catdiageq} in categories. That is a category of objects $\CObj$ a category of morphisms $\CMor$ together with functors $s,t,id$ and $\circ$ an associative unital composition.

\end{df}

The objects of $\CObj$ are called objects, the morphisms of $\CObj$ are called  vertical 1--morphisms, and their composition,  vertical composition, is denoted by $\circ_v$.
 The objects of $\CMor$ go by the name of horizontal  1--mor\-phisms, with horizontal composition $\circ_h$ given by the functor $\circ$. The morphisms of $\CMor$ are referred to as 2--cells or 2--morphisms.
 The latter have both a horizontal and a vertical composition, the vertical composition $\circ_v$ is the composition in the category $\CMor$, while the horizontal composition is 
   given by the functor $\circ$. These two compositions satisfy the interchange equations.
  
The usual diagrams  for objects, vertical, horizontal and 2-morphisms and their composition are given by 
\begin{equation}
\label{doublecateq}
\xymatrix{
X\ar[r]^\phi\ar[d]_\sigma \ar@{}[dr]|{\Downarrow \alpha}&Y\ar[d]^{\sigma'}\\ 
X'\ar[r]^{\phi'}&Y'
}
\qquad 
\xymatrix{
X\ar[r]^\phi\ar[d]_\sigma \ar@{}[dr]|{\Downarrow}^{\alpha} &Y\ar[d]^{\sigma'}\\ 
X'\ar[r]^{\phi'}\ar[d]_\tau \ar@{}[dr]|{\Downarrow}^{\beta} &Y\ar[d]^{\tau'}\\ 
X''\ar[r]^{\phi''}&Y''
}
\qquad
\xymatrix{
X\ar[r]^\phi\ar[d]_\sigma \ar@{}[dr]|{\Downarrow}^{\alpha_1} &Y\ar[d]^{\sigma'}\ar[r]^{\psi} \ar@{}[dr]|{\Downarrow}^{\alpha_2} &Z\ar[d]^{\sigma''}\\ 
X'\ar[r]^{\phi'}&Y\ar[r]^{\psi'} &Z\\ 
}
\end{equation}
The interchange relation reads:
\begin{equation}
\label{doubleinterchangeeq}
(\alpha_1\circ_h\alpha_2)\circ_v(\beta_1\circ_h\beta_2)=(\alpha_1\circ_v\beta_1)\circ_h(\alpha_2\circ_v\beta_2)
\end{equation}
which means the two ways of composing the diagram \eqref{doublecatintdiageq}, first horizontal then vertical or  first vertical then horizontal, yield the same result
\begin{equation}
\label{doublecatintdiageq}
\xymatrix{
X\ar[r]^\phi\ar[d]_\sigma \ar@{}[dr]|{\Downarrow}^{\alpha_1} &Y\ar[d]^{\sigma'}\ar[r]^{\psi} \ar@{}[dr]|{\Downarrow}^{\alpha_2} &Z\ar[d]^{\sigma''}\\ 
X'\ar[r]^{\phi'}\ar[d]_\tau \ar@{}[dr]|{\Downarrow}^{\beta_1} &Y\ar[d]^{\tau'}\ar[r]^{\psi'} \ar@{}[dr]|{\Downarrow}^{\beta_2} &Z\ar[d]^{\tau''}\\ 
X''\ar[r]^{\phi''}&Y'\ar[r]^{\psi''}&Z''
}
\end{equation}

There are also certain weakenings of the axioms for double categories and their functors, which for instance relax the condition of associativity and units either up to isomorphism or to simply having morphisms. 
In general, retaining an equality adds the adjective ``strict'', allowing for an isomorphism instead of an equality is indicated by the attribute ``strong'' and only postulating a morphism from one side of the equality to the other goes by the designations ``lax'' or ``op--lax'' depending on the direction.

\begin{df} A strict functor between two double categories $\C$ and $\D$ is a pair of functors $(F,G)$: $F:\CObj(\C)\to\CObj(\D)$ and $G:\CMor(\C)\to \CMor(\D)$ which is compatible with the source, target, unit and composition functors.
A horizontally lax functor means that  the conditions are relaxed for the underlying  categories of morphisms.
\end{df}

\begin{rmk} Another nice way of encoding a double category $\D$ is by the four sets $D_0$ of objects, $D_1^H$ of horizontal 1--morphism, $D_1^V$ of vertical 1--morphisms, and $D_2$ of 2--morphisms, which form four categories:
the horizontal edge category $\D_H=\CObj$ given by $D_0$ and $D_1^H$, the vertical edge category $\D_V$ given by $D_0$ and $D_1^V$, the horizontal category of morphisms $\D_{\Mor}^H=\CMor$ with objects $D_1^H$ and morphisms $D_2$ and finally $\D_{\Mor}^V$ with the same morphisms,  but objects $D_1^V$,
see \cite{BrownMosa} for more details.
\end{rmk}

\begin{ex} Given a category $\C$, one can define a double category $\square\C$ with $\CObj(\square\C)=\C$ and $\CMor(\C)$ having objects $\Mor(\C)$ with 2--morphisms given by commutative diagrams.
That is, there is precisely one 2--morphism $(\psi\Downarrow \psi'):\phi\to \phi'$ for any two morphisms $\psi,\psi'$ with $\psi'\circ\phi=\phi'\circ \psi$. 

There is a sub--double--category $\IV\!\C$ of $\square\C$ given by restricting the vertical morphisms to be isomorphisms. 
Explicitly, $\CObj(\IV\C)=\Iso(\C)$ and $\CMor(\C)=\Iso(\C\downarrow \C)$, that is $D_0=\Obj(\C)$, $D_1^V=\Mor(Iso(\C))$, $D_1^H=\Mor(\C)$ and $D_2(\C)=\Mor(\Iso(\C\downarrow \C))$ with  horizontal composition given by  $(\sds)(\phi)\circ (\sigma'\Downarrow \sigma'')(\psi)=(\sigma\Downarrow \sigma'')(\phi\circ\psi)$.

$\IV\C$ is a double category both of whose underlying categories are groupoids, but its horizontal morphisms are not necessarily isomorphisms.
\end{ex}

\begin{ex}[Feynman categories and double categories]
\label{sdsex}
A Feynman category $\FF$ naturally yields the double--category $\IV\F$.
The 2--morphisms are the $(\sigma \Downarrow \sigma'):\phi\to \phi'$, see
\eqref{sdseq}. 

 Note that this is very natural as the condition \eqref{objectcond} and \eqref{morphcond} can be rephrased as: there is an equivalence of double categories
$(\V^\ot,(\imath^\ot \downarrow \imath)^\ot)$ and $\IV\!\F$, where the horizontal composition in $(\imath^\otimes\downarrow\imath)^\otimes$ is  the one naturally induced by the composition of morphisms in $\F$.

\end{ex}

\begin{df}
Following \cite{EhresmannDouble} we define a 2--category to be a double category whose category of objects is discrete, i.e.\ only has identity morphisms.
\end{df}
Example \ref{2catex} is the 2--category of categories whose horizontal 1--mor\-phisms are functors and whose 2--morphisms are natural transformations.

\begin{rmk}
Alternatively, one can simply omit $D_1^V$ from the list and retain only the three sets $D_0,D_1=D_1^H,D_2$ with their structural data.
In terms of diagrams, one shrinks the vertical sides, which are by definition identity maps:

\begin{equation}
\label{2cateq}
\raisebox{8mm}{\xymatrix{
X\ar[r]^\phi\ar@{=}[d] \ar@{}[dr]|{\Downarrow \alpha}&Y\ar@{=}[d]\\ 
X\ar[r]^{\phi'}&Y
 }}
\leadsto
\diagram
X\rtwocell^{\phi}_{\phi'}{\alpha}&Y
\enddiagram
\end{equation}
\end{rmk}

\begin{ex} [Monoidal categories as 2--categories]
Just as a group $G$ defines a category $\underline{G}$ with one object, any strict monoidal category defines a 2--category.
Any strict monoidal category $(\C,\otimes,\unit)$ is a 2--category with one object $\underline{\C}$. This is $\Obj(\underline{\C})=\{*\}$,
$1\mdash\Mor(*,*)=\Obj(\C)$ with composition $\circ$ given by $\otimes$. That is $X\circ Y:=X\otimes Y$. The identity 1--morphism $id_*$ is $\unit\in \Obj(\C)=1\mdash\Mor(*,*)$, where one uses strictness.  Associativity also holds due to strictness.  $2\mdash\Mor(X,Y)=Mor_\C(X,Y)$ and $\circ_v=\circ$. The units in $2\mdash\Mor(X,X)$ are the $id_X$.
The horizontal composition is given by $\otimes$ again: For $\phi\in Hom_C(X,Y), \phi'\in \Hom(X',Y')$: $\phi'\circ_h\psi=\phi\otimes \psi'$. Due to strictness this is associative and $id_\unit=id_{id_*}$  is the unit for $\circ_h$. Finally, the interchange relation holds \eqref{interchangeeq}.

If the monoidal category is not strict, one has an example of a weak 2--category. In particular, it will be horizontally strong. The associativity  for the composition $\ot$ is not strict and given by associators, which satisfy the pentagon axiom. 
Likewise, the  unit $\unit_\E$  is not strict and its strong unit property for composition composition is given by the unit constraints. 

In the same vein, one has lax and op--lax monoidal functors as examples of a weakening of the conditions on the functor level.
\end{ex}

%

\begin{rmk} There are several relationships between double and 2--ca\-te\-gories.
Being supplied the data $D_0,D_1,D_2$, there are three natural double categories one can construct:
\begin{enumerate}
\item  The horizontal double category,  given by $D_0,D_1^V=D_0,D_1^H=D_1,D_2$ with the natural structure maps. Here the elements of $D_1^V=D_0$ are viewed as  identity maps,
and the 2--morphisms are expanded into squares reversing the shrinking of \eqref{2cateq}.
\item The vertical double category, given by $D_0,D_1^V=D_1,D_1^H=D_0,D_2$. Where the identifications are as above, just switching the roles of horizontal and vertical.
\item The edge--symmetric double category is given by $D_0,D_1^V=D_1,D_1^H=D_1,D_2$, 
where the two--morphisms for a square of the type \eqref{doublecateq} given by a 2--morphism $\alpha:\sigma'\circ\phi\to \phi'\circ\sigma$. \end{enumerate}
\end{rmk}
Vice--versa, a double category has an underlying horizontal  respectively vertical 2-category given restricting the horizontal or vertical morphisms to be identities.

\begin{ex} [A monoidal category as a double category] 
\label{mondoubex} Given a monoidal category $\E$, we define $D(\E)$ to be the horizontal realization of $\underline{\E}$. 
\end{ex}

\begin{df}
\label{enrichfunctdf} 
An enrichment functor for a Feynman category $\FF$   with values in a monoidal category $\E$ is 
a  horizontally lax functor of double categories $(F,\D)$ from  $\IV\F$,  see   Example \ref{sdsex} to 
$D(\E)$, see Example \ref{mondoubex}.
\end{df}
\begin{rmk}

Note that there is only one possible component fuctor $F$, which is the trivial functor $F=\trivial$: $\trivial(X)=*$ and $\trivial(\sigma)=\unit_\E$.
Thus the data for an enrichment functor is: 
\begin{enumerate}
\item A functor 
\begin{equation}
\label{groupoidgendataeq}
\D: \Iso(\F\downarrow \F)\to \E
\end{equation}
On objects, we have $\D(\phi)\in \E$ and for morphisms
$\D(\sds):\D(\phi)\to \D(\phi')$ are isomorphisms.
\item For each pair of composable morphisms $\phi_0,\phi_1$ a natural morphism 
\begin{equation}
\label{composablegeneq}
\D(\phi_0)\otimes \D(\phi_1)\to \D(\phi_0\circ \phi_1)
\end{equation}
\item \label{idXcond}
An element in $\D(id_X)$ for each $X$. That is a morphism $id_{\D(X)}=id_*=\unit\to \D(id_X)$ which is a unit for the maps above.

\end{enumerate}
\end{rmk}
\begin{df}
We say that  an enrichment functor is lax--monoidal, if in addition one has the following data
\begin{enumerate}
\setcounter{enumi}{3}
\item  On  horizontal 1--morphisms maps:
\begin{equation}
\label{monoidalgenobeq}
\D(\phi)\otimes \D(\psi)\to\D(\phi\otimes \psi)\\
\end{equation}
\item On 2--morphisms:
\begin{equation}
\label{monoidalgenmoreq}
\D(\sds)\otimes \D((\tau\Downarrow\tau'))\to\D((\sigma\otimes \tau\Downarrow\sigma'\otimes \tau'))
\end{equation}
\item A unit morphism:
\begin{equation}
 \label{unitcondeq} 
\unit \to \D(id_\unit)
\end{equation}
\end{enumerate}
Such that the constraints are associative (i.e.\ satisfy the pentagon identity), the interchange relation is functorially preserved and the unit constraints are transformed into each other.
On vertical 1--morphisms the morphisms $\trivial(\sigma)\ot\trivial(\sigma')=\unit_\E\ot\unit_\E\to \trivial(\sigma\circ\sigma')=\unit_\E$ are given by the unit constraints.

We say the functor is  strongly monoidal if the morphisms  in the last three equations are isomorphisms, and 
 strictly monoidal if it has equalities in the last equations \eqref{monoidalgenobeq}, \eqref{monoidalgenmoreq} and \eqref{unitcondeq}.
\end{df}

\begin{prop}
\label{twofunctorprop}
For a Feynman category the data of an enrichment functor that is strict monoidal  is up to equivalence, determined by
\begin{enumerate}
\item (Groupoid data) A functor
\begin{equation}\Iso(\F\downarrow \V)\to \E
\end{equation}
\item (Composition data) For $\phi_0\in (\F\downarrow\V)$ and $\phi_1=\phi_{1,1}\odo \phi_{1,n}$, with $\phi_{1,i}\in (\F\downarrow \V)$ that are composable, set $\phi:=\phi_0\circ \phi_1$. Morphisms
\begin{equation}
\D(\phi_0)\otimes \D(\phi_{1,1})\odo \D(\phi_{1,n})\to \D(\phi).
\end{equation}
\item (Unit data) For each object $*_v\in \V$ an element
\begin{equation}
\unit\to \D(id_{*_v})
\end{equation}
which is a unit for the composition data.
\end{enumerate}
\end{prop}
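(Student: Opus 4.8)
The plan is to prove the statement by a restriction--reconstruction argument, exhibiting the passage between the ``global'' data of a strict monoidal enrichment functor on all of $\Iso(\F\downarrow\F)$ (as spelled out after Definition~\ref{enrichfunctdf}) and the ``basic'' data recorded over $(\F\downarrow\V)$ and $\V$. After strictifying $\F$ so that $\Iso(\F)=\V^\otimes$, this becomes a direct double--categorical analogue of Corollary~\ref{fplusupscor}, and indeed most of the groupoid and composition bookkeeping is already contained there. The forward direction is pure restriction: the functor $\D\colon\Iso(\F\downarrow\F)\to\E$ restricts along $\Iso(\F\downarrow\V)\hookrightarrow\Iso(\F\downarrow\F)$, giving the groupoid data; the horizontal composition morphisms \eqref{composablegeneq} restrict to the composites $\phi=\phi_0\circ(\phi_{1,1}\odo\phi_{1,n})$ with $\phi_0$ basic, giving the composition data; and the chosen units $\unit\to\D(id_X)$ restrict to the objects $*_v\in\V$, giving the unit data.

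For the reconstruction direction I would use the two Feynman axioms to extend the basic data uniquely. Axiom~\eqref{objectcond} writes each object of $\F$ as a word $\bigotimes_v\imath(*_v)$, so strict monoidality forces the values of $\D$ on objects and isomorphisms of $\Iso(\F\downarrow\F)$ to be determined by those on $\Iso(\F\downarrow\V)$; likewise $\unit\to\D(id_X)$ is forced to equal $\bigotimes_v(\unit\to\D(id_{*_v}))$ under $\unit\cong\bigotimes_v\unit$, so the unit data over $\V$ suffices. Axiom~\eqref{morphcond} together with the decomposition \eqref{feydecompeq} writes any morphism as a tensor product of basic morphisms, so $\D(\phi)=\bigotimes_v\D(\phi_v)$ is determined; and for a general composable pair $\psi_0,\psi_1$ one decomposes $\psi_0=\bigotimes_v\psi_{0,v}$ into basic morphisms, observes that $\psi_0\circ\psi_1=\bigotimes_v(\psi_{0,v}\circ\psi_{1,v})$ where $\psi_{1,v}$ is the tensor factor of $\psi_1$ feeding into $\psi_{0,v}$, and reassembles the global composition morphism factorwise from the basic composition data, the reassembly being legitimate precisely because of the interchange relation \eqref{doubleinterchangeeq}.

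The main obstacle is not the existence of this reconstruction but its well--definedness and consistency. Since the decompositions furnished by \eqref{objectcond} and \eqref{morphcond} are unique only up to unique isomorphism, I must check that the reconstructed values of $\D$ and of the composition morphisms are independent of the chosen decomposition; this is exactly where the iso (groupoid) data intervenes, mediating between two decompositions via the induced isomorphisms $\D(\sds)$, and the verification mirrors the corresponding step in Corollary~\ref{fplusupscor}. One then checks that functoriality, associativity of the global composition, and the unit axioms all reduce to the analogous basic statements: associativity collapses to associativity of the basic composition data together with interchange, while the unit property collapses to the unit condition \eqref{idXcond} over $\V$ through the groupoid action (cf.\ Remark~\ref{irmk}). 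Granting these compatibilities, restriction and reconstruction are mutually inverse up to equivalence, which is the asserted statement.
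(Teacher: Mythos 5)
Your proposal is correct and follows essentially the same route as the paper's own (much terser) proof: both reduce the global data of the enrichment functor to the basic data over $(\F\downarrow\V)$ and $\V$ using axioms \eqref{objectcond} and \eqref{morphcond} together with strict monoidality, with the groupoid data mediating the uniqueness-up-to-isomorphism of decompositions. Your version simply spells out the well-definedness and associativity/interchange checks that the paper leaves implicit.
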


\begin{proof}
Due to the condition \eqref{objectcond} for a Feynman category  $\Iso(\F\downarrow \F)\simeq \Iso(\F\downarrow\V)^\otimes$. Since $\D$ is strict monoidal $\D$ is fixed up to equivalence on $\Iso(\F\downarrow\V)$.
Again, since $\D$ is monoidal, we can use condition \eqref{morphcond} to reduce to the case where $\phi_0\in (\imath^\otimes\downarrow\imath)$ and $\phi_1\in (\imath^\otimes\downarrow\imath^\otimes)$.
Finally, since for and $X$, $id_X$ is isomorphic to $\bigotimes_{v\in V}id_{*_v}$ for some decomposition of $X\simeq \bigotimes_{v\in V}*_v$. We see that up to equivalence the unit data is fixed on the $id_{*_v}$.
\end{proof}

\begin{prop}[Ground monoid/ring]
In the lax monoidal case,  we have that $R:=\D(\id_{\unit_\F})$ is a unital  monoid and if $\D$ takes values in a linear category it is a ring.
Moreover all the $Hom_{\F_\D}(X,Y)$ become $R\mdash R$--modules, and the category is enriched in $R\mdash R$--modules.
\end{prop}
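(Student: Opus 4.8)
The plan is to recognise $R$ as the endomorphism object of the monoidal unit in the enriched category $\F_\D$ and then to transport the two available products---horizontal composition and the monoidal product $\ot$---onto the hom-objects. First I would observe that since $\id_{\unit_\F}\circ\id_{\unit_\F}=\id_{\unit_\F}$, the composition datum \eqref{composablegeneq} specialises to a morphism $m\colon R\ot R=\D(\id_{\unit_\F})\ot\D(\id_{\unit_\F})\to\D(\id_{\unit_\F}\circ\id_{\unit_\F})=\D(\id_{\unit_\F})=R$, while the unit element \eqref{idXcond} (equivalently \eqref{unitcondeq}) at $X=\unit_\F$ supplies a morphism $u\colon\unit\to R$. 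Associativity of $m$ is exactly the associativity of the composition constraints required in Definition \ref{enrichfunctdf}, evaluated on the triple $\id_{\unit_\F},\id_{\unit_\F},\id_{\unit_\F}$, and the two unit laws are the statement that $u$ is a two-sided unit for the composition datum. Thus $(R,m,u)$ is a monoid in $(\E,\ot)$; when $\E$ is a linear (additive, bilinear) monoidal category a monoid in $(\E,\ot)$ is by definition a ring (resp. a $k$-algebra), so the second clause is immediate.

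Next I would build the $R\mdash R$-bimodule structure on each $\inthom_{\F_\D}(X,Y)=\bigoplus_{\phi\in\Hom_\F(X,Y)}\D(\phi)$ from \eqref{enrichedhomeq}. The actions come from the monoidal datum \eqref{monoidalgenobeq} rather than from composition: for $\phi\colon X\to Y$ the left action is
\begin{equation*}
R\ot\D(\phi)=\D(\id_{\unit_\F})\ot\D(\phi)\to\D(\id_{\unit_\F}\ot\phi)\xrightarrow{\;\sim\;}\D(\phi),
\end{equation*}
where the last isomorphism is $\D$ applied to the canonical $2$-morphism $(\lambda_X\Downarrow\lambda_Y)\colon\id_{\unit_\F}\ot\phi\to\phi$ furnished by the left unit constraint of $\F$ (a commuting square by naturality of $\lambda$), and the right action is defined symmetrically from $\phi\ot\id_{\unit_\F}$ and $\rho$. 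Summing over $\phi$ gives the action on $\inthom_{\F_\D}(X,Y)$. That the left and right actions commute and that each is associative and unital over $R$ follows from the associativity and unit coherence of $\ot$ together with the compatibility of $\D$ with $2$-morphisms \eqref{monoidalgenmoreq}; the point is that $m$ and the actions are induced by the same constraints, so the module axioms reduce to the pentagon and triangle coherences.

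The heart of the argument is the compatibility of these actions with the enriched composition $\inthom_{\F_\D}(Y,Z)\ot\inthom_{\F_\D}(X,Y)\to\inthom_{\F_\D}(X,Z)$, i.e. that composition is a morphism of $R\mdash R$-bimodules and is $R$-balanced. This is where the interchange relation \eqref{interchangeeq} (equivalently \eqref{doubleinterchangeeq}) enters: for $r\in R$, $g\colon Y\to Z$, $f\colon X\to Y$ one rewrites $(r\cdot g)\circ f$ as $(r\ot g)\circ(\id_{\unit_\F}\ot f)$ under the unit constraints and applies interchange to obtain $(r\circ\id_{\unit_\F})\ot(g\circ f)=r\ot(g\circ f)=r\cdot(g\circ f)$, and dually on the right factor; the middle balancing $(g\cdot r)\circ f=g\circ(r\cdot f)$ is the same computation read along the other diagonal of the interchange square \eqref{doublecatintdiageq}, invoking the unit constraints to identify $\unit_\F\ot(-)$ with $(-)\ot\unit_\F$. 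I expect this balancing step to be the main obstacle, because in the merely lax monoidal case the interchange is only preserved functorially, not as a strict equality, so one must track the direction of the lax structure maps and check that the induced diagrams of $R$-actions still commute; this same laxness is why the statement claims only a monoid and not, via Eckmann--Hilton, a commutative one. Assembling these facts shows that every hom-object is an $R\mdash R$-bimodule and that units and composition are bimodule maps, so the forgetful functor from $R\mdash R$-bimodules to $\E$ factors the enrichment and $\F_\D$ is enriched in $R\mdash R$-modules.
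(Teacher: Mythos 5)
Your proposal is correct and follows essentially the same route as the paper: the monoid structure on $R$ comes from the composition datum applied to $\id_{\unit_\F}\circ\id_{\unit_\F}=\id_{\unit_\F}$ with unit from the structure map $\unit\to\D(\id_*)$, and the $R\mdash R$--actions come from the lax monoidal map $\D(\phi)\ot\D(\id_{\unit_\F})\to\D(\phi\ot\id_{\unit_\F})$ followed by the unit constraint of $\F$. The only difference is that you spell out the coherence and interchange checks that the paper leaves implicit.
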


\begin{proof}
The multiplicative structure is given by $\D(id_\unit)\otimes \D(id_\unit)\to \D(id_\unit)$ corresponding to the composition
$id_\unit\circ id_\unit=id_\unit$. The $R\mdash R$--module structure is given by the left and unit constraints.  For the right action:
$\D(\phi)\otimes \D(id_\unit)\to \D(\phi \otimes id_\unit)\to \D(\phi)$, where the first map is given by the lax monoidal structure and the second by the unit constraint in $\F$.  This provides the morphisms $\D(\phi)\ot R \to \D(\phi)$. The unit comes from the structure map $\unit\to \D(id_*)$.

\end{proof}

\subsection{Holonomy, connections,  gcp and hyper functors}
\label{holonomypar}
\subsubsection{Holonomy and connections}
We briefly recall the pertinent elements  from \cite{BrownHolonomy,BrownMosa,Fioredouble} and sketch how to adapt and apply them to the setting of Feynman categories.
A homolony for a double category $\D$ is a functor $\bar{\hphantom{m}}:\D_V\to \D_H$  which is identity on the objects, viz.\  $D_0$.
A left respectively right connection for a holonomy is assignment $\lrcorner$ respectively $\ulcorner$ from $D_1^V\to D_2$
\begin{equation}
\xymatrix{
X\ar[r]^{\bar\sigma}\ar[d]_\sigma \ar@{}[dr]|{\Downarrow  \, \lrcorner(\sigma)}&Y\ar[d]^{id}\\ 
Y\ar[r]^{id}&Y
}
\xymatrix{
X\ar[r]^{id}\ar[d]_{id} \ar@{}[dr]|{\ulcorner(\sigma)\, \Downarrow }&X\ar[d]^{\sigma}\\ 
X\ar[r]^{\bar\sigma}&Y
}
\end{equation}
which satisfy a natural compatibility, see \cite{BrownHolonomy}. A connection pair is a  holonomy together with left and a right connection for it.  

A category with holonomy and a connection pair is called a {\em category with connection}. 
A functor $(F,G)$ of double categories with connections is a functor which preserves this extra structure. 
It weakly preserves the holonomy if there are natural morphisms $F(\sigma)\to G(\bar\sigma)$. Such a functor is strong, if the morphisms are isomorphisms. Likewise, one can relax the condition on the connection; see \S\ref{aplfeypar} for a concrete application.

\begin{ex}
Note that in the double category $\square\C$, we can choose $\bar{\hphantom{m}}$ to be the identity functor. This restricts to a functor $\Iso(\C)\to \C$ which is a holonomy for $\IV\C$,
and further restricts to a holonomy for the horizontal double category of $\C$.
Since there is precisely one two morphisms for each square in both cases, there are unique connections given by $\lrcorner(\sigma)=(\sigma\Downarrow id):\bar\sigma=\sigma\to id$ and $\ulcorner(\sigma)=(\id\Downarrow \sigma):id\to \bar\sigma=\sigma$. This related to the fact that $\square\C$ has a canonical thin structure. In the horizontal double category case, $\sigma$ is an identity.

\subsubsection{Applications to Feynman categories}
\label{aplfeypar}
In particular, there is a canonical connection on the double category of a Feynman category $\IV\F$  and the horizontal double category of $\underline{\E}$.

An enrichment functor weakly preserving the connection has natural morphisms $\trivial(\sigma)=\unit\to \D(\bar\sigma=\sigma)$, i.e.\ $\D$ is groupoid compatibly pointed.
It preserves the connection if $\unit=\D(\sigma)$ and it is strong if $\D$ is a hyper functor. The conditions for the weakness of a gcp functor and the strongness of the hyper-functor for the connections are the diagrams \eqref{urightcompateq} and \eqref{uleftcompateq}. In particular, $\D(\ulcorner(\sigma)):\unit\to \D(\sigma)$ yields the groupoid pointing and $\D(\lrcorner(\sigma):\D(\sigma)\to \unit$ gives the splitting as in Definition \ref{splitdef}.
\end{ex}

\section{Model structures}
\label{modelpar}
In this section we discuss Quillen model structures for $\fopcat_\C$. It turns out that these model structures can be defined if $\C$ satisfies certain conditions and if this is the case work for all $\FF$, e.g.\ all the previous examples.

\subsection{Model structure}
\begin{thm}{\rm \cite[Theorem 8.2.1]{feynman}}\label{modelthm}  Let $\FF$ be a Feynman category and let $\mathcal{C}$ be a cofibrantly generated model category and a closed symmetric monoidal category having the following additional properties:
\begin{enumerate}
\item  All objects of $\mathcal{C}$ are small.
\item  $\mathcal{C}$ has a symmetric monoidal fibrant replacement functor.
\item  $\mathcal{C}$ has $\tensor$-coherent path objects for fibrant objects.
\end{enumerate}
Then $\fopsc$ is a model category where a morphism $\phi\colon \mathcal{O}\to\mathcal{Q}$ of $\F\-ops$ is a weak equivalence (resp. fibration) if and only if $\phi\colon \mathcal{O}(v)\to\mathcal{Q}(v)$ is a weak equivalence (resp. fibration) in $\mathcal{C}$ for every $v\in \V$.
\end{thm}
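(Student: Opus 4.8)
The plan is to obtain the model structure on $\fopsc$ by transfer along the free--forgetful adjunction $F\dashv G$ of Theorem \ref{freethm}, exploiting the monadicity statement $(\Vmods_\C)^T=\Fops_\C$. First I would equip the functor category $\Vmods_\C={\it Fun}(\V,\C)$ with its projective model structure. Since $\V$ is essentially small and $\C$ is cofibrantly generated, this structure exists and has levelwise weak equivalences and fibrations; its generating cofibrations $I_\V$ and generating acyclic cofibrations $J_\V$ are obtained by left Kan extension along the inclusions of objects $\{v\}\hookrightarrow\V$ applied to the generating (acyclic) cofibrations of $\C$. Because the weak equivalences and fibrations proposed for $\fopsc$ are exactly those detected by the forgetful functor $G$, the sought structure is precisely the one transferred from $\Vmods_\C$.

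The transfer is governed by Kan's lifting theorem: it suffices to check that $\fopsc$ is complete and cocomplete, that the domains of $F(I_\V)$ and $F(J_\V)$ permit the small object argument, and that every relative $F(J_\V)$--cell complex is a weak equivalence. Completeness is immediate, since limits in $\fopsc$ are computed levelwise in $\C$. Cocompleteness follows from monadicity together with condition (1), which guarantees that the monad $T=G\circ F$ preserves the filtered colimits needed to build colimits of $T$--algebras; condition (1) likewise supplies the smallness required to run the small object argument for $F(I_\V)$ and $F(J_\V)$.

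The only substantial step is the acyclicity condition, and here I would invoke the path--object argument of Quillen in the form used by Berger--Moerdijk: it is enough to produce, functorially, a fibrant replacement and a path object for fibrant objects in $\fopsc$. This is exactly where conditions (2) and (3) enter. A symmetric monoidal fibrant replacement functor on $\C$ induces one on $\fopsc$, since the monoidal structure is precisely what defines $\Fops_\C$ as the category of strong monoidal functors, so fibrant replacement may be performed levelwise while respecting $\otimes$ and hence the composition structure. Similarly, the $\otimes$--coherent path objects on fibrant objects of $\C$ assemble into path objects on fibrant $\ops$: coherence with $\otimes$ is exactly what makes the levelwise path object compatible with the symmetric monoidal structure of $\F$ and the symmetric group actions it carries. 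Given these, any lift $A\to A\times A$ factors through a path object, and a standard retract argument then shows every relative $F(J_\V)$--cell complex is a levelwise weak equivalence.

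The main obstacle will be verifying that the fibrant replacement and the path objects genuinely lift from $\C$ to the category of $\ops$, compatibly with the free monad $T$. Because $\F$ is symmetric monoidal, the free functor $F$ involves coinvariants under symmetric group actions, and one must ensure that the $\otimes$--coherence of the path object is strong enough to descend through these; this is precisely why conditions (2) and (3) are phrased monoidally rather than merely demanding an abstract fibrant replacement and path object in $\C$. Once this compatibility is established, the remaining verifications reduce to the routine bookkeeping of Kan's transfer theorem.
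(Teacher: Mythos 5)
Your proposal is correct and follows essentially the same route as the proof in the cited reference: the hypotheses (1)--(3) are exactly the Berger--Moerdijk conditions, and the argument there is precisely the transfer of the projective (levelwise) structure on $\Vmods_\C$ along the free--forgetful adjunction via Kan's lifting theorem, with the acyclicity condition settled by the path--object argument using the symmetric monoidal fibrant replacement and the $\otimes$--coherent path objects. You have correctly identified both the role of each hypothesis and the one genuinely delicate point, namely that the monoidal coherence is what allows the fibrant replacement and path objects to descend through the free monad.
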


\subsubsection{Examples}
\begin{enumerate}
\item Simplicial sets. (Straight from Theorem \ref{modelthm})
\item ${\it dg}\Vect_k$ for ${\it char}(k)=0$ (Straight from Theorem \ref{modelthm})
\item $\Top$ (More work, see below.)
\end{enumerate}

\subsubsection{Remark}
Condition (i) is not satisfied for $\Top$ and so we cannot directly apply the theorem. In \cite{feynman} this point was first cleared up by following \cite{Fresse} and using the fact that all objects in $Top$ are small with respect to topological inclusions.

\begin{thm} {\rm \cite[Theorem 8.2.13]{feynman}} 
Let $\mathcal{C}$ be the category of topological spaces with the Quillen model structure.  The category $\fopsc$ has the structure of a cofibrantly generated model category in which the forgetful functor to $\vseq$ creates fibrations and weak equivalences.
\end{thm}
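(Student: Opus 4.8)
The plan is to obtain the model structure by \emph{transfer} along the free--forgetful adjunction, rather than by a direct application of Theorem \ref{modelthm}, since hypothesis (i) of that theorem (all objects small) fails for $\Top$. Recall that $\fopsc$ is described, via the free--forgetful adjunction $F\dashv G$ with $G\colon \fopsc\to \vseq$, as the algebras over the induced monad $T=G\circ F$ on $\vseq$. The target $\vseq$ carries its projective (objectwise) model structure, which is cofibrantly generated because $\Top$ is and $\V$ is essentially small; its generating cofibrations and trivial cofibrations are the free $\V$-sequences on the generating (trivial) cofibrations $i$ of $\Top$. The goal is then to lift this cofibrantly generated structure along $G$, declaring $\phi$ to be a fibration or weak equivalence exactly when $G\phi$ is one.

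First I would invoke the standard transfer (Kan lifting) criterion: given the adjunction $F\dashv G$ and the cofibrantly generated structure on $\vseq$, the category $\fopsc$ inherits a cofibrantly generated model structure created by $G$, with generating (trivial) cofibrations $F(I)$ and $F(J)$, provided (a) $\fopsc$ is bicomplete, and (b) every relative $F(J)$-cell complex is a weak equivalence. Bicompleteness follows from the monadic description: limits are created in $\vseq$, while colimits of $T$-algebras exist because the relevant filtered colimits are preserved, which itself rests on the smallness discussion below.

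The smallness subtlety is the crux for $\Top$. Since not every topological space is small in the absolute sense, I would follow \cite{Fresse} and replace absolute smallness by smallness \emph{relative to the closed topological inclusions}: every object of $\Top$ is small with respect to this class, the generating (trivial) cofibrations of $\Top$ and of $\vseq$ are built from such inclusions, and one checks that $F$ together with the pushouts and transfinite compositions arising in the small object argument keep us within this class. This is exactly the point that Theorem \ref{modelthm} could not accommodate, and it is what makes the small object argument, hence the required factorizations, valid here.

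The remaining and, I expect, principal obstacle is condition (b): the acyclicity of $F(J)$-cell complexes, i.e.\ that pushouts of free trivial cofibrations and their transfinite composites are weak equivalences. The clean route is the path-object (acyclicity) argument: using that every space is fibrant and that $\Top$ admits functorial path objects compatible with $\otimes$ --- the analogues of hypotheses (ii) and (iii) of Theorem \ref{modelthm} --- one constructs fibrant replacements and $\otimes$-coherent path objects in $\fopsc$ and shows by a retract-of-cell argument that the maps in question are weak equivalences. Verifying that these path objects interact correctly with the symmetric monoidal structure underlying $T$, so that the argument survives passage to $T$-algebras, is where the genuine work lies. Once (a) and (b) are established, the transfer criterion yields the desired cofibrantly generated model structure on $\fopsc$ in which $G$ creates fibrations and weak equivalences, completing the proof.
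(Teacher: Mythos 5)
Your proposal is correct and follows essentially the same route as the paper: a transferred (lifted) cofibrantly generated structure along the free--forgetful adjunction to $\vseq$, with the failure of absolute smallness in $\Top$ repaired \`a la Fresse by working with smallness relative to (closed) topological inclusions, and acyclicity of the free trivial cell maps handled via the fibrant-replacement and $\otimes$-coherent path-object argument underlying Theorem \ref{modelthm}. No substantive differences from the paper's argument.
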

\subsection{Quillen adjunctions from morphisms of Feynman categories}
\subsubsection{Adjunction from morphisms}
We assume $\mathcal{C}$ is a closed symmetric monoidal and model category satisfying the assumptions of Theorem \ref{modelthm}.  Let $\fr{E}$ and $\FF$ be Feynman categories and let $\ff\colon \fr{E}\to\FF$ be a morphism between them.  This morphism induces an adjunction
\begin{equation*}
f_!\colon\eops \leftrightarrows \fopsc\colon f^*
\end{equation*}

\begin{lemma}\label{qalem} Suppose $f^*$ restricted to $\vfmods\to \vemods$ preserves fibrations and acyclic fibrations, then the adjunction  $(f_!, f^*)$ is a Quillen adjunction.
\end{lemma}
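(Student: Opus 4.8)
The plan is to invoke the standard dual characterization of a Quillen adjunction: since $f_!\dashv f^*$ by Theorem \ref{adjointthm}, it suffices to prove that the \emph{right} adjoint $f^*$ preserves fibrations and acyclic fibrations. The whole argument will then be a diagram chase through the forgetful functors, using that by Theorem \ref{modelthm} the fibrations and weak equivalences in $\fopsc$ (and likewise in $\eops$) are detected objectwise over the vertex groupoid. Concretely, the forgetful functors $G_\FF\colon\fopsc\to\vfmods$ and $G_\fr{E}\colon\eops\to\vemods$ given by restriction along $\imath_\FF$ and $\imath_\fr{E}$ create fibrations and weak equivalences, once $\vfmods$ and $\vemods$ are equipped with the objectwise model structure (over a groupoid there is no ambiguity). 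Thus a morphism $\phi$ is a fibration (resp.\ acyclic fibration) in $\fopsc$ iff $G_\FF(\phi)$ is one in $\vfmods$, and similarly on the $\fr{E}$-side.

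First I would record the commuting square relating $f^*$ to its restriction on vertex modules. Writing $\ff=(v,f)$ with $v\colon\V_\fr{E}\to\V_\FF$ and $f\colon\fr{E}\to\F$ satisfying $f\circ\imath_\fr{E}=\imath_\FF\circ v$, and letting $v^*\colon\vfmods\to\vemods$ be restriction along $v$, one computes for any $\O\in\fopsc$:
\[
G_\fr{E}(f^*\O)=(\O\circ f)\circ\imath_\fr{E}=\O\circ(\imath_\FF\circ v)=(G_\FF\O)\circ v=v^*(G_\FF\O).
\]
Hence $G_\fr{E}\circ f^*=v^*\circ G_\FF$, and the functor $v^*$ is exactly what is meant by ``$f^*$ restricted to $\vfmods\to\vemods$'' in the hypothesis.

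The argument then closes formally. Given a fibration (resp.\ acyclic fibration) $\phi$ in $\fopsc$, its image $G_\FF(\phi)$ is a fibration (resp.\ acyclic fibration) in $\vfmods$ since $G_\FF$ creates these classes; by hypothesis $v^*(G_\FF(\phi))$ is a fibration (resp.\ acyclic fibration) in $\vemods$; the commuting square identifies this with $G_\fr{E}(f^*(\phi))$; and since $G_\fr{E}$ detects fibrations and weak equivalences, $f^*(\phi)$ is a fibration (resp.\ acyclic fibration) in $\eops$. Therefore $f^*$ preserves both classes and $(f_!,f^*)$ is a Quillen adjunction.

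I expect the only point needing genuine care to be the two model-structural facts feeding the chase: that $G_\FF$ and $G_\fr{E}$ \emph{create} (not merely preserve) fibrations and weak equivalences, so that detection runs in both directions, and that the objectwise model structures on $\vfmods,\vemods$ are precisely the ones relative to which the hypothesis on $v^*$ is stated. Both are immediate consequences of how the model structure is defined in Theorem \ref{modelthm} (fibrations and weak equivalences ``for every $v\in\V$''), so once these compatibilities are pinned down there is no real obstacle beyond bookkeeping.
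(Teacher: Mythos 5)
Your proposal is correct and is exactly the intended argument: the paper states the lemma without writing out a proof precisely because it reduces, as you show, to the standard criterion that the right adjoint preserve fibrations and acyclic fibrations, combined with the fact (Theorem \ref{modelthm}) that these classes are created objectwise over $\V$ on both sides, so that the square $G_{\fr{E}}\circ f^*=v^*\circ G_\FF$ coming from $f\circ\imath_{\fr{E}}=\imath_\FF\circ v$ finishes the diagram chase. No gaps.
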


\subsection{Cofibrant replacement}

\begin{thm}  The Feynman transform  of a non-negatively graded dg $\F$-$\oper$ is co-fibrant.
\label{cofcor}

The double Feynman transform of a non-negatively graded dg-$\F$-$\oper$ in a cubical Feynman category is a co-fibrant replacement.
\end{thm}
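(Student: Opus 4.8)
The plan is to combine the quasi-freeness recorded above with a filtration argument controlled by the non-negative grading. Recall that, after forgetting the differential, $F\O = \imath^{odd}_!(\imath^*\O)$ is \emph{free}: it is the image under the free functor $\imath^{odd}_!$ of the $\V$-module $\imath^*\O$ (suitably dualized). First I would note that this free functor is left Quillen. Indeed it is the $f_!$ of the morphism of Feynman categories ${\mathfrak i}^{odd}=(\id,\imath^{odd\,\otimes})$, and its right adjoint $\imath^{odd\,*}$ is restriction to $\V$; by Theorem \ref{modelthm} fibrations and (acyclic) fibrations in $\foddops_\C$ are detected objectwise on $\V$, so $\imath^{odd\,*}$ preserves them, and Lemma \ref{qalem} shows $(\imath^{odd}_!,\imath^{odd\,*})$ is a Quillen adjunction. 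Hence $\imath^{odd}_!$ sends cofibrant $\V$-modules to cofibrant $\foddops$, and over the target categories in play (e.g.\ $\dgVect_k$ with $\mathrm{char}(k)=0$, where the groupoid $\V$ forces semisimplicity) $\imath^*\O$ is cofibrant. Thus the underlying free graded object is cofibrant.

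It remains to account for the transform differential, which is \emph{not} the free one. Write it as $\partial = \partial_\O + \partial_{\Phi^1}$, where $\partial_\O$ is internal and $\partial_{\Phi^1}$ is the part produced by the resolving subset $\Phi^1$. The decisive feature is that $\partial_{\Phi^1}$ is \emph{decomposable}: on free generators it produces terms of strictly larger generator count (equivalently, more edges in the graphical picture of Appendix \ref{graphpluspar}). I would therefore filter $F\O$ by number of generators; because $\O$ is non-negatively graded, this filtration is bounded below and exhaustive, and on the associated graded $\partial_{\Phi^1}$ vanishes, leaving only the free differential. Each inclusion of successive stages is then a pushout of a generating cofibration — the attachment of cells whose boundary data lie in the already-built lower stage — so that $F\O$ is a transfinite composite of such cofibrations out of the initial object, hence cofibrant. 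The main obstacle in this statement is exactly checking that $\partial_{\Phi^1}$ respects the filtration and that each stage is genuinely such a pushout; the non-negativity hypothesis is what guarantees the filtration converges and the attaching maps are well defined.

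\textbf{Second statement.} By the first statement, $FF\O = F(F\O)$ is cofibrant, since it is itself a Feynman transform and the grading/boundedness needed to rerun the argument is inherited from that of $\O$ (this bookkeeping, tracking how $\vee$ and the edge-degree shift act on $F\O$, is one thing to verify). It then remains to produce a weak equivalence $FF\O\to\O$. Using that the duality $\vee$ is, degreewise, an involution in the finite-type setting available here, I would identify the double transform with the bar--cobar composite by inserting $\vee^2\cong\id$ between the two transforms: $FF\O = \vee\,\Cobar(\vee\,\Bar\,\O)\cong \Cobar\,\Bar\,\O$, naturally in $\O$. The cubical hypothesis then permits invoking the theorem that for a cubical Feynman category the counit $\Cobar\,\Bar(\O)\to\O$ of the bar--cobar adjunction is a levelwise quasi-isomorphism. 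Since, by Theorem \ref{modelthm}, weak equivalences in $\fopsc$ are precisely the objectwise ones on $\V$, this counit is a weak equivalence. A cofibrant object equipped with an acyclic structure map to $\O$ is a cofibrant replacement, which finishes the argument.

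The hardest point overall is the identification $FF\O\cong\Cobar\,\Bar\,\O$ and the controlled passage of the cubical counit theorem through the duality: one must verify that $\vee$ intertwines $\Bar$ and $\Cobar$ up to natural quasi-isomorphism and that no finiteness hypothesis is violated upon dualizing twice. Granting this, the cofibrancy supplied by the first statement and the levelwise quasi-isomorphism supplied by the cubical theorem combine formally into the cofibrant-replacement claim.
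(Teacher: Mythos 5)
The paper states Theorem \ref{cofcor} without proof (it is imported from \cite{feynman}), so I am comparing your argument against the standard one that the surrounding machinery is set up to support. Your overall architecture is right: quasi-freeness plus a cell-attachment filtration for the first statement; and, for the second, the identification of the double transform with $\Cobar\Bar$ via $\vee\circ\vee\simeq\id$, the cubical counit theorem giving a levelwise quasi-isomorphism, and Theorem \ref{modelthm} to convert ``levelwise on $\V$'' into ``weak equivalence''. The second half is essentially a correct assembly of the paper's ingredients, modulo the bookkeeping you yourself flag; note that you can sidestep the question of whether $F\O$ inherits the non-negative grading by observing that $FF\O\cong\Cobar\Bar\O$ is itself quasi-free (on $\Bar\O$) and rerunning the filtration argument directly, rather than quoting the first statement with $F\O$ as input.

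The genuine gap is the filtration in the first statement. You filter ``by number of generators'' and claim each successive stage is a pushout of generating cofibrations, but the transform differential $d_{\Phi^1}$ sends a single generator to a sum of products of \emph{two} generators, so it strictly raises word length: the spans of words of length at most $p$ are not closed under $d_{\Phi^1}$ (nor under the free product), hence are not subobjects at all, and the cell-attachment step fails as written. Your parenthetical ``(equivalently, more edges)'' also has the inequality backwards and conflates two counts: under a splitting $\phi=\phi_0\circ\phi_1$ the total number of ghost edges is preserved, while each individual factor has strictly \emph{fewer} edges than $\phi$. That last fact is precisely what makes the correct filtration work: take $X_p\subset F\O$ to be the free sub-$\F^{odd}$-$\ops$ generated by the generators indexed by morphisms of degree at most $p$ in the ordered presentation (equivalently, at most $p$ ghost edges). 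Then $d_{\Phi^1}$ of a degree-$p$ generator lands in $X_{p-1}$ because $\Phi^1$ is resolving, each inclusion $X_{p-1}\to X_p$ is the attachment of free cells whose boundaries lie in $X_{p-1}$, and $F\O$ is the (transfinite) composite of these cofibrations. With that replacement --- and a more careful identification of where the non-negativity of the internal grading is actually used, since the edge filtration is bounded below and exhaustive for any $\O$ --- your proof goes through.
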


\bibliography{ausbib}
\bibliographystyle{halpha}

\end{document}